\tikzset{
    every matrix/.append style={
        matrix of math nodes,
        nodes in empty cells,
        inner sep=3pt,
        outer sep=0pt,
        column sep=-\pgflinewidth,
        row sep=-\pgflinewidth,
        },
    mylargenode/.style={
        text centered,
        text width=30pt,
        text height=20pt,
        text depth=10pt,
    },
    mylongnode/.style={
        text centered,
        text width=20pt,
        text height=40pt,
        text depth=30pt,
    },
    mysmallnode/.style={
        text centered,
        text width=20pt,
        text height=20pt,
        text depth=10pt,
    },
    mylonglargenode/.style={
        text centered,
        text width=30pt,
        text height=40pt,
        text depth=30pt,
    },
}
\def\allowhook{tunnel hook}
\def\allowhookfilling{tunnel hook covering}
\def\THC{tunnel hook covering}
\def\IT{\operatorname{IT}}
\def\THCc{content}
\def\GBPR{GBPR}
\def\T{T}
\def\S{S}
\def\h{\mathfrak{h}}
\def\Z{\mathbb{Z}}
\def\c{\tau}
\def\K{\mathbf{K}}
\def\ri{r}
\def\j{q}
\def\p{p}
\def\lp{\left(}
\def\rp{\right)}
\newtheorem{thm}{Theorem}[section]
\newtheorem{lem}[thm]{Lemma}
\newtheorem{cor}[thm]{Corollary}
\newtheorem{prop}[thm]{Proposition}
\newtheorem*{proprestatecycle}{Proposition \ref{prop:permutation-THC-cycle-decomp}}
\newtheorem*{proprestaterows}{Proposition \ref{prop:THC-perm-comp-row-by-row}}
\newtheorem{proc}[thm]{Procedure}
\theoremstyle{definition}
\newtheorem{defn}[thm]{Definition}
\newtheorem{ex}[thm]{Example}
\newtheorem{alg}[thm]{Algorithm}
\definecolor{lightblue}{rgb}{.85,.9,.96}
\definecolor{lightpurple}{rgb}{.9,.5,.9}
\definecolor{lightred}{rgb}{.9,.7,.7}
\definecolor{grey}{rgb}{.85,.85,.85}
\newcommand{\tcred}[1]{\textcolor{red}{#1}}
\def\K{\tilde{K}}
\def\iK{\tilde{K}^{-1}}
\def\pairskkinv{A_{\alpha,\beta}}
\def\pairskkinvone{A_{\alpha,\alpha}}
\def\pairskkinvsymT{B_{\lambda,\mu}}
\def\pairs{C_{\alpha,\beta}}
\def\pairone{C_{\alpha,\alpha}}
\def\sympairs{D_{\lambda,\mu}}
\def\sympairsbig{E_{\lambda,\mu}}
\DeclareMathOperator{\f1}{\phi}
\DeclareMathOperator{\g1}{\psi}
\DeclareMathOperator{\sh}{sh}
\DeclareMathOperator{\I}{\mathfrak{S}}
\DeclareMathOperator{\Sym}{Sym}
\DeclareMathOperator{\SSYT}{SSYT}
\DeclareMathOperator{\NSym}{NSym}
\DeclareMathOperator{\SHF}{THC}
\DeclareMathOperator{\SRT}{SRT}
\DeclareMathOperator{\fl}{flat}
\DeclareMathOperator{\Ri}{\mathcal{R}}
\renewcommand{\sh}{\operatorname{sh}}
\DeclareMathSymbol{\shortminus}{\mathbin}{AMSa}{"39}
\DeclareMathOperator{\sgn}{sgn}
\DeclareMathOperator{\dec}{dec}
\DeclareMathOperator{\perm}{perm}
\DeclareMathOperator{\permSRT}{perm^{\SRT}}
\DeclareMathOperator{\thc}{THC}
\newcommand{\rh}{\mathfrak{g}}
\newcommand{\circl}[1]{\raisebox{.5pt}{\textcircled{\raisebox{-.9pt} {#1}}}}
   \def\MR#1{}
\begin{document}

\title[NSym inverse Kostka identity]{A new proof of an E\u{g}ecio\u{g}lu--Remmel inverse Kostka matrix problem via a Garsia--Milne involution involving Sym and NSym}

\author{Edward E. Allen}
\address{Dept. of Mathematics, Wake Forest University, Winston-Salem, NC 27109}
\curraddr{}
\email{allene@wfu.edu}
\thanks{}

\author{Kyle Celano}
\address{Dept. of Mathematics, Wake Forest University, Winston-Salem, NC 27109}
\curraddr{}
\email{celanok@wfu.edu}
\thanks{}

\author{Sarah K. Mason}
\address{Dept. of Mathematics, Wake Forest University, Winston-Salem, NC 27109}
\curraddr{}
\email{masonsk@wfu.edu}
\thanks{SM was partially supported by an AMS-Simons PUI grant.}

\subjclass[2010]{Primary 05E05; Secondary 05A05, 05A19}

\date{}

\dedicatory{}

\begin{abstract}
E\u{g}ecio\u{g}lu and Remmel provide a combinatorial proof (using special rim hook tableaux) that the product of the Kostka matrix $K$ and its inverse $K^{-1}$ equals the identity matrix $I$.  They then pose the problem of proving the reverse identity $K^{-1}K =I$
combinatorially. 
Sagan and Lee prove a special case of this identity using overlapping special rim hook tableaux.  Loehr and Mendes provide a full proof using bijective matrix algebra that relies on the E\u{g}ecio\u{g}lu--Remmel map.  In this article, we solve the problem in full generality independent of the E\u{g}ecio\u{g}lu--Remmel bijection.  To do this, we start by proving NSym versions of both Kostka matrix identities using sign-reversing involutions involving the tunnel hook coverings recently introduced by the first and third authors.  Then we modify our sign-reversing involutions to reduce to Sym.  Finally, we show that our bijection is different than the Loehr and Mendes result by constructing an injective map between special rim tableaux and the symmetric group $S_n.$
\end{abstract}
\maketitle

\section{Introduction}

The complete homogeneous functions $\{ h_{\lambda} \}_{\lambda \vdash n}$ and the Schur functions $\{s_{\lambda}\}_{\lambda \vdash n}$, where $\lambda \vdash n$ indicates that $\lambda$ is a partition of $n$, are bases for the vector space $\Sym$ of symmetric functions.  
The Kostka matrix $K$ is the transition matrix from the complete homogeneous basis to the Schur basis. For $\lambda,\mu\vdash n$, the entry $K_{\lambda,\mu}$ counts the number of semistandard Young tableaux of shape $\lambda$ and content $\mu$ \cite{Mac95}.  The inverse Kostka matrix $K^{-1}$ is the transition matrix from the Schur basis to the complete homogeneous basis whose coefficients are computed via the Jacobi--Trudi formula $s_{\lambda}=\det(h_{\lambda_i-i+j})_{1 \le i,j \le \ell}$~\cite{Mac95}.  E\u{g}ecio\u{g}lu and Remmel~\cite{EgeRem90} interpret these coefficients as \emph{special rim hook tableaux} and use this construction to prove the identity $K K^{-1}=I$ combinatorially, leaving the combinatorial proof of $K^{-1}K=I$ as an open problem. In the following, we abuse notation by calling an involution ``sign-reversing" if it is sign-reversing on all non-fixed points.

\begin{thm}[\cite{EgeRem90}]\label{ERthm}
There exists a sign-reversing involution between certain pairs of semistandard Young tableaux and special rim hook tableaux which shows combinatorially that $\displaystyle{\sum_{\nu\vdash n} K_{\lambda,\nu} K^{-1}_{\nu,\mu} = \delta_{\lambda,\mu}}$, where $\delta_{\lambda,\mu}$ is the Kronecker delta.
\end{thm}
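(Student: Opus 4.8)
The plan is to upgrade the trivial linear-algebra statement --- that $K^{-1}$ is a two-sided inverse of $K$ --- into a cancellation argument, following E\u{g}ecio\u{g}lu--Remmel. First I would record their combinatorial reading of the Jacobi--Trudi determinant: expanding $s_\mu=\det(h_{\mu_i-i+j})$ and matching the permutations that survive with rim-hook tilings of the Young diagram of $\mu$ shows that
\[
K^{-1}_{\nu,\mu}=\sum_{S}\sgn(S),\qquad \sgn(S)=\prod_{h\in S}(-1)^{\,\mathrm{rows}(h)-1},
\]
the sum being over all \emph{special rim hook tableaux} $S$ --- tilings of the diagram of shape $\mu$ by rim hooks, each of which meets the first column, whose sizes form the partition $\nu$ --- and $\mathrm{rows}(h)$ being the number of rows occupied by the rim hook $h$. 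Substituting this into $\sum_{\nu}K_{\lambda,\nu}K^{-1}_{\nu,\mu}$ rewrites the left-hand side as a signed enumeration of pairs $(T,S)$, where $T$ is a semistandard Young tableau of shape $\lambda$, $S$ is a special rim hook tableau of shape $\mu$ whose rim-hook sizes record the content of $T$, and the entire sign $\sgn(S)$ is carried by $S$.

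Second, I would construct a sign-reversing involution $\Phi$ on the set of these pairs. The hinge is the partition $\nu$ that is simultaneously the content of $T$ and the multiset of rim-hook sizes of $S$: $\Phi$ scans $S$ and $T$ in a fixed order and, at the first place where it is possible, performs a coordinated local move --- the prototype being to cut some rim hook of $S$ that meets the first column in two or more rows into two special rim hooks lying in disjoint sets of rows, which lowers $\sum_h(\mathrm{rows}(h)-1)$ by one and hence flips $\sgn(S)$, while at the same time splitting the corresponding part of $\nu$ and relabelling a right-justified portion of the matching horizontal strip of $T$ --- and otherwise performs the inverse ``merge'' move. The pairs left fixed are precisely those on which no such move applies: $S$ must be the all-horizontal tiling of $\mu$, so $\nu=\mu$, and $T$ must be the superstandard tableau with all $i$'s in row $i$; that tableau exists exactly when $\lambda=\mu$, and then it is unique and has sign $+1$. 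Hence the signed total over all pairs is $\delta_{\lambda,\mu}$, as required.

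The main obstacle, as always with such involutions, is making $\Phi$ simultaneously well-defined and genuinely involutive. One must fix canonical choices --- which rim hook to cut (and, if several of its rows meet the first column, where), and exactly which cells of which horizontal strip of $T$ to relabel --- so that: (i) the move leaves the earlier portion of the scan untouched, so its inverse is triggered at the same location; (ii) $T$ remains column-strict and $S$ remains a valid special rim hook tableau (connectedness, the no-$2\times2$ condition, meeting the first column, and every partial union in the tiling being a partition); and (iii) the cut and the merge are exact inverses, which may require allowing a second family of moves to dispose of any tiling that is ``uncuttable'' yet non-horizontal. Reconciling a local surgery on a rim-hook tiling with a local surgery on a semistandard tableau whose content has changed is the delicate part, and is precisely the technical heart of E\u{g}ecio\u{g}lu--Remmel's proof; one then also checks the fixed-point count, including that the all-horizontal tiling of $\mu$ is the only content-$\mu$ tiling that survives the cancellation. (If one wanted only the identity and not an explicit bijection, it would follow formally from the Jacobi--Trudi and Pieri expansions once the special-rim-hook interpretation is in hand, but that would not produce the sign-reversing involution the statement asserts.)
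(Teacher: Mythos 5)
Your setup is the right one---rewriting $\sum_{\nu}K_{\lambda,\nu}K^{-1}_{\nu,\mu}$ as a signed sum over pairs $(T,S)$ of a semistandard Young tableau and a special rim hook tableau with matching content, and aiming for a sign-reversing involution whose only fixed point (occurring exactly when $\lambda=\mu$) is the superstandard tableau paired with the all-horizontal tiling---but the proof itself is missing. Everything hinges on the map $\Phi$, and $\Phi$ is never defined: you describe a ``prototype'' move and then explicitly defer the canonical choices, the preservation of semistandardness and of the special-rim-hook conditions, the involutivity check, and the possible need for ``a second family of moves.'' That deferred material \emph{is} the theorem. Worse, the prototype move as stated does not work: if you cut a special rim hook occupying several rows into two rim hooks lying in disjoint sets of rows, the upper piece in general contains no cell of the first column (a special rim hook reaches column $1$ only near its terminal cell), so the result is not a special rim hook tableau at all. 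You would also need to show that when $\lambda\neq\mu$ \emph{every} pair admits a move (no spurious fixed points), which is not addressed. So while the proposal correctly identifies the shape of an Eğecioğlu--Remmel-style argument, it does not contain a proof of the statement.

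For comparison, the present paper proves the identity $KK^{-1}=I$ by a different route that avoids rim-hook surgery altogether: $K^{-1}_{\lambda,\mu}$ is expanded via tunnel hook coverings, each THC $T$ is encoded by a permutation $\sigma=\perm(T)$ with $\Delta_i(T)=\mu_i+\sigma_i-i$ and $\sgn(T)=\sgn(\sigma)$ (\Cref{Th:THC_and_Perms}), and the involution $\chi$ of \Cref{alg:KKinverseSym-row-labeling} simply locates the first row $m$ of the SSYT whose largest entry $q_m$ exceeds $m$, multiplies $\sigma$ by the adjacent transposition $s_{q_m-1}$ (which flips the sign and is trivially involutive), and applies a Bender--Knuth involution on the tableau side. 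All the delicate geometric checks that your sketch leaves open are replaced there by the permutation bookkeeping and the known properties of Bender--Knuth moves; if you want to salvage your SRHT version, you would need to supply the analogous precise local rules and verify them, which is exactly the content of the original Eğecioğlu--Remmel construction that your proposal cites but does not reproduce.
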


Our first main result is the generalization of \Cref{ERthm} to the vector space $\NSym$ of \emph{noncommutative symmetric functions}.  Here $\K$ is the transition matrix from \emph{complete homogeneous noncommutative symmetric functions} $\{H_\alpha\}_{\alpha \vDash n}$~\cite{GKLLRT95} of degree $n$, where $\alpha\vDash n$ indicates that $\alpha$ is a composition of $n$, to \emph{immaculate functions} $\{\I_\alpha\}_{\alpha\vDash n}$~\cite{BBSSZ14} also of degree $n$. 
Unlike $\Sym$, whose vector space of polynomials of degree $n$ has dimension equal to the number of partitions of $n$,
the dimension of the vector space of polynomials in $\NSym$ of degree $n$ is given by the number of compositions of $n$.
To maintain this distinction between $\Sym$ and $\NSym$, we reserve the symbols $\lambda$, $\mu$,
and $\nu$ for partitions and
$\alpha,$ $\beta,$ and $\gamma$ for compositions and weak compositions. Additionally, $a=(a_1,a_2, \ldots, a_\ell)$ and $b=(b_1,b_2,\ldots,b_\ell)$ are used to indicate integer sequences.

\begin{thm}\label{Th:One}
The sign-reversing involution $\f1$ defined in Section~\ref{section:k-k-inv} provides a combinatorial proof that $\displaystyle\sum_{\gamma\vDash n} \K_{\alpha,\gamma} \iK_{\gamma,\beta}= \delta_{\alpha, \beta}$ for all $\alpha, \beta \vDash n$.
\end{thm}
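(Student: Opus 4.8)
The plan is to carry the Eğecioğlu--Remmel strategy for $KK^{-1}=I$ into the noncommutative setting, replacing special rim hook tableaux by tunnel hook coverings. First I would invoke the combinatorial models developed in the preceding sections: $\K_{\alpha,\gamma}$ counts immaculate tableaux of shape $\alpha$ and content $\gamma$ (the $\NSym$ analogue of semistandard Young tableaux), while $\iK_{\gamma,\beta}=\sum_{T}\sign(T)$ is the signed enumeration of tunnel hook coverings $T$ of shape $\gamma$ and content $\beta$. Hence $\sum_{\gamma\vDash n}\K_{\alpha,\gamma}\iK_{\gamma,\beta}$ equals the signed cardinality of the set $\pairskkinv$ of pairs $(S,T)$ for which there is a composition $\gamma\vDash n$ with $S$ an immaculate tableau of shape $\alpha$ and content $\gamma$ and $T$ a tunnel hook covering of shape $\gamma$ and content $\beta$, each such pair weighted by $\sign(T)$.

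I would then define $\f1$ on $\pairskkinv$ as follows. Traverse the diagram of the common shape $\gamma$ in the canonical order dictated by the hook decomposition recorded by $T$, and locate the first position at which one of two mutually exclusive moves is available: either a cell of $S$ may be removed and appended to extend a tunnel hook of $T$ by one cell, or a tunnel hook of $T$ may be shortened by one cell and the vacated cell reinserted into $S$ as a new entry. In the first case the number of tunnel hooks is unchanged but one hook grows; in the second it shrinks; in either case the new pair again belongs to $\pairskkinv$ for the updated composition $\gamma'$, with $S$ still a legal immaculate tableau and $T$ still a legal tunnel hook covering. Since the move alters by exactly one the number of rows a single tunnel hook occupies, it flips the statistic controlling $\sign(T)$, so $\f1$ is sign-reversing. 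Proving that $\f1$ is an involution comes down to showing the distinguished position is left unchanged by the move, so that a second application exactly undoes the first.

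Finally I would classify the fixed points. A pair $(S,T)$ is fixed exactly when neither move is available anywhere; I expect this to force every tunnel hook of $T$ to consist of a single cell and $S$ to be the unique immaculate tableau of shape $\beta$ and content $\beta$ (its $i$th row filled with $i$'s), which can occur only when $\alpha=\beta$, in which case there is precisely one fixed point, of sign $+1$. Together with the sign-reversing property of $\f1$ on the complement $\pairskkinv\setminus\pairskkinvone$, this gives $\sum_{\gamma\vDash n}\K_{\alpha,\gamma}\iK_{\gamma,\beta}=\delta_{\alpha,\beta}$ and proves \Cref{Th:One}.

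The main obstacle will be making the move precise and establishing the involution property. Tunnel hooks, unlike ordinary rim hooks, can thread through the composition diagram in several configurations, and the shapes in play are arbitrary compositions rather than partitions, so one must verify that peeling off or appending a cell never violates the ``first column strictly increasing, rows weakly increasing'' conditions on $S$ nor the connectivity and covering requirements on $T$, and that the ``first available move'' is genuinely stable under the map. The boundary situations---a hook shrinking away entirely, a row of $\gamma$ emptying out, or the distinguished cell lying in the first column---are where the bulk of the case checking resides.
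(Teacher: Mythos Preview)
Your setup has the roles of shape and content reversed in the $\iK$ factor, and this undermines the whole proposed involution. By the paper's combinatorial interpretation (Theorem~\ref{thm:thcinvkostka}), $\iK_{\gamma,\beta}$ is the signed count of tunnel hook coverings of \emph{content} $\gamma$ and \emph{shape} $\beta$, not the other way around. Consequently the pair set $\pairskkinv$ consists of $(S,T)$ with $S$ an immaculate tableau of shape $\alpha$, $T$ a tunnel hook covering of shape $\beta$, and the intermediate $\gamma$ records their common \emph{content} (more precisely, the content of $S$ equals $\Delta(T)$). The shapes $\alpha$ and $\beta$ are fixed throughout; there is no ``common shape'' to modify, so the move you describe---peeling a cell off $S$ and extending a hook of $T$, or vice versa---has no meaning here. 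Your construction is closer in spirit to the $\iK\K=I$ direction (Section~\ref{section:k-inv-k}), where $S$ and $T$ do share a shape $\gamma$ and the involution $\g1$ genuinely alters it.

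The paper's involution $\f1$ works entirely differently and never touches either shape. It exploits the bijection of Theorem~\ref{Th:THC_and_Perms} between tunnel hook coverings of a fixed shape $\beta$ and permutations in $S_{\ell(\beta)}$: given $(S,T)$ with $\sigma=\perm(T)$, one locates in each row $i$ of $S$ the entry $q_i$ maximizing $\sigma(q_i)$, finds the first row $m$ with $\sigma(q_m)\neq m$, replaces $T$ by the unique THC of the \emph{same} shape $\beta$ with permutation $s_{\sigma(q_m)-1}\sigma$, and replaces a single $q_m$ in row $m$ of $S$ by $p=\sigma^{-1}(\sigma(q_m)-1)$ (then re-sorts that row). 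The sign reversal comes from the transposition applied to $\perm(T)$, and the delicate part is showing that the new content of $S$ matches $\Delta(V)$ (Theorem~\ref{thm:changehooklengths}) and that the choice of $m$ is stable under the map. Your fixed-point heuristic (``every hook a single cell'') is also off: the unique fixed pair has $\perm(T)=\mathrm{id}$, which means each tunnel hook stays in its own row but typically covers $\beta_i$ cells, not one.
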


We then restrict our proof of \Cref{Th:One} to obtain a new proof of the identity $K K^{-1}=I$ using tunnel hook coverings (see Section 2 for their definition) as introduced by the first and third author in \cite{AllenMason25EJC}. See also \cite[Section 7.1]{khanna_local_2025} for another combinatorial proof of $KK^{-1}=I$ by Khanna and Loehr.

On the other hand, Sagan and Lee~\cite{SagLee03} prove the last column (indexed by $(1^n)$) of the identity $K^{-1}K=I$, i.e., $\sum_{\nu \vdash n} K^{-1}_{\lambda,\nu} K^{\textcolor{white}{-1}}_{\nu,(1^n)} = \delta_{\lambda,(1^n)}$. 
Loehr and Mendes~\cite{LoeMen06} use bijective matrix algebra to convert the E\u{g}ecio\u{g}lu--Remmel map into a proof of the $K^{-1}K=I$ identity.  Our proof of $K^{-1}K=I$ begins with a combinatorial proof of the $\NSym$ analogue which we then algorithmically reduce to $\Sym$.

\begin{thm}\label{Th:Two}
The sign-reversing involution $\g1$ defined in Section~\ref{section:k-inv-k} provides a combinatorial proof that $\displaystyle \sum_{\gamma\vDash n} \iK_{\beta,\gamma} \K_{\gamma,\alpha}= \delta_{\beta,\alpha}$ for all $\alpha,\beta\vDash n.$ 
\end{thm}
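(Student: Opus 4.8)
The plan is to realize $\displaystyle\sum_{\gamma\vDash n}\iK_{\beta,\gamma}\K_{\gamma,\alpha}$ as a signed enumeration of a single combinatorial set and then exhibit $\g1$ as a sign-reversing involution on it. By the tunnel-hook-covering formula for the entries of $\iK$ (Section~2, following \cite{AllenMason25EJC}) one has $\iK_{\beta,\gamma}=\sum_{T}\sgn(T)$ with $T$ ranging over \allowhookfillings{} of shape $\beta$ and content $\gamma$, while $\K_{\gamma,\alpha}$ counts (with all signs $+$) immaculate tableaux of shape $\gamma$ and content $\alpha$. Summing over all compositions $\gamma$,
\[
\sum_{\gamma\vDash n}\iK_{\beta,\gamma}\K_{\gamma,\alpha}\;=\;\sum_{(T,U)}\sgn(T),
\]
where $(T,U)$ runs over all pairs with $T$ a \allowhookfilling{} of shape $\beta$ whose content is \emph{some} $\gamma\vDash n$ and $U$ an immaculate tableau of that shape $\gamma$ and content $\alpha$. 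The target is an involution $\g1$ on this pair-set that reverses $\sgn(T)$ off its fixed points, with a single fixed point (of sign $+1$) when $\alpha=\beta$ and none otherwise.

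For the involution itself, the approach I would take is a local \emph{split/merge} toggle that moves content between $T$ and $U$. Reading the \allowhooks{} of $T$ in the canonical order coming from the row-by-row description of \allowhookfillings{} (\Cref{prop:THC-perm-comp-row-by-row}) — equivalently, reading $U$ row by row, since the $i$-th row of $U$ has length equal to the $i$-th hook size — one scans for the first site at which either (i) a \allowhook{} of $T$ can be split into two \allowhooks{} while the matching row of $U$ splits into two rows that still form an immaculate tableau, or (ii) two consecutive \allowhooks{} of $T$ can be merged while the two matching rows of $U$ concatenate into a single weakly increasing row. The legality rules are to be arranged so that at the first active site exactly one of (i), (ii) applies and the two operations are mutually inverse; performing it changes the number of \allowhooks{} by one, hence flips $\sgn(T)$, and it changes $\gamma$ but leaves the shape $\beta$ and the content $\alpha$ untouched. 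Thus $\g1$ is a well-defined involution on the whole pair-set, sign-reversing off its fixed points.

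The fixed points are the pairs admitting no legal split and no legal merge. On the $T$ side this should force $T$ to be the unique \allowhookfilling{} of shape $\beta$ all of whose \allowhooks{} are single rows — the covering corresponding to the identity under the permutation description of \Cref{prop:permutation-THC-cycle-decomp} — so that $\gamma=\beta$ and $\sgn(T)=+1$. On the $U$ side it should force $U$ to be the unique immaculate tableau of shape $\beta$ whose $i$-th row is constant equal to $i$, which has content $\beta$. Hence a fixed point exists precisely when $\alpha=\beta$, in which case it is unique and positive, and the signed count equals $\delta_{\beta,\alpha}$, which is \Cref{Th:Two}.

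The step I expect to be the main obstacle is proving that the toggle is genuinely well defined and involutive, which has two coupled pieces. First, on the tableau side the split of a row must be pinned down canonically (e.g. at a prescribed strict ascent, or by peeling a minimal initial segment) so that after the move the scan re-locates exactly the same site and the only legal operation there is the reverse merge, and symmetrically for merges; making splits and merges pair up bijectively, and making the ``first active site seen from $T$'' coincide with the ``first active site seen from $U$'', is the delicate combinatorial core, and it is here that the structure theory of Section~2 (the cycle decomposition of \Cref{prop:permutation-THC-cycle-decomp} and the row-by-row recursion of \Cref{prop:THC-perm-comp-row-by-row}) must do the real work. Second, the operation on $T$ must propagate to $U$ while preserving immaculacy — a split must keep the first column strictly increasing and all rows weakly increasing, and a merge must be its exact inverse — so one has to verify that a site legal on the $T$ side is legal on the $U$ side and conversely; since $T$ and $U$ are linked only loosely, through the common composition $\gamma$, getting the legality conditions on the two halves to match up simultaneously is the crux of the argument.
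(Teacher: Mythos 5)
Your reduction to a signed pair-set rests on a transposed reading of the tunnel-hook formula, and this breaks the argument at the very first step. By \Cref{thm:thcinvkostka}, $\iK_{\beta,\gamma}$ is the signed number of tunnel hook coverings of \emph{content} $\beta$ and \emph{shape} $\gamma$, not of shape $\beta$ and content $\gamma$. Hence in $\sum_{\gamma\vDash n}\iK_{\beta,\gamma}\K_{\gamma,\alpha}$ the summed index $\gamma$ is the \emph{common shape} of the tunnel hook covering and of the immaculate tableau, while the contents are pinned at $\beta$ and $\alpha$; this is exactly the set used in \Cref{section:k-inv-k} (see \Cref{Eq:KinvK}). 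Your set --- pairs $(T,U)$ with $T$ of shape $\beta$ and content $\gamma$, and $U$ of shape $\gamma$ and content $\alpha$ --- instead enumerates $\sum_{\gamma}\iK_{\gamma,\beta}\K_{\gamma,\alpha}$, i.e.\ the $(\beta,\alpha)$ entry of $(\iK)^{T}\K$, which is not the identity matrix. Concretely, take $\beta=(2)$ and $\alpha=(1,1)$: the one-row shape $(2)$ admits a unique tunnel hook covering, with content $(2)$ and sign $+1$, and there is exactly one immaculate tableau of shape $(2)$ and content $(1,1)$, so your set consists of a single pair of sign $+1$ even though $\alpha\neq\beta$. No sign-reversing involution with your claimed fixed-point structure can exist on that set, so the proposed proof cannot be repaired without redoing the set-up: the involution must fix the content of $T$ at $\beta$ and vary the common shape of the pair, as the paper's $\g1$ does.

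Beyond the set-up, the involution itself is never constructed: the legality and canonicity rules for your split--merge toggle, which you yourself flag as the crux, are precisely the missing content, so the proposal is a plan rather than a proof. The paper's \Cref{alg: involution} supplies this differently: working with $\sigma=\perm(T)$, it finds the smallest $k$ for which the rows below $k$ fail to be ``trivial'' (namely $\sigma_i=i$, row $i$ of $S$ consists only of the letter $M-\ell+i$, and that letter occurs nowhere else), then moves a single maximal letter $M-\ell+k$ of $S$ between two rows determined by $\sigma$ (possibly creating or deleting a one-cell row) and replaces $\perm(T)$ by a transposition times $\sigma$; the sign reverses because the permutation changes by a transposition (\Cref{Th:THC_and_Perms}), and preservation of the content of $T$ is a short $\Delta$-computation via \Cref{delta-and-permutation}. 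Your proposed sign mechanism --- ``changing the number of tunnel hooks by one'' --- does not work as stated: a tunnel hook covering of shape $\beta$ has exactly $\ell(\beta)$ tunnel hooks, one per row, and in your model the shape of $T$ never changes, so the hook count is constant; in general the sign of a tunnel hook covering is governed by its permutation, not by how many hooks it has.
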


The $\NSym$ analogues of semistandard Young tableaux and special rim hook tableaux used in this proof are \emph{immaculate tableaux}~\cite{BBSSZ14} and \emph{tunnel hook coverings}~\cite{AllenMason25EJC}, respectively.
We modify our sign-reversing involution $\g1$ from Theorem \ref{Th:Two} so that it applies to pairs consisting of tunnel hook coverings and semistandard Young tableaux.    This then solves the problem posed by E\u{g}ecio\u{g}lu and Remmel~\cite{EgeRem90} in full generality. 

\begin{thm}\label{thm:KinvK-involution}
 The sign-reversing involution $\rho$ defined in \Cref{section: kinv-k-sym} provides a combinatorial proof that $\displaystyle{\sum_{\nu \vdash n} K^{-1}_{\lambda,\nu} K_{\nu,\mu} = \delta_{\lambda,\mu}}$ for all $\lambda,\mu\vdash n$. 
\end{thm}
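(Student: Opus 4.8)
The plan is to derive \Cref{thm:KinvK-involution} from its $\NSym$ counterpart \Cref{Th:Two} by modifying the involution $\g1$ through a Garsia--Milne-type stitching argument.

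First I would identify the $\Sym$ objects. Applying the forgetful ring homomorphism $\NSym\to\Sym$, $H_\alpha\mapsto h_{\widetilde\alpha}$ (with $\widetilde\alpha$ the partition rearrangement of $\alpha$), to $\I_\nu=\sum_\beta\iK_{\beta,\nu}H_\beta$ and using that it carries $\I_\nu$ to $s_\nu$, one gets $K^{-1}_{\lambda,\nu}=\sum_{\beta:\,\widetilde\beta=\lambda}\iK_{\beta,\nu}$. Hence the left side of the identity to be proved is $\sum_{(C,T)\in\mathcal M}\sgn(C)$, where $\mathcal M=\mathcal M_{\lambda,\mu}$ is the signed set of pairs $(C,T)$ with $C$ a tunnel hook covering of shape $\nu\vdash n$ and content a composition rearranging to $\lambda$, and $T$ a semistandard Young tableau of shape $\nu$ and content $\mu$. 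Since partitions are compositions and semistandard Young tableaux are immaculate tableaux, $\mathcal M$ embeds sign-preservingly into the signed set $\mathcal N=\mathcal N_{\lambda,\mu}$ of pairs $(C,T)$ with $C$ a tunnel hook covering of shape $\gamma\vDash n$ and content a rearrangement of $\lambda$ and $T$ an immaculate tableau of shape $\gamma$ and content $\mu$; this $\mathcal N$ is exactly the disjoint union, over compositions $\beta$ rearranging to $\lambda$, of the signed sets on which \Cref{Th:Two} is proved (with $\alpha$ specialized to $\mu$ and $\beta$ ranging over those rearrangements). Consequently $\mathcal N$ carries the sign-reversing involution $\g1$, acting within each content class, with $\sum_{\mathcal N}\sgn=\sum_{\beta:\,\widetilde\beta=\lambda}\delta_{\beta,\mu}=\delta_{\lambda,\mu}$, and its unique fixed point (occurring precisely when $\lambda=\mu$: the tunnel hook covering of shape $\lambda$ and content $\lambda$ together with the unique immaculate tableau of shape $\lambda$ and content $\lambda$) already lies in $\mathcal M$.

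What remains is to equip $\mathcal N$ with a second sign-reversing involution $\sigma$ whose fixed-point set is exactly $\mathcal M$; equivalently, $\sigma$ must pair off with opposite sign every element of $\mathcal N\setminus\mathcal M$, namely the pairs whose common shape $\gamma$ is not a partition and the pairs of partition shape whose immaculate tableau has a column descent. Given such a $\sigma$, one defines $\rho$ on $\mathcal M$ as the first-return map of the alternating walk $\g1,\sigma,\g1,\sigma,\dots$: starting from $(C,T)\in\mathcal M$, apply these maps in turn and stop the first time the walk lands back in $\mathcal M$. Since $\sigma$ fixes $\mathcal M$ pointwise and is fixed-point-free on the complement, and every fixed point of $\g1$ lies in $\mathcal M$, no interior vertex of this walk is fixed by either map, so the signs strictly alternate, the walk is self-avoiding and hence terminates, and the standard reversal argument shows $\rho$ is a sign-reversing involution on $\mathcal M$ with $\mathrm{Fix}(\rho)=\mathrm{Fix}(\g1)$. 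Summing signs gives $\sum_{\nu\vdash n}K^{-1}_{\lambda,\nu}K_{\nu,\mu}=\sum_{\mathcal M}\sgn=\delta_{\lambda,\mu}$, and unwinding the walk produces the explicit $\rho$ of \Cref{section: kinv-k-sym}.

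The construction of $\sigma$ is the crux and the step I expect to be hardest. The obstruction is that the sign of a pair $(C,T)$ depends only on $C$, so $\sigma$ cannot simply rectify $T$: whenever it changes $T$ it must simultaneously change $C$ so as to flip $\sgn(C)$. I would build $\sigma$ itself as a Garsia--Milne composite of two sign-reversing involutions on $\mathcal N$: one that finds the first place where the shape $\gamma$ fails to be weakly decreasing, straightens that step, and absorbs the sign via a local move among the tunnel hooks meeting the two affected rows; and one that, on a pair of partition shape, locates the uppermost-then-leftmost column descent of $T$, performs the associated column slide, and compensates by a sign-reversing modification of $C$ along that column's boundary. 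The real work is to verify that each local operation is an involution, is sign-reversing, is inverted by its partner, and has as its fixed set precisely the configurations already in $\mathcal M$ --- and that the resulting nested Garsia--Milne walks collapse to an explicit, checkable rule. Once $\sigma$, hence $\rho$, is in place, the theorem follows.
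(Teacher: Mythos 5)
Your overall architecture is the same as the paper's: rewrite $\sum_\nu K^{-1}_{\lambda,\nu}K_{\nu,\mu}$ as a signed sum over pairs (tunnel hook covering with content rearranging to $\lambda$, semistandard Young tableau of the same shape with content $\mu$), embed this set sign-preservingly into the larger $\NSym$ set of (tunnel hook covering, immaculate tableau) pairs on which the involution $\psi$ of \Cref{Th:Two} acts, introduce a second sign-reversing involution supported exactly on the complement, and define $\rho$ as the first-return map of the alternating $\psi$/$\theta$ walk; your termination-and-reversal argument is exactly the path/cycle multigraph argument of \Cref{thm:sym-involution}, and your bookkeeping of the fixed point on the diagonal matches \Cref{alg: sym involution FULL}. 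So the stitching half of your proposal is correct and is the paper's proof.

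The genuine gap is that you never construct the second involution $\sigma$, and you say so yourself (``the real work is to verify\dots''). That construction is the heart of the paper's Section 5. The paper does it with a single move (\Cref{alg:Bad-Involution}): a \emph{bad cell} $(i,j)$ of the immaculate tableau is one with $i\ge 2$ and either $S(i-1,j)$ undefined or $S(i-1,j)\ge S(i,j)$, so one definition captures both of your failure modes (non-partition shape and failed column-strictness) at once; the involution swaps the tails of rows $t-1$ and $t$ past the chosen column (Gasharov/Gessel--Viennot style) and replaces the tunnel hook covering by the one with permutation $\perm(V)s_{t-1}$. The nontrivial verifications --- that the result is still an immaculate tableau of composition shape, that $(t,i)$ is still a bad cell chosen by the same rule (so the map is an involution), and, crucially, that the new covering's content is still a rearrangement of $\lambda$ because right-multiplication by $s_{t-1}$ merely interchanges $\Delta_{t-1}$ and $\Delta_t$ --- are exactly the conditions your sketched two-stage plan (a shape-straightening move plus a column-slide move, each ``compensated'' on the covering side) would have to satisfy, and nothing in the sketch guarantees them; in particular it is not clear that a ``column slide'' on $S$ admits any compensating modification of $C$ that both reverses the sign and keeps $\dec(\Delta(C))=\lambda$, and building $\sigma$ itself as a nested Garsia--Milne composite adds fixed-point bookkeeping that the paper's uniform bad-cell involution avoids. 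Without an explicit, verified $\sigma$, the proposal does not yet prove the theorem.
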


Our proof of Theorem \ref{thm:KinvK-involution} is an application of the \emph{Garsia--Milne Involution Principle} that was used to give a bijective proof of a Rogers--Ramanujan identity \cite{GarMil81,GarMil81_2}.  For a recent description and history of this important combinatorial tool, see \cite{SZ25}.

In the proof of Theorem \ref{ERthm} \cite[Theorem 1]{EgeRem90}, E\u{g}ecio\u{g}lu and Remmel show that their combinatorial interpretation in terms of special rim hook tableaux satisfies the recurrence relation  underlying the Jacobi--Trudi identity.
In \cite{AllenMason25EJC}, however, the first and third authors give a combinatorial interpretation
of $\iK$ by showing that its corresponding entries are the same as those implied by the noncommutative Jacobi--Trudi formula for immaculate functions
$\I_\alpha$.
Theorems \ref{Th:One} and \ref{Th:Two} show that the combinatorial interpretation of the entries of $\iK$ implies it is the inverse of $\tilde K$ directly (without reference to the Jacobi--Trudi formula).
Therefore this paper gives a proof of the  tunnel hook covering interpretation
of $\iK$ without relying on the Jacobi--Trudi identity.

In fact, \Cref{Th:One} (or \Cref{Th:Two}) together with the bijection (see \Cref{Th:THC_and_Perms}~\cite[Proposition 31]{AllenMason25EJC}) between tunnel hook coverings and the permutations used in evaluating the determinant of the Jacobi--Trudi matrix provide an alternative method for showing that immaculate functions satisfy the Jacobi--Trudi style identity given in~\cite{BBSSZ14}. Moreover, this argument can be further modified to produce a new proof of the classical Jacobi--Trudi identity using tunnel hook coverings of partition shapes.


This article is organized as follows.
In \Cref{sec:background}, we review the construction of important tools including immaculate functions, immaculate tableaux, and tunnel hook coverings. In \Cref{section:k-k-inv}, we give combinatorial proofs of Theorems \ref{Th:One} and \ref{ERthm} via the construction of two sign-reversing
involutions 
$\f1$ and
$\chi$ given by Algorithms \ref{alg:KKinverse-row-labeling} and \ref{alg:KKinverseSym-row-labeling}, respectively.
In \Cref{section:k-inv-k}, we define a sign-reversing involution $\g1$ via \Cref{alg: involution}  which we then use to prove \Cref{Th:Two}.  In \Cref{section: kinv-k-sym}, we define a sign-reversing involution $\theta$ via \Cref{alg:Bad-Involution} and then 
create a sign-reversing involution $\rho$ via \Cref{alg: sym involution FULL} using both $\psi$ and $\theta$ to prove \Cref{thm:KinvK-involution}.  In \Cref{section:connections}, we show that the bijection constructed in this paper is different from that presented in \cite{LoeMen06}.  To do this, we give a novel method to associate a special rim hook tableaux with the permutation that gives the corresponding term from the Jacobi-Trudi determinant.

\section{Background on immaculate functions and NSym Kostka matrices}\label{sec:background} 
A \textit{composition} of length $\ell$ is a sequence $\alpha=(\alpha_1,\alpha_2,\dots,\alpha_\ell)$ of positive integers. We say that $\alpha$ is a composition of $n$, denoted $\alpha\vDash n$, if $\alpha_1+\cdots+\alpha_\ell=n$. A \textit{partition} $\lambda=(\lambda_1,\lambda_2,\dots,\lambda_\ell)$ is a weakly decreasing composition and we write $\lambda \vdash n$ if $\lambda_1+\cdots+\lambda_\ell=n$. A \textit{weak composition} $\gamma=(\gamma_1,\gamma_2,\dots)$ of $n$ is a sequence of nonnegative integers such that $\gamma_1+\gamma_2+ \cdots=n.$  Since there can only be finitely many nonzero parts, we often ignore the trailing zeros and consider our weak compositions to have finitely many parts.

The Ferrers diagram of a composition $\alpha=(\alpha_1,\alpha_2,\ldots,\alpha_\ell)\vDash n$, written in English notation,
consists of a collection of cells $C_\alpha=\{ (i,j): 1 \le i \le \ell, 1 \le j \le \alpha_i\}$ with $(i,j)$ designating the cell in the $i$\textsuperscript{th} row from the top and $j$\textsuperscript{th} column from the left.  A Ferrers diagram of shape $(2,3,4,2)\vDash 11$ is given in Figure \ref{fig:ferrer}.

\begin{figure}
   \centering
$T$\ \  $=$\ \ \begin{minipage}{.3\textwidth}
    
\begin{tikzpicture}[yscale=-1,scale=.75,xscale=1.5]

\def\L1{{2, 3, 4, 2}}
\pgfmathsetmacro{\len}{dim(\L1)}

\foreach \y in {1,...,\len}{
   \pgfmathsetmacro{\j}{\L1[\y-1]}
   \foreach \i in {1,...,\j}{
      \draw (\i-.5,\y-.5) rectangle (\i+.5,\y+.5);
      \node at (\i,\y) {(\y,\i)};
       }
}
\end{tikzpicture}
\end{minipage}
\medskip

$C_\alpha=\{(1,1),(1,2),(2,1),(2,2),(2,3),(3,1),(3,2),(3,3),(3,4),(4,1),(4,2)\}$
\caption{The above figure is a Ferrers diagram of shape $\alpha=(2,3,4,2)$  and the corresponding
collection of cells $C_\alpha.$}\label{fig:ferrer}
\end{figure}

Permutations are usually written in one-line notation; that is, for $\sigma \in S_n$, if $\sigma(i)=\sigma_i$ then we write $\sigma=[\sigma_1,\sigma_2,\ldots,\sigma_n]$.
Transpositions, on the other hand,
are represented by $s_i=(i,i+1)\in S_n$.
We multiply permutations from right to left so that $\sigma\tau=[\sigma_{\tau_1}, \sigma_{\tau_2},\ldots,\sigma_{\tau_n}]$ where $\tau=[\tau_1,\tau_2,\ldots,\tau_n] \in S_n$.

$NSym$ is the algebra generated by the noncommutative elements
$\{ H_i \}_{i \in \mathbb{Z}_{\geq 0}}$ with no relations.  Elements of the set $\{H_i\}_{i \in \mathbb{Z}_{\geq 0}}$ can be thought of as the noncommutative analogues
of the complete homogeneous functions $\{h_i\}_{i \in \mathbb{Z}_{\geq 0}}$ in $\Sym.$  One analogue of the Schur functions in $\NSym$ is the collection of \emph{immaculate functions}~\cite{BBSSZ14}.  This basis is constructed using \emph{noncommutative Berenstein operators}, which are a noncommutative analogue of the Berenstein creation operators used to construct Schur functions.  Immaculate functions can equivalently be defined in terms of a Jacobi--Trudi-like determinant as $\mathfrak{S}_\alpha=\mathfrak{det}(H_{\alpha_i-i+j})_{1\leq i,j\leq \ell}$ for a composition $\alpha$, where $\mathfrak{det}()$ is the noncommutative Laplace expansion of the determinant from row 1 through row $\ell$~\cite[Theorem 3.27]{BBSSZ14}.
As in the $\Sym$ case, we can define the \emph{$\NSym$ Kostka matrix} $\tilde K_{\alpha,\beta}$ for compositions $\alpha$ and $\beta$ by $H_\beta=\sum_{\alpha} \tilde K_{\alpha,\beta} \mathfrak{S}_\alpha$.

For a composition $\alpha=(\alpha_1,\dots,\alpha_\ell)$, an  \emph{immaculate tableau} $S$ of \emph{shape} $\alpha$ \cite{BBSSZ14} is a function $S: C_\alpha \rightarrow \mathbb{Z}_{\ge 1}$ 
such that
$S(i,j) \le S(i,j+1)$ for
$1 \le j \le \alpha_i-1$ and
$S(i,1)<S(i+1,1)$ for $1 \le i \le \ell -1$. The \emph{content} of $S$ is the weak composition $\beta=(\beta_1,\beta_2,\dots)$ where $\beta_k$ is the number of pairs $(i,j)\in C_\alpha$ such that $S(i,j)=k$. 
A \emph{semistandard Young tableau} $T$
of partition shape $\lambda=(\lambda_1,\dots,\lambda_\ell)\vdash n$ is an immaculate tableau of shape $\lambda$ in which we have
$T(i,j)<T(i+1,j)$ for $1 \le i \le \ell -1$
whenever both $(i,j)$ and $(i+1,j)$ are in $C_\lambda.$  Let $\IT_{\alpha,\beta}$ 
and $\SSYT_{\lambda,\mu}$ denote the collections of immaculate tableaux of shape $\alpha$ and content~$\beta$\footnote{The authors of \cite{BBSSZ14} only define immaculate tableaux with composition content. It is natural (and moreover useful for us) to extend the definition to allow for weak composition content. } and  semistandard Young tableaux of shape $\lambda$ and content $\mu$, respectively.

\begin{thm}[{\cite{BBSSZ14}}]\label{thm:itnsmykostka}
For compositions $\alpha,\beta\vDash n$, we have $\K_{\alpha,\beta}=|\IT_{\alpha,\beta}|$.
\end{thm}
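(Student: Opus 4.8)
This is essentially \cite{BBSSZ14}; the route I would take reduces the statement to one family of $H$-expansions and then establishes those by iterating the immaculate Pieri rule. First observe that $\{\I_\alpha\}_{\alpha\vDash n}$ is a basis of the degree-$n$ component of $\NSym$: in the noncommutative Laplace expansion of $\I_\alpha=\ndet(H_{\alpha_i-i+j})_{1\le i,j\le\ell}$ the identity permutation contributes exactly $H_\alpha$, while every other term is either $0$ (from a negative shift) or $\pm H_\gamma$ with $\gamma$ a composition strictly larger than $\alpha$ in the dominance-type order on partial sums; hence the transition matrix from $\{H_\alpha\}$ to $\{\I_\alpha\}$ is unitriangular. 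Since $\K$ is \emph{defined} by $H_\beta=\sum_\alpha\K_{\alpha,\beta}\,\I_\alpha$, proving the theorem is equivalent to proving the collection of identities
\[
H_\beta \;=\; \sum_{\alpha\vDash n}|\IT_{\alpha,\beta}|\,\I_\alpha \qquad(\beta\vDash n),
\]
which is what I would verify directly.

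The engine is the \emph{immaculate Pieri rule}: for a composition $\alpha$ and an integer $r\ge1$,
\[
\I_\alpha\cdot H_r \;=\; \sum_\beta \I_\beta ,
\]
where $\beta$ ranges over all compositions with $\beta_i\ge\alpha_i$ for every $i$ (setting $\alpha_i=0$ for $i>\ell(\alpha)$), $|\beta|=|\alpha|+r$, and $\ell(\beta)\le\ell(\alpha)+1$; in particular the rule is multiplicity-free. Granting this, I would write $H_\beta=H_{\beta_1}H_{\beta_2}\cdots H_{\beta_\ell}=\I_{(\beta_1)}\cdot H_{\beta_2}\cdots H_{\beta_\ell}$ and apply the Pieri rule $\ell-1$ times, each time multiplying by the next factor $H_{\beta_k}$ on the right, to obtain
\[
H_\beta \;=\; \sum_{\emptyset=\gamma^{(0)}\subseteq\gamma^{(1)}\subseteq\cdots\subseteq\gamma^{(\ell)}}\I_{\gamma^{(\ell)}},
\]
the sum being over all chains of compositions with $|\gamma^{(k)}|=\beta_1+\cdots+\beta_k$ and $\ell(\gamma^{(k)})\le\ell(\gamma^{(k-1)})+1$ for each $k$. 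The final step is the bijection between such chains and $\bigsqcup_{\alpha}\IT_{\alpha,\beta}$: from a chain, build the filling of $\gamma^{(\ell)}$ that places the entry $k$ in every cell of $\gamma^{(k)}/\gamma^{(k-1)}$. One checks that (i) rows are automatically weakly increasing, (ii) the ``at most one new row per step'' condition on the chain is precisely what makes the first column strictly increasing, and (iii) the construction is reversible, because in an immaculate tableau the sub-diagram of cells with entry $\le k$ is again a composition diagram and recovers $\gamma^{(k)}$. Grouping the chains by their terminal shape $\alpha=\gamma^{(\ell)}$ then gives the displayed identity, and hence the theorem.

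I expect the main obstacle to be the immaculate Pieri rule itself. The cleanest input is the presentation of $\I_\alpha$ as an iterated application of the noncommutative Berenstein operators of \cite{BBSSZ14}, together with the commutation relations between those operators and right multiplication by $H_r$; I would take this from \cite{BBSSZ14}. Alternatively, staying within the determinantal description used above, one can prove the Pieri rule --- equivalently, prove the displayed identity $H_\beta=\sum_\alpha|\IT_{\alpha,\beta}|\,\I_\alpha$ outright --- by a sign-reversing involution on the set of pairs $(S,\sigma)$, with $S$ an immaculate tableau of content $\beta$ and $\sigma$ a permutation, weighted by $\sgn(\sigma)$, that arises after expanding each $\I_\alpha$ through its Jacobi--Trudi determinant (with the conventions $H_0=1$ and $H_{-k}=0$). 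This is a Lindström--Gessel--Viennot-type cancellation adapted to immaculate tableaux, in the same spirit as the involutions constructed in later sections of this paper; the delicate point --- and the source of the multiplicity-freeness --- is the way first-column strictness interacts with the determinantal shifts.
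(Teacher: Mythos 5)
This theorem is imported by the paper from \cite{BBSSZ14} with no proof given, so there is nothing internal to compare against; your proposal reconstructs essentially the argument of the cited source itself. The outline is correct: unitriangularity of the $H$-to-$\I$ transition (your dominance-of-partial-sums argument, after discarding $H_0$'s and noting a non-identity permutation can never reproduce $H_\alpha$, does give this), then iteration of the right immaculate Pieri rule of \cite{BBSSZ14} applied to $H_\beta=H_{\beta_1}\cdots H_{\beta_\ell}$, and finally the chain-to-tableau bijection, where the strictly increasing first column of an immaculate tableau is exactly equivalent to the ``at most one new row, appended at the bottom'' condition on the chain $\gamma^{(0)}\subseteq\cdots\subseteq\gamma^{(\ell)}$. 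Since the Pieri rule is the one genuinely nontrivial input and you take it from \cite{BBSSZ14} (which is where the paper sends the reader anyway), the proposal is a faithful and correct rendering of the standard proof rather than a new route.
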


A GBPR diagram \cite{AllenMason25EJC} is a generalization of a Ferrers diagram in which
the cells are colored grey, blue, purple, or red.  Let $a=(a_1,a_2,\ldots,a_\ell)$ be an integer sequence and let $\nu=(\nu_1,\nu_2,\ldots,\nu_\ell)$ be a partition.  A GBPR diagram $D_{a/\nu}$ of shape $a/\nu$ is constructed as follows.

\begin{enumerate}
\item For $1 \le i \le \ell$, place $\nu_i$ grey cells in row $i$. 

\begin{enumerate}
\item If $a_i >0$ and $\nu_i \le a_i,$ place $a_i-\nu_i$ blue cells in row $i$ situated immediately to the right of the grey cells. 

\item If $a_i>0$ and $a_i < \nu_i,$ place $ \nu_i-a_i$ red cells in row $i$ situated immediately to the right of the grey cells.

\item If $a_i \le 0,$ place $|a_i|+\nu_i$ red cells  in row $i$ situated immediately to the right of the grey cells.
\end{enumerate}

\item Any cell that is not colored grey, red, or blue is purple. 
\end{enumerate}

At times it will be convenient to ignore the first $r-1$ rows.  The \emph{partial diagram} $D^{(r-1)}_{\mu/\nu^{(r-1)}}$ is obtained by constructing the \GBPR~diagram 
\[D_{(\mu_{r}, \mu_{r+1} , \hdots , \mu_k)/(\nu^{(r-1)}_{r}, \nu^{(r-1)}_{r+1} , \hdots,  \nu^{(r-1)}_k)}\]
and then shifting the resulting diagram down by $r-1$ rows, so that the first nonempty row is row $r$ of the diagram.

A \emph{boundary cell} is a blue, purple, or red cell
in the GBPR diagram
that is vertically, horizontally, or diagonally adjacent to at least one grey cell. A \emph{terminal cell} is a boundary cell whose immediate neighbor on the left is a grey cell. A \emph{tunnel hook} $\h(j,\c_j)$ starting in row $j$ and ending in terminal cell $\c_j=(p_j,q_j)$
includes all blue and red cells in row $j$, one purple cell in row $j$ if there are no blue or red cells in row $j$, and
all of the boundary cells in rows $j+1 \le i \le p_j.$ The \emph{sign} of $\h(j,\c_j)$ is \begin{equation}\sgn(\h(j,\c_j))=(-1)^{p_j-j}.\end{equation}
A {\emph \allowhookfilling~($\SHF$)} of integer sequence shape $a=(a_1,a_2,\hdots,a_\ell)$ 
is constructed using the following procedure.

\begin{proc}[\cite{AllenMason25EJC}]\label{A:THC} 
Consider a sequence $a=(a_1, a_2, \hdots , a_\ell) \in \mathbb{Z}^\ell$.
Start with the GBPR diagram $D_{a/\nu^{(0)}}$, where $\nu^{(0)}=(0,0,\hdots)$,
and repeat the following steps, once for each value of $r$ from $1$ to $\ell$.
\begin{enumerate}

\item 
Choose a \allowhook~$\h(r,\c_r)$ in $D_{a/\nu^{(r-1)}}$.  Set 
\[
\Delta_r=\Delta(\h(r,\c_r))=|\h(r,\c_r)|-2 \rho_r - \g1_r,
\]
where 
$|\h(r,\c_r)|$ is the total number of cells in
$\h(r,\c_r)$
and $\rho_r$ and $\g1_r$ are the number of red cells and purple cells, respectively, in row $r$
that are included in $\h(r,\c_r).$

\item{\label{St:grey}} For each $1 \le i \le \ell$, let $\eta_i^{(r)}$ be the number of cells in row $i$ of $\h(r,\c_r)$ and let $\nu^{(r)}$ be the sequence defined for $1 \le i \le \ell$ by
$\nu_{i}^{(r)}=\nu_{i}^{(r-1)}+\eta_{i}^{(r)}.$

\item{\label{St:gbpr}} Construct the \GBPR~diagram $D_{a/\nu^{(r)}}$. 

\end{enumerate}
\noindent
Let $T=\bigl(\h(1,\c_1), \h(2,\c_2), \ldots, \h(\ell,\c_\ell)\bigr)$ denote the resulting THC and set
$\Delta(T) = (\Delta_1,\Delta_2, \hdots , \Delta_\ell)$. 
\end{proc}

\noindent
The \emph{\THCc} of $T$ is given by $\fl(\Delta(T))$ where $\fl(a)$ is the sequence obtained by removing the zeros from the integer sequence $a=(a_1,a_2,\ldots,a_\ell)$. For a tunnel hook covering $T=\bigl(\h(1,\c_1), \h(2,\c_2), \ldots, \h(\ell,\c_\ell)\bigr)$, define the \textit{sign} of $T$ to be
\begin{equation}\sgn(T)=\prod_{i=1}^\ell \sgn(\h(i,\c_i)).\end{equation}
For compositions $\alpha$ and $\beta$, let $\thc_{\alpha,\beta}$ be the set of tunnel hook coverings of \THCc{} $\alpha$ and shape $\beta$.

\begin{thm}[\cite{AllenMason25EJC}]\label{thm:thcinvkostka}
For compositions $\alpha,\beta$ of a positive integer $n$, we have
\[
\iK_{\alpha,\beta}=\sum_{T\in \thc_{\alpha,\beta}}\sgn(T).
\]
\end{thm}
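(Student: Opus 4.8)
The plan is to evaluate $\iK_{\alpha,\beta}$ from the noncommutative Jacobi--Trudi formula and then match its expansion term-by-term with tunnel hook coverings. First I would unwind the definitions. Inverting the relation $H_\beta=\sum_\alpha \tilde K_{\alpha,\beta}\mathfrak{S}_\alpha$ that defines $\tilde K$, and using $\iK=\tilde K^{-1}$, gives $\mathfrak{S}_\beta=\sum_\alpha \iK_{\alpha,\beta} H_\alpha$, so $\iK_{\alpha,\beta}$ is the coefficient of $H_\alpha$ in $\mathfrak{S}_\beta$. Let $\ell$ be the length of $\beta$. Expanding the noncommutative determinant $\mathfrak{S}_\beta=\mathfrak{det}(H_{\beta_i-i+j})_{1\le i,j\le\ell}$ by its Laplace expansion from row $1$ through row $\ell$ produces
\[
\mathfrak{S}_\beta=\sum_{\sigma\in S_\ell}\sgn(\sigma)\,H_{\beta_1-1+\sigma(1)}H_{\beta_2-2+\sigma(2)}\cdots H_{\beta_\ell-\ell+\sigma(\ell)},
\]
where $H_0=1$ and $H_m=0$ for $m<0$. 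Collecting the terms that collapse to $H_\alpha$ we obtain
\[
\iK_{\alpha,\beta}=\sum_\sigma\sgn(\sigma),
\]
the sum taken over all $\sigma\in S_\ell$ such that every exponent $\beta_i-i+\sigma(i)$ is nonnegative and the sequence $(\beta_i-i+\sigma(i))_{1\le i\le\ell}$, with its zeros deleted, equals $\alpha$. Thus it suffices to exhibit a sign-preserving bijection between $\thc_{\alpha,\beta}$ and this set of permutations.

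Next I would construct that bijection $T\mapsto\sigma_T$; this is precisely \Cref{Th:THC_and_Perms} (\cite[Proposition 31]{AllenMason25EJC}). Given a THC $T=\bigl(\h(1,\c_1),\dots,\h(\ell,\c_\ell)\bigr)$ of shape $\beta$, I would first show that $r\mapsto\Delta_r+r-\beta_r$ is a permutation $\sigma_T$ of $\{1,\dots,\ell\}$; granting this, $\Delta_r=\beta_r-r+\sigma_T(r)$ for all $r$, so deleting zeros from $\Delta(T)$ recovers exactly the composition obtained by collapsing the corresponding Jacobi--Trudi monomial, giving $\fl(\Delta(T))=\alpha$ with $\alpha$ a composition (which also forces each $\Delta_r\ge 0$, hence each exponent nonnegative). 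One then checks that $\sgn(T)=\prod_{r=1}^\ell\sgn(\h(r,\c_r))=\prod_{r=1}^\ell(-1)^{p_r-r}$ equals $\sgn(\sigma_T)$, the point being that each cycle of $\sigma_T$ corresponds to a maximal chain of tunnel hooks whose terminal cells link up through rows, and a cycle of length $k$ contributes sign $(-1)^{k-1}$ on both sides. Finally one verifies that the assignment is invertible: from a permutation $\sigma$ with all exponents nonnegative one reconstructs $T$ by running \Cref{A:THC} and peeling tunnel hooks off its successive diagrams one row at a time, the greedy recoloring being forced at every step, with the terminal cell demanded by $\sigma$ available exactly because the corresponding exponent is nonnegative. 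Summing $\sgn$ over the two sides of the bijection then gives $\sum_{T\in\thc_{\alpha,\beta}}\sgn(T)=\iK_{\alpha,\beta}$.

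I expect the main obstacle to be the bijection of the second paragraph, and within it the identity $\Delta_r=\beta_r-r+\sigma_T(r)$. Establishing it requires carefully tracking, for each hook $\h(r,\c_r)$, the set of rows it occupies (which controls $(-1)^{p_r-r}$ and the cycle structure of $\sigma_T$), the numbers $\rho_r$ and $\psi_r$ of red and purple cells it uses in row $r$, and the running grey partition $\nu^{(r)}$, and showing that the correction $-2\rho_r-\psi_r$ in $\Delta_r=|\h(r,\c_r)|-2\rho_r-\psi_r$ is exactly what converts the raw hook length into the determinant exponent --- with the red cells, which occur precisely when $\nu^{(r-1)}_i>\beta_i$, encoding how a would-be negative exponent is realized as a legitimate diagram. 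Checking well-definedness and invertibility of the forced recoloring in both directions is where the real work lies; once the bijection is pinned down, the sign and content identities are comparatively formal. One could instead avoid Jacobi--Trudi altogether: \Cref{Th:One}, combined with the identity $\tilde K_{\alpha,\beta}=|\IT_{\alpha,\beta}|$ of \Cref{thm:itnsmykostka}, shows directly that the signed THC count is a two-sided inverse of $\tilde K$, hence equals $\iK$; but the Jacobi--Trudi argument is the one used in \cite{AllenMason25EJC}, where this theorem first appears.
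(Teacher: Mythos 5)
Your proposal is correct and follows essentially the same route as the source the paper cites for this statement: the paper does not reprove \Cref{thm:thcinvkostka} but quotes it from \cite{AllenMason25EJC}, where the argument is exactly the one you outline --- extract the coefficient of $H_\alpha$ from the noncommutative Jacobi--Trudi expansion of $\mathfrak{S}_\beta$ and match terms with tunnel hook coverings via the sign-preserving bijection of \Cref{Th:THC_and_Perms} together with the identity $\Delta_r=\beta_r-r+\sigma_r$ of \Cref{delta-and-permutation}. The Jacobi--Trudi-free alternative you mention in passing (showing the signed THC matrix is a two-sided inverse of $\K$ via \Cref{Th:One} and \Cref{Th:Two}) is precisely the new derivation this paper advertises, so your aside correctly identifies the distinction between the original proof and the present paper's contribution.
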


 An important ingredient in the proof of the above theorem is the association of a unique permutation to each \THC{}~of a diagram.
 First,  the \emph{$d^{th}$ diagonal} $\mathcal{L}_d$
 of a GBPR diagram of shape $\alpha$
 is the set $\mathcal{L}_d=\{(d+k,1+k)\mid k\in \Z_{\ge 0}\}$. 
      For a \THC{} $T=\bigl(\h(1,\c_1), \h(2,\c_2), \ldots, \h(\ell,\c_\ell)\bigr)$, its \textit{permutation} $\sigma\in S_\ell$, denoted $\perm(T)$, is given by setting $\sigma(i)=j$ if $\c_i\in \mathcal{L}_j$
      for each $i\in [\ell].$ See \Cref{fig:THC permutation} for an example.  Recall that the sign $\sgn(\sigma)$ of a permutation $\sigma$ is given by $\sgn(\sigma)=1$ if $\sigma$ is an even permutation and $\sgn(\sigma)=-1$ if $\sigma $ is an odd permutation.

\begin{thm}[{\cite[Proposition 31]{AllenMason25EJC}}]\label{Th:THC_and_Perms}
For each composition $\beta$, the map $T\mapsto \perm(T)$ is a bijection from tunnel hook coverings of shape $\beta$ to permutations of $\ell(\beta)$ such that $\sgn(T)=\sgn(\perm(T))$.
\end{thm}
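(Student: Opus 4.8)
The plan is to extract $\perm(T)$ directly from Procedure~\ref{A:THC} and to run that procedure in reverse. Fix $\beta\vDash n$ and set $\ell=\ell(\beta)$; for a tunnel hook covering $T$ of shape $\beta$ write $\nu^{(0)},\dots,\nu^{(\ell)}$ for the successive grey shapes produced by the procedure and $\c_r=(p_r,q_r)$ for the terminal cell chosen in step $r$, so that $\c_r$ lies on the diagonal $\mathcal{L}_{d_r}$ with $d_r=p_r-q_r+1$. Everything rests on the following bookkeeping lemma, which I would prove by induction on $r$. \emph{Lemma:} when step $r$ begins, $\nu^{(r-1)}$ is weakly decreasing on rows $r,\dots,\ell$; the diagonal indices already occupied by a terminal cell of $\h(1,\c_1),\dots,\h(r-1,\c_{r-1})$ are exactly $d_1,\dots,d_{r-1}$, and these are distinct elements of $[\ell]$; and, writing $e_r<e_{r+1}<\dots<e_\ell$ for the increasing list of the remaining $\ell-r+1$ diagonal indices, for each $p\in\{r,\dots,\ell\}$ there is exactly one tunnel hook admissible in step $r$ whose terminal cell lies on diagonal $\mathcal{L}_{e_p}$ — namely the one that descends to row $p$ — and these are all the admissible choices in step $r$.

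For the induction, the base case $r=1$ is immediate because $\nu^{(0)}=(0,0,\dots)$, so the $p$th diagonal meets row $p$ in its first cell and $e_p=p$. For the inductive step, the key observation is that a tunnel hook starting in row $r$ occupies, in each row $p$ it descends through (that is, $r<p\le p_r$), precisely the boundary cells of that row; since $\nu^{(r-1)}$ is weakly decreasing on the active rows, those boundary cells are the columns $\nu^{(r-1)}_p+1,\dots,\nu^{(r-1)}_{p-1}+1$, of which there are $\nu^{(r-1)}_{p-1}-\nu^{(r-1)}_p+1$. Hence the grey-shape update in Procedure~\ref{A:THC} gives $\nu^{(r)}_p=\nu^{(r-1)}_{p-1}+1$ for $r<p\le p_r$ and $\nu^{(r)}_p=\nu^{(r-1)}_p$ for $p>p_r$, from which one checks that $\nu^{(r)}$ is again weakly decreasing on rows $r+1,\dots,\ell$ and that the diagonal index $p-\nu^{(r)}_p$ attached to row $p>r$ after step $r$ runs through exactly the list $e_r<\dots<e_\ell$ with the entry $e_{p_r}=d_r$ deleted. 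That is precisely the assertion for step $r+1$.

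Granting the Lemma, the three claims follow quickly. \emph{The map is well defined:} the indices $d_1,\dots,d_\ell$ are $\ell$ distinct elements of $[\ell]$, so $\perm(T)\in S_\ell$. \emph{Bijectivity:} I would construct a map $\Psi$ from $S_\ell$ to tunnel hook coverings of shape $\beta$ by running the procedure and, in step $r$, taking the unique admissible tunnel hook from row $r$ whose terminal cell lies on diagonal $\mathcal{L}_{\sigma(r)}$; this is legitimate because, inductively, the diagonal indices still available at step $r$ are $[\ell]\setminus\{\sigma(1),\dots,\sigma(r-1)\}$, which contains $\sigma(r)$, and the Lemma matches it with exactly one hook. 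Then $\perm\circ\Psi=\mathrm{id}$ is clear, and $\Psi\circ\perm=\mathrm{id}$ is the uniqueness clause of the Lemma applied step by step to a given $T$. \emph{Sign:} by the Lemma, $d_i=\sigma(i)$ is the $(p_i-i+1)$th smallest element of $\{\sigma(i),\sigma(i+1),\dots,\sigma(\ell)\}$, so $p_i-i=\#\{\,s>i:\sigma(s)<\sigma(i)\,\}$; summing over $i$ gives $\sum_{i=1}^\ell(p_i-i)=\operatorname{inv}(\sigma)$, whence
\[
\sgn(T)=\prod_{i=1}^{\ell}(-1)^{p_i-i}=(-1)^{\operatorname{inv}(\sigma)}=\sgn(\sigma).
\]

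The main obstacle is the Lemma, and within it two points require care: maintaining the invariant that $\nu^{(r)}$ stays weakly decreasing on the active rows under every legal choice of tunnel hook, and handling the two extreme situations — the first step, where no grey cell is yet present, and rows that reach (or, after the descents of several earlier hooks, would overshoot) their capacity $\beta_p$ before being used as a starting row. Both are absorbed by the red and purple cells built into the \GBPR{} formalism — the ``one purple cell'' clause in the definition of a tunnel hook and the red-cell rule in the construction of $D_{a/\nu}$ — and these are exactly the places where one appeals to the structural results of~\cite{AllenMason25EJC}; with those in hand, the diagonal-tracking argument above is elementary.
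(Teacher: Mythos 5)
Your argument is essentially correct, but be aware that this paper contains no proof of \Cref{Th:THC_and_Perms} to compare against: the statement is imported verbatim from \cite[Proposition 31]{AllenMason25EJC}. Judged on its own terms, your diagonal-tracking induction is a sound reconstruction. The key invariant --- that $\nu^{(r-1)}$ is weakly decreasing on rows $r,\dots,\ell$, that the unique terminal cell available in row $p\geq r$ is $(p,\nu^{(r-1)}_p+1)$ on diagonal $p-\nu^{(r-1)}_p$, and that the update $\nu^{(r)}_p=\nu^{(r-1)}_{p-1}+1$ for $r<p\leq p_r$ simply deletes the used diagonal and shifts the rest --- does give exactly one admissible hook per unused diagonal at every step, which yields both well-definedness and the inverse map $\Psi$; and your identity $p_i-i=\#\{s>i:\sigma(s)<\sigma(i)\}$ is precisely the Lehmer-code fact that this paper quotes from \cite[Lemma 35]{AllenMason25EJC} in Appendix A, so your route is in the same spirit as the cited source while making the step-by-step correspondence explicit. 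Two cautions. First, your closing deferral to ``the structural results of~\cite{AllenMason25EJC}'' must be limited to definitional conventions of the \GBPR{} formalism (the implicit grey column $0$, so that first-column cells are terminal when a row has no grey cells yet, and the purple-cell clause for rows with $\nu_r=\beta_r$); importing anything stronger would be circular, since Proposition 31 is the very statement being proved. Second, your boundary-cell count ``columns $\nu^{(r-1)}_p+1,\dots,\nu^{(r-1)}_{p-1}+1$'' silently uses that no grey cell from row $p+1$ or from the frozen rows $1,\dots,r-1$ interferes; the former is exactly the weak-decrease invariant and the latter holds because rows $\leq r-1$ are not adjacent to rows $\geq r+1$, but this is worth saying explicitly, since it is where the invariant is genuinely needed. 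With those points spelled out, the proof is complete.
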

We use the permutation of a \THC{} to construct the sign-reversing involutions necessary to prove $\K\iK=I$ and $\iK \K=I$ combinatorially. A
useful thing to note is that if a tunnel hook covering $T$ of shape $\beta$ has $\Delta(T)=(\Delta_1,\Delta_2,\dots,\Delta_\ell)$ and $\perm(T)=\sigma$, then
\begin{equation}\label{delta-and-permutation}
    \Delta_i=\beta_i+\sigma_i-i
\end{equation}
for all $i\in [\ell].$ See the proof of Lemma 37 in \cite{AllenMason25EJC} (which uses the notation $\Delta_r=\beta_r-r+j$ with $j=\sigma_r$).

We need the following two lemmas. Recall that for compositions $\alpha$ and $\beta$ of $n$, we say $\alpha$ \emph{dominates} $\beta$, denoted $\alpha\trianglerighteq \beta$, if $\alpha_1+\cdots+\alpha_i\geq \beta_1+\cdots+\beta_i$ for all $i\in [n]$, where we pad the ends of compositions with zeros as necessary to make the comparisons. Further, we say that $\alpha$ is \textit{lexicographically}  greater than or equal to $\beta$, denoted $\alpha \geq_\ell \beta,$ if for the  smallest $i$ such that
$\alpha_i\ne\beta_i$ we have $\alpha_i>\beta_i.$

\begin{lem}[Dominance Lemma for IT {\cite{BBSSZ14}}]\label{lem:dom-lem-IT}
    For $\alpha,\beta\vDash n$, if $\IT_{\alpha,\beta}\neq \emptyset$, then $\alpha \ge_\ell\beta$. Moreover, if $\alpha=\beta$, then $|\IT_{\alpha,\alpha}|=1.$
\end{lem}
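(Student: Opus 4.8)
The plan is to prove the two claims in sequence: first that nonemptiness of $\IT_{\alpha,\beta}$ forces $\alpha \ge_\ell \beta$, and then that $|\IT_{\alpha,\alpha}| = 1$.

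For the first claim, fix an immaculate tableau $S \in \IT_{\alpha,\beta}$ and let $i$ be the smallest index where $\alpha_i \ne \beta_i$; we must show $\alpha_i > \beta_i$. I would argue by tracking where small entries can appear. Because each row of $S$ is weakly increasing and the first column is strictly increasing, the entry $S(j,1) \ge j$ for every row $j$, and more generally the smallest available entries are forced to cluster in the top-left. Concretely, consider the entries equal to some value $k$: in each row they form a contiguous block, and the strict increase down column $1$ means that the value $k$ can occur in row $1$ only in a prefix of positions, and its occurrences in later rows are similarly constrained. The cleanest route is probably induction on $i$: assuming $\alpha_1 = \beta_1, \dots, \alpha_{i-1} = \beta_{i-1}$, I would show that the sub-tableau consisting of the first $i-1$ rows of $S$ must use \emph{exactly} the multiset of entries dictated by $\beta$ restricted appropriately, which then forces the count of entries $\le i$ (or a similar statistic) to compare correctly, yielding $\alpha_i > \beta_i$. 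Alternatively — and this may be the slicker argument — I would count: the number of cells in the first $i$ rows of $S$ is $\alpha_1 + \cdots + \alpha_i$, while the strictness of column $1$ plus weak increase in rows forces all entries in these rows to lie in $\{1, 2, \dots, \text{something}\}$; combined with $\alpha_j = \beta_j$ for $j < i$, a careful bookkeeping of the first $i$ column-1 entries and the row blocks gives the lexicographic inequality.

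For the second claim, suppose $\alpha = \beta$ and let $S \in \IT_{\alpha,\alpha}$. The goal is to show $S$ is the unique ``superstandard'' filling in which row $i$ is constant equal to $i$. I would prove this by induction on the row index. Row $1$ has $\alpha_1$ cells and must be weakly increasing; if any entry in row $1$ exceeds $1$, then the value $1$ appears strictly fewer than $\alpha_1$ times in row $1$, and since $1$ cannot appear in any later row (as $S(j,1) \ge 2 > 1$ for $j \ge 2$ would be needed, but actually $S(2,1) > S(1,1) \ge 1$), the content in position $1$ would be less than $\alpha_1$, contradicting content $= \alpha$. Hence row $1$ is all $1$'s. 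Then $S(2,1) \ge 2$; repeating the argument on the remaining rows — now with the value $2$ unavailable above row $2$ and the smallest entry in row $2$ being at least $2$ — forces row $2$ to be all $2$'s, and so on. This establishes that $S$ is uniquely determined, so $|\IT_{\alpha,\alpha}| = 1$.

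The main obstacle is the first claim: making the ``small entries cluster in the top-left'' intuition into a clean inequality requires choosing the right statistic to count. The delicate point is that immaculate tableaux have strictness only in the first column, not in every column, so the standard semistandard-tableau dominance argument does not transfer verbatim; I expect the right move is to count, for each $i$, the total number of cells among the first $i$ rows and compare it against how many entries $\le i$ must appear, using the first-column strictness to pin down the column-1 entries and the row weak-increase to control the rest. Everything else is routine induction.
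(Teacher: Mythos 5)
First, a point of comparison: the paper does not prove this lemma at all — it is quoted from \cite{BBSSZ14} — so there is no in-paper argument to match; the closest relative in the paper is the proof of the THC dominance lemma, which goes through the permutation attached to a tunnel hook covering and is not available (or needed) here. Your second claim ($|\IT_{\alpha,\alpha}|=1$) is proved correctly and completely: all $1$'s are forced into row $1$, so row $1$ is constant, and stripping rows inductively forces row $i$ to be all $i$'s; just add the one-line remark that this constant-row filling is itself an immaculate tableau, so the count is exactly $1$ rather than at most $1$.

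The first claim, however, is not actually established in your write-up: you offer two candidate strategies, defer the "careful bookkeeping," and explicitly flag choosing the right statistic as the main unresolved obstacle. The gap is real but small, because your second sketch does work and your worry that the semistandard argument "does not transfer verbatim" is unfounded. The missing observation is that \emph{every} entry in row $j$ is at least $j$: you already noted $S(j,1)\ge j$ (strictness of column $1$), and weak increase along rows gives $S(j,c)\ge S(j,1)\ge j$ for every cell $(j,c)$. Hence any entry of value at most $i$ lies in rows $1,\dots,i$, so $\beta_1+\cdots+\beta_i\le \alpha_1+\cdots+\alpha_i$ for every $i$ (pad with zeros as needed). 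This is in fact the dominance inequality $\alpha\trianglerighteq\beta$, which is stronger than the stated lexicographic claim: at the first index $i$ with $\alpha_i\ne\beta_i$ the equal prefixes give $\alpha_i\ge\beta_i$, hence $\alpha_i>\beta_i$ and $\alpha\ge_\ell\beta$. Alternatively, your first sketch can be completed by the row-stripping induction already implicit in your uniqueness proof: all $1$'s lie in row $1$, so $\beta_1\le\alpha_1$; if $\beta_1<\alpha_1$ you are done, and if $\beta_1=\alpha_1$ then row $1$ is all $1$'s, so deleting it and subtracting $1$ from every entry yields an immaculate tableau of shape $(\alpha_2,\dots,\alpha_\ell)$ and content $(\beta_2,\dots)$, and induction finishes the argument. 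Either completion turns your proposal into a correct, self-contained proof.
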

\begin{lem}[Dominance Lemma for THC]\label{lem:dom-lem-thc}
    For $\alpha,\beta\vDash n$, if $\thc_{\alpha,\beta}\neq \emptyset$, then $\alpha \trianglerighteq \beta$ and hence $\alpha \ge_\ell \beta$. Moreover, if $\alpha=\beta$, then $\thc_{\alpha,\alpha}$
    is a single element $\{T\}$ 
    with $perm(T)=id$, where $id$ is the identity permutation of $S_{\ell(\alpha)}.$
\end{lem}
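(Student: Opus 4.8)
The plan is to deduce everything from two facts already in the excerpt: the formula $\Delta_i = \beta_i + \sigma_i - i$ of \eqref{delta-and-permutation}, where $\sigma = \perm(T)$, and the bijection $T \mapsto \perm(T)$ of \Cref{Th:THC_and_Perms}. Fix $T \in \thc_{\alpha,\beta}$ and write $\ell = \ell(\beta)$, $\sigma = \perm(T) \in S_\ell$. First observe that every entry of $\Delta(T) = (\Delta_1, \dots, \Delta_\ell)$ is nonnegative: the content of $T$ equals $\fl(\Delta(T))$, which by hypothesis is the composition $\alpha$ (in particular it has no negative parts), and $\fl$ deletes only zeros. Next, summing \eqref{delta-and-permutation} over $i = 1, \dots, j$ for $1 \le j \le \ell$ gives $\Delta_1 + \cdots + \Delta_j = (\beta_1 + \cdots + \beta_j) + \bigl((\sigma_1 + \cdots + \sigma_j) - (1 + \cdots + j)\bigr) \ge \beta_1 + \cdots + \beta_j$, since $\sigma_1, \dots, \sigma_j$ are $j$ distinct positive integers; and for $j \ge \ell$ both sides equal $n$ because $\sigma$ is a permutation.

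To turn these partial-sum bounds on $\Delta(T)$ into $\alpha \trianglerighteq \beta$, I would exploit the nonnegativity just established: with $\alpha = \fl(\Delta(T))$, for $1 \le j \le \ell(\alpha)$ the sum $\alpha_1 + \cdots + \alpha_j$ equals $\Delta_1 + \cdots + \Delta_k$, where $k = k(j) \ge j$ is the position in $\Delta(T)$ of its $j$-th nonzero entry, while for $j > \ell(\alpha)$ one has $\alpha_1 + \cdots + \alpha_j = n \ge \beta_1 + \cdots + \beta_j$. In the first case, combining with the inequality above and the fact that $\beta$ has no zero parts yields $\alpha_1 + \cdots + \alpha_j = \Delta_1 + \cdots + \Delta_k \ge \beta_1 + \cdots + \beta_k \ge \beta_1 + \cdots + \beta_j$. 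Hence $\alpha \trianglerighteq \beta$. The implication $\alpha \trianglerighteq \beta \Rightarrow \alpha \ge_\ell \beta$ is standard: at the first index $i$ where $\alpha$ and $\beta$ differ, equality of the first $i - 1$ partial sums together with $\alpha_1 + \cdots + \alpha_i \ge \beta_1 + \cdots + \beta_i$ force $\alpha_i > \beta_i$.

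For the ``moreover'' clause, assume $\alpha = \beta$ and take any $T \in \thc_{\beta,\beta}$, $\sigma = \perm(T)$. If $\sigma \ne \mathrm{id}$, let $i$ be the least index with $\sigma_i \ne i$; then $\{\sigma_1, \dots, \sigma_{i-1}\} = \{1, \dots, i-1\}$ forces $\sigma_i > i$, so \eqref{delta-and-permutation} gives $\Delta_j = \beta_j \ge 1$ for $j < i$ and $\Delta_i = \beta_i + \sigma_i - i > \beta_i \ge 1$. Since $\Delta_1, \dots, \Delta_i$ are all positive, the $i$-th entry of $\fl(\Delta(T))$ is $\Delta_i$, but $\fl(\Delta(T)) = \beta$ forces it to be $\beta_i$, a contradiction. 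Thus $\perm(T) = \mathrm{id}$ for every $T \in \thc_{\beta,\beta}$. Conversely, \Cref{Th:THC_and_Perms} provides a unique THC $T_0$ of shape $\beta$ with $\perm(T_0) = \mathrm{id}$; by \eqref{delta-and-permutation} we have $\Delta(T_0) = \beta$, hence its content is $\fl(\beta) = \beta$, so $T_0 \in \thc_{\beta,\beta}$, and injectivity of $T \mapsto \perm(T)$ then gives $\thc_{\beta,\beta} = \{T_0\}$, a single element whose permutation is $\mathrm{id}$.

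The only step needing real care is the passage in the second paragraph from the partial-sum inequalities for $\Delta(T)$ to those for $\alpha = \fl(\Delta(T))$: one must check that nonnegativity of the $\Delta_i$ is exactly what makes these inequalities persist when zeros are removed, and correctly treat the indices $j$ beyond $\ell(\alpha)$. Everything else reduces to short manipulations of permutations and partial sums built on \eqref{delta-and-permutation} and \Cref{Th:THC_and_Perms}.
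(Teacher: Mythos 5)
Your proof is correct and follows essentially the same route as the paper: both rest on the formula $\Delta_i=\beta_i+\sigma_i-i$ and the partial-sum comparison $\sum\alpha_j\ge\sum\Delta_j\ge\sum\beta_j$, with the equality case forcing $\perm(T)=\mathrm{id}$. You merely spell out more carefully the step where zeros are removed by $\fl$ and add an explicit existence argument for the identity-permutation THC (which the paper leaves to \Cref{lem: perm-id-iff-shape-is-content} and \Cref{Th:THC_and_Perms}), so no changes are needed.
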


\begin{proof}
Let $T\in \thc_{\alpha,\beta}$ with permutation $\sigma=\perm(T)$. Then for each $i\in [\ell(\beta)]$, we have
\begin{equation}\label{eqnalphabeta}
\sum_{j=1}^i \alpha_j\geq \sum_{j=1}^i \Delta_j(T)=\sum_{j=1}^i \beta_j+\sigma_j-j=\sum_{j=1}^i \beta_j+\sum_{j=1}^i (\sigma_j-j)\geq \sum_{j=1}^i \beta_j,
\end{equation}
where the first inequality is because $\alpha=\fl(\Delta(T))$ and final inequality is because $\sigma$ is a bijection $[\ell(\beta)]\to [\ell(\beta)]$. Hence, $\alpha \trianglerighteq \beta$ and thus $\alpha\ge_\ell \beta$.

If $\alpha=\beta$, then \Cref{eqnalphabeta} implies that $$\sum_{j=1}^i \alpha_j = \sum_{j=1}^i \Delta_j(T).$$  A simple induction argument shows that $\alpha_i =\Delta_i(T)$ for all $i$.  Therefore $\alpha_i=\alpha_i+\sigma_i-i$, implying $\sigma_i=i$ for each $i$.
\end{proof}
A partial converse of the above is also useful.
\begin{lem}\label{lem: perm-id-iff-shape-is-content}
Let $T$ be a tunnel hook covering of shape $\alpha\vDash n$. Then the content of $T$ equals the shape of $T$ if and only if $\perm(T)=id$.
\end{lem}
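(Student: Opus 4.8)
The statement follows by unwinding identity~\eqref{delta-and-permutation} relating $\Delta(T)$ to $\perm(T)$, together with the elementary fact that $\fl$ only deletes zero entries. Set $\ell=\ell(\alpha)$ and $\sigma=\perm(T)\in S_\ell$. By Procedure~\ref{A:THC}, a tunnel hook covering of shape $\alpha$ consists of exactly $\ell$ tunnel hooks, so $\Delta(T)=(\Delta_1,\dots,\Delta_\ell)$ has exactly $\ell$ components; and by~\eqref{delta-and-permutation}, $\Delta_i=\alpha_i+\sigma_i-i$ for all $i\in[\ell]$.

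First I would treat the direction $\perm(T)=id\implies$ ``content equals shape''. If $\sigma_i=i$ for every $i$, then $\Delta_i=\alpha_i$; since $\alpha\vDash n$ has strictly positive parts, $\Delta(T)$ has no zero entries, whence $\fl(\Delta(T))=\Delta(T)=\alpha$. That is exactly the assertion that the content of $T$ equals its shape.

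For the converse, suppose $\fl(\Delta(T))=\alpha$. Since $\fl$ removes only zeros (preserving the order of the surviving entries), and since $\alpha$ has $\ell$ parts while $\Delta(T)$ has $\ell$ components, no entry of $\Delta(T)$ can have been deleted; hence $\Delta(T)=\alpha$, i.e.\ $\Delta_i=\alpha_i$ for all $i\in[\ell]$. Plugging this into~\eqref{delta-and-permutation} gives $\sigma_i-i=0$ for every $i$, so $\sigma=id$.

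The argument is short and I anticipate no real obstacle; the one step that should be stated carefully rather than glossed over is the length bookkeeping — that a tunnel hook covering of shape $\alpha$ has exactly $\ell(\alpha)$ hooks, so $\Delta(T)$ has exactly $\ell(\alpha)$ entries — since this equality of lengths is precisely what forces ``no zeros were deleted'' in the converse. (The forward implication is already implicit in the proof of \Cref{lem:dom-lem-thc}; the genuinely new content is the converse direction.)
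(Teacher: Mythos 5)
Your proof is correct. The direction $\perm(T)=id\Rightarrow$ content $=$ shape is identical to the paper's (substitute $\sigma_i=i$ into \eqref{delta-and-permutation} and note the parts of $\alpha$ are positive, so $\fl$ does nothing). For the converse, however, you take a different and somewhat more elementary route: the paper simply points back to the proof of \Cref{lem:dom-lem-thc}, where $\Delta(T)=\alpha$ is extracted from the chain of partial-sum inequalities (using that $\sum_{j\le i}(\sigma_j-j)\ge 0$ for a permutation) followed by an induction on the partial sums, whereas you obtain $\Delta(T)=\alpha$ directly from a length count: $\Delta(T)$ has exactly $\ell(\alpha)$ entries, $\fl$ only deletes zeros, and $\fl(\Delta(T))=\alpha$ also has $\ell(\alpha)$ entries, so nothing was deleted and $\Delta(T)=\fl(\Delta(T))=\alpha$; then \eqref{delta-and-permutation} forces $\sigma_i=i$. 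Your version is self-contained and avoids re-invoking the dominance machinery (it also implicitly disposes of possible negative or zero entries of $\Delta(T)$ in one stroke), while the paper's version is shorter on the page because it reuses an argument already written out; you were right to flag the one bookkeeping point — that a THC of shape $\alpha$ has exactly $\ell(\alpha)$ tunnel hooks, hence $\Delta(T)$ has exactly $\ell(\alpha)$ components — since that is what makes the counting step legitimate.
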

\begin{proof}
One direction is in the proof of \Cref{lem:dom-lem-thc}. For the other direction if $\perm(T)=id$, then $\Delta_i(T)=\alpha_i+i-i=\alpha_i$ for each $i$. Hence, $\fl(\Delta(T))=\alpha.$
\end{proof}

\begin{figure}
   \centering
$T$\ \  $=$\ \ \begin{minipage}{.3\textwidth}
    
\begin{tikzpicture}[yscale=-1,scale=.5]

\def\L{{8, 7, 7, 4}}
\pgfmathsetmacro{\len}{dim(\L)}

\foreach \y in {1,...,\len}{
   \pgfmathsetmacro{\j}{\L[\y-1]}
   \foreach \i in {1,...,\j}{
      \draw (\i-.5,\y-.5) rectangle (\i+.5,\y+.5);
       }
}

\tikzset{every node/.style={inner sep=-4pt,color=blue}}
\node (1i) at (8,1) {1};
\node (2i) at (7,2) {2};
\node (3i) at (7,3) {3};
\node (4i) at (4,4) {4};

\tikzset{every node/.style={inner sep=-4pt,color=red}}
\node (1t) at (1,1) {1};
\node (2t) at (1,3) {3};
\node (3t) at (1,4) {4};
\node (4t) at (3,4) {};
\node at (3+.25,4-.25) { 2};
\begin{scope}[on background layer]
\tikzset{every path/.style={line width = 7pt,color=black,line cap=round,opacity=.15,rounded corners}}
\draw (1i)--(1t);
\draw (2i)--(1,2)--(2t);
\draw (3i)--(2,3)--(2,4)--(3t);
\draw (4i)--(4t);
\end{scope}
\tikzset{every path/.style={line width=.5pt,color=red}}
 \draw (1-.5,2-.5)--($(4t)+(.5,.5)$);
\end{tikzpicture}\end{minipage} 
\(\displaystyle \hspace*{0.15in} \perm(T)=\binom{\textcolor{blue}{1\ 2\ 3\ 4}}{\textcolor{red}{1\ 3\ 4\ 2}}\)
   \caption{The permutation (in 2-line notation) associated to a tunnel hook covering}
   \label{fig:THC permutation}
\end{figure}

\section{A combinatorial proof of Theorem \ref{Th:One}}\label{section:k-k-inv}

In this section, we provide a combinatorial proof that $\K\iK=I$.  For compositions $\alpha,\beta$ of $n$, \Cref{thm:itnsmykostka} and \Cref{thm:thcinvkostka} imply that
\[(\K\iK)_{\alpha,\beta}=\sum_{\gamma\vDash n}\K_{\alpha,\gamma}\iK_{\gamma,\beta}=\sum_{(S,T)\in \pairskkinv}\sgn(T),\]
where $\pairskkinv$ is the set of pairs $(S,T)$ such that
\begin{itemize}
    \item 
    $\S$ is an immaculate tableau of shape $\alpha$,
    \item 
    $\T$ is a \THC{} of shape $\beta$, and
    \item 
    the content of $\S$ is equal to $\Delta(T).$
\end{itemize}
Note that the content of $S$ is $\Delta(T)$ instead of simply the content of $T$ (which is $\fl(\Delta(T))$). 

To prove the matrix identity $\K \iK=I$ combinatorially, we generalize the approach from the $\Sym$ case~\cite{EgeRem90}.  Specifically, we introduce an involution $\f1_{\alpha,\beta}$ on $\pairskkinv$ with the following properties.
\begin{enumerate}
 \item If $\alpha=\beta$, then $\pairskkinvone$ contains exactly one pair $(S,T)$, and this pair is a fixed point under $\f1_{\alpha,\alpha}$ such that $\sgn(T)=1$.
\item If $\alpha\neq \beta,$ then $\f1_{\alpha,\beta}$ is sign-reversing on $\pairskkinv$.
   
\end{enumerate}

The following lemmas are useful in proving the first property.

\begin{lem}\label{lem:diag-pairs-k-kinv-1}
For any composition $\alpha$, $\pairskkinvone$ contains exactly one pair $(S,T)$ and for this pair $(S,T)$, we have $\perm(T)=id$.
\end{lem}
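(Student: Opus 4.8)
The plan is to analyze the set $\pairskkinvone$ directly. Recall that a pair $(S,T) \in \pairskkinvone$ consists of an immaculate tableau $S$ of shape $\alpha$, a tunnel hook covering $T$ of shape $\alpha$, and the requirement that $\mathrm{content}(S) = \Delta(T)$. First I would observe that, since the content of any immaculate tableau is a genuine (weak) composition with nonnegative parts, the equality $\mathrm{content}(S) = \Delta(T)$ forces $\Delta(T)$ to be a weak composition of $n$; in particular, passing to the flattening, $\mathrm{content}(T) = \fl(\Delta(T))$ is then a composition of $n$ whose parts are exactly the nonzero entries of $\mathrm{content}(S)$. Now apply \Cref{lem:dom-lem-IT} to $S$: since $\IT_{\alpha,\mathrm{content}(S)} \ne \emptyset$, we get $\alpha \ge_\ell \mathrm{content}(S)$. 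But I also want an inequality in the other direction coming from $T$; for this, I would note that $\mathrm{content}(T) = \fl(\Delta(T))$ is a composition $\gamma$ with $\thc_{\gamma,\alpha} \ne \emptyset$ (as witnessed by $T$), so \Cref{lem:dom-lem-thc} gives $\gamma \trianglerighteq \alpha$, hence $\gamma \ge_\ell \alpha$.

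The next step is to reconcile these two comparisons. Since $\gamma = \fl(\mathrm{content}(S))$ is obtained from $\mathrm{content}(S)$ by deleting zeros, and lexicographic comparison between a composition and a weak composition (padding with trailing zeros) is unaffected by interior zeros only in limited ways, I would argue that $\alpha \ge_\ell \mathrm{content}(S)$ together with $\fl(\mathrm{content}(S)) \ge_\ell \alpha$ forces $\mathrm{content}(S)$ to have no interior zeros, i.e. $\mathrm{content}(S) = \gamma$, and then $\alpha \ge_\ell \gamma$ and $\gamma \ge_\ell \alpha$ give $\gamma = \alpha$. Thus $\mathrm{content}(S) = \alpha = \mathrm{shape}(S)$ and $\mathrm{content}(T) = \alpha = \mathrm{shape}(T)$. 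At this point \Cref{lem: perm-id-iff-shape-is-content} immediately yields $\perm(T) = id$, and the second sentence of \Cref{lem:dom-lem-thc} shows $T$ is the unique such tunnel hook covering of shape $\alpha$. Likewise the ``moreover'' clause of \Cref{lem:dom-lem-IT} gives $|\IT_{\alpha,\alpha}| = 1$, so $S$ is uniquely determined. Hence $\pairskkinvone$ has exactly one element, and for it $\perm(T) = id$, as claimed.

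I expect the main obstacle to be the bookkeeping in the middle step: carefully justifying that $\alpha \ge_\ell \mathrm{content}(S)$ and $\fl(\mathrm{content}(S)) \ge_\ell \alpha$ together force $\mathrm{content}(S)$ to equal $\alpha$ with no interior zeros. One has to be slightly careful because $\ge_\ell$ is defined with trailing-zero padding, so deleting an \emph{interior} zero of $\mathrm{content}(S)$ can only lexicographically increase it, which is exactly the direction needed to pit the two inequalities against each other; the cleanest route is probably to argue that if $\mathrm{content}(S)$ had an interior zero then $\fl(\mathrm{content}(S)) >_\ell \mathrm{content}(S)$ strictly unless the zero is trailing, combine with $\alpha \ge_\ell \mathrm{content}(S)$ to get a contradiction with $\fl(\mathrm{content}(S)) \ge_\ell \alpha$ unless $\mathrm{content}(S) = \alpha$ already. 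Alternatively — and perhaps more simply — one can avoid lexicographic subtleties entirely by working with dominance from \Cref{lem:dom-lem-thc} and the length statistic: $T$ forces $\ell(\fl(\Delta(T))) = \ell(\alpha)$, so $\Delta(T)$ already has no interior zeros, whence $\mathrm{content}(S) = \Delta(T)$ has none either, and the rest is a direct comparison. I would present whichever of these is shortest once the details are checked.
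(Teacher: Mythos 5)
Your overall strategy is the paper's: pit the two dominance lemmas against each other to force content~$=$~shape, then invoke \Cref{lem: perm-id-iff-shape-is-content} and the ``moreover'' clauses for uniqueness. But the step you yourself flag as the obstacle is a genuine gap, not just bookkeeping. \Cref{lem:dom-lem-IT} is stated for \emph{composition} content, and applying it directly to the weak composition $\mathrm{content}(S)=\Delta(T)$ only gives you (at best) $\alpha\ge_\ell \Delta(T)$, which is strictly weaker than what you need. The purely order-theoretic implication you then rely on is false: take $\alpha=(3,1,2)$, $c=(3,0,2,1)$, $\gamma=\fl(c)=(3,2,1)$; then $\alpha\ge_\ell c$ and $\gamma\ge_\ell\alpha$, yet $\gamma\neq\alpha$ and $c\neq\alpha$. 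So ``$\alpha\ge_\ell \mathrm{content}(S)$ together with $\fl(\mathrm{content}(S))\ge_\ell\alpha$ forces $\mathrm{content}(S)=\alpha$'' does not hold, and your main route stalls exactly where you need $\alpha\ge_\ell\gamma$. The paper avoids this by \emph{standardizing} $S$: relabel its entries order-preservingly to get an immaculate tableau $S'$ of shape $\alpha$ with genuine composition content $\gamma=\fl(\Delta(T))$, so \Cref{lem:dom-lem-IT} yields $\alpha\ge_\ell\gamma$ directly; combined with $\gamma\ge_\ell\alpha$ from \Cref{lem:dom-lem-thc} this gives $\gamma=\alpha$, after which $\perm(T)=id$, $\Delta(T)=\alpha$, $S\in\IT_{\alpha,\alpha}$, and uniqueness all follow as you describe.

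Your proposed alternative is also not correct as written: the claim that ``$T$ forces $\ell(\fl(\Delta(T)))=\ell(\alpha)$'' is false for general tunnel hook coverings --- e.g.\ shape $(1,1)$ with permutation $[2,1]$ has $\Delta=(2,0)$, so the content has length $1<2$. What is true, and would repair this route, is that $S$ forces it: the first column of $S$ is strictly increasing, so $S$ has at least $\ell(\alpha)$ distinct entries and hence $\Delta(T)$ has at least $\ell(\alpha)$ nonzero parts, while $\gamma\trianglerighteq\alpha$ from \Cref{lem:dom-lem-thc} gives $\ell(\gamma)\le\ell(\alpha)$; together these show $\Delta(T)$ has no zeros, and then both dominance lemmas apply verbatim. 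Either the standardization step or this length argument must be supplied explicitly; as submitted, the proof has a hole at its central step.
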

\begin{proof}

Let $(S,T)\in A_{\alpha,\alpha}$. Then $S$ has content $\Delta(T)$. If $\gamma=\fl(\Delta(T))$, we standardize $S$ to construct an immaculate tableau $S'$ with composition content $\gamma$. Hence, \Cref{lem:dom-lem-IT} implies $\alpha\geq_{\ell} \gamma$. Since $\gamma$ is the content of $T$, \Cref{lem:dom-lem-thc} implies $\gamma\geq_{\ell} \alpha$. Thus, $\alpha=\gamma$, implying $T\in \thc_{\alpha,\alpha}$. By \Cref{lem: perm-id-iff-shape-is-content}, $T$ has the identity as its permutation, so $\Delta(T)$ is equal to its content, namely $\alpha$. Hence, $S\in \IT_{\alpha,\alpha}.$ The claim then follows from the second parts of the dominance lemmas. 
\end{proof}

Therefore we may define our involution $\f1_{\alpha,\alpha}$ to fix the sole pair in $\pairskkinvone$ (Step (2) of Algorithm \ref{alg:KKinverse-row-labeling}) and then describe the involution $\f1_{\alpha,\beta}$ on $\pairskkinv$ for which $\alpha\neq \beta$ (Step (3)).  Let $S^i$ be the multiset of all entries in row $i$ of $S$.

\begin{alg}\label{alg:KKinverse-row-labeling}
Suppose $\alpha$ and $\beta$ are 
compositions of $n$ and let $(S,T)\in \pairskkinv$.  Let $\ell=\ell(\alpha)$, let $\sigma=\perm(T)$. We construct another pair $\f1_{\alpha,\beta}(S,T)=(U,V)\in \pairskkinv$ as follows.

\begin{enumerate}
\item For each row $i$, let $\j_i$ be the element of $S^i$ such that $\sigma(\j_i)\geq \sigma(k)$ for all entries $k\in S^i$. Let $m$ be the smallest positive integer such that $\sigma(\j_m)\neq m$, or set $m=\ell$ if there is none.
\item If $m=\ell$, then $\alpha=\beta$ (see Lemma~\ref{lem: qis-give-alpha-and-beta}), so we set $\f1_{\alpha,\alpha}(S,T)=(S,T)$.
\item Otherwise, $\sigma(\j_m)>m$. Then,
\begin{enumerate}
 \item Let $V$ be the unique \THC{} of shape $\beta$ and permutation $\perm(V)=s_{\sigma(\j_m)-1}\sigma$.
             \item Let $U$ be obtained by turning one $\j_m$ in $S^m$ into $\p=\sigma^{-1}(\sigma(\j_m)-1)$ and then sorting the entries of $S^m$ to be weakly increasing.
        \end{enumerate}
    \end{enumerate}
    Now set $\phi(S,T)=\phi_{\alpha,\beta}(S,T)$ if $(S,T)\in A_{\alpha,\beta}.$
\end{alg}

See \Cref{fig:KKinverse-example} for an example.

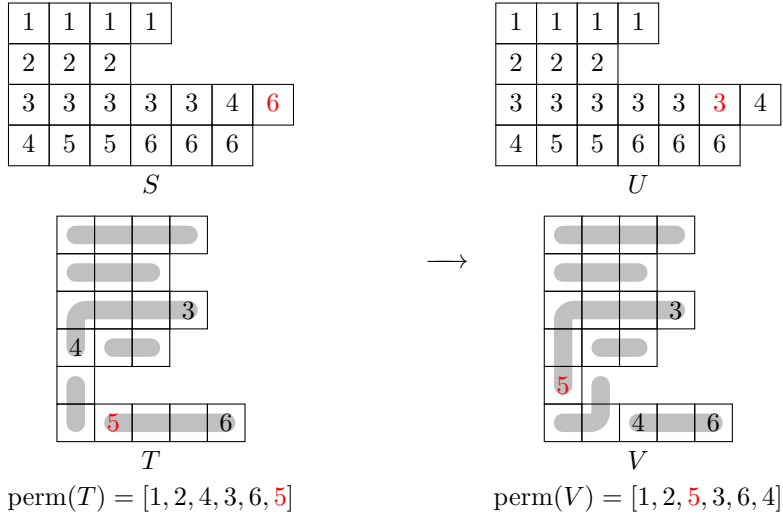
\begin{figure}
    \centering
    $\begin{minipage}{.45\textwidth}\begin{tikzpicture}
            
        \node at (0,.5) {\ytableaushort{1111,222,333334{\tcred{6}},455666,\none\none\none{\none[S]}}};
        \node at (0,-3) {\begin{tikzpicture}[yscale=-1,scale=.5]
\def\L{{4,3,4,3,1,5}}
\pgfmathsetmacro{\len}{dim(\L)}

\foreach \y in {1,...,\len}{
    \pgfmathsetmacro{\j}{\L[\y-1]}
    \foreach \i in {1,...,\j}{
        \draw (\i-.5,\y-.5) rectangle (\i+.5,\y+.5);
        }
}
\node at (4,3) {3};
\node at (3,4) {};
\node at (1,5) {};
\node at (1,4) {4};
\node at (5,6) {$6$};
\node at (2,6) {\tcred{5}};
\node at (3,7) {$T$};
\node at (3,8) {$\perm(T)=[1,2,4,3,6,\tcred{5}]$};
\begin{scope}[on background layer]
\tikzset{every path/.style={line width = 7pt,color=black,line cap=round,opacity=.25,rounded corners}}
\draw (4,1)--(1,1);
\draw (3,2)--(1,2);
\draw (4,3)--(1,3)--(1,4);
\draw (3,4)--(2,4);
\draw (1,5)--(1,6);
\draw (5,6)--(2,6);
\end{scope}

\end{tikzpicture}};\end{tikzpicture} \end{minipage}\xrightarrow[]{\quad} \begin{minipage}{.45\textwidth}\begin{tikzpicture}
    
\node at (0,.5) {$\ytableaushort{1111,222,33333{\tcred{3}}4,455666,\none\none\none{\none[U]}}$};
\node at (0,-3) {\begin{tikzpicture}[yscale=-1,scale=.5]
\def\L{{4,3,4,3,1,5}}
\pgfmathsetmacro{\len}{dim(\L)}

\foreach \y in {1,...,\len}{
    \pgfmathsetmacro{\j}{\L[\y-1]}
    \foreach \i in {1,...,\j}{
        \draw (\i-.5,\y-.5) rectangle (\i+.5,\y+.5);
        }
}
\node at (4,3) {$3$};
\node at (1,5) {\tcred{5}};
\node at (2,4) {};
\node at (1,5) {};
\node at (5,6) {$6$};
\node at (3,6) {4};
\node at (3,7) {$V$};
\node at (3,8) {$\perm(V)=[1,2,\tcred{5},3,6,4]$};
\begin{scope}[on background layer]
\tikzset{every path/.style={line width = 7pt,color=black,line cap=round,opacity=.25,rounded corners}}
\draw (4,1)--(1,1);
\draw (3,2)--(1,2);
\draw (4,3)--(1,3)--(1,5);
\draw (3,4)--(2,4);
\draw (2,5)--(2,6)--(1,6);
\draw (3,6)--(5,6);
\end{scope}

\end{tikzpicture}};\end{tikzpicture}
\end{minipage}
$
    \caption{Using \Cref{alg:KKinverse-row-labeling}, $q_1=1,q_2=2,$ and $q_3=6$.  Thus $m=3$ since $\perm(T)_1=1,\perm(T)_2=2,$ and $\perm(T)_6=5$. Note that row 3 is sorted into weakly increasing order when $q_3=6$ is changed into $p=3$.}
    \label{fig:KKinverse-example}
\end{figure}

\begin{lem}\label{lem: qis-give-alpha-and-beta}
Using the notation in \Cref{alg:KKinverse-row-labeling}, if $\sigma(\j_i)=i$ for all $i\leq M$ for some $M\in [\ell]$, then $\alpha_i=\beta_i$ for all $i\leq M$.
\end{lem}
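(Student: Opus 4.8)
The plan is to prove, by a minimal-counterexample argument, the following strengthening: if $\sigma(\j_i)=i$ for all $i\le M$, then $\sigma$ fixes each of $1,2,\dots,M$ pointwise, row $i$ of $S$ consists entirely of the entry $i$ for every $i\le M$, and $\alpha_i=\beta_i$ for all $i\le M$. Strengthening in this way is essential: the assertion that $\sigma$ fixes the initial segment $\{1,\dots,M\}$ is exactly what a naive induction on ``$\alpha_i=\beta_i$'' fails to deliver, yet it is what lets us pin down the entries of each row. (Note $M\le\ell(\alpha)\le\ell(\beta)$, since the strictly increasing first column of $S$ supplies $\ell(\alpha)$ distinct entries of $S$, all at most $\ell(\beta)$; hence all indices below are in range for \eqref{delta-and-permutation}.)

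Two structural observations do most of the work. First, since $S$ is an immaculate tableau of shape $\alpha$, its first column is strictly increasing, so $S(i,1)\ge i$ and hence every entry of row $i$ of $S$ is $\ge i$. Second, by the definition of $\j_i$, every entry $v$ of row $i$ satisfies $\sigma(v)\le\sigma(\j_i)$, which under the hypothesis $\sigma(\j_i)=i$ says that every entry of row $i$ lies in $\sigma^{-1}(\{1,\dots,i\})$. Combining these: whenever $\sigma$ happens to fix $\{1,\dots,i\}$ pointwise we have $\sigma^{-1}(\{1,\dots,i\})=\{1,\dots,i\}$, and intersecting with the constraint that entries are $\ge i$ forces every entry of row $i$ to equal $i$. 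In that situation $S$ contains exactly $\alpha_i$ copies of the value $i$---rows below $i$ hold only smaller entries and rows above $i$ only larger ones---so \eqref{delta-and-permutation} gives $\alpha_i=\Delta_i(T)=\beta_i+\sigma_i-i=\beta_i$.

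It remains to show $\sigma$ fixes $\{1,\dots,M\}$, which is the step I expect to carry the weight of the argument; the key extra input is that $\Delta(T)$, being the content of the immaculate tableau $S$, has nonnegative entries. Suppose for contradiction that $k\le M$ is the least index with $\sigma(k)\ne k$. Then $\sigma$ fixes $\{1,\dots,k-1\}$, so by the previous paragraph rows $1,\dots,k-1$ of $S$ are filled respectively with $1,\dots,k-1$; moreover $\sigma$ permutes $\{k,\dots,\ell(\beta)\}$, so $\sigma(k)\ge k+1$ and $\j_k=\sigma^{-1}(k)>k$. As before, every entry of row $k$ lies in $\sigma^{-1}(\{1,\dots,k\})=\{1,\dots,k-1\}\cup\{\j_k\}$ and is $\ge k$, hence equals $\j_k>k$. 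Therefore the value $k$ appears nowhere in $S$: not in rows below $k$ (entries $<k$), not in row $k$ (entries $=\j_k>k$), and not in rows above $k$ (entries exceed their row index $>k$). Thus $\Delta_k(T)=0$, contradicting $\Delta_k(T)=\beta_k+\sigma_k-k\ge\beta_k+1\ge2$ from \eqref{delta-and-permutation}. Hence no such $k$ exists, $\sigma$ fixes $\{1,\dots,M\}$, the conclusion of the previous paragraph applies to every $i\le M$, and $\alpha_i=\beta_i$ for all $i\le M$. The whole difficulty is thereby concentrated in controlling $\sigma$ on $\{1,\dots,M\}$, and it is resolved purely by the nonnegativity of the content of $S$.
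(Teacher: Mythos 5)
Your proof is correct and takes essentially the same route as the paper's: an induction along the initial segment (packaged as a minimal counterexample) showing simultaneously that $\sigma$ fixes $1,\dots,M$ and that row $i$ of $S$ consists only of $i$'s, with everything driven by the relation $\Delta_i(T)=\beta_i+\sigma_i-i$, the positivity $\beta_i\ge 1$, the maximality defining $q_i$, and the strictly increasing first column. The only cosmetic differences are that you reach the contradiction via the value $k$ being absent from $S$ (so $\Delta_k(T)=0$ against $\Delta_k(T)\ge 2$) where the paper instead forces $q_M=M$ from the presence of an $M$ in row $M$, and that your parenthetical ``rows below/above $i$'' reads with the vertical directions swapped relative to English notation, though the facts you actually invoke (rows of smaller index carry smaller entries, rows of larger index carry entries at least their index) are the correct ones.
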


\begin{proof}
Let $\Delta(T)=(\Delta_1,\dots,\Delta_\ell)$ and induct on $M$. If $M=1$, then $q_1\in S^1$ and $\sigma(q_1)=1$. We must have $\sigma(1)\geq 1$, so 
\[\Delta_1=\beta_1+\sigma_1-1\geq 1.\]
Thus, $S$ has at least one 1 (because $S$ is of content $\Delta(T)$) that therefore must be in row 1. Since $1=\sigma(q_1)\geq \sigma(k)$ for all other $k\in S^1$, we can conclude that $q_1=1$. Thus, $S^1$ has only 1's in it and $\sigma(1)=1$. Hence, $\alpha_1=\Delta_1$ and $\Delta_1=\beta_1$, as desired.
    
Now assume that having $\sigma(q_i)=i$ for all $i\leq M-1$ implies that $\alpha_i=\Delta_i=\beta_i$ for $i\leq M-1$ and further assume that $\sigma(q_{i})=i$ for all $i\leq M$. By the inductive hypothesis, $\alpha_i=\Delta_i=\beta_i$ for all $i\leq M-1$. Since $\Delta_i=\alpha_i$ for $i<M$, the entries in $S^{M}$ must be from the set $\{M,\dots,\ell\}$. Since $\Delta_i=\beta_i$ for $i\leq M-1$, we have $\sigma(i)=i$ for $i\leq M-1$.  Therefore $\sigma(M) \ge M$ and hence $$\Delta_M = \beta_M + \sigma_M-M \ge \beta_M.$$  Therefore $\Delta_M >0$ and hence $S^M$ contains at least one $M$.  

Since $\sigma(q_M)=M$, the only number occurring in $S^{M}$ must be $q_M$ and hence $q_M=M$.  This also implies that $\sigma(M)=M$ and so $\alpha_M = \Delta_M = \beta_M$, as desired.
\end{proof}

For a $\SHF$~$T
$ of shape $\beta=(\beta_1,\dots,\beta_\ell)$, recall from \Cref{delta-and-permutation} that $\Delta(T)=(\Delta_1,\dots,\Delta_\ell)$ satisfies
\begin{equation*}
    \Delta_i=\beta_i+\sigma_i-i
\end{equation*}
for each $i\in [\ell]$, where $\sigma=\perm(T)$. \Cref{thm:changehooklengths}
now describes how the tunnel hooks of a THC $V$ compare to $T$ under the function $\phi_{\alpha,\beta}(S,T)=(U,V)$.

\begin{thm}\label{thm:changehooklengths}
Suppose $\alpha$ and $\beta$ are compositions of $n$ and $\phi_{\alpha,\beta}$ is the function defined in \Cref{alg:KKinverse-row-labeling}. If $(S,T) \in A_{\alpha,\alpha}$ then $\phi_{\alpha,\alpha}(S,T)=(S,T)$.  If $\alpha$ and $\beta$ are distinct compositions, then  $\phi_{\alpha,\beta}$ is a sign-reversing involution on $\pairskkinv$.  Moreover, let $\f1_{\alpha,\beta}(S,T) = (U,V)$.  If $\p$ and $\j_m$ are chosen as in the description of \Cref{alg:KKinverse-row-labeling}, we have
    \[\Delta_r(V)=\begin{cases}
    \Delta_{\p}(T)+1&\textrm{if } r=\p,\\
    \Delta_{\j_m}(T)-1&\textrm{if } r=\j_m,\\
    \Delta_r(T)&\textrm{if } r\not\in \{\p,\j_m\},
    \end{cases}\]  for all $1 \le r \le \ell(\beta)$.
\end{thm}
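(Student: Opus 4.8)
The statement bundles three claims: (i) every pair in $\pairskkinvone$ is a fixed point of $\phi_{\alpha,\alpha}$; (ii) the three–case formula relating $\Delta(V)$ to $\Delta(T)$; and (iii) for $\alpha\neq\beta$, that $\phi_{\alpha,\beta}$ is a sign-reversing involution on $\pairskkinv$. I would dispose of (i) and (ii) first. For (i): by \Cref{lem:diag-pairs-k-kinv-1} the unique pair $(S,T)\in\pairskkinvone$ has $\perm(T)=id$, so by \Cref{lem: perm-id-iff-shape-is-content} the content of $T$ equals $\alpha$ and hence $\Delta(T)=\alpha$ and $S\in\IT_{\alpha,\alpha}$; by the second part of \Cref{lem:dom-lem-IT} the set $\IT_{\alpha,\alpha}$ is a single element, necessarily the tableau with $\alpha_i$ copies of $i$ in row $i$. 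Thus $\j_i=i$ and $\perm(T)(\j_i)=i$ for every $i$, so $m=\ell$ in \Cref{alg:KKinverse-row-labeling} and Step~(2) returns $(S,T)$. For (ii): write $\sigma=\perm(T)$, $k=\sigma(\j_m)$ (so $k>m\ge 1$, whence $2\le k\le\ell(\beta)$) and $\p=\sigma^{-1}(k-1)$, so that $\perm(V)=s_{k-1}\sigma$ by construction. Since $T$ and $V$ both have shape $\beta$, \eqref{delta-and-permutation} gives $\Delta_r(T)=\beta_r+\sigma(r)-r$ and $\Delta_r(V)=\beta_r+(s_{k-1}\sigma)(r)-r$; as $s_{k-1}$ only interchanges the values $k-1$ and $k$, the value $(s_{k-1}\sigma)(r)$ differs from $\sigma(r)$ exactly when $\sigma(r)\in\{k-1,k\}$, i.e. when $r\in\{\p,\j_m\}$, and there it changes by $+1$ and $-1$ respectively; substituting yields the claimed formula. (In particular $\j_m\neq\p$.)

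For (iii), the sign assertion is immediate from \Cref{Th:THC_and_Perms}: $\sgn(V)=\sgn(s_{k-1}\sigma)=-\sgn(\sigma)=-\sgn(T)$. The substantive points are that $(U,V)\in\pairskkinv$ and that $\phi_{\alpha,\beta}$ is an involution. For the former, the key input is the forced shape of the top $m$ rows of $S$. Since $\sigma(\j_i)=i$ for $i<m$ by minimality of $m$, the induction in the proof of \Cref{lem: qis-give-alpha-and-beta} shows that for each $i<m$, row $i$ of $S$ consists of $\alpha_i=\Delta_i(T)$ copies of $i$ and $\sigma(i)=i$; hence every entry of $S$ smaller than $m$ lies in rows $1,\dots,m-1$, so all entries of row $m$ are $\ge m$. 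Because $\sigma$ fixes $1,\dots,m-1$ we get $\sigma(m)\ge m$, so $\Delta_m(T)=\beta_m+\sigma(m)-m\ge\beta_m\ge1$; thus the (at least one) copies of the value $m$ in $S$ all sit in row $m$ and $S(m,1)=m$. Now $U$ is obtained by turning one $\j_m$ in row $m$ into $\p$, and $\p\ge m$ (again because $\sigma^{-1}$ fixes $1,\dots,m-1$ and $k-1\ge m$). If $\j_m>m$ the copies of $m$ are untouched; if $\j_m=m$ then $k=\sigma(m)>m$ forces $\Delta_m(T)=\beta_m+k-m\ge2$, so a copy of $m$ survives the deletion. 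Either way $U(m,1)=m$, and combined with rows $1,\dots,m-1$ of $U$ (unchanged, so $U(i,1)=i$) and row $m+1$ when it exists (unchanged, with $S(m+1,1)>S(m,1)=m$) this shows the first column of $U$ is strictly increasing; re-sorting row $m$ makes the rows weakly increasing, so $U$ is an immaculate tableau of shape $\alpha$. By (ii) the content of $U$ — namely that of $S$ with one $\j_m$ replaced by one $\p$ — equals $\Delta(V)$, so $(U,V)\in\pairskkinv$.

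For the involution, I would run $\phi_{\alpha,\beta}$ on $(U,V)$ with $\tau=\perm(V)=s_{k-1}\sigma$. Since $\tau$ fixes $1,\dots,m-1$ (as $k-1\ge m$) and rows $1,\dots,m-1$ of $U$ agree with $S$, the selected entries in those rows again satisfy $\tau(\j_i')=i$. In row $m$, every value $x\neq\p$ occurring in $U^m$ lies in $S^m$, so $\sigma(x)\le k$ with equality only at $x=\j_m$, while $\sigma(x)=k-1$ is impossible (it would give $x=\p$); hence $\tau(x)\le k-1<k=\tau(\p)$, so the maximizer is $\j_m'=\p$, and since $\tau(\p)=k>m$ the first bad row is again $m$. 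Then \Cref{alg:KKinverse-row-labeling} produces the THC with permutation $s_{\tau(\p)-1}\tau=s_{k-1}\tau=\sigma$, namely $T$, and replaces one $\p$ in row $m$ of $U$ by $\tau^{-1}(k-1)=\sigma^{-1}(k)=\j_m$ and re-sorts, recovering $S$ exactly; so $\phi_{\alpha,\beta}(U,V)=(S,T)$. Since for $\alpha\neq\beta$ the algorithm never lands in Step~(2), $\phi_{\alpha,\beta}$ has no fixed points, completing (iii).

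The step I expect to be the main obstacle is the well-definedness of $U$ as an immaculate tableau — concretely, the identity $U(m,1)=m$ together with enough multiplicity of the value $m$ in row $m$ that deleting a copy of $\j_m$ and inserting $\p$ cannot break strict increase down the first column; this is exactly where the forced minimality of the first $m-1$ rows and the bound $\p\ge m$ are used. A subsidiary point to nail down is that the boundary case $m=\ell$ genuinely corresponds to $\alpha=\beta$, so that Step~(2) is the only source of fixed points: this follows from \Cref{lem: qis-give-alpha-and-beta} applied with $M=\ell$ together with $\sum_i\alpha_i=\sum_i\beta_i=n$, which forces $\ell(\beta)=\ell(\alpha)$ and hence $\alpha=\beta$.
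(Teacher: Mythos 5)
Your proof is correct and follows essentially the same route as the paper's: the fixed-point case via \Cref{lem:diag-pairs-k-kinv-1} and \Cref{lem: qis-give-alpha-and-beta}, the $\Delta$-formula from \Cref{delta-and-permutation} applied to $\perm(V)=s_{\sigma(\j_m)-1}\sigma$, well-definedness of $U$ from the forced form of rows $1,\dots,m-1$, and the involution by checking that the new maximizer in row $m$ is $\p$ and that the transposition squares to the identity. The only differences are cosmetic: you verify $\p\ge m$ and the equality of the content of $U$ with $\Delta(V)$ explicitly, where the paper instead argues via a non-leftmost occurrence of $\j_m$ and leaves the content check implicit.
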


\begin{proof}
Let $m$ be as in the statement of \Cref{alg:KKinverse-row-labeling}
and let $\sigma=\perm(T)$. We first claim that $m=\ell$ if and only if $\alpha=\beta$. One direction is given by \Cref{lem: qis-give-alpha-and-beta}. The other direction follows from \Cref{lem:diag-pairs-k-kinv-1}. Specifically, in the proof we conclude that $T\in \thc_{\alpha,\alpha}$ and $S\in \IT_{\alpha,\alpha}$. Hence, for all $i\in [\ell]$, $S$ only has $i$'s in row $i$ so $q_i=i$ and $T$ has the identity as its permutation, so $\sigma(q_i)=i$.


We may now assume that $\alpha\neq \beta$ and $m<\ell$. By \Cref{lem: qis-give-alpha-and-beta}, $S^i$ is comprised of only $i$'s for each $i<m$. In particular, the first column of $S$ has the elements $1,2,\dots,m-1$ in order from top to bottom. Hence, rows $m,\dots,\ell$ of $S$ contain only the numbers $m,\dots,\ell$ because $S$ is immaculate. In particular, $m$ only appears in $S^m$ (but there could be other entries in $S^m$). Also, there must be at least one $m$ in $S^m$.  If not, then  $\Delta_m=0$, so at least one tunnel hook originating in an earlier row of $T$ must include a cell in row $m$.  However, this is not possible since $\Delta_i=\beta_i$ for all $1 \le i < m$.

Since $\sigma(\j_m)\neq m$, we must have $\sigma(\j_m)>m$. Thus, $V$ (the tunnel hook covering of shape $\beta$ and permutation $\perm(V)=s_{\sigma(\j_m)-1}\sigma$) is well-defined with $\sgn(V)=\sgn(s_{\sigma(\j_m)-1}\sigma)=-\sgn(\sigma)=-\sgn(T).$ 
    
We claim that $U$ as constructed in Step (3b) is an immaculate tableau. If $\j_m=m$, then $\sigma(\j_m)>m$ implies that $\Delta_m=\beta_m+\sigma_m-m > \beta_m \ge 1$.  Thus there are at least 2 $m$'s in row $m$ of $S$. If $\j_m>m$, then $\j_m$ appears to the right of column $1$ in $S^m$ since there must be at least one $m$ in $S^m$. Hence in either case, there is an occurrence of $\j_m$ in $S^m$ that is not the leftmost element of $S^m$. Thus, the set of entries in the leftmost column of $U$ is identical to the set of entries in the leftmost column of $S$. Since row $m$ of $U$ is increasing and the rest of the rows are identical to those in $S$, $U$ is an immaculate tableau.

For the statement regarding $\Delta(V)$, recall that $\Delta_{k}(V)=\beta_k-k+\perm(V)_k$ for each $k$.  If $\p=\sigma^{-1}(\sigma(\j_m)-1)$, then
\[\Delta_{\j_m}(V)=\beta_{\j_m}-{\j_m}+\perm(V)_{\j_m}=\beta_{\j_m}-{\j_m}+\perm(T)_{\j_m}-1=\Delta_{\j_m}(T)-1,\]
where the second equality is because $\perm(V)=s_{\sigma(\j_m)-1}\perm(T)$. Further, we have
\[\Delta_{\p}(V)=\beta_{\p}-\p+\perm(V)_{\p}=\beta_{\p}-\p+\perm(T)_{\p}+1= \Delta_{\p}(T)+1.\]

Since no other entries in the permutation are altered by applying the transposition $s_{\sigma(\j_m)-1}$, we have $\Delta_k(V)=\Delta_k(T)$ for all $k \not= \p,\j_m$, as desired.

Finally, we must show that $\phi_{\alpha,\beta}$ is an involution.  We need to prove that if $\f1_{\alpha,\beta}(S,T)=(U,V)$ then $\phi_{\alpha,\beta}(U,V)=(S,T)$. Let $\tau=\perm(V)=s_{\sigma(\j_m)-1}\sigma$ and for each $i$, set $q'_i=q_i$ in Step (1) of the algorithm for $(U,V)$ (to distinguish them from the values chosen through $\f1_{\alpha,\beta}(S,T)$). Since $\sigma(\j_m)\geq \sigma(k)$ for all $k\in S^m$ and $p=\sigma^{-1}(\sigma(\j_m)-1)$ is in row $m$ of $U$ by construction, we have $\tau(p)=\sigma(\j_m)\geq \tau(k)$ for any other $k$ appearing in row $m$ of $U$. Thus, $q_m'=p$. Since the other rows are unchanged, we have $q'_i=q_i$ for $i<m$.  Hence, $m$ is still the smallest index such that $\tau(q'_m)\neq m$.  Since $\tau^{-1}(\tau(q'_m)-1)=\sigma^{-1}s_{\sigma(\j_m)-1}(\sigma(\j_m)-1)=q_m$, Step (3b) of \Cref{alg:KKinverse-row-labeling} converts a $p$ in row $m$ of $U$ to $q_m$ and reorders the entries, thus producing the immaculate tableau $S$ as desired.  
Since $q_m'=p=\sigma^{-1} ( \sigma(q_m)-1)$, applying $s_{\tau(q_m')-1}$ to $\tau$ produces the permutation $s_{\tau(p)-1} s_{\sigma(q_m)-1} \sigma = \sigma$. Hence, the tunnel hook covering produced in Step (3a) has shape $\beta$ and permutation $\sigma$, which must be $T$. Therefore, the result of applying $\phi_{\alpha,\beta}$ to $(U,V)$ is $(S,T)$, as desired.
\end{proof}

We use \Cref{alg:KKinverse-row-labeling} to provide a new combinatorial proof of the classical Kostka matrix identity $KK^{-1}=I$. 
 From \cite{AllenMason25EJC}, for each $\lambda,\mu\vdash n$ we have
\begin{equation}\label{eq:kostka-in-terms-thc}K^{-1}_{\lambda,\mu}=\sum_{\substack{\alpha\vDash n\\\dec(\alpha)=\lambda}}\sum_{T\in \thc_{\alpha,\mu}}\sgn(T),\end{equation}
where $\dec(\alpha)$ is the partition obtained by writing the composition $\alpha$ in weakly decreasing order. 

As in~\cite{EgeRem90}, we use a variation on the pairs to make the proofs more straightforward.  In particular, we make use of the fact that $K_{\lambda,\alpha}=K_{\lambda,\mu}$ for any rearrangement $\alpha$ of $\mu$.

Let $\pairskkinvsymT$ be the set of pairs $(S,T)$ where
\begin{itemize}
\item $S$ is a semistandard Young tableau of shape $\lambda$,  
    \item 
    $T$ is a \THC{} of shape $\mu$, and
    \item the content of $S$ is the weak composition  \begin{equation}\label{Eq:contentS}\Delta_{\sigma^{-1}}(T):=(\Delta_{\sigma^{-1}(1)}(T),\Delta_{\sigma^{-1}(2)}(T),\dots,\Delta_{\sigma^{-1}(\ell(\mu))}(T)),\end{equation} where $\sigma=\perm(T)$.
\end{itemize} 
Note that if $(S,T)\in B_{\lambda,\mu}$, then in particular $\Delta(T)$ has no negative entries.
\begin{lem} For partitions $\lambda,\mu\vdash n,$
\[(KK^{-1})_{\lambda,\mu}=\sum_{(S,T)\in \pairskkinvsymT}\sgn(T).\]
\end{lem}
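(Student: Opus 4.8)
The plan is to unwind both sides of the identity into weighted sums over tunnel hook coverings of shape $\mu$ and then observe that the weights match. First I would expand the left side using the definition of the matrix product and then formula \eqref{eq:kostka-in-terms-thc}:
\[
(KK^{-1})_{\lambda,\mu}=\sum_{\nu\vdash n}K_{\lambda,\nu}\,K^{-1}_{\nu,\mu}=\sum_{\nu\vdash n}K_{\lambda,\nu}\sum_{\substack{\alpha\vDash n\\\dec(\alpha)=\nu}}\ \sum_{T\in\thc_{\alpha,\mu}}\sgn(T).
\]
For any composition $\alpha$ with $\dec(\alpha)=\nu$ we have $K_{\lambda,\nu}=K_{\lambda,\alpha}$, since Kostka numbers depend only on the multiset of parts of the content (the rearrangement fact recalled just above the definition of $\pairskkinvsymT$). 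Because every composition $\alpha\vDash n$ has a unique decreasing rearrangement, the iterated sum over $\nu$ and then over $\alpha$ with $\dec(\alpha)=\nu$ is a single sum over all $\alpha\vDash n$, so
\[
(KK^{-1})_{\lambda,\mu}=\sum_{\alpha\vDash n}K_{\lambda,\alpha}\sum_{T\in\thc_{\alpha,\mu}}\sgn(T)=\sum_{\alpha\vDash n}\ \sum_{S\in\SSYT_{\lambda,\alpha}}\ \sum_{T\in\thc_{\alpha,\mu}}\sgn(T),
\]
using $K_{\lambda,\alpha}=|\SSYT_{\lambda,\alpha}|$. In each surviving term $\alpha$ is forced to be simultaneously the content of $S$ and the content $\fl(\Delta(T))$ of $T$, so this equals the sum of $\sgn(T)$ over all pairs $(S,T)$ with $S$ a semistandard Young tableau of shape $\lambda$, $T$ a tunnel hook covering of shape $\mu$, and the content of $S$ equal to $\fl(\Delta(T))$; in particular only $T$ with $\Delta(T)$ having no negative entries contribute.

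Next I would group both this sum and the right side by the tunnel hook covering $T$. Fix a THC $T$ of shape $\mu$ with $\Delta(T)$ nonnegative and let $\sigma=\perm(T)$. The weak composition $\Delta_{\sigma^{-1}}(T)$ from \eqref{Eq:contentS} is a rearrangement of $\Delta(T)$, so its nonzero parts agree, up to order, with those of $\fl(\Delta(T))$; hence by the same rearrangement-invariance of Kostka numbers (which also allows inserting or deleting zero parts in the content),
\[
\bigl|\{\,S:(S,T)\in\pairskkinvsymT\,\}\bigr|=K_{\lambda,\Delta_{\sigma^{-1}}(T)}=K_{\lambda,\fl(\Delta(T))}=\bigl|\SSYT_{\lambda,\fl(\Delta(T))}\bigr|,
\]
which is exactly the number of tableaux $S$ paired with $T$ in the displayed expression for $(KK^{-1})_{\lambda,\mu}$. (When $\Delta(T)$ has a negative entry both counts vanish and $T$ appears in neither sum.) Summing $\sgn(T)$ over all tunnel hook coverings of shape $\mu$, weighted by this common count, yields $(KK^{-1})_{\lambda,\mu}=\sum_{(S,T)\in\pairskkinvsymT}\sgn(T)$.

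This is essentially a bookkeeping argument and I do not expect a serious obstacle; the two points that require care are that the iterated sum over $\nu\vdash n$ and over $\alpha\vDash n$ with $\dec(\alpha)=\nu$ collapses to a plain sum over all compositions of $n$, and that the content constraint defining $\pairskkinvsymT$ involves the $\sigma^{-1}$-permuted sequence $\Delta_{\sigma^{-1}}(T)$ rather than $\Delta(T)$ or $\fl(\Delta(T))$ — a distinction that changes the weak composition but not the Kostka number, and which is made only because the permuted form is the one needed by the involution constructed later.
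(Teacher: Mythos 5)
Your proposal is correct and follows essentially the same route as the paper: expand the matrix product, use \Cref{eq:kostka-in-terms-thc} and the rearrangement-invariance of Kostka numbers to collapse the sum over $\nu\vdash n$ and $\alpha$ with $\dec(\alpha)=\nu$ into a single sum over compositions, and then invoke that same invariance (including insertion of zero parts) to replace $\SSYT_{\lambda,\fl(\Delta(T))}$ by $\SSYT_{\lambda,\Delta_{\sigma^{-1}}(T)}$, which is exactly the paper's chain of equalities. Your explicit remark that tunnel hook coverings with a negative entry in $\Delta(T)$ contribute to neither side is a nice clarification but not a departure from the paper's argument.
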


\begin{proof}
Suppose $\lambda,\mu\vdash n$. Then,
\begin{align*}
(KK^{-1})_{\lambda,\mu}&=\sum_{\gamma\vdash n}K_{\lambda,\gamma}K_{\gamma,\mu}^{-1}\\
    &=\sum_{\gamma\vdash n}\lp \sum_{S\in SSYT_{\lambda,\gamma}}1 \rp \lp \sum_{\substack{\alpha\vDash n\\dec(\alpha)=\gamma}}\sum_{T\in THC_{\alpha,\mu}}\sgn(T)\rp \\
    &=
    \sum_{\alpha\vDash n}\lp \sum_{T\in THC_{\alpha,\mu}} \sgn(T) \rp
    \lp \sum_{S\in SSYT_{\lambda,\alpha}}1 \rp\\
    &=
    \sum_{\alpha\vDash n}\lp \sum_{T\in THC_{\alpha,\mu}} \sgn(T)  \sum_{S\in SSYT_{\lambda,\Delta_{\sigma^{-1}}(T)}}1 \rp\\
    &=
    \sum_{(S,T)\in \pairskkinvsymT}\sgn(T)
\end{align*}
 as desired.
\end{proof}

We define a map $\chi_{\lambda,\mu}:\pairskkinvsymT\to \pairskkinvsymT$ which we will prove is a sign-reversing involution whenever $\lambda \not= \mu$ and the identity map otherwise.  Recall that $S^i$ is the multiset of all entries in row $i$ of $S$.

\begin{alg}\label{alg:KKinverseSym-row-labeling}
        Suppose $\lambda$ and $\mu$ are 
    partitions of $n$ and let $(S,T)\in \pairskkinvsymT$. Let $\sigma=\perm(T)$.  We construct another pair $\chi_{\lambda,\mu}(S,T)=(U,V)\in \pairskkinvsymT$ as follows.

\begin{enumerate}
\item For each $i$, let $\j_i$ be the largest element of $S^i$. Let $m$ be the smallest value in such that $\j_m\neq m$ or set $m=\ell(\lambda)$ if there is none.
\item If $m=\ell(\lambda)$, then $\lambda=\mu$ (see  \Cref{lem: qis-give-alpha-and-beta-Sym}), so we set $\chi_{\lambda,\lambda}(S,T)=(S,T)$.
\item Otherwise, $\j_m>m$. 
\begin{enumerate}
 \item Let $V$ be the unique \THC{} of shape $\mu$ and permutation $\perm(V)=s_{\j_m-1}\sigma$.
    \item Replace the leftmost occurrence of $q_m$ in row $m$ of $S$ with $q_m-1$ to create $S'$. Let $U$ to be the semistandard Young tableau obtained by applying the standard Bender--Knuth involution swapping the number of occurrences of $q_m-1$ and $q_m$ in $S'$  (e.g. \cite[Theorem 7.10.2]{Sta99}).
        \end{enumerate}
    \end{enumerate}
    Now set $\chi(S,T)=\chi_{\lambda,\mu}(S,T)$ if $(S,T)\in B_{\lambda,\mu}.$
\end{alg}

See \Cref{fig:KKinverseSYM-example} for an example.

\begin{figure}
    \centering
    $\begin{minipage}{.45\textwidth}\begin{tikzpicture}
            
        \node at (0,.5) {\ytableaushort{111111,22222,4444{\tcred{5}},5566,\none\none\none{\none[S]}}};
        \node at (0,-3) {\begin{tikzpicture}[yscale=-1,scale=.5]
\def\L{{6,5,3,2,2,2}}
\pgfmathsetmacro{\len}{dim(\L)}

\foreach \y in {1,...,\len}{
    \pgfmathsetmacro{\j}{\L[\y-1]}
    \foreach \i in {1,...,\j}{
        \draw (\i-.5,\y-.5) rectangle (\i+.5,\y+.5);
        }
}
\node at (1,5) {\tcred{5}};
\node at (3,8) {$\perm(T)=[1,2,4,\tcred{5},3,6]$};
\node at (3,7) {$T$};
\begin{scope}[on background layer]
\tikzset{every path/.style={line width = 7pt,color=black,line cap=round,opacity=.25,rounded corners}}
\draw (6,1)--(1,1);
\draw (5,2)--(1,2);
\draw (3,3)--(1,3)--(1,4);
\draw (2,4)--(2,5)--(1,5);
\draw (2,6)--(1,6);
\end{scope}

\end{tikzpicture}};\end{tikzpicture} \end{minipage}\xrightarrow[]{\quad} \begin{minipage}{.45\textwidth}\begin{tikzpicture}
            
        \node at (0,.5) {\ytableaushort{111111,22222,4455{\tcred{5}},5566,\none\none{\none[U]}}};
        \node at (0,-3) {\begin{tikzpicture}[yscale=-1,scale=.5]
\def\L{{6,5,3,2,2,2}}
\pgfmathsetmacro{\len}{dim(\L)}

\foreach \y in {1,...,\len}{
    \pgfmathsetmacro{\j}{\L[\y-1]}
    \foreach \i in {1,...,\j}{
        \draw (\i-.5,\y-.5) rectangle (\i+.5,\y+.5);
        }
}
\node at (1,5) {\tcred{5}};
\node at (3,8) {$\perm(T)=[1,2,\tcred{5},4,3,6]$};
\node at (3,7) {$V$};
\begin{scope}[on background layer]
\tikzset{every path/.style={line width = 7pt,color=black,line cap=round,opacity=.25,rounded corners}}
\draw (6,1)--(1,1);
\draw (5,2)--(1,2);
\draw (3,3)--(1,3)--(1,5);
\draw (2,4)--(2,5);
\draw (2,6)--(1,6);
\end{scope}

\end{tikzpicture}};\end{tikzpicture}
\end{minipage}
$
    \caption{Using \Cref{alg:KKinverseSym-row-labeling}, $q_1=1,q_2=2,$ and $q_3=5$.  Thus $m=3$ since $q_3 \not= 3$.  Note we first change the $5$ in row 3 to a 4 and then apply Bender--Knuth, resulting in each 4 changing to 5, except the first 2 which are paired. }
    \label{fig:KKinverseSYM-example}
\end{figure}
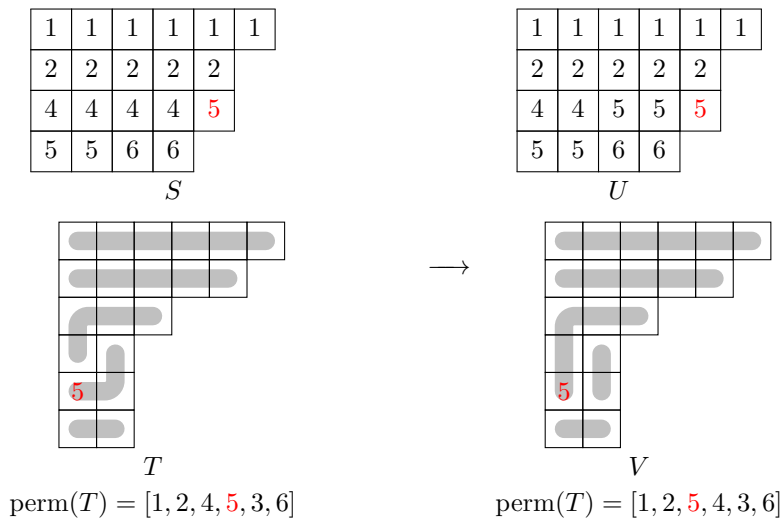

\begin{lem} \label{lem: qis-give-alpha-and-beta-Sym} Using the notation in \Cref{alg:KKinverseSym-row-labeling}, if $\j_i=i$ for all $i\leq \ell(\lambda)$, then $\lambda=\mu$. \end{lem}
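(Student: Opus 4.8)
The plan is to give a direct argument, simpler than the inductive proof of the $\NSym$ analogue (\Cref{lem: qis-give-alpha-and-beta}): I will pin down the content of $S$ exactly and then read off $\lambda=\mu$ from the relation \eqref{delta-and-permutation} together with a single monotonicity observation, without appealing to either dominance lemma.

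First I would show that the hypothesis forces $S$ to be the semistandard Young tableau of shape $\lambda$ whose $i$-th row consists entirely of the value $i$. Indeed, for a partition shape the cells $(1,j),(2,j),\dots,(i,j)$ all lie in $C_\lambda$ whenever $(i,j)\in C_\lambda$, and column-strictness of $S$ makes $S(1,j)<S(2,j)<\dots<S(i,j)$, so $S(i,j)\ge S(1,j)+(i-1)\ge i$; on the other hand $\j_i=i$ gives $S(i,j)\le i$. Hence the content of $S$, viewed as a weak composition, is $\lambda$ padded with trailing zeros. In particular every entry of $S$ lies in $[\ell(\lambda)]$, and since the content vector $\Delta_{\sigma^{-1}}(T)$ has exactly $\ell(\mu)$ coordinates by \eqref{Eq:contentS}, this already gives $\ell(\lambda)\le\ell(\mu)$.

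Next I would transfer this information to the tunnel hook covering $T$. Writing $\sigma=\perm(T)$ and setting $\lambda_i:=0$ for $i>\ell(\lambda)$, the condition $(S,T)\in\pairskkinvsymT$ says that $\lambda_i=\Delta_{\sigma^{-1}(i)}(T)$ for all $i\in[\ell(\mu)]$. Substituting $i=\sigma(r)$ and applying \eqref{delta-and-permutation} to the shape $\mu$ of $T$ turns this into $\mu_r+\sigma(r)-r=\lambda_{\sigma(r)}$, that is,
\[
\mu_r-r \;=\; \lambda_{\sigma(r)}-\sigma(r)\qquad\text{for all }r\in[\ell(\mu)].
\]
Thus $(\mu_r-r)_{r=1}^{\ell(\mu)}$ is the $\sigma$-rearrangement of $(\lambda_j-j)_{j=1}^{\ell(\mu)}$, so these two finite sequences have the same underlying set of values. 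Both are \emph{strictly} decreasing, because $\mu$ and the padded $\lambda$ are weakly decreasing while $r\mapsto -r$ strictly decreases, and two strictly decreasing sequences with the same value set agree term by term. Hence $\mu_r-r=\lambda_r-r$, i.e.\ $\mu_r=\lambda_r$, for every $r\le\ell(\mu)$. Since all parts of a partition are positive, this forces $\ell(\lambda)=\ell(\mu)$ as well, so $\lambda=\mu$ (and, as a byproduct, $\sigma=id$).

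The only thing that needs genuine care is the bookkeeping with lengths: that $\j_i$ is defined precisely for $i\in[\ell(\lambda)]$ (clear, as $\lambda$ has $\ell(\lambda)$ rows) and that padding $\lambda$ out to length $\ell(\mu)$ before comparing with $\Delta_{\sigma^{-1}}(T)$ is legitimate (clear from \eqref{Eq:contentS}). I do not expect the monotonicity step to cause any difficulty; in contrast to the $\NSym$ situation, the full conclusion ``$\lambda=\mu$'' drops out in one shot from \eqref{delta-and-permutation} and the strict monotonicity of $r\mapsto \mu_r-r$, with no induction required.
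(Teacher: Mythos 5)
Your argument is correct, and it reaches the conclusion by a genuinely different (and arguably cleaner) route than the paper. Both proofs start identically: $\j_i=i$ together with column-strictness forces row $i$ of $S$ to consist solely of $i$'s, so the content of $S$ is the padded shape $\lambda$, and via \eqref{Eq:contentS} and \eqref{delta-and-permutation} one gets $\mu_r+\sigma_r-r=\lambda_{\sigma(r)}$ for all $r$. From there the paper proceeds iteratively: it records the chain of inequalities \eqref{eq:Deltas-decreasing}, rules out $\sigma_1>1$ by a comparison of $\Delta_1(T)$ with $\Delta_{\sigma^{-1}(1)}(T)$, and then repeats ("a similar calculation") to force $\sigma^{-1}(i)=i$ for every $i$, concluding $\sigma=id$ and hence shape equals content. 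You instead observe that the identity $\mu_r-r=\lambda_{\sigma(r)}-\sigma(r)$ says the two staircase sequences $(\mu_r-r)$ and $(\lambda_j-j)$ (with $\lambda$ padded by zeros) have the same value set, and since both are strictly decreasing they must agree term by term, which yields $\lambda_r=\mu_r$ and $\sigma=id$ in one shot, with no induction and no inequality chain. The length bookkeeping you flag is handled correctly: the content of $S$ having $\ell(\mu)$ coordinates gives $\ell(\lambda)\le\ell(\mu)$, and positivity of the parts then gives $\ell(\lambda)=\ell(\mu)$. What your approach buys is brevity and the elimination of the implicit induction; what the paper's approach buys is that it stays entirely inside the $\Delta$-inequality framework it has already set up for \Cref{lem: qis-give-alpha-and-beta}, so the two lemmas read in parallel. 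Either proof is acceptable.
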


\begin{proof}
Recall that both $\lambda$ and $\mu$ are partitions and we are considering $\pairskkinvsymT$
where the content of $S$ is $\Delta_{\sigma^{-1}}(T)$ as in \Cref{Eq:contentS}. Let $\ell=\ell(\lambda)$ and assume $\j_i=i$ for all $i \leq \ell$. Then no entries other than $i$ appear in row $i$ of $S$ for all $i \le \ell$ and the content of $S$ equals the shape of $S$. So $\Delta_{\sigma^{-1}(i)}(T) = \lambda_i$ for all $i \in [\ell]$.  

Since $\lambda$ is a partition, $\lambda_1 \ge \lambda_2 \ge \cdots \ge \lambda_{\ell}$.  This implies \begin{equation}\label{eq:Deltas-decreasing}\Delta_{\sigma^{-1}(1)}(T) \ge \Delta_{\sigma^{-1}(2)}(T) \ge \cdots \ge \Delta_{\sigma^{-1}(\ell)}(T).\end{equation}  
By \Cref{delta-and-permutation}, we have 
\[\Delta_{1}(T)=\mu_1+\sigma_1-1\quad \text{and}\quad \Delta_{\sigma^{-1}(1)}(T)=\mu_{\sigma^{-1}(1)}+\sigma(\sigma^{-1}(1))-1=\mu_{\sigma^{-1}(1)}. \]
If $\sigma_1>1$, then $\Delta_1(T)>\mu_1\geq \mu_{\sigma^{-1}(1)}= \Delta_{\sigma^{-1}(1)}(T)$, which contradicts \Cref{eq:Deltas-decreasing}. Hence, $\sigma_1=1$, and so $\mu_1=\Delta_1(T)=\lambda_1$.

A similar calculation shows that in fact $\sigma^{-1}(i)=i$ for all $1 \le i \le \ell$.  Therefore $\sigma$ is the identity permutation and thus the shape of $T$ is equal to its content.  Therefore $\lambda = \mu$.
\end{proof}


Before we show that $\chi_{\lambda,\mu}$ is a sign-reversing involution for $\lambda \not= \mu$, we must prove that this map is well-defined.  

\begin{lem}
For all partitions $\lambda \not= \mu$, the map $\chi_{\lambda,\mu}$ is a well-defined map from $\pairskkinvsymT$ to $\pairskkinvsymT$.
\end{lem}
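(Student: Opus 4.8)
The plan is to verify that the two constructions in Step~(3) of \Cref{alg:KKinverseSym-row-labeling} actually land back inside $\pairskkinvsymT$, i.e. that $V$ is a genuine \THC{} of shape $\mu$, that $U$ is a genuine semistandard Young tableau of shape $\lambda$, and that the content of $U$ matches $\Delta_{\tau^{-1}}(V)$ where $\tau=\perm(V)$. Throughout I will set $\sigma=\perm(T)$ and assume $\lambda\neq\mu$, so that by \Cref{lem: qis-give-alpha-and-beta-Sym} we are in the genuine case $m<\ell(\lambda)$ with $\j_m>m$.

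First I would handle $V$. We need $s_{\j_m-1}$ to be a legitimate simple transposition, which requires $\j_m-1\in[\ell(\mu)-1]$; since $\j_m$ is an entry of the semistandard tableau $S$ of content $\Delta_{\sigma^{-1}}(T)$ and this content is supported on $[\ell(\mu)]$, and since $\j_m>m\geq 1$, we get $2\leq\j_m\leq\ell(\mu)$, so $s_{\j_m-1}$ makes sense. Then $s_{\j_m-1}\sigma$ is a well-defined permutation of $[\ell(\mu)]$, and by \Cref{Th:THC_and_Perms} there is a unique \THC{} $V$ of shape $\mu$ with this permutation, so $V$ is well-defined. (I should note in passing that $\Delta(V)$ still has no negative entries once I have checked the content condition below, since that is exactly the content of $U$, which is a nonnegative sequence; alternatively one can use the explicit formula $\Delta_i(V)=\mu_i+\perm(V)_i-i$ together with the content computation.)

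Next I would handle $U$. The passage from $S$ to $S'$ replaces the leftmost $q_m$ in row $m$ with $q_m-1$; I must check this is still semistandard, and then that the Bender--Knuth involution swapping the multiplicities of $q_m-1$ and $q_m$ applies to $S'$. The key point, parallel to the argument in the proof of \Cref{thm:changehooklengths}, is that $S$ has only $i$'s in row $i$ for $i<m$ (this is where $\j_i=i$ for $i<m$ is used, via the definition of $\j_i$ as the largest entry of $S^i$ combined with column-strictness), so the value $q_m-1$ does not occur in rows $1,\dots,m-1$ of $S$, and the leftmost $q_m$ in row $m$ is not in column $1$ unless $q_m=m+1$ with a forced extra $m+1$; in all cases decrementing it keeps rows weakly increasing and columns strictly increasing. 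Then $S'$ is semistandard, the classical Bender--Knuth involution (see \cite[Theorem 7.10.2]{Sta99}) produces a semistandard tableau $U$ of the same shape $\lambda$, and it changes the content only in coordinates $q_m-1$ and $q_m$, incrementing one and decrementing the other by the net amount by which $q_m-1$ was over- or under-represented.

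Finally I would check the content condition: $\operatorname{content}(U)=\Delta_{\tau^{-1}}(V)$ for $\tau=\perm(V)=s_{\j_m-1}\sigma$. Here I would compute directly. Going from $S$ to $U$ the content changes by decreasing coordinate $q_m$ by one and increasing coordinate $q_m-1$ by one (the Bender--Knuth step plus the initial decrement together have exactly this net effect, since Bender--Knuth preserves total size and only redistributes between those two values). On the \THC{} side, $\tau=s_{\j_m-1}\sigma$ differs from $\sigma$ by swapping the preimages of $\j_m-1$ and $\j_m$: if $\sigma^{-1}(\j_m)=a$ and $\sigma^{-1}(\j_m-1)=b$ then $\tau^{-1}(\j_m)=b$, $\tau^{-1}(\j_m-1)=a$, and $\tau^{-1}(k)=\sigma^{-1}(k)$ otherwise. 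Using $\Delta_i(V)=\mu_i+\tau_i-i$ one finds $\Delta_a(V)=\Delta_a(T)-1$, $\Delta_b(V)=\Delta_b(T)+1$, and $\Delta_i(V)=\Delta_i(T)$ for $i\neq a,b$; hence in the sequence $\Delta_{\tau^{-1}}(V)$, coordinate $\j_m$ (which reads off row $b$) goes up by one and coordinate $\j_m-1$ (which reads off row $a$) goes down by one, relative to $\Delta_{\sigma^{-1}}(T)$. This is the mirror image of the change in $\operatorname{content}(U)$ at coordinates $q_m$ and $q_m-1$ — and here I use that $q_m=\j_m$ by construction in Step~(1) — so $\operatorname{content}(U)=\Delta_{\tau^{-1}}(V)$, giving $(U,V)\in\pairskkinvsymT$.

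The step I expect to be the main obstacle is showing $S'$ (and hence $U$) is semistandard, specifically that decrementing the leftmost $q_m$ in row $m$ does not violate the strict-column condition with row $m-1$ or the weak-row condition within row $m$. This is where I need the structural fact that rows $1,\dots,m-1$ of $S$ are constant (row $i$ is all $i$'s), forcing $q_m\geq m+1$ and arranging that the value $q_m-1$ is available to drop into; I would isolate this as the crux of the argument and treat the Bender--Knuth application and the content bookkeeping as routine once it is in place.
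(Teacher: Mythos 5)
Your overall route is the same as the paper's (well-definedness of $V$ via \Cref{Th:THC_and_Perms}, semistandardness of $S'$ from the fact that rows $1,\dots,m-1$ of $S$ are constant so the entry above the decremented cell is $m-1<q_m-1$, then Bender--Knuth, then content bookkeeping), but the content verification as you state it is wrong at a concrete point. The net effect of Step (3b) is \emph{not} ``$+1$ at coordinate $q_m-1$ and $-1$ at coordinate $q_m$'': the initial decrement produces $S'$ with multiplicities $(\alpha_{q_m-1}+1,\alpha_{q_m}-1)$ of $(q_m-1,q_m)$, and the Bender--Knuth involution then \emph{swaps} these multiplicities, so $U$ contains $\alpha_{q_m}-1$ copies of $q_m-1$ and $\alpha_{q_m-1}+1$ copies of $q_m$. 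Your parenthetical claim that the two steps together only shift each of the two coordinates by one is the content of $S'$, not of $U$. Symmetrically, on the tunnel hook side the coordinate $q_m$ of $\Delta_{\tau^{-1}}(V)$ equals $\Delta_{\sigma^{-1}(q_m-1)}(T)+1$, i.e.\ the \emph{old coordinate-$(q_m-1)$} value plus one, not the old coordinate-$q_m$ value plus one. Taken literally, your two descriptions produce sequences that differ by exchanging the entries in positions $q_m-1$ and $q_m$, so the step ``this is the mirror image \dots\ so $\operatorname{content}(U)=\Delta_{\tau^{-1}}(V)$'' does not follow; equality is exactly what has to be checked, and a mirror image is not equality.

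The statement you want is true, and the repair is the explicit computation the paper does: with $x=\sigma^{-1}(q_m-1)$ and $y=\sigma^{-1}(q_m)$, one has $\alpha_{q_m-1}=\mu_x+q_m-1-x$ and $\alpha_{q_m}=\mu_y+q_m-y$, so the content of $U$ at $q_m-1$ is $\alpha_{q_m}-1=\mu_y+q_m-y-1=\Delta_y(V)=\Delta_{\tau^{-1}(q_m-1)}(V)$ and at $q_m$ is $\alpha_{q_m-1}+1=\mu_x+q_m-x=\Delta_x(V)=\Delta_{\tau^{-1}(q_m)}(V)$, with all other coordinates unchanged; this is where the swap of multiplicities on the tableau side matches the swap of preimages under $s_{q_m-1}$ on the permutation side. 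Two smaller remarks: your aside that the leftmost $q_m$ in row $m$ ``is not in column 1 unless $q_m=m+1$'' is neither needed nor correct (row $m$ may well begin with $q_m$); the justification that matters, and which you also give, is that the entry directly above (if any) is $m-1<q_m-1$, and within row $m$ the neighbors bound $q_m-1$ correctly. Your argument that $s_{q_m-1}$ is a legitimate transposition (because the content sequence is supported on $[\ell(\mu)]$, forcing $2\le q_m\le \ell(\mu)$) agrees with the paper's.
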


\begin{proof}
Consider a pair $(S,T) \in \pairskkinvsymT$ and let $(U,V)=\chi_{\lambda,\mu}(S,T)$. Let  $\alpha$ be the content of $S$, $\perm(T)=\sigma$, and $\ell=\ell(\mu)$.  Since $\sigma$ is a permutation of $[\ell]$, we must have $q_m \le \ell$.  Therefore $q_m-1 < \ell(\sigma)$.  So $s_{q_m-1} \sigma$ is of the same length as $\sigma$, meaning $V$ is a well-defined tunnel hook covering by \Cref{Th:THC_and_Perms}.

By construction, $U$ has the same shape as $S$, so $U$ is of partition shape.  We must prove that converting the leftmost occurrence of $q_m$ in row $m$ of $S$ into a $q_m-1$ produces a semistandard Young tableau $S'$; if so, then $U$ is a semistandard Young tableau since applying the Bender--Knuth involution to a semistandard Young tableau produces a semistandard Young tableau.  The rows of $S'$ remain weakly increasing (converting the leftmost occurrence of $q_m$ to $q_m-1$ produces a weakly increasing sequence), so we must show that the columns of $S'$ are strictly increasing.   By construction of $m$, we know the entry immediately above $q_m$ must be $m-1$, and since $q_m>m$ we must have $q_m-1 > m-1$.  Therefore, the columns of $S'$ are also strictly increasing and so $S'$ is a semistandard Young tableau.

Next we must show that the content of $U$ is equal to \[(\Delta_{\sigma^{-1}(1)}(V), \Delta_{\sigma^{-1}(2)}(V),, \hdots , \Delta_{\sigma^{-1}(\ell)}(V)).\]
To see this, first note that $\perm(V) = s_{q_m-1} \sigma$.  Recall (see \Cref{delta-and-permutation}) that $\Delta_i=\mu_i+\sigma_i-i$.  Let $\sigma^{-1}(q_m-1)=x$ and $\sigma^{-1}(q_m)=y$.  Then $\Delta_{\sigma^{-1}(q_m)}(T) = \mu_y + q_m-y$ and $\Delta_{\sigma^{-1}(q_m-1)}(T) = \mu_x + q_m-1-x$.  If $\alpha=(\alpha_1, \hdots , \alpha_{\ell})$ is the content of $S$, we have \[\alpha_{q_m-1}=\mu_x+q_m-1-x \; \textrm{ and } \; \alpha_{q_m}=\mu_y+q_m-y.\]  Replacing one $q_m$ with $q_m-1$ and applying the Bender--Knuth involution to $S'$ results in a semistandard Young tableau with $q_m-1$ appearing $\alpha_{q_m}-1$ times, $q_m$ appearing $\alpha_{q_m-1}+1$ times, and $i$ appearing $\alpha_i$ times for all other $i$.  Thus, the content of $U$ is $\alpha'=(\alpha_1',\dots,\alpha_\ell')$, where 
\[\alpha_i'=\begin{cases}
\mu_y+q_m-y-1& \textrm{if } i=q_m-1,\\
 \mu_x + q_m-x& \textrm{if } i=q_m,\\
 \alpha_i&\textrm{otherwise.}
\end{cases}\]

On the other hand, when we apply $s_{q_m-1}$ to $\sigma$ (to obtain $\perm(V)$), we have $(s_{q_m-1}\sigma)^{-1}(q_m)=x$ and $(s_{q_m-1}\sigma)^{-1}(q_m-1)=y.$  Then, we have
\[\Delta_{(s_{q_m-1}\sigma)^{-1}(i)}(V)=\begin{cases}
\mu_y+q_m-y-1&\textrm{if } i=q_m-1,\\
 \mu_x + q_m-x&\textrm{if } i=q_m,\\
 \alpha_i&\textrm{otherwise.}
\end{cases}\]

Since the content is unchanged for values other than $q_m$ and $q_{m}-1$ and the permutation is only modified by the transposition $s_{q_m-1}$, this implies that the content of $U$ is indeed equal to $(\Delta_{(s_{q_m-1}\sigma)^{-1}(1)}(V), \hdots , \Delta_{(s_{q_m-1}\sigma)^{-1}(\ell)}(V))$ and the pair $(U,V)$ is an element of $\pairskkinvsymT$.
\end{proof}

We are now ready to prove that $\chi_{\lambda,\mu}$ is a sign-reversing involution.

\begin{thm}
For all $\lambda,\mu\vdash n$ such that $\lambda\neq \mu$, $\chi_{\lambda,\mu}$ is a sign-reversing involution.
\end{thm}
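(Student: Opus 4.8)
The plan is to adapt the argument used in the proof of \Cref{thm:changehooklengths} to the semistandard/Bender--Knuth setting. Fix partitions $\lambda\neq\mu$ of $n$ and a pair $(S,T)\in\pairskkinvsymT$, and write $\sigma=\perm(T)$ and $q_m=\j_m$ as in \Cref{alg:KKinverseSym-row-labeling}.

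First I would check that Step (3) of \Cref{alg:KKinverseSym-row-labeling} is always executed. By \Cref{lem: qis-give-alpha-and-beta-Sym}, if $\j_i=i$ for every $i\le\ell(\lambda)$ then $\lambda=\mu$, which is excluded; hence some row index has $\j_i\neq i$, so Step (3) is reached and $\j_m\neq m$. Since $\j_i=i$ for $i<m$, rows $1,\dots,m-1$ of $S$ contain only the values $1,\dots,m-1$ respectively, so column-strictness forces every entry of row $m$ of $S$ to be at least $m$; together with $\j_m\neq m$ this gives $q_m=\j_m>m$, hence $q_m-1\ge m\ge 1$. As the content of $S$ is $\Delta_{\sigma^{-1}}(T)$, a weak composition with $\ell(\mu)$ parts, all entries of $S$ lie in $[\ell(\mu)]$, so $q_m\le\ell(\mu)$ and $s_{q_m-1}$ is a genuine adjacent transposition in $S_{\ell(\mu)}$. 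The sign-reversing property is then immediate from \Cref{Th:THC_and_Perms}: $\sgn(V)=\sgn(\perm(V))=\sgn(s_{q_m-1}\sigma)=-\sgn(\sigma)=-\sgn(T)$, and the weight attached to a pair in $\pairskkinvsymT$ is the sign of its THC component; in particular $\chi_{\lambda,\mu}$ has no fixed points.

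It remains to prove $\chi_{\lambda,\mu}^2=\mathrm{id}$. Write $(U,V)=\chi_{\lambda,\mu}(S,T)$ and rerun the algorithm on $(U,V)$ (which again lies in $\pairskkinvsymT$ for the same $\lambda\neq\mu$ by the preceding well-definedness lemma), decorating recomputed quantities with primes; recall $\perm(V)=s_{q_m-1}\sigma$. Neither the Bender--Knuth involution $\mathrm{BK}_{q_m-1,q_m}$ nor the step ``decrease the leftmost $q_m$ in row $m$'' alters rows $1,\dots,m-1$ (whose only entries are $1,\dots,m-1<q_m-1$), so $\j'_i=i$ for $i<m$, and the key point is that $\j'_m$, the largest entry of row $m$ of $U$, is again $q_m$. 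Granting this, the recomputed index is $m$ and the recomputed value is $q_m$, so the THC component of $\chi_{\lambda,\mu}(U,V)$ has permutation $s_{q_m-1}\,\perm(V)=s_{q_m-1}s_{q_m-1}\sigma=\sigma$ and shape $\mu$, hence equals $T$; and its tableau component is obtained from $U$ by decreasing the leftmost $q_m$ in row $m$ and applying $\mathrm{BK}_{q_m-1,q_m}$. Using that $\mathrm{BK}_{q_m-1,q_m}$ is an involution, and that decreasing the leftmost $q_m$ in row $m$ of a tableau (which moves a cell from the $q_m$-block to the right end of the $(q_m-1)$-block of that row) is inverted by increasing the rightmost $q_m-1$ in that row, a short computation then shows this tableau component is $S$, so $\chi_{\lambda,\mu}(U,V)=(S,T)$.

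I expect the main obstacle to be precisely the claim that reapplying the algorithm to $(U,V)$ recovers the same row index $m$ and the same value $q_m$ --- equivalently, that after decreasing one $q_m$ to $q_m-1$ in row $m$ of $S$ and applying $\mathrm{BK}_{q_m-1,q_m}$, the largest entry of row $m$ is still $q_m$ (note that if row $m$ of $S$ had a unique $q_m$, this relies on Bender--Knuth reintroducing a $q_m$ into row $m$), and that the composite ``decrease leftmost, $\mathrm{BK}$, decrease leftmost, $\mathrm{BK}$'' is the identity. This requires a careful local analysis of how $\mathrm{BK}_{q_m-1,q_m}$ redistributes the ``free'' $q_m-1$'s and $q_m$'s in and adjacent to row $m$, leaning on the content computation already carried out in the preceding lemma (which pins down that $U$ has $\alpha_{q_m}-1$ copies of $q_m-1$ and $\alpha_{q_m-1}+1$ copies of $q_m$, where $\alpha$ is the content of $S$) together with column-strictness across rows $m-1$, $m$, and $m+1$.
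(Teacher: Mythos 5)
Your overall structure matches the paper's proof (sign-reversal from $\perm(V)=s_{q_m-1}\sigma$ and \Cref{Th:THC_and_Perms}; involutivity by rerunning \Cref{alg:KKinverseSym-row-labeling} on $(U,V)$ and checking that the same $m$ and $q_m$ are selected), but as written it has a genuine gap: the two statements you yourself flag as ``the main obstacle'' are exactly the technical content of the theorem, and you assert rather than prove them. First, the claim that the largest entry of row $m$ of $U$ is again $q_m$ is not automatic; the paper proves it by showing that the new $q_m-1$ created in row $m$ of $S'$ is \emph{unpaired} in the Bender--Knuth sense (column-strictness gives $S(m+1,j+1)>q_m$, and weak row increase gives $S(m+1,k)>q_m$ for $k>j+1$, so nothing to the right of the untouched entries of row $m$ gets paired), whence the Bender--Knuth swap puts at least one $q_m$ back into row $m$, and no larger value can appear. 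Your parenthetical acknowledges the issue when row $m$ of $S$ has a unique $q_m$ but does not resolve it.

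Second, your proposed shortcut for recovering $S$ --- ``BK is an involution, and decreasing the leftmost $q_m$ is inverted by increasing the rightmost $q_m-1$'' --- is not a valid argument, because the second pass of the algorithm does not apply the inverse of the decrement: it again decreases the leftmost $q_m$ in row $m$ of $U$ and then applies Bender--Knuth. So what must be shown is that the composite $\mathrm{BK}\circ D\circ \mathrm{BK}\circ D$ (with $D$ the decrement) is the identity, which you correctly identify at the end but leave unproven. The paper closes this by a count of unpaired entries in row $m$: if $S$ has $x$ unpaired $q_m-1$'s and $y\ge 1$ unpaired $q_m$'s there, then $D$ gives $(x+1,y-1)$, $\mathrm{BK}$ gives $(y-1,x+1)$, the second $D$ gives $(y,x)$, and the final $\mathrm{BK}$ gives $(x,y)$, while paired entries are carried to paired entries throughout; this is what forces the tableau component back to $S$. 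Until these two points are argued (or the paper's arguments reproduced), the proposal is an outline of the correct strategy rather than a proof.
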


\begin{proof}
The fact that $\chi_{\lambda,\mu}$ is sign-reversing is immediate from construction.  To see that it is indeed an involution, let $\chi_{\lambda,\mu}(S,T) = (U,V)$ and suppose we apply $\chi_{\lambda,\mu}$ to $(U,V)$.  Note that for $1 \le i \le m-1$, $i$ is the only entry appearing in row $i$.  Let the first $j+1$ entries in row $m$ of $S$ be $a_1 \le a_2 \le \cdots \le a_j < q_m$.  Step (3b) first converts the entry in column $j+1$ and row $m$ of $S$ into $q_m-1$.  Since the columns of $S$ are strictly increasing, $S(m+1,j+1) > q_m$.  Furthermore, $S(m+1,k) > q_m$ for $k > j+1$ because the rows of $S$ are weakly increasing.  Therefore none of the entries to the right of $a_j$ are paired during the Bender--Knuth portion of Step (3b) applied to $S'$.  Since the largest entry in $S^m$ is $q_m$ and there is at least one $q_m-1$ in $S'$, the largest entry in $U^m$ must be $q_m$.  The entries in rows $1$ through $m-1$ of $U$ are the same as those in $S$.  Therefore, row $m$ is the row selected in Step (1) of Algorithm~\ref{alg:KKinverseSym-row-labeling} applied to $(U,V)$ and the permutation of the resulting tunnel hook is $s_{q_m-1} s_{q_m-1} \sigma = \sigma$.  Hence, the resulting tunnel hook covering is $T$ by \Cref{Th:THC_and_Perms}.

To see that the resulting semistandard Young tableau is $S$, note that since the Bender--Knuth involution sends segments of paired entries of $S$ to segments of paired entries of $U$ and segments of unpaired entries of $S$ to segments of unpaired entries of $U$, we only need to check the unpaired entries equal to $q_m-1$ and $q_m$ in row $m$.  In $S$, let $x$ be the number of unpaired occurrences of $q_m-1$ in row $m$ and let $y$ be the number of unpaired occurrences of $q_m$ in row $m$.  (Note that $x$ might be zero but $y \ge 1$.)  The first part of Step (3b) applied to $S$ produces a tableau $S'$ with $x+1$ unpaired occurrences of $q_m-1$ and $y-1$ unpaired occurrences of $q_m$ since the affected entry $q_m$ is unpaired in $S$ and its replacement $q_m-1$ is unpaired in $S'$.  Applying the Bender--Knuth involution results in $U^m$ containing $y-1$ unpaired occurrences of $q_m-1$ and $x+1$ unpaired occurrences of $q_m$. On the other hand, applying the algorithm $\chi_{\lambda,\mu}$ to $U$ first produces a tableau $U'$ with $y$ unpaired occurrences of $q_m-1$ and $x$ unpaired occurrences of $q_m$ in row $m$.  Then applying the Bender--Knuth algorithm results in a tableau whose $m^{th}$ row contains $x$ unpaired copies of $q_m-1$ and $y$ unpaired copies of $q_m$, as in $S$.  The paired entries remain paired as in $S$ and thus the resulting semistandard Young tableau is $S$.  Therefore the map is an involution. \qedhere

\end{proof}

\section{A combinatorial proof of Theorem~\ref{Th:Two}}
\label{section:k-inv-k}

The goal of this section is to prove that $(\iK \K)_{\alpha,\beta}=\delta_{\alpha,\beta}$ for all compositions $\alpha,\beta$ of $n$. The combinatorial interpretations of these matrices imply that
\begin{equation}\label{Eq:KinvK}(\iK \K)_{\alpha,\beta}=\sum_{\gamma\vDash n}\iK_{\alpha,\gamma}\K_{\gamma,\beta}=\sum_{(T,S)\in \pairs}\sgn(T),
\end{equation}
where $\pairs$ is the set of pairs $(T,S)$ such that
\begin{itemize}
   \item 
    $\T$ is a 
    THC with content $\alpha$,
   \item 
    $\S$ is an immaculate tableau of content $\beta$, and
   \item 
$\sh(\S)=\sh(T)$.  
\end{itemize}
We prove that $(\iK \K)_{\alpha,\beta}=\delta_{\alpha,\beta}$ for all compositions $\alpha,\beta$ of $n$ by defining an involution $\g1_{\alpha,\beta}$ on $\pairs$ with the following properties.
\begin{enumerate}
\item If $\alpha=\beta$, then $C_{\alpha,\alpha}$ contains exactly one pair $(T,S)$ and this pair is a fixed point under $\g1_{\alpha,\alpha}$ such that $\sgn(T)=1$.
\item 
If $\alpha\neq \beta,$ then $\g1_{\alpha,\beta}$ is a sign-reversing involution.
\end{enumerate}
We start with the first point. The proof of  \Cref{lem:samecomp} is the same as that
given in 
\Cref{lem:diag-pairs-k-kinv-1} \textit{mutatis mutandis}.

\begin{lem}\label{lem:samecomp}
For any composition $\alpha$, $\pairone$ contains exactly one pair $(T,S)$ and for this pair, we have $\perm(T)=id$.
\end{lem}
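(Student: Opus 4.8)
The plan is to follow the argument for \Cref{lem:diag-pairs-k-kinv-1} essentially verbatim, noting that the present case is in fact a bit cleaner: here the content of $S$ is prescribed to be the \emph{composition} $\alpha$ rather than the integer sequence $\Delta(T)$, so no standardization step is needed. First I would fix an arbitrary pair $(T,S)\in\pairone$ and set $\beta=\sh(T)=\sh(S)$, which is a composition of $n$. Since $T$ is a tunnel hook covering of content $\alpha$ and shape $\beta$, we have $T\in\thc_{\alpha,\beta}$, so $\thc_{\alpha,\beta}\neq\emptyset$ and the Dominance Lemma for THC (\Cref{lem:dom-lem-thc}) gives $\alpha\trianglerighteq\beta$, hence $\alpha\ge_\ell\beta$.

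Next, $S$ is an immaculate tableau of shape $\beta$ and content $\alpha$, i.e.\ $S\in\IT_{\beta,\alpha}$, so $\IT_{\beta,\alpha}\neq\emptyset$ and the Dominance Lemma for IT (\Cref{lem:dom-lem-IT}) gives $\beta\ge_\ell\alpha$. Because $\ge_\ell$ is a total order on compositions of $n$, the two inequalities $\alpha\ge_\ell\beta$ and $\beta\ge_\ell\alpha$ force $\alpha=\beta$. Consequently $T\in\thc_{\alpha,\alpha}$ and $S\in\IT_{\alpha,\alpha}$.

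Finally I would invoke the ``moreover'' clauses of the two dominance lemmas: $\thc_{\alpha,\alpha}$ is the single element $\{T\}$ with $\perm(T)=id$, and $|\IT_{\alpha,\alpha}|=1$. Hence $\pairone$ consists of exactly one pair $(T,S)$, and for it $\perm(T)=id$, which is the claim. I do not expect a genuine obstacle here; the only point requiring care is the bookkeeping of the index conventions (tunnel hook coverings are indexed as $\thc_{\text{content},\text{shape}}$ while immaculate tableaux are indexed as $\IT_{\text{shape},\text{content}}$), together with the observation that $\alpha$ being an honest composition is exactly what lets us apply \Cref{lem:dom-lem-IT} directly, avoiding the standardization that was needed in the proof of \Cref{lem:diag-pairs-k-kinv-1}.
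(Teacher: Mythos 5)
Your proof is correct and is essentially the paper's own argument: the paper simply states that \Cref{lem:samecomp} follows from the proof of \Cref{lem:diag-pairs-k-kinv-1} \emph{mutatis mutandis}, and your write-up carries out exactly that adaptation (both dominance lemmas forcing the common shape to equal $\alpha$, then their ``moreover'' clauses giving uniqueness and $\perm(T)=id$). Your observation that the standardization step is unnecessary here, since the content of $S$ is already the composition $\alpha$, is a correct and mild simplification, not a different approach.
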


\begin{alg}\label{alg: involution} 
Suppose $\alpha$ and $\beta$ are compositions of $n$. Let $(T,S)\in \pairs$ and set $\sigma=\perm(T)$. Assume that $T$ and $S$ have common shape $\gamma=(\gamma_1,\gamma_2,\dots,\gamma_\ell)$ and let $M$ be the maximum entry in $S$. We construct another pair $\g1_{\alpha,\beta}(T,S)=(V,U)\in \pairs$ as follows.

\begin{enumerate}
\item Let $k$ be the smallest nonnegative integer less than $\ell$ such that the following conditions are true for all $i\in \{k+1,\dots,\ell\}$. 

\begin{enumerate}
\item $\sigma_i=i.$ 
\item Row $i$ of $S$ contains no entries other than $M-\ell+i$.
\item $M-\ell+i$ only appears in row $i$ of $S$.
\end{enumerate}

\noindent If no such $k$ exists, set $k = \ell$.

\item If $k=0$, then $\alpha=\beta$ and so we set $\g1_{\alpha,\alpha}(T,S)=(T,S)$.

\item Otherwise $k\in [\ell]$.  Set $$\Ri = \{ \sigma_j \; | \; M-\ell+k \textrm{ appears in row } j \; \mathrm{ of } \; S \}.$$ Select $r$ such that $\sigma_r$ is minimal in $\Ri$.

\item If $\sigma_r=k$, first delete the rightmost entry in row $r$ of $S$
and then add a row (row $p=\sigma^{-1}(k+1)=k+1$) consisting of a single $M-\ell+k$ between rows $k$ and $k+1$ of $S$ to create $U$.  Embed $\sigma$ into $S_{\ell+1}$ in the obvious way and then let $V$ be the THC of shape $\sh(U)$ with $\perm(V)\in S_{\ell+1}$ defined by $\perm(V)=s_k \sigma$.

\item If $\sigma_r \neq k$, delete the last entry in row $\ri$ of $S$ and set $\j=\sigma_r$.  Then append a cell with value $M-\ell+k$ to the end of row $\p=\sigma^{-1}(\j+1)$ to create $U$.  If row $r$ of $U$ is empty (which happens when $r=k$ and $\gamma_k=1$), shift the rows $r+1, \hdots , \ell$ up to fill the rows $r, \hdots , \ell-1$.  Finally let $V$ be the THC of shape $\sh(U)$ with $\perm(V)=s_{\j}\sigma$.

\end{enumerate}
 Now set $\psi(T,S)=\psi_{\alpha,\beta}(T,S)$ if $(T,S)\in C_{\alpha,\beta}.$
 \end{alg}
See Examples \ref{ex:kinvk} and \ref{ex:kinvk2}.  We first show that $\g1_{\alpha,\beta}$ is a well-defined function.
 
\begin{lem}{\label{lem:welldefinedsec4}}
Let $\alpha$ and $\beta$ be compositions of $n$. If $(T,S)\in C_{\alpha,\beta},$ then $\g1_{\alpha,\beta}(T,S)\in C_{\alpha,\beta}$.
\end{lem}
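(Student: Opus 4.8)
The plan is to show directly that $(V,U) := \psi_{\alpha,\beta}(T,S)$ satisfies the three conditions defining $\pairs$: that $V$ is a $\thc$ of content $\alpha$, that $U$ is an immaculate tableau of content $\beta$, and that $\sh(U)=\sh(V)$. The last equality is automatic, since $V$ is produced as the $\thc$ of shape $\sh(U)$ with a prescribed permutation; so the real work is to check that the prescribed $\perm(V)$ is a genuine permutation of $\ell(\sh(U))$ (then $V$ exists and is unique by \Cref{Th:THC_and_Perms}), that $U$ satisfies the row and column conditions of an immaculate tableau, and that the contents come out to $\alpha$ and $\beta$. When $k=0$ the map fixes $(T,S)$ and there is nothing to prove, so I would assume $k\in[\ell]$ and that we are in Step (4) or (5) of \Cref{alg: involution}.

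I would first extract the structural information packed into the choice of $k$ in Step (1): because $\sigma_i=i$ for all $i>k$, the permutation $\sigma$ restricts to a permutation of $\{1,\dots,k\}$, and the first column of $S$ below row $k$ reads $M-\ell+k+1,\dots,M$; together with conditions (b)--(c) this forces every entry of $S$ in rows $1,\dots,k$ to be at most $M-\ell+k$ (any larger value $M-\ell+i$ with $i>k$ would be trapped in row $i$). Since $S$ has composition content $\beta\vDash n$ it uses every value in $\{1,\dots,M\}$, so $M-\ell+k$ occurs in $S$ --- necessarily in a row $\le k$ --- hence $\Ri\neq\emptyset$, the chosen index satisfies $r\le k$, and in any row containing it the value $M-\ell+k$ is the largest and therefore the last entry. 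Thus the deleted cell always carries the value $M-\ell+k$. I would also note that $\Delta(T)$ is componentwise nonnegative, since $\fl(\Delta(T))=\alpha$ has only positive parts.

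Given this, the content of $U$ is immediate: one $M-\ell+k$ is removed and one $M-\ell+k$ is inserted, so $\wt(U)=\wt(S)=\beta$. That $U$ is immaculate is a bounded case check. Deleting the last cell of row $r$ keeps it weakly increasing. In Step (5), $r\le k$ and $j=\sigma_r\le k-1$, so $p=\sigma^{-1}(j+1)\le k$ and row $p$'s entries are all $\le M-\ell+k$; thus appending $M-\ell+k$ to it is legal, and $r\neq p$ so the two edits do not interfere. In Step (4), inserting the length-one row $\{M-\ell+k\}$ between rows $k$ and $k+1$ is legal because $S(k,1)<M-\ell+k<M-\ell+k+1=S(k+1,1)$, the left inequality requiring a short argument that row $k$ cannot be constant equal to $M-\ell+k$; the case $k=\ell$ is handled analogously using that row $\ell$ contains a non-$M$ entry. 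Deleting a cell empties a row only when $\gamma_r=1$, which by the column bound forces $r=k$, and the prescribed upward shift of rows $r+1,\dots,\ell$ then restores a composition shape with strictly increasing first column. Finally, since $\sigma$ permutes $\{1,\dots,k\}$ the transposition $s_j$ (or $s_k$ in the embedded $S_{\ell+1}$ of Step (4)) is a transposition of the appropriate symmetric group, and in the empty-row case the prescribed permutation fixes the point $k$; so in all cases $\perm(V)$ descends to a permutation of $\ell(\sh(U))$ and $V$ exists by \Cref{Th:THC_and_Perms}.

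The content of $V$ is the crux, and I would verify it by computing $\Delta(V)$ from its shape and permutation using \Cref{delta-and-permutation}. Since $s_j$ exchanges the values $j=\sigma_r$ and $j+1=\sigma_p$ in $\sigma$ while the shape change decreases $\gamma_r$ by one and increases $\gamma_p$ by one, the perturbations cancel: $\Delta_r(V)=\Delta_r(T)$, $\Delta_p(V)=\Delta_p(T)$, and $\Delta_i(V)=\Delta_i(T)$ elsewhere. So in the generic Step (5) one gets $\Delta(V)=\Delta(T)$; in Step (4) the new row contributes $\Delta=1+k-(k+1)=0$, so $\Delta(V)$ is $\Delta(T)$ with a zero inserted; and in the degenerate Step (5) the emptied row $k$ is removed, and there $\Delta_k(T)\ge 0$ together with $\sigma_k\in\{1,\dots,k\}\setminus\{k\}$ forces $\sigma_k=k-1$, whence $\Delta_k(T)=\gamma_k+\sigma_k-k=1+(k-1)-k=0$, so $\Delta(V)$ is $\Delta(T)$ with a zero removed. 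In every case $\fl(\Delta(V))=\fl(\Delta(T))=\alpha$. I expect the main obstacle to be precisely this last step: one must be careful in the two degenerate sub-cases (a newly created or a deleted row) to confirm that the extra or missing row is always a zero of $\Delta$ --- which is where the nonnegativity of $\Delta(T)$ enters --- and to track the re-indexing of rows when checking that $U$ remains immaculate.
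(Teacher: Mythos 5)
Your proposal is correct and follows essentially the same route as the paper's proof: the same case analysis of Steps (4) and (5) including the empty-row case, the same key observations (that $M-\ell+k$ is the largest entry in rows $1,\dots,k$ and hence the value of the deleted cell, that the minimality of $k$ rules out row $k$ consisting solely of $M-\ell+k$'s, and that nonnegativity of $\Delta(T)$ forces $\sigma_k=k-1$ when $\gamma_r=1$), and the same computation of $\Delta(V)$ from shape and permutation to see the content of $V$ equals that of $T$. The one sub-argument you flag but do not write out (that the leftmost entry of row $k$ is strictly less than $M-\ell+k$ in Step (4)) is precisely the paper's appeal to the minimality of $k$, so nothing essential is missing.
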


\begin{proof}
Assume first that $\alpha = \beta$.  Lemma~\ref{lem:samecomp} states that there is only one pair $(T,S) \in C_{\alpha,\alpha}$ and for this pair, $\perm(T)=id$.  Therefore Condition (1a) ($\sigma_i=i$) is satisfied by every integer $1 \le i \le \ell$ and thus $\alpha=\gamma$.  Hence $\gamma = \alpha = \beta$.  Since the content of $S$ equals the shape of $S$, Conditions (b) and (c) of Step (1) are also satisfied for all $1 \le i \le \ell$.  Therefore $k=0$ and hence $\g1_{\alpha,\beta}$ fixes $(T,S)$.

Next assume that $\alpha \not= \beta$. In both cases (Steps (4) and (5)), we delete the last entry in row $r$.  Since $M-\ell+k$ appears in row $r$ by assumption and $M-\ell+k$ is the largest entry in rows $1$ through $k$, the last entry in row $r$ of $S$ must be $M-\ell+k$.  Therefore, in all cases we delete one occurrence of $M-\ell+k$ and append it to row $p$.  Thus, the content of $U$ is the same as the content of $S$.  We need to prove that $U$ is indeed an immaculate tableau.   The rows remain weakly increasing (since row $p$ is among the first $k$ rows), so we just need to show that the leftmost column is strictly increasing from top to bottom.

Suppose Step (4) is applied.  Since $\sigma_r=k$ and $r$ is chosen so that $\sigma_r$ is minimal among elements of $\mathcal{R}$ (whose values must be less than or equal to $k$), the only row containing $M-\ell+k$ is row $r$.  If $M-\ell+k$ were the leftmost entry in row $k$, then $r=k$ and row $k$ could not contain any entries other than $M-\ell+k$ (since the row entries weakly increase from left to right and no entry larger than $M-\ell+k$ is contained in row $k$).  But then $r=k$ and row $k$ would satisfy Conditions (a), (b), and (c) in Step (1), contradicting our choice of $k$ in Step (1).  Therefore, the leftmost entry in row $k$ must be strictly less than $M-\ell+k$.  The entry in the leftmost column of row $k+1$ of $S$ must equal $M-\ell+k+1$ by Condition (1b), so the leftmost column remains strictly increasing after the insertion of $M-\ell+k$ between rows $k$ and $k+1$.  Hence in this case, the resulting diagram $U$ is indeed an immaculate tableau.

Now suppose Step (5) is applied. Then, the leftmost column of $S$ is only altered if row $r$ is deleted.  In this case, the leftmost column remains strictly increasing since deleting an entry from a strictly increasing sequence results in a strictly increasing sequence.  Therefore, $U$ is an immaculate tableau.

The tunnel hook covering $V$ has the same shape as $U$.  We need to prove that $V$ is a well-defined tunnel hook covering whose content is equal to the content of $T$.  From this, it follows that $(V,U)\in C_{\alpha,\beta}.$

Suppose we are in Step (4).  A new row is added to $S$ and therefore $V$ has $\ell+1$ rows.  Since $\sigma_r=k \le \ell$, we have $\sigma_r < k +1\leq \ell+1$ and hence $V$ is a well-defined tunnel hook covering by \Cref{Th:THC_and_Perms}.

Suppose we are in Step (5). Then, $\sigma_r<k \le \ell$.  If $\gamma_k \not=1$, the number of rows of $V$ equals $\ell$ and hence $s_q \sigma$ is a permutation of $[\ell]$ and $V$ is a well-defined tunnel hook covering. Now, suppose $\gamma_r=1$. By construction of $\mathcal{R}$,  there is at least one $M-\ell+k$ in row $r$ of $S$ and since $\gamma_r=1$, this is the only entry in row~$r$. Note that this is in the leftmost column of $S$. Observe that $M-\ell+k$ is the largest value appearing in rows $1,\dots,k$ because all entries greater than $M-\ell+k$ are in rows $k+1,\dots,\ell$ by Condition (1c). Hence, we must actually have $r=k$ because $S$ is immaculate and therefore its leftmost column must be strictly increasing.

Recall $\Delta(T)=(\Delta_1,\Delta_2,\dots,\Delta_\ell)$ is given by $\Delta_i=\gamma_i +\sigma_i-i$ for each $i\in [\ell]$ (\Cref{delta-and-permutation}). We must have $\sigma_r=\sigma_k \ge k-1$.  To see this, assume that $\sigma_k < k-1$.  Since $\gamma_k=\gamma_r=1$, we see that $\Delta_k = 1 + \sigma_k-k < 1+k-1-k=0$, which is a contradiction since $\fl(\Delta)=\alpha$, which is a sequence of \emph{positive} integers and therefore $\Delta$ cannot include any negative integers. Hence, $\sigma_k\geq k-1$ and thus $\sigma_k=k-1$ since $\sigma_k=\sigma_r<k$ by assumption.

From here, note that if $k<\ell$, then $\sigma_\ell=\ell$ and $q=\sigma_r=k-1<\ell-1$, so $s_q\sigma$ is a permutation of $[\ell-1]$. If $k=\ell$, then $\sigma_{\ell}=\ell-1$, so $s_q\sigma(\ell)=\ell$. Thus, $s_q\sigma$ is a permutation of $[\ell-1]$. Therefore by \Cref{Th:THC_and_Perms}, $V$ is a well-defined tunnel hook covering.

Now that we know $V$ is a well-defined tunnel hook covering, we must prove that its content equals the content of $T$.  Let $\gamma'$ be the shape of $V$ and $\tau=s_q\sigma$.  Then $\Delta(V)$ is computed by \[\Delta_i(V)=\begin{cases}
        \gamma'_{r}+\tau_{r}-r=(\gamma_{r}-1)+(\sigma_r+1)-r&\textrm{if }i=r,\\
        \gamma'_p+\tau_p-p=(\gamma_p+1)+(\sigma_p-1)-p&\textrm{if }i=p,\\
        \gamma'_i+\tau_i-i=\gamma_i+\sigma_i-i&\text{otherwise.}
    \end{cases}\]

Note that in all of these cases, $\Delta_i(V) = \Delta_i(T)$.  Therefore the content of $V$ is equal to the content of $T$, as desired.
\end{proof}
   
Now, we can show that $\g1_{\alpha,\beta}$ is a sign-reversing involution on $C_{\alpha,\beta}$ when $\alpha\neq \beta$.
\begin{thm}
Let $\alpha$ and $\beta$ be compositions of $n$ and let $(T,S)\in \pairs$.
    \begin{enumerate}
        \item If $\alpha=\beta$, then $\g1_{\alpha,\alpha}(T,S)=(T,S)$.
        \item If $\alpha\neq \beta$, then $\g1_{\alpha,\beta}(T,S)=(V,U)$ satisfies
        \begin{enumerate}
            \item $\sgn(V)=-\sgn(T)$,
            \item $\g1_{\alpha,\beta}(V,U)=(T,S)$.
        \end{enumerate}
    \end{enumerate}
\end{thm}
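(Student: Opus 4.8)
The statement asserts exactly that the map $\g1_{\alpha,\beta}$ of \Cref{alg: involution} fixes the unique pair of $C_{\alpha,\alpha}$ and is a sign-reversing involution when $\alpha\neq\beta$. The plan is to take the three assertions in order of increasing difficulty. For part (1), the argument from the proof of \Cref{lem:welldefinedsec4} applies verbatim: \Cref{lem:samecomp} gives a unique pair $(T,S)\in C_{\alpha,\alpha}$ with $\perm(T)=id$, so every row of $S$ satisfies conditions (1a)--(1c), Step (1) returns $k=0$, and Step (2) gives $\g1_{\alpha,\alpha}(T,S)=(T,S)$. For part (2a): in both branches of the algorithm $\perm(V)$ is obtained from $\sigma=\perm(T)$ by a single left multiplication by an adjacent transposition ($s_k$ in Step (4), $s_{\sigma_r}$ in Step (5)), and left multiplication by a transposition reverses the sign of a permutation (the embedding into, or restriction from, the inserted or deleted row being sign-preserving), so $\sgn(\perm(V))=-\sgn(\sigma)$. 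Since \Cref{lem:welldefinedsec4} already guarantees that $V$ is a well-defined tunnel hook covering, \Cref{Th:THC_and_Perms} gives $\sgn(V)=\sgn(\perm(V))=-\sgn(\sigma)=-\sgn(T)$.

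The substance is part (2b), namely $\g1_{\alpha,\beta}(V,U)=(T,S)$. I would run \Cref{alg: involution} on $(V,U)$ and track the three pieces of data it consumes: the index $k$ of Step (1), the set $\mathcal{R}$ of Step (3), and the selected row $r$. The two core claims are (i) the index chosen in Step (1) for $(V,U)$ is the same $k$ (after accounting for the reindexing caused by any row insertion or deletion), and (ii) the row chosen for $(V,U)$ is precisely the row $\p$ to which a copy of $M-\ell+k$ was appended or inserted in passing from $S$ to $U$. Granting (i) and (ii), the rest is mechanical: the last entry of row $\p$ of $U$ is exactly that freshly placed $M-\ell+k$, so deleting it and re-attaching an $M-\ell+k$ to row $r$ undoes the single-box move that produced $U$ from $S$; and since $\perm(V)=s_j\sigma$ for the relevant adjacent transposition $s_j$, the algorithm applied to $(V,U)$ left multiplies by $s_j$ once more, returning $s_js_j\sigma=\sigma$, so the output tunnel hook covering is $T$ by the bijectivity half of \Cref{Th:THC_and_Perms}.

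Establishing (i) and (ii) requires a case analysis that pairs up the branches of \Cref{alg: involution}: Step (5) with $\gamma_r\neq 1$ preserves the number of rows and is formally its own inverse; Step (4), which inserts a length-one row, is reversed by an instance of Step (5) in which that length-one row is the chosen row and is therefore deleted; and Step (5) with $\gamma_r=1$, which deletes row $r=k$, is reversed by an instance of Step (4). So the verification splits into checking that each of these three branches, when re-applied, triggers the branch that is supposed to undo it.

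For (i) I would check that the rows below the threshold stay stable --- conditions (1a)--(1c) persist there because $\perm(V)$ and $\sigma$ differ only in positions $r$ and $\p$, both of index $\le k$, and the only multiset changes from $S$ to $U$ occur in rows $r$ and $\p$ --- while the threshold row still violates one of (1a)--(1c), so the selected index cannot drop below $k$; the row-count bookkeeping in the insertion/deletion cases reuses the computation from \Cref{lem:welldefinedsec4} that in the deletion case $r=k$ and $\sigma_k=k-1$. For (ii) I would use that $\perm(V)$ is a permutation, so its values are pairwise distinct, together with the minimality of $\sigma_r$ in $\mathcal{R}$: the copy of $M-\ell+k$ now sitting in row $\p$ of $U$ carries $\perm(V)$-value $\sigma_r$ (Step (5) case) or $k$ (Step (4) case), and this value turns out to be the strict minimum of the set $\mathcal{R}$ computed for $(V,U)$, which pins $\p$ down as the selected row. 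The step I expect to be the main obstacle is precisely this bookkeeping: keeping straight how the ``relevant value'' $M-\ell+i$ attached to row $i$ shifts when $\ell$ changes by one, and confirming that the reverse pass lands in the correct branch of \Cref{alg: involution}. Once that is nailed down, everything else follows from the two trivial observations that a single-box move is reversible and that $s_j$ is an involution.
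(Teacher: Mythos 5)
Your proposal is correct and follows essentially the same route as the paper's proof: part (1) is read off from the well-definedness argument (Lemma \ref{lem:welldefinedsec4} together with Lemma \ref{lem:samecomp}), part (2a) from the fact that $\perm(V)=s_j\sigma$ and Theorem \ref{Th:THC_and_Perms}, and part (2b) by re-running Algorithm \ref{alg: involution} on $(V,U)$, checking that the threshold index and the row $p$ holding the moved copy of $M-\ell+k$ are the ones selected (via minimality of $\sigma_r$ in $\mathcal{R}$ and the facts $r=k$, $\sigma_k=k-1$ in the row-deletion case), so the box move is undone and $s_js_j\sigma=\sigma$ forces the output THC to be $T$. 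Your three-way pairing of branches (Step (5) with $\gamma_r\neq 1$ self-paired, Step (4) paired with the row-deleting Step (5)) is only a slightly finer bookkeeping of the paper's two cases $\sigma_r=k$ versus $\sigma_r\neq k$, and the remaining verification you flag is exactly the check the paper's proof carries out.
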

\begin{proof} 
Note that in proving \Cref{lem:welldefinedsec4}, we also proved Part (1). Let $\alpha\neq \beta$. Since $\g1_{\alpha,\beta}$ is clearly sign-reversing, we just need to verify Part (2b). Suppose $\g1_{\alpha,\beta}(T,S)=(V,U)$ and let $k,\mathcal{R},q,$ and $p$ be as defined in \Cref{alg: involution} for computing $\g1_{\alpha,\beta}(T,S)$. Also, let $\tau=\perm(V)$ and let $k',\mathcal{R}',q',$ and $p'$ be defined as in \Cref{alg: involution} for computing $\g1_{\alpha,\beta}(V,U)$.

Suppose $\sigma_r=k$. Then applying $\g1_{\alpha,\beta}$ to $(T,S)$ adds a new row (with one box containing the entry $M-\ell+k$) between rows $k$ and $k+1$.  This means that in $U$, row $k+1$ contains one entry ($M - \ell+k$) and $\perm(V)_{k+1}=k$.  Since rows $k+1$ and greater all satisfy Conditions (a), (b), and (c) of Step (1) and $\perm(V)_{k+1}\neq k+1$, the selected index in Step (1)  when we apply \Cref{alg: involution} to $(V,U)$ is $k'=k+1$. Since $\sigma_r=k$ is the minimal entry of $\mathcal{R}$, $\perm(V)_{k+1}=(s_k\sigma)_{k+1}=k$ is the minimal entry in $\mathcal{R}'$. Thus, we select $r'=k+1$. Since $\perm(V)_{r'}=k<k+1=k'$ we are in Step (5) of the algorithm. Row $k+1$ is deleted (and the rows are shifted up) and the entry $M-\ell+k$ is placed in row $\tau^{-1}(k+1)=k$, which is exactly the row it came from during $\g1_{\alpha,\beta}(T,S)$.  The transposition $s_k$ is applied to $\perm(V)$, resulting in the original permutation $\sigma$.  Therefore $\g1_{\alpha,\beta}(\g1_{\alpha,\beta}(T,S))=(T,S)$ in this case.

Now suppose $\sigma_r \neq k$. In this situation, an entry ($M-\ell+k$) is deleted from row $r$ and added to row $p=\sigma^{-1}(\sigma_r+1)$.  Thus, $\sigma(p)=\sigma_r+1$, so when $s_q$ is applied to $\sigma$ to produce the permutation $\tau = s_q \sigma$, we have $\tau(p)=\sigma_r$ and $\tau(r)=\sigma_r+1$.  Since there is at least one $M-\ell+k$ in row $p$ of $U$, that must have the smallest $\tau$ value in $\mathcal{R}'$.  Therefore this value becomes $r'$.  This means one entry ($M-\ell+k$) is removed from row $p$ of $U$ and placed into row $r$ (since $\tau'(r)=\sigma_r+1$).  The permutation for the resulting tunnel hook covering is $s_q s_q \sigma = \sigma$.  Therefore $\g1_{\alpha,\beta}(\g1_{\alpha,\beta}(T,S))=(T,S)$ in this case, so the map is an involution.
\end{proof}

\begin{figure}[h]
    \centering
\begin{minipage}{.3\textwidth}\begin{tikzpicture}[yscale=-1,scale=.55]

\def\L{{4,3,4}}
\pgfmathsetmacro{\len}{dim(\L)}

\foreach \y in {1,...,\len}{
    \pgfmathsetmacro{\j}{\L[\y-1]}
    \foreach \i in {1,...,\j}{
        \draw (\i-.5,\y-.5) rectangle (\i+.5,\y+.5);
        }
}
\setsepchar{\\/,/ }
\readlist\LS{1,1,2,6\\2,3,5\\4,4,6,6}
\pgfmathsetmacro{\lens}{\listlen\LS[]}
\foreach \y in {1,...,\lens}{
    \pgfmathsetmacro{\j}{\L[\y-1]}
    \foreach \i in {1,...,\j}{
        \node at (\i,\y) {\LS[\y,\i]};
        }
}
\node at (-1,2) {$(S,T)\ =\ $};

\tikzset{every node/.style={inner sep=-4pt}}
\begin{scope}[on background layer]
\tikzset{every path/.style={line width = 7pt,color=black,line cap=round,opacity=.15,rounded corners}}
\draw (4,1)--(1,1)--(1,3);
\draw (3,2)--(2,2)--(2,3);
\draw (4,3)--(3,3);
\end{scope}

\end{tikzpicture}
\end{minipage}
$\xrightarrow[]{}$
\begin{minipage}{.3\textwidth}
\begin{tikzpicture}[yscale=-1,scale=.55]

\def\L{{4,4,3}}
\pgfmathsetmacro{\len}{dim(\L)}

\foreach \y in {1,...,\len}{
    \pgfmathsetmacro{\j}{\L[\y-1]}
    \foreach \i in {1,...,\j}{
        \draw (\i-.5,\y-.5) rectangle (\i+.5,\y+.5);
        }
}
\setsepchar{\\/,/ }
\readlist\LS{1,1,2,6\\2,3,5,6\\4,4,6}
\pgfmathsetmacro{\lens}{\listlen\LS[]}
\foreach \y in {1,...,\lens}{
    \pgfmathsetmacro{\j}{\L[\y-1]}
    \foreach \i in {1,...,\j}{
        \node at (\i,\y) {\LS[\y,\i]};
        }
}
\node at (-1,2) {$(U,V)\ =\ $};

\tikzset{every node/.style={inner sep=-4pt}}
\begin{scope}[on background layer]
\tikzset{every path/.style={line width = 7pt,color=black,line cap=round,opacity=.15,rounded corners}}
\draw (4,1)--(1,1)--(1,3);
\draw (4,2)--(2,2);
\draw (3,3)--(2,3);
\end{scope}

\end{tikzpicture}
\end{minipage}
\caption{An example of \Cref{alg: involution}}
\label{fig:THC-alg4-ex-general}
\end{figure}

\begin{ex}\label{ex:kinvk} We apply \Cref{alg: involution} to the pair $(S,T)\in \pairs$ as in \Cref{fig:THC-alg4-ex-general} with $\alpha=(2,2,1,2,1,3)$ and $\beta=(6,3,2)$.  Here we superimpose the two objects, both of shape $\gamma=(4,3,4)$ (and so $\ell=3$). The permutation of $T$ is $\perm(T)=\sigma=[3,2,1]$ and note that  the maximum entry of $S$ is $M=6$.

In Step (1), we find $k=3$, so $M-\ell+k=6$. Rows $1$ and $3$ contain $6$. Since $\sigma_1=3>1=\sigma_3$, we have $r=3$ in Step (3) of Algorithm~\ref{alg: involution}.

Since $\sigma_r=1\neq k$, we move from Step (3) directly to Step (5). Since $\sigma_3=1=q$, we have $\perm(V)=[3,1,2]$. We move the right-most cell in row $r=3$ to row $p=\sigma^{-1}(1+1)=2$ to create $U$ of shape $(4,4,3)$, and then define $V$ to be the unique \THC{} with shape $(4,4,3)$ and permutation $s_{1}\sigma=[3,1,2].$

\end{ex}

\begin{figure}[h]
    \centering
\begin{minipage}{.3\textwidth}\begin{tikzpicture}[yscale=-1,scale=.55]

\def\L{{4,3,3,2,3}}
\pgfmathsetmacro{\len}{dim(\L)}

\draw[white] (.5,5.5)-- (1.5,6.5);
\foreach \y in {1,...,\len}{
    \pgfmathsetmacro{\j}{\L[\y-1]}
    \foreach \i in {1,...,\j}{
        \draw (\i-.5,\y-.5) rectangle (\i+.5,\y+.5);
        }
}
\setsepchar{\\/,/ }
\readlist\LS{1,1,2,2\\2,3,5\\4,6,6\\7,7\\8,8,8}
\pgfmathsetmacro{\lens}{\listlen\LS[]}
\foreach \y in {1,...,\lens}{
    \pgfmathsetmacro{\j}{\L[\y-1]}
    \foreach \i in {1,...,\j}{
        \node at (\i,\y) {\LS[\y,\i]};
        }
}
\node at (-1,3.5) {$(S,T)\ =\ $};

\tikzset{every node/.style={inner sep=-4pt}}
\begin{scope}[on background layer]
\tikzset{every path/.style={line width = 7pt,color=black,line cap=round,opacity=.15,rounded corners}}
\draw (1,2)--(1,1)--(4,1);
\draw (2,2)--(3,2);
\draw (1,3)--(3,3);
\draw (1,4)--(2,4);
\draw (1,5)--(3,5);
\end{scope}

\end{tikzpicture}
\end{minipage}
$\xrightarrow[]{}$
\begin{minipage}{.3\textwidth}\begin{tikzpicture}[yscale=-1,scale=.55]

\def\L{{4,3,2,1,2,3}}
\pgfmathsetmacro{\len}{dim(\L)}

\foreach \y in {1,...,\len}{
    \pgfmathsetmacro{\j}{\L[\y-1]}
    \foreach \i in {1,...,\j}{
        \draw (\i-.5,\y-.5) rectangle (\i+.5,\y+.5);
        }
}
\setsepchar{\\/,/ }
\readlist\LS{1,1,2,2\\2,3,5\\4,6\\6\\7,7\\8,8,8}
\pgfmathsetmacro{\lens}{\listlen\LS[]}
\foreach \y in {1,...,\lens}{
    \pgfmathsetmacro{\j}{\L[\y-1]}
    \foreach \i in {1,...,\j}{
        \node at (\i,\y) {\LS[\y,\i]};
        }
}
\node at (-1,3.5) {$(U,V)\ =\ $};

\tikzset{every node/.style={inner sep=-4pt}}
\begin{scope}[on background layer]
\tikzset{every path/.style={line width = 7pt,color=black,line cap=round,opacity=.15,rounded corners}}
\draw (1,2)--(1,1)--(4,1);
\draw (2,2)--(3,2);
\draw (1,4)--(1,3)--(2,3);
\draw (1,5)--(2,5);
\draw (1,6)--(3,6);
\end{scope}

\end{tikzpicture}
\end{minipage}
\caption{An example of \Cref{alg: involution}}
\label{fig:THC-alg4-ex-general2}
\end{figure}
\begin{ex}\label{ex:kinvk2}  For $\alpha=(2,3,1,1,1,2,2,3)$ and $\beta=(5,2,3,2,3)$, consider the pair $(S,T)\in \pairs$ in \Cref{fig:THC-alg4-ex-general2} with $S$ and $T$ of shape $\gamma=(4,3,3,2,3)$. The permutation of $T$ is $\perm(T)=\sigma=[2,1,3,4,5]$ and note that $M=8$.

In Step (1), we set $k=3$, so $M-\ell+k=6$. Only Row $3$ contains $6$. Hence, we set $r=3$. Since $\sigma_r=3=k$, we move from Step (3) to Step (4) in the algorithm. Hence, we add a row below row $k$ consisting of a single $6$ to create $U$. $V$ has permutation $[2,1,4,3,5,6].$ Note that the content of $V$ is still $\beta=(5,2,3,2,3).$

\end{ex}

\section{A combinatorial proof of Theorem \ref{thm:KinvK-involution}}
\label{section: kinv-k-sym}

In this section, we provide a full combinatorial proof that $(K^{-1} K)_{\lambda,\mu}=\delta_{\lambda,\mu}$
(Theorem \ref{thm:KinvK-involution}) by utilizing Algorithm \ref{alg: involution} which showed $(\iK \K)_{\alpha,\beta}=\delta_{\alpha,\beta}$
(Theorem \ref{Th:Two}).  
Recall that for a composition $\alpha$, $\dec(\alpha)$ is the partition obtained by writing $\alpha$ in weakly decreasing order. For $\lambda,\mu\vdash n$, let $\sympairs$ be the set of pairs $(T,S)$ such that 
\begin{itemize}
\item 
$T$ is a \THC{}  with \THCc{} $\alpha$ where $\dec(\alpha)=\lambda$ and
\item 
$S$ is a semistandard Young tableau of shape $\sh(T)$ and content $\mu$.
\end{itemize}
Note that $S$ is partition shaped. Writing $K^{-1}_{\lambda,\mu}$ as a signed sum of tunnel hook coverings as in \Cref{eq:kostka-in-terms-thc} produces the equation
\begin{equation}\label{eq: KinvK-sym-setup}
(K^{-1}K)_{\lambda,\mu}=\sum_{(T,S)\in \sympairs}\sgn(T).\end{equation}
We prove \Cref{thm:KinvK-involution} by exhibiting a sign-reversing involution on $\sympairs$.

Define $\sympairsbig$ to be the set of pairs $(T,S)$ as above by replacing the condition that ``$S$ is a semistandard Young tableau'' with ``$S$ is an immaculate tableau.''  Note that the shapes of $T$ and $S$ do not need to be partitions.

Assume that $\lambda\neq \mu$. For a pair $(T,S) \in \sympairsbig$, we say that $c=(i,j)$ is a \emph{bad cell} of $S$ if $i\geq 2$ and either $S(i-1,j)$ is not defined or $S(i-1,j)\geq S(i,j)$.  We have the following two observations.
\begin{enumerate}
    \item If $(T,S)\in \sympairs$, then applying $\g1$ (\Cref{alg: involution}) to $(T,S)$ produces an element of $\sympairsbig.$
    \item Let $(T,S)\in \sympairsbig$. Then $S$ has a bad cell if and only if $(T,S)\not\in \sympairs.$
\end{enumerate}

In order to define our involution on $\sympairs$, it is useful to first give an involution $\theta_{\lambda,\mu}$  on $\sympairsbig\setminus \sympairs.$  The map $\theta_{\lambda,\mu}$ is the same style of involution as those appearing in a variety of theorems including a Berg--Bergeron--Saliola--Serrano--Zabrocki result in computing certain Littlewood--Richardson coefficients for immaculate functions \cite{BBSSZ17}, Gasharov and Shareshian--Wachs results in a Schur positivity proof for certain chromatic symmetric functions \cite{gasharov_1996,shareshian_chromatic_2016}, and a Gessel--Vienott result on determinants~\cite{GesVie89}.

\begin{alg}\label{alg:Bad-Involution}
For $\lambda,\mu\vdash n$ such that $\lambda\neq \mu$, define a map
\[\theta_{\lambda,\mu }:\sympairsbig\setminus \sympairs\to \sympairsbig\setminus \sympairs\]
algorithmically as follows. 
\begin{enumerate}
\item Let $(V,U)\in \sympairsbig\setminus \sympairs$ where $V$ has \THCc~$\beta$ with $\dec(\beta)=\lambda$. 
\item Let $i$ be the leftmost column of $U$ containing a bad cell and let $t$ be the largest value such that row $t$ contains a bad cell in column $i$.  Swap the cells of $U$ in the sets
\[
P=\{(t,j)\in U \mid j>i\}
\quad 
\text{and}\quad 
Q=\{(t-1,j)\in U \mid j\geq i\}
\]
to create $U'$, meaning we set $U'(t,j)=U(t-1,j-1)$ for $j>i$ and $U'(t-1,j)=U(t,j+1)$ for $j\geq i$ (when defined), and set $U'(x,y)=U(x,y)$ otherwise.
\item Let $V'$ be the THC of shape $\sh(U')$ such that $\perm(V')=\perm(V) s_{t-1}.$ 
\item Set $\theta_{\lambda,\mu}(V,U)=(V',U')$.
\end{enumerate}
Now set $\theta(V,U)=\theta_{\lambda,\mu}(V,U)$ if $(V,U)\in E_{\lambda,\mu}\setminus D_{\lambda,\mu}.$
\end{alg}

\begin{ex}
Consider the immaculate tableau $U$ in \Cref{fig:bad-involution-example}, which has content \[\mu=(4,4,4,4,4,2,2,2,2).\] The cell $(4,3)$ (circled) is a bad cell. Column $i=3$ is the leftmost column containing a bad cell (columns 4, 5, 6, and 7 also contain bad cells) and $t=4$ is the largest value such that row $t$ contains a bad cell in column 3 (cell $(3,3)$ is also bad). The map $\theta_{\lambda,\mu}$ then swaps cells of $P$ and $Q$ (highlighted) to obtain the immaculate tableau $U'$. Note that $(4,3)$ is still a bad cell of $U'$, that column $i=3$ is the leftmost column containing a bad cell, and that $t=4$ is still the largest value such that row $t$ contains a bad cell in column $i=3$. Hence, $\theta_{\lambda,\mu}$ would turn $U'$ back into $U$.
\begin{figure}
    \centering
    \begin{tikzpicture}
\node at (0,0) {\begin{tikzpicture}[yscale=-1,scale=.55]
\node at (3,6) {$U$};
\node at (4.5,2) {$P$};
\node at (5.5,5) {$Q$};
\def\L{{9,3,5,7,4}}
\pgfmathsetmacro{\len}{dim(\L)}

\foreach \y in {1,...,\len}{
    \pgfmathsetmacro{\j}{\L[\y-1]}
    \foreach \i in {1,...,\j}{
        \draw (\i-.5,\y-.5) rectangle (\i+.5,\y+.5);
        }
}
\setsepchar{\\/,/ }
\readlist\LS{1,1,1,1,2,2,3,3,4\\2,2,5\\3,3,5,5,5\\4,4,\circl{4},6,6,7,7\\8,8,9,9}
\pgfmathsetmacro{\lens}{\listlen\LS[]}
\foreach \y in {1,...,\lens}{
    \pgfmathsetmacro{\j}{\L[\y-1]}
    \foreach \i in {1,...,\j}{
        \node at (\i,\y) {\LS[\y,\i]};
        }

}

\tikzset{every node/.style={inner sep=-4pt}}
\begin{scope}[on background layer]
\tikzset{every path/.style={line width = 7pt,color=black,line cap=round,opacity=.15,rounded corners}}
\draw (5,3)--(3,3);
\draw (4,4)--(7,4);
\end{scope}

\end{tikzpicture}};
\node at (3,0) {$\longrightarrow$};

\node at (6.5,0){\begin{tikzpicture}[yscale=-1,scale=.55]
\node at (3,6) {$U'$};
\node at (5.5,5) {$P$};
\node at (4.5,2) {$Q$};
\def\L{{9,3,6,6,4}}
\pgfmathsetmacro{\len}{dim(\L)}

\foreach \y in {1,...,\len}{
    \pgfmathsetmacro{\j}{\L[\y-1]}
    \foreach \i in {1,...,\j}{
        \draw (\i-.5,\y-.5) rectangle (\i+.5,\y+.5);
        }
}
\setsepchar{\\/,/ }
\readlist\LS{1,1,1,1,2,2,3,3,4\\2,2,5\\3,3,6,6,7,7\\4,4,\circl{4},5,5,5\\8,8,9,9}
\pgfmathsetmacro{\lens}{\listlen\LS[]}
\foreach \y in {1,...,\lens}{
    \pgfmathsetmacro{\j}{\L[\y-1]}
    \foreach \i in {1,...,\j}{
        \node at (\i,\y) {\LS[\y,\i]};
        }

}

\tikzset{every node/.style={inner sep=-4pt}}
\begin{scope}[on background layer]
\tikzset{every path/.style={line width = 7pt,color=black,line cap=round,opacity=.15,rounded corners}}
\draw (6,3)--(3,3);
\draw (4,4)--(6,4);
\end{scope}

\end{tikzpicture}};
\end{tikzpicture}
    \caption{An example of \Cref{alg:Bad-Involution}}
    \label{fig:bad-involution-example}
\end{figure}

\end{ex}

\begin{lem}
For all $\lambda,\mu\vdash n$ with $\lambda\neq \mu$, $\theta_{\lambda,\mu}$ is a sign-reversing involution on $\sympairsbig\setminus\sympairs$.
\end{lem}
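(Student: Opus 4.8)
The plan is to check in turn that $\theta_{\lambda,\mu}$ maps $\sympairsbig\setminus\sympairs$ to itself, that it reverses signs, and that it squares to the identity. Fix $(V,U)\in\sympairsbig\setminus\sympairs$ and keep the notation of \Cref{alg:Bad-Involution}: let $i$ be the leftmost column of $U$ containing a bad cell, $t$ the bottommost row with a bad cell in column $i$, $\gamma=\sh(V)=\sh(U)$, and $\sigma=\perm(V)$. I would begin by recording three structural facts. Because $U$ is immaculate, its first column strictly increases and every row $1,\dots,\ell(\gamma)$ is nonempty, so no cell of column $1$ is bad and hence $i\ge 2$. Since $(t,i)$ is a genuine cell, $\gamma_t\ge i$. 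And $\gamma_{t-1}\ge i-1$: otherwise $(t,\gamma_{t-1}+1)$ would be a cell (as $\gamma_{t-1}+1<i\le\gamma_t$) sitting directly below an absent cell, hence a bad cell in a column strictly left of $i$, which is impossible. These facts ensure the rearrangement touches only rows $t-1$ and $t$, and that $\sh(U')=:\gamma'$ satisfies $\gamma'_{t-1}=\gamma_t-1\ge i-1\ge 1$ and $\gamma'_t=\gamma_{t-1}+1\ge 1$, so $\gamma'$ is again a composition of $n$ with $\ell(\gamma')=\ell(\gamma)$.

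For well-definedness I would argue as follows. The swap permutes the multiset of entries of $U$, so $U'$ has content $\mu$; its first column (living in column $1<i$) is untouched, so it still strictly increases; and the only new horizontal adjacencies occur at the two splice points in rows $t-1$ and $t$. Weak increase in row $t$ at the splice is exactly the defining inequality $U(t,i)\le U(t-1,i)$ of the bad cell $(t,i)$ (and there is nothing to splice if $\gamma_{t-1}=i-1$), while weak increase in row $t-1$ follows from $U(t-1,i-1)<U(t,i-1)\le U(t,i+1)$, the first inequality holding because column $i-1$ has no bad cell. Hence $U'$ is an immaculate tableau. Next, since $t-1\le\ell(\gamma)-1$ we have $s_{t-1}\in S_{\ell(\gamma')}$, so \Cref{Th:THC_and_Perms} produces a unique THC $V'$ of shape $\gamma'$ with $\perm(V')=\sigma s_{t-1}$; a short computation with \Cref{delta-and-permutation} shows $\Delta(V')$ is $\Delta(V)$ with its $(t-1)$st and $t$th entries exchanged, so $\Delta(V')$ has no negative entry and $\fl(\Delta(V'))$ is a rearrangement of $\fl(\Delta(V))$, giving $\dec$ equal to $\lambda$. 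Thus $(V',U')\in\sympairsbig$, and since $(t,i)$ remains a bad cell of $U'$ (its entry is unchanged and the cell above it is either absent or carries the old value $U(t,i+1)\ge U(t,i)$), in fact $(V',U')\in\sympairsbig\setminus\sympairs$. Sign-reversal is then immediate from \Cref{Th:THC_and_Perms}: $\sgn(V')=\sgn(\sigma s_{t-1})=-\sgn(\sigma)=-\sgn(V)$; in particular $\theta_{\lambda,\mu}$ has no fixed point, and this argument goes through even in the degenerate ``corner-pivot'' case $\gamma_{t-1}=i-1$, $\gamma_t=i$, where $U'=U$ but $V'\neq V$.

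The heart of the argument is the involution property, and the main obstacle is showing that the selection data $(i,t)$ is recovered when $\theta_{\lambda,\mu}$ is applied to $(V',U')$. For this I would check: columns $1,\dots,i-1$ of $U'$ and $U$ are literally identical (they avoid $P\cup Q$ entirely, and the shapes agree there), so $U'$ has no bad cell in those columns; $(t,i)$ is bad in $U'$ as already noted; and rows below $t$ are untouched, so $U'$ has no bad cell in column $i$ below row $t$ either. Consequently $\theta_{\lambda,\mu}$ applied to $(V',U')$ uses the very same $i$ and $t$, performs the inverse cell rearrangement — a routine re-indexing check shows it sends $U'$ back to $U$ — and multiplies the permutation by $s_{t-1}$ once more, so $\perm(V'')=\sigma s_{t-1}s_{t-1}=\sigma$ and $\sh(V'')=\sh(U)$, forcing $V''=V$ by \Cref{Th:THC_and_Perms}. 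Hence $\theta_{\lambda,\mu}^2=\operatorname{id}$. I expect the fiddly part to be the case analysis at the splice points (in particular the corner-pivot case $\gamma_{t-1}=i-1$, $\gamma_t=i$) together with the verification that the rearrangement never creates a bad cell strictly to the left of column $i$ or strictly below row $t$, which is precisely what makes $(i,t)$ — and hence $\theta_{\lambda,\mu}$ — well behaved under iteration.
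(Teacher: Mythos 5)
Your proposal is correct and follows essentially the same route as the paper's proof: verify that the cell swap yields an immaculate tableau of composition shape with the bad cell $(t,i)$ surviving, that $\Delta(V')$ is $\Delta(V)$ with entries $t-1$ and $t$ exchanged (so the content class is preserved and the sign flips via $s_{t-1}$), and that the selection data $(i,t)$ is recovered on the second application, forcing $\theta_{\lambda,\mu}^2=\operatorname{id}$. Your justifications (e.g.\ attributing $U(t-1,i-1)<U(t,i-1)$ to the absence of bad cells in column $i-1$, and the explicit handling of the degenerate case $\gamma_{t-1}=i-1$, $\gamma_t=i$) are if anything slightly more careful than the paper's, but not a different argument.
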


\begin{proof}
The map is clearly sign-reversing, so it just remains to show that it is a well-defined involution.  The content of $U'$ is equal to that of $U$ and $V'$ is a THC with $\sh(V')=\sh(U')$ by construction. Therefore proving that $\theta_{\lambda,\mu}$ is well-defined reduces to proving that $U'$ is an immaculate tableau of composition shape which is not a semistandard Young tableau and that the content of $V'$ is a rearrangement of $\lambda$.
  
To see that $U'$ is an immaculate tableau of composition shape, first note that since $U$ is immaculate, the first column of $U$ is increasing so a bad cell is not in the first column of $U$.  Hence $P$ and $Q$ contain no cells in the first column, so the first column of $U'$ is increasing.  We must prove that the rows of $U'$ weakly increase from left to right.  

Suppose $(t,i)$ is the chosen bad cell.  By minimality of $i$, $U(t-1,i-1)$ is defined. By maximality of $t$ and the fact that $U$ is an immaculate tableau, $U(t-1,i-1)<U(t,i-1) \le U(t,i)$.  To check if row $t-1$ of $U'$ is weakly increasing, we may assume that $U(t,i+1)$ is defined and show that $U(t-1,i-1) \le U(t,i+1)$.  If $U(t,i+1)$ is defined then $U(t,i) \le U(t,i+1)$, which implies $U(t-1,i-1) \le U(t,i+1)$.  Therefore row $t-1$ of $U'$ is weakly increasing.  

If $U(t-1,i)$ is undefined then row $t$ of $U'$ is weakly increasing.  Otherwise, since $(t,i)$ is a bad cell, we have $U(t-1,i) \ge U(t,i)$ and row $t$ of $U'$ is weakly increasing.  All other rows of $U'$ are the same as those in $U$, so $U'$ is an immaculate tableau. Moreover, $(t,i)$ is still a bad cell of $U'$ since $U'(t,i)=U(t,i) \le U(t,i+1)=U'(t-1,i)$ (assuming $U(t,i+1)$ and thus $U'(t-1,i)$ are defined). Hence, $U'$ is an immaculate tableau but not a semistandard Young tableau.

Next observe that if $U$ has shape $\alpha=(\alpha_1, \alpha_2, \hdots , \alpha_{\ell}) \vDash n$, then $U'$ has shape \[\alpha'=(\alpha_1,\alpha_2,\dots,\alpha_{t-2},\alpha_{t}-1,\alpha_{t-1}+1,\alpha_{t+1},\dots,\alpha_{\ell}).\] We must have $\alpha_t \ge 2$ (since the bad cell cannot be in the leftmost column), and therefore $\alpha'$ must be a composition.  

Next, to see that the content of $V'$ is a rearrangement of $\lambda$, let $\tau=\perm(V)$ and $\tau'=\tau s_{t-1}$. Hence, we have 
\begin{align*}
\Delta_k(V')&=\alpha'_{k}-k+\tau'_k=\alpha_k-k+\tau_k=\Delta_k(V)\quad (\text{for $k\not\in \{t-1,t\}$}),\\
\Delta_{t-1}(V')&=\alpha'_{t-1}-(t-1)+\tau'_{t-1}=\alpha_{t}-1-(t-1)+\tau_t = \alpha_t-t+\tau_t=\Delta_{t}(V),\\
\Delta_t(V')&=\alpha'_{t}-t+\tau'_t=\alpha_{t-1}+1-t+\tau_{t-1}=\Delta_{t-1}(V).
\end{align*}
Therefore, $$\Delta(V')=(\Delta_1(V), \Delta_2(V), \hdots, \Delta_{t-2}(V), \Delta_t(V), \Delta_{t-1}(V) , \Delta_{t+1}(V), \hdots,\Delta_{\ell}(V))$$ is the \THCc{} of $V'$.  Since $\dec(\Delta(V'))=\dec(\Delta(V))=\lambda$, the map $\theta_{\lambda,\mu}$ is well-defined.

Finally, to see that $\theta_{\lambda,\mu}$ is an involution, first recall that $(t,i)$ is still a bad cell of $U'$. Moreover, there are no bad cells to the left of $(t,i)$ in $U'$ since they would have been bad cells in $U$. By maximality, there are no bad cells in column $i$ of $U'$ that are in a row below row $t+1$ (as they would have also been bad cells of $U$).  Thus, $(t,i)$ remains the ``chosen'' bad cell in $U'$, so $U'$ becomes $U$ again under the algorithm. Since the bad cell is still in row $t$, the tunnel hook covering constructed has shape $\sh(U)$ and permutation $\perm(V')s_{t-1}=\perm(V)$, and thus must be $V$.
\end{proof}

\begin{alg}\label{alg: sym involution FULL}
Let $\lambda,\mu\vdash n$. We define a map $\rho_{\lambda,\mu}:\sympairs\to \sympairs$ as follows. Let $(T, S)\in \sympairs$.  Recall the map $\psi$ from \Cref{alg: involution}.
\begin{enumerate}
\item If $\lambda=\mu$ then $\rho_{\lambda,\lambda}(T,S)=(T,S)$.
\item If $\lambda \not= \mu$, set $(T',S')=(T,S)$.
\begin{enumerate}
\item Let $(V,U)=\psi(T',S')$. If $(V,U) \in \sympairs$, then set $\rho_{\lambda,\mu}(T,S)=(V,U)$.
\item Otherwise, set $(T',S')=\theta(V,U)$ and then go back to Step (2a).
\end{enumerate}
\end{enumerate}
Now set $\rho(T,S)=\rho_{\lambda,\mu}(T,S)$ if $(T,S)\in \sympairs$.
\end{alg}

\begin{thm}\label{thm:sym-involution}
For distinct partitions $\lambda$ and $\mu$ of $n$, $\rho_{\lambda,\mu}$ is a sign-reversing involution on $\sympairs.$
\end{thm}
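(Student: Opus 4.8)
The plan is to recognize \Cref{alg: sym involution FULL} as an instance of the Garsia--Milne involution principle, assembled from two ingredients already in hand: the sign-reversing involution $\psi$ of \Cref{section:k-inv-k} and the sign-reversing involution $\theta$ of \Cref{alg:Bad-Involution}.

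First I would record the two structural facts that power the argument. \emph{Fact A:} $\psi$ restricts to a fixed-point-free, sign-reversing involution on $\sympairsbig$. Writing $\sympairsbig$ as the disjoint union, over compositions $\alpha$ with $\dec(\alpha)=\lambda$, of the sets $C_{\alpha,\mu}$ of pairs $(T,S)$ with $T$ a THC of content $\alpha$ and $S$ an immaculate tableau of content $\mu$ satisfying $\sh(S)=\sh(T)$, we note that $\dec(\alpha)=\lambda\neq\mu$ forces $\alpha\neq\mu$; hence on each block $C_{\alpha,\mu}$ the map $\psi=\psi_{\alpha,\mu}$ is, by the analysis in \Cref{section:k-inv-k}, a sign-reversing involution, and it has no fixed point there because $\sgn(V)=-\sgn(T)$ whenever $\psi(T,S)=(V,U)$. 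Moreover $\psi$ does not leave $\sympairsbig$: it preserves the content of the THC (hence $\lambda=\dec(\alpha)$), the content $\mu$ of the tableau, the equality $\sh(S)=\sh(T)$, and immaculacy of the tableau. \emph{Fact B:} $\theta$ is a fixed-point-free, sign-reversing involution on $\sympairsbig\setminus\sympairs$; this is the lemma preceding \Cref{alg: sym involution FULL}, with fixed-point-freeness holding because $\perm(V')=\perm(V)s_{t-1}\neq\perm(V)$ and hence $\sgn(V')=-\sgn(V)$.

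Next I would extend $\theta$ to all of $\sympairsbig$ by declaring it the identity on $\sympairs$, so that $\theta$ becomes an involution on $\sympairsbig$ with fixed-point set exactly $\sympairs$ (using the stated observation that a pair in $\sympairsbig$ lies in $\sympairs$ iff its tableau has no bad cell). Then I would form the graph $G$ on the finite vertex set $\sympairsbig$ whose edges are the pairs $\{x,\psi(x)\}$ together with the pairs $\{x,\theta(x)\}$ for $x\notin\sympairs$. By Facts A and B every vertex carries a $\psi$-edge, and carries a $\theta$-edge precisely when it lies outside $\sympairs$; thus every vertex has degree $1$ or $2$, the degree-$1$ vertices being exactly the elements of $\sympairs$. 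Consequently the components of $G$ are simple paths and cycles, and each path has exactly two endpoints, both in $\sympairs$. I claim that \Cref{alg: sym involution FULL}, started at $x\in\sympairs$, simply traverses the path of $G$ containing $x$ from $x$ to its other endpoint: Step (2a) takes the only edge at $x$ (a $\psi$-edge), and thereafter the algorithm alternates $\theta$- and $\psi$-edges and never backtracks, since it leaves each intermediate vertex by the edge on which it did not arrive; it halts exactly upon reaching the other endpoint, which lies in $\sympairs$ and so has no outgoing $\theta$-edge. Because $G$ is finite and the walk is along a simple path, the algorithm terminates, so $\rho_{\lambda,\mu}$ is well-defined; and ``walk to the other endpoint of your path'' is patently an involution of $\sympairs$ with no fixed points (a one-vertex component cannot occur, as every vertex has a $\psi$-edge). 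Finally, the path from $x$ to $\rho_{\lambda,\mu}(x)$ alternates $\psi,\theta,\psi,\dots,\psi$ with one more $\psi$-edge than $\theta$-edge, hence has odd length, and each edge reverses the weight $\sgn(T)$ by Facts A and B, so the total effect reverses $\sgn$. Thus $\rho_{\lambda,\mu}$ is a sign-reversing involution on $\sympairs$, which together with equation \eqref{eq: KinvK-sym-setup} yields \Cref{thm:KinvK-involution}.

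\textbf{Main obstacle.} There is no single hard computation here; the real care goes into the bookkeeping of domains. One must make sure that $\psi$ genuinely maps $\sympairsbig$ into itself (so that iterating $\psi$ and $\theta$ makes sense), that $\theta$ maps $\sympairsbig\setminus\sympairs$ into itself (so that the walk can re-enter $\sympairs$ only through a $\psi$-step, which is exactly what makes the halting condition in Step (2a) correct), and that both maps are honestly fixed-point-free on the sets where they act; granting this, the Garsia--Milne path argument runs mechanically.
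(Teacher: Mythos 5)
Your proposal is correct and takes essentially the same route as the paper: the paper's proof also turns $\sympairsbig$ into a (multi)graph with $\psi$-edges at every vertex and $\theta$-edges at vertices of $\sympairsbig\setminus\sympairs$, observes that components are paths and cycles with the degree-one vertices exactly the elements of $\sympairs$, and concludes that \Cref{alg: sym involution FULL} walks each such path to its other endpoint, giving a sign-reversing involution since the paths have an odd number of sign-reversing edges. Your write-up merely makes explicit two points the paper leaves implicit (that $\psi$ preserves $\sympairsbig$ via the block decomposition into the sets $C_{\alpha,\mu}$ with $\dec(\alpha)=\lambda\neq\mu$, and the termination/no-backtracking of the walk), so it is the same Garsia--Milne argument.
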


\begin{proof}
Turn $\sympairsbig$ into a multigraph by connecting $(V,U)$ to $(V',U')$ if they are related by either
\begin{enumerate}
    \item $(V',U')=\g1(V,U)$, or
    \item  $(V,U)\in \sympairsbig\setminus \sympairs$ and $\theta(V,U)=(V',U')$.
\end{enumerate}
Since both $\psi$ and $\theta$ are involutions, every vertex of $\sympairsbig$ has degree $1$ or $2$.  Hence $\sympairsbig$ is a disjoint union of paths and cycles (which are all finite since $\sympairsbig$ is finite). See \Cref{fig:graphic-involution}.

Every element of $\sympairs$ must have degree one while every element of $\sympairsbig\setminus \sympairs$ must have degree two since only $\psi$ can produce an element of $\sympairs$.  Therefore every connected component containing an element of $\sympairs$ must be a path with an odd number of edges whose endpoints are in $\sympairs$.  Thus, $\rho_{\lambda,\mu}$ is an involution, and sign-reversing since every path has an odd number of edges.
\end{proof}

\begin{figure}
      \begin{tikzpicture}[scale=.48]
          \draw (.5,.5) rectangle (24.5,2.5);
          \node at (22,1) {$D_{\lambda,\mu}$};
          \draw (0,0) rectangle (25,7.5);
          \node at (22.5,7) {$E_{\lambda,\mu}$};
          \pgfmathsetmacro{\grdx}{14}
          \pgfmathsetmacro{\grdy}{4}
          \foreach \x in {1,...,\grdx}{
          \foreach \y in {1,...,\grdy}{
          \node (\x\y) at (1.5*\x,1.5*\y+.3) {$\bullet$};
          }
         
          }
          \tikzset{every path/.style={line width=2pt}}
        \draw[red] (11)--(12);
        \draw[blue] (12)--(13);
        \draw[red] (13)--(14);
        \draw[blue] (14)--(24);
        \draw[red] (24)--(23);
        \draw[blue] (23)--(33);
        \draw[red] (33)--(34);
        \draw[blue] (34)--(44);
        \draw[red] (44)--(43);
        \draw[blue] (43)--(42);
        \draw[red] (42)--(41);
        \draw[red] (51)--(52);
        \draw[blue] (52)--(53);
        \draw[red] (53)--(54);
        \draw[blue] (54)--(64);
        \draw[red] (64)--(74);
        \draw[blue] (74)--(84);
        \draw[red] (84)--(83);
        \draw[blue] (83)--(82);
        \draw[red] (82)--(81);

          \draw[red] (73)--(63);
        \draw[blue] (63)--(62);
        \draw[red] (62)--(72);
        \draw[blue] (72)--(73);
        \draw[red] (21)--(22);
        \draw[blue] (22)--(32);
        \draw[red] (32)--(31);
        \draw[red] (111)--(112);
        \draw[blue] (112)--(113);
        \draw[red] (113)--(103);
        \draw[blue] (103)--(104);
        \draw[red] (104)--(114);
        \draw[blue] (114)--(124);
        \draw[red] (124)--(134);
        \draw[blue] (134)--(144);
        \draw[red] (144)--(143);
        \draw[blue] (143)--(133);
        \draw[red] (133)--(123);
        \draw[blue] (123)--(122);
        \draw[red] (122)--(132);
        \draw[blue] (132)--(142);
        \draw[red] (142)--(141);
        
        \draw[blue] (94)to[bend left=60](93);
        \draw[red] (93)--(94);
        \draw[blue] (92)to[bend left=60](102);
        \draw[red] (92)--(102);
        \draw[red] (61)--(71);
        \draw[red] (91)--(101);
        \draw[red] (121)--(131);
      \end{tikzpicture}
      \caption{This is a sketch of the graph described in the proof of \Cref{thm:sym-involution}. Each dot is an element of $E_{\lambda,\mu}$. Red edges are the involution $\g1$ and blue edges are the involution $\theta$.}\label{fig:graphic-involution}
      \end{figure}
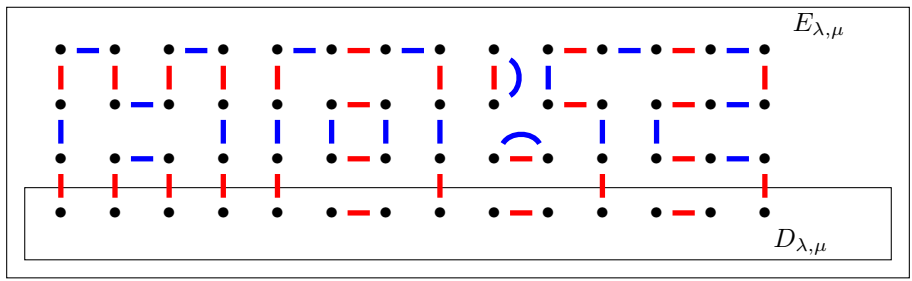

\begin{ex}
\label{Ex:ex53}

Since we will be cycling through collections of pairs,
we will use $(T^{(i+1)},S^{(i+1)})$ to denote the result of either $\psi$ or $\theta$ applied to $(T^{(i)},S^{(i)})$.  We will embed all of our permutations into $S_5$.  Consider the following pair: 
\[\begin{tikzpicture}[yscale=-1,scale=.55]

\def\L{{2,2,1}}
\pgfmathsetmacro{\len}{dim(\L)}

\foreach \y in {1,...,\len}{
    \pgfmathsetmacro{\j}{\L[\y-1]}
    \foreach \i in {1,...,\j}{
        \draw (\i-.5,\y-.5) rectangle (\i+.5,\y+.5);
        }
}
\setsepchar{\\/,/ }
\readlist\LS{1,2\\3,4\\5}
\pgfmathsetmacro{\lens}{\listlen\LS[]}
\foreach \y in {1,...,\lens}{
    \pgfmathsetmacro{\j}{\L[\y-1]}
    \foreach \i in {1,...,\j}{
        \node at (\i,\y) {\LS[\y,\i]};
        }
}
\node at (-1.5,2) {$(T^{(1)},S^{(1)})\ =\ $};

\tikzset{every node/.style={inner sep=-4pt}}
\begin{scope}[on background layer]
\tikzset{every path/.style={line width = 7pt,color=black,line cap=round,opacity=.15,rounded corners}}
\draw (1,1)--(2,1);
\draw (1,3)--(1,2)--(2,2);
\draw (2-.25,3)--(2+.25,3);
\end{scope}

\end{tikzpicture}\]

Begin with \Cref{alg: sym involution FULL} Step (2a), which applies \Cref{alg: involution}.  Note that $\sigma^{(1)}=[1,3,2,4,5]$ and $k^{(1)}=3$, $r^{(1)}=3$, $q^{(1)}=\sigma^{(1)}_3=2\neq k^{(1)},$ so Step (5)  of \Cref{alg: involution} applies. In particular, $\perm(T^{(2)})=s_{q^{(1)}}\sigma^{(1)}
=s_2\sigma^{(1)}=[1,2,3,4,5]$ and
$p^{(1)}=(\sigma^{(1)})^{-1}(q^{(1)}+1)=2$. Hence we move the $5$ to the end of row $p^{(1)}=2$ to obtain:

\[\begin{tikzpicture}[yscale=-1,scale=.55]

\def\L{{2,3}}
\pgfmathsetmacro{\len}{dim(\L)}

\foreach \y in {1,...,\len}{
    \pgfmathsetmacro{\j}{\L[\y-1]}
    \foreach \i in {1,...,\j}{
        \draw (\i-.5,\y-.5) rectangle (\i+.5,\y+.5);
        }
}
\setsepchar{\\/,/ }
\readlist\LS{1,2\\3,4,5}
\pgfmathsetmacro{\lens}{\listlen\LS[]}
\foreach \y in {1,...,\lens}{
    \pgfmathsetmacro{\j}{\L[\y-1]}
    \foreach \i in {1,...,\j}{
        \node at (\i,\y) {\LS[\y,\i]};
        }
}
\node at (-1.5,1.5) {$(T^{(2)},S^{(2)})\ =\ $};

\tikzset{every node/.style={inner sep=-4pt}}
\begin{scope}[on background layer]
\tikzset{every path/.style={line width = 7pt,color=black,line cap=round,opacity=.15,rounded corners}}
\draw (1,1)--(2,1);
\draw (1,2)--(3,2);
\end{scope}

\end{tikzpicture}\]



Note that 
$\sh(S^{(2)})=(2,3)$ is not a partition.  So, we need to apply Step (2b) of \Cref{alg: sym involution FULL}, which in turn applies \Cref{alg:Bad-Involution}.
Note that cell $(2,3)$ is the only bad cell, so $t^{(2)}=2$ and $P^{(2)}=Q^{(2)}=\emptyset$. Thus, we have: 

\[\begin{tikzpicture}[yscale=-1,scale=.55]

\def\L{{2,3}}
\pgfmathsetmacro{\len}{dim(\L)}

\foreach \y in {1,...,\len}{
    \pgfmathsetmacro{\j}{\L[\y-1]}
    \foreach \i in {1,...,\j}{
        \draw (\i-.5,\y-.5) rectangle (\i+.5,\y+.5);
        }
}
\setsepchar{\\/,/ }
\readlist\LS{1,2\\3,4,5}
\pgfmathsetmacro{\lens}{\listlen\LS[]}
\foreach \y in {1,...,\lens}{
    \pgfmathsetmacro{\j}{\L[\y-1]}
    \foreach \i in {1,...,\j}{
        \node at (\i,\y) {\LS[\y,\i]};
        }
}
\node at (-1.5,1.5) {$(T^{(3)},S^{(3)})\ =\ $};

\tikzset{every node/.style={inner sep=-4pt}}
\begin{scope}[on background layer]
\tikzset{every path/.style={line width = 7pt,color=black,line cap=round,opacity=.15,rounded corners}}
\draw (1,2)--(1,1)--(2,1);
\draw (2,2)--(3,2);
\end{scope}

\end{tikzpicture}\]

Next we apply Step (2a) of \Cref{alg: sym involution FULL} to the pair
$(T^{(3)},S^{(3)})$, which applies \Cref{alg: involution} with $k^{(3)}=2$ and $r^{(3)}=2$. Then, $\sigma^{(3)}_{r^{(3)}}=1\neq k^{(3)}$, so we apply 
Step (5) of \Cref{alg: involution}. Since $q^{(3)}=\sigma^{(3)}_{r^{(3)}}=1$ and $p^{(3)}=(\sigma^{(3)}_{r^{(3)}})^{-1}(q^{(3)}+1)=1$, we move $5$ to the end of row $p^{(3)}=1$ and set $\perm(T^{(4)})=s_{q^{(3)}}\sigma^{(3)}=[1,2,3,4,5]$ to get:
\begin{center}
\begin{tikzpicture}[yscale=-1,scale=.6]
\def\L{{3,2}}
\pgfmathsetmacro{\len}{dim(\L)}
\foreach \y in {1,...,\len}{
    \pgfmathsetmacro{\j}{\L[\y-1]}
    \foreach \i in {1,...,\j}{
        \draw (\i-.5,\y-.5) rectangle (\i+.5,\y+.5);
        }
}

\node at (-1.5,1.5) {$(T^{(4)},S^{(4)})=$};
\tikzset{every node/.style={inner sep=-4pt,color=black}}
\node (1t) at (1,1) {1};
\node (1t) at (2,1) {2};
\node (3t) at (3,1) {5};
\node (2t) at (1,2) {3};
\node (2t) at (2,2) {4};
\tikzset{every node/.style={inner sep=-4pt}}
\begin{scope}[on background layer]
\tikzset{every path/.style={line width = 7pt,color=black,line cap=round,opacity=.15,rounded corners}}
\draw (1,1)--(3,1);
\draw (1,2)--(2,2);
\end{scope}
\end{tikzpicture}
\end{center}
The table below records the common shape $\alpha^{(i)}$ (padded with 0's) of $(T^{(i)},S^{(i)})$ and the permutation $\sigma^{(i)}=\perm(T^{(i)})$ (extended to a permutation of $[5]$) as we progress through the algorithm.
\[
\begin{array}{c|c|c}(T^{(i)},S^{(i)})&\alpha^{(i)}&\sigma^{(i)}  \\\hline
     (T^{(1)},S^{(1)})&(2,2,1,0,0)&[1,3,2,4,5]\\
     (T^{(2)},S^{(2)})&(2,3,0,0,0)&[1,2,3,4,5]\\
     (T^{(3)},S^{(3)})&(2,3,0,0,0)&[2,1,3,4,5]\\
     (T^{(4)},S^{(4)})&(3,2,0,0,0)&[1,2,3,4,5]\\
\end{array}\]
The corresponding terms of the determinants of the Jacobi--Trudi matrices that correspond to the pairs $(\alpha^{(i)},\sigma^{(i)})$ 
are seen in Figure
\ref{fig:Sym}.

\begin{figure}
\begin{tikzpicture}
\node at (0,0) {
\hskip -1.875truein
\begin{tikzpicture}[   
    every matrix/.append style={
        left delimiter={[},
        right delimiter={]},
        }
 ]
  \matrix{
    \node[draw=red] {h_2}; & \node{h_3}; & \node {h_4}; & \node {h_5};& \node {h_6}; \\
     \node {h_1}; & \node{h_2}; & \node[draw=red] {h_3}; & \node {h_4};& \node {h_5}; \\    
     \node {h_{-1}}; & \node[draw=red]{h_0}; & \node {h_1}; & \node {h_2};& \node {h_3}; \\    
      \node{h_{-3}}; & \node {h_{-2}}; & \node {h_{-1}};& \node[draw=red] {h_0}; & \node {h_{1}};\\    
     \node {h_{-4}}; & \node{h_{-3}}; & \node {h_{-2}}; & \node {h_{-1}};& \node[draw=red] {h_0}; \\ 
};
    
     \draw[->] (2.3,0)--(3.35,0);
    \hskip 2.25truein
\matrix{
    \node[draw=red] {h_2}; & \node{h_3}; & \node {h_4}; & \node {h_5};& \node {h_6}; \\
     \node {h_2}; & \node[draw=red]{h_3}; & \node {h_4}; & \node {h_5};& \node {h_6}; \\    
     \node {h_{-2}}; & \node{h_{-1}}; & \node[draw=red] {h_0}; & \node {h_1};& \node {h_2}; \\    
      \node{h_{-3}}; & \node {h_{-2}}; & \node {h_{-1}};& \node[draw=red] {h_0}; & \node {h_{1}};\\    
     \node {h_{-4}}; & \node{h_{-3}}; & \node {h_{-2}}; & \node {h_{-1}};& \node[draw=red] {h_0}; \\ 
};\end{tikzpicture}};
\node at (0,-3) {
\hskip -1.875truein
\begin{tikzpicture}[   
    every matrix/.append style={
        left delimiter={[},
        right delimiter={]},
        }
 ]
\matrix{
    \node {h_2}; & \node[draw=red]{h_3}; & \node {h_4}; & \node {h_5};& \node {h_6}; \\
     \node[draw=red] {h_2}; & \node{h_3}; & \node {h_4}; & \node {h_5};& \node {h_6}; \\    
     \node {h_{-2}}; & \node{h_{-1}}; & \node[draw=red] {h_0}; & \node {h_1};& \node {h_2}; \\    
      \node{h_{-3}}; & \node {h_{-2}}; & \node {h_{-1}};& \node[draw=red] {h_0}; & \node {h_{1}};\\    
     \node {h_{-4}}; & \node{h_{-3}}; & \node {h_{-2}}; & \node {h_{-1}};& \node[draw=red] {h_0}; \\ 
};
\draw[->] (2.3,0)--(3.35,0);
\hskip 2.25truein
\matrix{
    \node[draw=red] {h_3}; & \node{h_4}; & \node {h_5}; & \node {h_6};& \node {h_7}; \\
     \node {h_1}; & \node[draw=red]{h_2}; & \node {h_3}; & \node {h_4};& \node {h_5}; \\    
     \node {h_{-2}}; & \node{h_{-1}}; & \node[draw=red] {h_0}; & \node {h_1};& \node {h_2}; \\    
      \node{h_{-3}}; & \node {h_{-2}}; & \node {h_{-1}};& \node[draw=red] {h_0}; & \node {h_{1}};\\    
     \node {h_{-4}}; & \node{h_{-3}}; & \node {h_{-2}}; & \node {h_{-1}};& \node[draw=red] {h_0}; \\ 
};\end{tikzpicture}};
\draw[->] (.45,-.25)--(-.55,-2.6);
\end{tikzpicture}
   
    \caption{An example of the product of terms in the Jacobi--Trudi determinant as we go from $(T^{(i)},S^{(i)})$
    to $(T^{(i+1)},S^{(i+1)})$ for $i \in \{1,2,3\}$}
    \label{fig:Sym}
\end{figure}
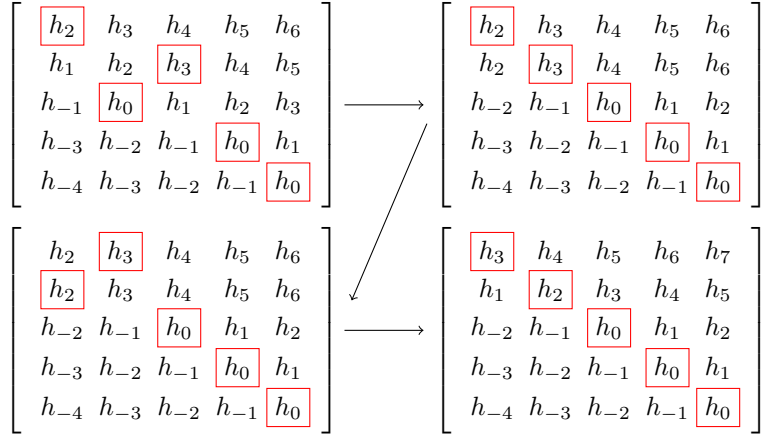

Note that the term of the determinant corresponding to the pair $(\alpha^{(1)},\sigma^{(1)})$
is $-h_2h_3h_0h_0h_0$
while the term in the corresponding
determinant corresponding to the pair
$(\alpha^{(4)},\sigma^{(4)})$
  is $h_3h_2h_0h_0h_0.$ 
  Since we are in $Sym$,
  these two terms cancel each other out.
\end{ex}

\begin{ex}
    \Cref{fig:sym-involu-full-ex} depicts \Cref{alg: sym involution FULL} applied to the pair $(T^{(1)},S^{(1)})\in \sympairs$ where $\lambda=(5,2,1)$ and $\mu=(2,2,2,2)$.  The circled entry indicates the selected bad cell for $\theta$.
\end{ex}

   \begin{figure}
    \centering
$\begin{tikzpicture}[yscale=-1,scale=.55]

\def\L{{4,2,2}}
\pgfmathsetmacro{\len}{dim(\L)}

\foreach \y in {1,...,\len}{
    \pgfmathsetmacro{\j}{\L[\y-1]}
    \foreach \i in {1,...,\j}{
        \draw (\i-.5,\y-.5) rectangle (\i+.5,\y+.5);
        }
}
\setsepchar{\\/,/ }
\readlist\LS{1,1,4,4\\2,2\\3,3}
\pgfmathsetmacro{\lens}{\listlen\LS[]}
\foreach \y in {1,...,\lens}{
    \pgfmathsetmacro{\j}{\L[\y-1]}
    \foreach \i in {1,...,\j}{
        \node at (\i,\y) {\LS[\y,\i]};
        }
}

\node at (2,4) {$(T^{(1)},S^{(1)})$};

\tikzset{every node/.style={inner sep=-4pt}}
\begin{scope}[on background layer]
\tikzset{every path/.style={line width = 7pt,color=black,line cap=round,opacity=.15,rounded corners}}
\draw (4,1)--(1,1)--(1,2);
\draw (2-.25,2)--(2+.25,2);
\draw (2,3)--(1,3);
\end{scope}

\end{tikzpicture}\ \begin{minipage}{.05\textwidth}\vspace{-2.8cm}$\xrightarrow{\psi}$
\end{minipage}\ \begin{tikzpicture}[yscale=-1,scale=.55]

\def\L{{3,2,3}}
\pgfmathsetmacro{\len}{dim(\L)}

\foreach \y in {1,...,\len}{
    \pgfmathsetmacro{\j}{\L[\y-1]}
    \foreach \i in {1,...,\j}{
        \draw (\i-.5,\y-.5) rectangle (\i+.5,\y+.5);
        }
}
\setsepchar{\\/,/ }
\readlist\LS{1,1,4\\2,2\\3,3,\circl{4}}
\pgfmathsetmacro{\lens}{\listlen\LS[]}
\foreach \y in {1,...,\lens}{
    \pgfmathsetmacro{\j}{\L[\y-1]}
    \foreach \i in {1,...,\j}{
        \node at (\i,\y) {\LS[\y,\i]};
        }
}

\node at (2,4) {$(T^{(2)},S^{(2)})$};

\tikzset{every node/.style={inner sep=-4pt}}
\begin{scope}[on background layer]
\tikzset{every path/.style={line width = 7pt,color=black,line cap=round,opacity=.15,rounded corners}}
\draw (3,1)--(1,1)--(1,3);
\draw (2-.25,2)--(2+.25,2);
\draw (3,3)--(2,3);
\end{scope}

\end{tikzpicture}\ \begin{minipage}{.05\textwidth}\vspace{-2.8cm}$\xrightarrow{\theta}$ \end{minipage}\ \begin{tikzpicture}[yscale=-1,scale=.55]

\def\L{{3,2,3}}
\pgfmathsetmacro{\len}{dim(\L)}

\foreach \y in {1,...,\len}{
    \pgfmathsetmacro{\j}{\L[\y-1]}
    \foreach \i in {1,...,\j}{
        \draw (\i-.5,\y-.5) rectangle (\i+.5,\y+.5);
        }
}
\setsepchar{\\/,/ }
\readlist\LS{1,1,4\\2,2\\3,3,4}
\pgfmathsetmacro{\lens}{\listlen\LS[]}
\foreach \y in {1,...,\lens}{
    \pgfmathsetmacro{\j}{\L[\y-1]}
    \foreach \i in {1,...,\j}{
        \node at (\i,\y) {\LS[\y,\i]};
        }
}

\node at (2,4) {$(T^{(3)},S^{(3)})$};

\tikzset{every node/.style={inner sep=-4pt}}
\begin{scope}[on background layer]
\tikzset{every path/.style={line width = 7pt,color=black,line cap=round,opacity=.15,rounded corners}}
\draw (3,1)--(1,1)--(1,3);
\draw (2,2)--(2,3);
\draw (3-.25,3)--(3+.25,3);
\end{scope}

\end{tikzpicture}
\ \begin{minipage}{.05\textwidth}\vspace{-2.8cm}$\xrightarrow{\psi}$ \end{minipage}\ \begin{tikzpicture}[yscale=-1,scale=.55]

\def\L{{3,3,2}}
\pgfmathsetmacro{\len}{dim(\L)}

\foreach \y in {1,...,\len}{
    \pgfmathsetmacro{\j}{\L[\y-1]}
    \foreach \i in {1,...,\j}{
        \draw (\i-.5,\y-.5) rectangle (\i+.5,\y+.5);
        }
}
\setsepchar{\\/,/ }
\readlist\LS{1,1,4\\2,2,\circl{4}\\3,3}
\pgfmathsetmacro{\lens}{\listlen\LS[]}
\foreach \y in {1,...,\lens}{
    \pgfmathsetmacro{\j}{\L[\y-1]}
    \foreach \i in {1,...,\j}{
        \node at (\i,\y) {\LS[\y,\i]};
        }
}

\node at (2,4) {$(T^{(4)},S^{(4)})$};

\tikzset{every node/.style={inner sep=-4pt}}
\begin{scope}[on background layer]
\tikzset{every path/.style={line width = 7pt,color=black,line cap=round,opacity=.15,rounded corners}}
\draw (3,1)--(1,1)--(1,3);
\draw (3,2)--(2,2);
\draw (2-.25,3)--(2+.25,3);
\end{scope}

\end{tikzpicture}
\ \begin{minipage}{.05\textwidth}\vspace{-2.8cm}$\xrightarrow{\theta}$ \end{minipage}\ 
\begin{tikzpicture}[yscale=-1,scale=.55]

\def\L{{2,4,2}}
\pgfmathsetmacro{\len}{dim(\L)}

\foreach \y in {1,...,\len}{
    \pgfmathsetmacro{\j}{\L[\y-1]}
    \foreach \i in {1,...,\j}{
        \draw (\i-.5,\y-.5) rectangle (\i+.5,\y+.5);
        }
}
\setsepchar{\\/,/ }
\readlist\LS{1,1\\2,2,4,4\\3,3}
\pgfmathsetmacro{\lens}{\listlen\LS[]}
\foreach \y in {1,...,\lens}{
    \pgfmathsetmacro{\j}{\L[\y-1]}
    \foreach \i in {1,...,\j}{
        \node at (\i,\y) {\LS[\y,\i]};
        }
}

\node at (2,4) {$(T^{(5)},S^{(5)})$};

\tikzset{every node/.style={inner sep=-4pt}}
\begin{scope}[on background layer]
\tikzset{every path/.style={line width = 7pt,color=black,line cap=round,opacity=.15,rounded corners}}
\draw (2,1)--(1,1);
\draw (4,2)--(1,2)--(1,3);
\draw (2-.25,3)--(2+.25,3);
\end{scope}

\end{tikzpicture}
$ 
\medskip

$
\ \begin{minipage}{.05\textwidth}\vspace{-3.4cm}$\xrightarrow{\psi}$ \end{minipage}\ \begin{tikzpicture}[yscale=-1,scale=.55]

\def\L{{2,3,2,1}}
\pgfmathsetmacro{\len}{dim(\L)}

\foreach \y in {1,...,\len}{
    \pgfmathsetmacro{\j}{\L[\y-1]}
    \foreach \i in {1,...,\j}{
        \draw (\i-.5,\y-.5) rectangle (\i+.5,\y+.5);
        }
}
\setsepchar{\\/,/ }
\readlist\LS{1,1\\2,2,\circl{4}\\3,3\\4}
\pgfmathsetmacro{\lens}{\listlen\LS[]}
\foreach \y in {1,...,\lens}{
    \pgfmathsetmacro{\j}{\L[\y-1]}
    \foreach \i in {1,...,\j}{
        \node at (\i,\y) {\LS[\y,\i]};
        }
}

\node at (2,5) {$(T^{(6)},S^{(6)})$};

\tikzset{every node/.style={inner sep=-4pt}}
\begin{scope}[on background layer]
\tikzset{every path/.style={line width = 7pt,color=black,line cap=round,opacity=.15,rounded corners}}
\draw (2,1)--(1,1);
\draw (3,2)--(1,2)--(1,4);
\draw (2-.25,3)--(2+.25,3);
\draw (2-.25,4)--(2+.25,4);
\end{scope}

\end{tikzpicture}\ \begin{minipage}{.05\textwidth}\vspace{-3.4cm}$\xrightarrow{\theta}$ \end{minipage}\ \begin{tikzpicture}[yscale=-1,scale=.55]

\def\L{{2,3,2,1}}
\pgfmathsetmacro{\len}{dim(\L)}

\foreach \y in {1,...,\len}{
    \pgfmathsetmacro{\j}{\L[\y-1]}
    \foreach \i in {1,...,\j}{
        \draw (\i-.5,\y-.5) rectangle (\i+.5,\y+.5);
        }
}
\setsepchar{\\/,/ }
\readlist\LS{1,1\\2,2,4\\3,3\\4}
\pgfmathsetmacro{\lens}{\listlen\LS[]}
\foreach \y in {1,...,\lens}{
    \pgfmathsetmacro{\j}{\L[\y-1]}
    \foreach \i in {1,...,\j}{
        \node at (\i,\y) {\LS[\y,\i]};
        }
}

\node at (2,5) {$(T^{(7)},S^{(7)})$};

\tikzset{every node/.style={inner sep=-4pt}}
\begin{scope}[on background layer]
\tikzset{every path/.style={line width = 7pt,color=black,line cap=round,opacity=.15,rounded corners}}
\draw (2,1)--(1,1)--(1,4);
\draw (3,2)--(2,2);
\draw (2-.25,3)--(2+.25,3);
\draw (2-.25,4)--(2+.25,4);
\end{scope}

\end{tikzpicture}\ \begin{minipage}{.05\textwidth}\vspace{-3.4cm}$\xrightarrow{\psi}$ \end{minipage}\ \begin{tikzpicture}[yscale=-1,scale=.55]

\def\L{{2,2,3,1}}
\pgfmathsetmacro{\len}{dim(\L)}

\foreach \y in {1,...,\len}{
    \pgfmathsetmacro{\j}{\L[\y-1]}
    \foreach \i in {1,...,\j}{
        \draw (\i-.5,\y-.5) rectangle (\i+.5,\y+.5);
        }
}
\setsepchar{\\/,/ }
\readlist\LS{1,1\\2,2\\3,3,\circl{4}\\4}
\pgfmathsetmacro{\lens}{\listlen\LS[]}
\foreach \y in {1,...,\lens}{
    \pgfmathsetmacro{\j}{\L[\y-1]}
    \foreach \i in {1,...,\j}{
        \node at (\i,\y) {\LS[\y,\i]};
        }
}

\node at (2,5) {$(T^{(8)},S^{(8)})$};

\tikzset{every node/.style={inner sep=-4pt}}
\begin{scope}[on background layer]
\tikzset{every path/.style={line width = 7pt,color=black,line cap=round,opacity=.15,rounded corners}}
\draw (2,1)--(1,1)--(1,4);
\draw (2,2)--(2,3);
\draw (3-.25,3)--(3+.25,3);
\draw (2-.25,4)--(2+.25,4);
\end{scope}

\end{tikzpicture}
\ \begin{minipage}{.05\textwidth}\vspace{-3.4cm}$\xrightarrow{\theta}$ \end{minipage}\ \begin{tikzpicture}[yscale=-1,scale=.55]

\def\L{{2,2,3,1}}
\pgfmathsetmacro{\len}{dim(\L)}

\foreach \y in {1,...,\len}{
    \pgfmathsetmacro{\j}{\L[\y-1]}
    \foreach \i in {1,...,\j}{
        \draw (\i-.5,\y-.5) rectangle (\i+.5,\y+.5);
        }
}
\setsepchar{\\/,/ }
\readlist\LS{1,1\\2,2\\3,3,4\\4}
\pgfmathsetmacro{\lens}{\listlen\LS[]}
\foreach \y in {1,...,\lens}{
    \pgfmathsetmacro{\j}{\L[\y-1]}
    \foreach \i in {1,...,\j}{
        \node at (\i,\y) {\LS[\y,\i]};
        }
}

\node at (2,5) {$(T^{(9)},S^{(9)})$};

\tikzset{every node/.style={inner sep=-4pt}}
\begin{scope}[on background layer]
\tikzset{every path/.style={line width = 7pt,color=black,line cap=round,opacity=.15,rounded corners}}
\draw (2,1)--(1,1)--(1,4);
\draw (2-.25,2)--(2+.25,2);
\draw (3,3)--(2,3);
\draw (2-.25,4)--(2+.25,4);
\end{scope}

\end{tikzpicture}
\ \begin{minipage}{.05\textwidth}\vspace{-3.4cm}$\xrightarrow{\psi}$ \end{minipage}\ \begin{tikzpicture}[yscale=-1,scale=.55]

\def\L{{2,2,2,2}}
\pgfmathsetmacro{\len}{dim(\L)}

\foreach \y in {1,...,\len}{
    \pgfmathsetmacro{\j}{\L[\y-1]}
    \foreach \i in {1,...,\j}{
        \draw (\i-.5,\y-.5) rectangle (\i+.5,\y+.5);
        }
}
\setsepchar{\\/,/ }
\readlist\LS{1,1\\2,2\\3,3\\4,4}
\pgfmathsetmacro{\lens}{\listlen\LS[]}
\foreach \y in {1,...,\lens}{
    \pgfmathsetmacro{\j}{\L[\y-1]}
    \foreach \i in {1,...,\j}{
        \node at (\i,\y) {\LS[\y,\i]};
        }
}

\node at (2,5) {$(T^{(10)},S^{(10)})$};

\tikzset{every node/.style={inner sep=-4pt}}
\begin{scope}[on background layer]
\tikzset{every path/.style={line width = 7pt,color=black,line cap=round,opacity=.15,rounded corners}}
\draw (2,1)--(1,1)--(1,4);
\draw (2-.25,2)--(2+.25,2);
\draw (2,3)--(2,4);
\draw (3-.25,4)--(3+.25,4);
\end{scope}

\end{tikzpicture}
$
    \caption{An example of {\Cref{alg: sym involution FULL}}}
    \label{fig:sym-involu-full-ex}
\end{figure}

\section{Connections to special rim hook tableaux}\label{section:connections}

In this section, we provide a bijection between special rim hook tableaux and tunnel hook coverings with nonnegative content. We do so by assigning a unique permutation to each special rim hook tableau in a manner similar to \Cref{Th:THC_and_Perms}.  This allows us to make comparisons between our algorithm and those of Loehr and Mendes~\cite{LoeMen06} and Sagan and Lee~\cite{SagLee03}.

\subsection{The permutation associated to a special rim hook tableau}

Following \cite{EgeRem90}, we define a rim hook $\rh$ to be a connected skew diagram (the set theoretic difference of two Ferrers diagrams of partition shapes) which does not contain
a $2 \times 2$ square of cells. A \emph{special rim hook tableau} of shape $\lambda$ is a set theoretic partition of the cells of a Ferrers diagram of shape $\lambda$ into rim hooks such that each rim hook contains a cell from the leftmost column of $\lambda.$ 

The \textit{initial cell} of a rim hook $\rh$ is the northeastern-most cell contained in $\rh$.  The \emph{terminal cell} of a rim hook $\rh$ is the southwestern-most cell contained in $\rh$. Recall that the $d$\textsuperscript{th} diagonal is the set of all cells of the form $\mathcal{L}_d=\{(d+k,1+k)\mid k\in \Z\}$. Note that $(x,y)$ and $(a,b)$ are on the same diagonal if and only if $x-y=a-b$. In particular, $(x,y)\in \mathcal{L}_d$ if and only if $d=x-y+1$.  This definition of diagonals allows them to be indexed by integers (either positive, zero, or negative).  However, while initial cells may occur in diagonals indexed by any integer, terminal cells always occur in diagonals indexed by positive integers since terminal cells always appear in the leftmost column. 

\begin{lem}{\label{lem:uniqueDiagonals}}
Each diagonal of a special rim hook tableau contains a maximum of one initial cell.
\end{lem}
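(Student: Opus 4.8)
The plan is to recast the statement as a comparison between the sizes of consecutive diagonals of $\lambda$, exploiting that every rim hook occupies a block of consecutive diagonals, one cell on each. First I would record two standard facts about a rim hook $\rh$. Reading its cells from the initial cell (northeastern-most) to the terminal cell (southwestern-most), consecutive cells are edge-adjacent and each step goes one cell down or one cell left; since a down-step $(x,y)\mapsto(x+1,y)$ and a left-step $(x,y)\mapsto(x,y-1)$ both increase $x-y+1$ by $1$, the rim hook meets a block of consecutive diagonals $\mathcal{L}_{d_0},\mathcal{L}_{d_0+1},\dots,\mathcal{L}_{d_1}$ in exactly one cell each, with its initial cell on $\mathcal{L}_{d_0}$ and its terminal cell on $\mathcal{L}_{d_1}$. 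Moreover, since $\rh$ meets the leftmost column, its terminal cell lies in that column.

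For an integer $d$, let $A_d$ and $B_d$ be the numbers of initial cells and of terminal cells on $\mathcal{L}_d$; the lemma asserts $A_d\le 1$ for all $d$. Since the rim hooks partition $\lambda$ and each meets $\mathcal{L}_d$ in at most one cell, $|\mathcal{L}_d\cap\lambda|$ is the number of rim hooks whose diagonal block contains $d$. Passing from $\mathcal{L}_{d-1}$ to $\mathcal{L}_d$, the rim hooks gained are exactly those whose block begins at $d$ and those lost are exactly those whose block ends at $d-1$, so
\[
|\mathcal{L}_d\cap\lambda|-|\mathcal{L}_{d-1}\cap\lambda| \;=\; A_d - B_{d-1}.
\]
Distinct rim hooks have distinct terminal cells and all terminal cells lie in column $1$, so $B_{d-1}\le 1$; and $B_{d-1}=0$ whenever $(d-1,1)\notin\lambda$, in particular whenever $d\le 1$.

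Finally I would bound the left side of the display using the map $(x,y)\mapsto(x-1,y)$: it sends $\mathcal{L}_d$ onto $\mathcal{L}_{d-1}$, and since $\lambda$ is a partition it sends any cell of $\lambda$ in a row $x\ge 2$ to a cell of $\lambda$. If $d\ge 2$, then every cell of $\mathcal{L}_d\cap\lambda$ lies in row $\ge d\ge 2$, so the map injects $\mathcal{L}_d\cap\lambda$ into $\mathcal{L}_{d-1}\cap\lambda$ and $|\mathcal{L}_d\cap\lambda|\le|\mathcal{L}_{d-1}\cap\lambda|$; with $B_{d-1}\le 1$ this gives $A_d\le 1$. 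If $d\le 1$, then at most one cell of $\mathcal{L}_d\cap\lambda$ lies in row $1$, so the map injects all remaining cells, $|\mathcal{L}_d\cap\lambda|\le|\mathcal{L}_{d-1}\cap\lambda|+1$, and since $B_{d-1}=0$ the identity again gives $A_d\le 1$. (For $d\ge \ell(\lambda)+2$ the diagonal $\mathcal{L}_d\cap\lambda$ is empty and $A_d=0$.) I expect the main obstacle to be the bookkeeping in the middle step: verifying the exact cancellation of the $B_{d-1}$ term precisely on the diagonals where it can be nonzero, together with pinning down the two structural facts about rim hooks; the diagonal inequality itself is a one-line injection.
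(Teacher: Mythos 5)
Your argument is correct, and it takes a genuinely different route from the paper's. The paper proves the lemma locally: it fixes a diagonal, looks at the lowest cell on it that is an initial cell, and walks up the diagonal, arguing that the cell directly above each diagonal cell is forced into the rim hook one step up (that hook must still reach column~$1$, so it must cross the diagonal, and it cannot re-enter the hook below), which prevents any higher cell of the diagonal from being an initial cell. You instead argue globally by a telescoping count: since each rim hook meets a consecutive block of diagonals in exactly one cell apiece, with the initial cell on the smallest diagonal and the terminal cell on the largest, the difference $|\mathcal{L}_d\cap\lambda|-|\mathcal{L}_{d-1}\cap\lambda|$ equals (initial cells on $\mathcal{L}_d$) minus (terminal cells on $\mathcal{L}_{d-1}$); the ``special'' hypothesis enters through $B_{d-1}\le 1$ (terminal cells are confined to column~$1$, so at most one hook can end on a given diagonal, and none can when $d\le 1$), while the partition shape gives $|\mathcal{L}_d\cap\lambda|\le|\mathcal{L}_{d-1}\cap\lambda|$ for $d\ge 2$ and $\le|\mathcal{L}_{d-1}\cap\lambda|+1$ for $d\le 1$ via the row-shift injection. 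I checked the identity and both case bounds; they are right, and your case split covers all diagonals. What your approach buys is a sharper statement -- an exact formula $A_d=|\mathcal{L}_d\cap\lambda|-|\mathcal{L}_{d-1}\cap\lambda|+B_{d-1}$ for the number of initial cells per diagonal -- and it isolates exactly where the column-$1$ condition is needed (without it the lemma is false, e.g.\ the all-singletons decomposition of a $2\times 2$ square has two initial cells on one diagonal); the paper's argument is shorter and matches the local style of its other structural lemmas. Both proofs lean on the same standard ribbon facts (a rim hook meets each diagonal at most once, spans a consecutive interval of diagonals, and, if it meets column~$1$, terminates there), which the paper itself invokes in the surrounding text, so treating them as known is consistent with the paper's level of rigor.
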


\begin{proof}
Assume that diagonal $d$ contains $k$ cells, at least one of which is an initial cell.  Let $\rh_1, \rh_2, \hdots , \rh_k$ be the $k$ distinct special rim hooks passing through this diagonal, ordered so that $\rh_i$ passes through cell $(d+i-1,i)$.  Let $m$ be the maximum value such that $\rh_m$ has initial cell $(d+m-1,m)$ in diagonal $d$.  Then $(d+m-2,m)$ is in the diagram and cannot be in $\rh_m$.  Therefore $(d+m-2,m)$ must be in $\rh_{m-1}$.
This means $(d+m-2,m-1)$ is not the initial cell of $\rh_{m-1}$.  Continuing inductively, we see that none of the special rim hooks $\rh_1, \rh_2, \hdots , \rh_{m-1}$ can have its initial cell in diagonal $d$.
\end{proof}

\Cref{lem:uniqueDiagonals} allows us to define a function $\sigma$ associated to a special rim hook tableau based on the 
individual special rim hooks.  Intuitively, each special rim hook defines one value $\sigma_i$ and the remainder of the function will be determined solely from the shape of the special rim hook tableau.

\begin{defn}\label{def:findperm}
For a special rim hook tableau $R$ of shape $\lambda$, its \textit{permutation} $\sigma$, denoted $\permSRT(R)$, is defined by specifying $\sigma_i$ for each $i \in [\ell(\lambda)]$ as follows:
\begin{enumerate}
\item If the diagonal $i-\lambda_i+1$ does not contain any initial cells, set $\sigma_i=i-\lambda_i$.
\item Otherwise, let $\rh$ be the special rim hook whose initial cell is in diagonal $i-\lambda_i+1$.  Let $r$ be the row containing the terminal cell of $\rh$ and set $\sigma_i=r$.
\end{enumerate}
\end{defn}

See   \Cref{fig:SRHT-def} for an example.  It is not immediately obvious that the object $\permSRT(R)$ is in fact a permutation in $S_{\ell(\lambda)}$, so we prove this before showing that the map $R \mapsto \permSRT(R)$ is an injection.

\begin{figure}
    \centering
   \begin{center}
    \begin{tikzpicture}[yscale=-1,scale=.6]

\def\L{{8, 7, 7, 4,4,4, 2, 2, 2}}
\pgfmathsetmacro{\len}{dim(\L)}

\foreach \y in {1,...,\len}{
    \pgfmathsetmacro{\j}{\L[\y-1]}
    \foreach \i in {1,...,\j}{
        \draw (\i-.5,\y-.5) rectangle (\i+.5,\y+.5);
        }
}
\node at (12,5) { \text{$\displaystyle\sigma=\binom{\textcolor{blue}{1\ 2\ 3\ 4\ 5\ }6\ 7\ \textcolor{blue}{8\ } 9}{\textcolor{red}{1\ 6\ 4\ 3\ 9\ } 2\  5\ \textcolor{red}{8\ } 7}$}};

\tikzset{every node/.style={inner sep=-4pt,color=blue}}
\node (1i) at (8,1) {};
\node (2i) at (7,2) {};
\node (3i) at (6,2) {};
\node (4i) at (2,2) {};
\node (5i) at (4,5) {};
\node (8i) at (1,7) {};
\node[label={[label distance=.1cm]45:1}] at (8,1) {};
\node[label={[label distance=.1cm]45:2}] at (7,2) {};
\node[label={[label distance=.1cm]45:3}] at (7,3) {};
\node[label={[label distance=.1cm]45:4}] at (4,4) {};
\node[label={[label distance=.1cm]45:5}] at (4,5) {};
\node[label={[label distance=.1cm]45:6}] at (4,6) {};
\node[label={[label distance=.1cm]45:7}] at (2,7) {};
\node[label={[label distance=.1cm]45:8}] at (2,8) {};
\node[label={[label distance=.1cm]45:9}] at (2,9) {};

\tikzset{every node/.style={inner sep=-4pt,color=red}}
\node (1t) at (1,1) {1};
\node (2t) at (1,6) {6};
\node (3t) at (1,4) {4};
\node (4t) at (1,3) {3};
\node (5t) at (1,9) {9};
\node (8t) at (1,8) {8};
\begin{scope}[on background layer]
\tikzset{every path/.style={line width = 7pt,color=black,line cap=round,opacity=.15,rounded corners}}
\draw (1i)--(1t);
\draw (2i)--(7,3)--(4,3)--(4,4)--(3,4)--(3,5)--(1,5)--(2t);
\draw (3i)--(3,2)--(3,3)--(2,3)--(2,4)--(3t);
\draw (4i)--(1,2)--(4t);
\draw (5i)--(4,6)--(2,6)--(2,9)--(5t);
\draw (8i)--(8t);
\end{scope}
\tikzset{every path/.style={line width=.5pt,color=black}}
\draw (8+.5,1+.5)-- (8-.5,1-.5);
\draw (7+.5,2+.5)--(7-1.5,2-1.5);
\draw (7+.5,3+.5)--(7-2.5,3-2.5);
\draw (4+.5,4+.5)--(4-3.5,4-3.5);
\draw (4+.5,5+.5)--(4-3.5,5-3.5);
\draw (4+.5,6+.5)--(4-3.5,6-3.5);
\draw (2+.5,7+.5)--(2-1.5,7-1.5);
\draw (2+.5,8+.5)--(2-1.5,8-1.5);
\draw (2+.5,9+.5)--(2-1.5,9-1.5);
\tikzset{every node/.style={inner sep=-4pt,color=black}}
\node at (8+1.2,1+.75) {$\mathcal{L}_{-6}$};
\node at (7+1.2,2+.75) {$\mathcal{L}_{-4}$};
\node at (7+1.2,3+.75) {$\mathcal{L}_{-3}$};
\node at (4+1,4+.75) {$\mathcal{L}_{1}$};
\node at (4+1,5+.75) {$\mathcal{L}_{2}$};
\node at (4+1,6+.75) {$\mathcal{L}_{3}$};
\node at (2+1,7+.75) {$\mathcal{L}_{6}$};
\node at (2+1,8+.75) {$\mathcal{L}_{7}$};
\node at (2+1,9+.75) {$\mathcal{L}_{8}$};

\end{tikzpicture}
\end{center}
    \caption{The diagonals $\mathcal{L}_{i-\lambda_i+1}$ for $i\in [\ell(\lambda)]$ considered in \Cref{def:findperm} are drawn in black for this special rim hook tableau of shape $(8,7,7,4,4,4,2,2,2)$.  The integers in $\sigma$ are colored blue and red if Case (2) is applied and black if Case (1) is applied.  The blue 5 (top) and the red 9 (bottom) indicate that the special rim hook starting in diagonal $\mathcal{L}_{5-\lambda_5+1}$ ends in row 9 (Case 2).  The black 7 (top) and black 5 (bottom) indicate that there are no initial cells in diagonal $\mathcal{L}_{7-\lambda_7+1}$ and so $\sigma_7=7-\lambda_7=5$ (Case 1).}
    \label{fig:SRHT-def}
\end{figure}
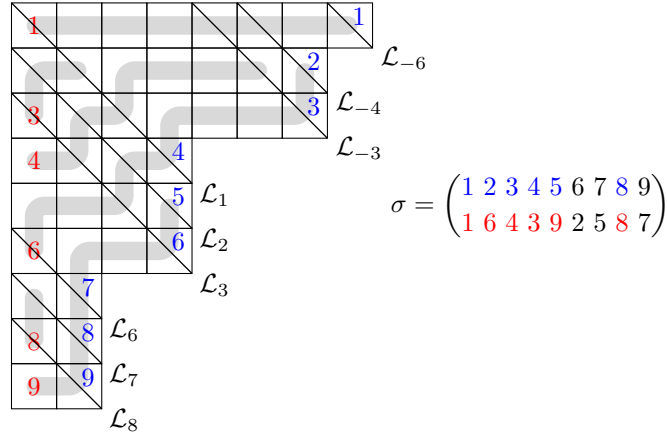

\begin{lem}
Suppose $R$ is a special rim hook tableau of shape $\lambda=(\lambda_1,\lambda_2,\dots,\lambda_\ell)$. Then $\sigma=\permSRT(R)$ is a permutation of $[\ell]$.
\end{lem}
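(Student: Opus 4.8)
The plan is to prove that $\sigma=\permSRT(R)$ is an injection $[\ell]\to[\ell]$; since these sets are finite of equal size, injectivity forces $\sigma$ to be a permutation of $[\ell]$. I would organize everything around the $\ell$ diagonals $d_i:=i-\lambda_i+1$ for $i\in[\ell]$. Because $\lambda$ is weakly decreasing, $d_{i+1}-d_i=1+(\lambda_i-\lambda_{i+1})\ge 1$, so $d_1<d_2<\cdots<d_\ell$ are pairwise distinct; moreover $\mathcal{L}_{d_i}$ is exactly the diagonal through the rightmost cell $(i,\lambda_i)$ of row $i$ (and $(i,\lambda_i)$ is the lowest cell of $\mathcal{L}_{d_i}$ lying in $C_\lambda$, since $(i+1,\lambda_i+1)\notin C_\lambda$). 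Thus the two cases of \Cref{def:findperm} split $[\ell]$ into the set $I_2$ of indices $i$ for which $\mathcal{L}_{d_i}$ contains an initial cell and the complementary set $I_1$. I would then record the two structural facts that drive the argument: (S1) the initial cell of every rim hook of $R$ is the rightmost cell of $C_\lambda$ in its row, and hence lies on one of the diagonals $\mathcal{L}_{d_i}$; and (S2) if $i\in I_1$ then $1\le i-\lambda_i$ and row $i-\lambda_i$ of $\lambda$ contains no terminal cell.

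Granting (S1), the map sending a rim hook $\mathfrak{g}$ of $R$ to the unique index $i$ with $d_i$ equal to the diagonal index of the initial cell of $\mathfrak{g}$ is well defined, injective by \Cref{lem:uniqueDiagonals}, and surjective onto $I_2$ by the definition of Case~(2); for this $i$ one has $\sigma_i=(\text{row of the terminal cell of }\mathfrak{g})$. Since distinct rim hooks are disjoint and their terminal cells all lie in the leftmost column, those terminal cells occupy distinct rows. Hence $\sigma$ restricted to $I_2$ is injective, takes values in $[\ell]$, and has image exactly the set $\mathcal{T}\subseteq[\ell]$ of rows of $\lambda$ that contain a terminal cell, with $|I_2|=|\mathcal{T}|=(\text{number of rim hooks of }R)$.

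For the Case-(1) indices, $\sigma_i=i-\lambda_i$. The assignment $i\mapsto i-\lambda_i$ is injective on all of $[\ell]$ (if $i-\lambda_i=j-\lambda_j$ with $i<j$ then $\lambda_j>\lambda_i$, contradicting monotonicity), so $\sigma$ is automatically injective on $I_1$; and by (S2) together with $i-\lambda_i\le i-1\le\ell-1$, this restriction maps $I_1$ into $[\ell]\setminus\mathcal{T}$. Finally $|I_1|+|I_2|=\ell=|\mathcal{T}|+|[\ell]\setminus\mathcal{T}|$ and $|I_2|=|\mathcal{T}|$, so $|I_1|=|[\ell]\setminus\mathcal{T}|$; combining the injectivity of $\sigma$ on each of $I_1$ and $I_2$ with the disjointness of the images $\mathcal{T}$ and $[\ell]\setminus\mathcal{T}$ shows $\sigma$ is injective, hence a bijection $[\ell]\to[\ell]$, i.e.\ an element of $S_\ell$.

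The bulk of the work, and the place I expect to get stuck if anything, is establishing (S1) and (S2). Both are statements about how a rim hook can sit inside a Ferrers diagram: one uses that a rim hook occupies a single contiguous segment in each of its rows, that consecutive rows overlap in exactly one column, and --- crucially for a \emph{special} rim hook tableau --- that every rim hook meets the leftmost column, which forces its bottom row to reach column~$1$ and is precisely the hypothesis that rules out the bad configurations (for instance, an initial cell with cells of another rim hook immediately to its right, which cannot be completed to a legal special rim hook tableau). Concretely I would prove, as a preliminary lemma, the dichotomy that for each $i\in[\ell]$ either $\mathcal{L}_{d_i}$ meets an initial cell (so $i\in I_2$ and $\sigma_i$ is a terminal row), or it does not, in which case $(i-1,\lambda_i)$ lies in the same rim hook as $(i,\lambda_i)$, forcing $i\ge 2$, $\lambda_i<i$, and $i-\lambda_i\notin\mathcal{T}$; with that lemma in hand the injectivity argument above goes through verbatim.
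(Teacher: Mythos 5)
Your skeleton---show that $\sigma$ is an injective self-map of $[\ell]$ by splitting the indices into $I_2$ (diagonal $\mathcal{L}_{i-\lambda_i+1}$ contains an initial cell, so $\sigma_i$ is a terminal row) and $I_1$ (no initial cell, so $\sigma_i=i-\lambda_i$), and observing the two images land in the set of terminal rows and its complement---is essentially the paper's argument. The problem is that the load-bearing facts are exactly the ones you defer. For $i\in I_1$ you need both $i-\lambda_i\geq 1$ and that no hook has its terminal cell in row $i-\lambda_i$, and your sketch asserts that the single observation ``$(i-1,\lambda_i)$ lies in the same rim hook as $(i,\lambda_i)$'' forces these. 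It does not: you must iterate that step along the entire diagonal $\mathcal{L}_{i-\lambda_i+1}$. For positivity, assuming $\lambda_i\geq i$ one shows successively that the hook through $(1,\lambda_i-i+1)$ must contain $(1,\lambda_i-i+2)$, the hook through $(2,\lambda_i-i+2)$ must contain $(2,\lambda_i-i+3)$, and so on, until the last hook would need the cell $(i,\lambda_i+1)$ outside the diagram; for the terminal-row claim one chases the other way, forcing the hook through $(i-\lambda_i+1,1)$ to contain $(i-\lambda_i,1)$, so that $(i-\lambda_i,1)$ cannot be the southwestern-most cell of any hook. These two inductive diagonal-chasing arguments are the entire substance of the paper's proof, and your proposal leaves them unproven (as you acknowledge), so as written the proof is incomplete.

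Separately, your (S1) is false. In shape $\lambda=(2,2)$ the decomposition into the hooks $\{(1,1)\}$ and $\{(1,2),(2,2),(2,1)\}$ is a legal special rim hook tableau (both hooks meet column $1$, neither contains a $2\times 2$ square), yet the initial cell $(1,1)$ of the singleton hook is not the rightmost cell of its row; the same example refutes your parenthetical claim that an initial cell with another hook's cell immediately to its right cannot occur. The weaker statement you actually invoke---that every initial cell lies on some diagonal $\mathcal{L}_{i-\lambda_i+1}$ (here $(1,1)$ lies on $\mathcal{L}_{2-\lambda_2+1}$)---is true, but it too requires a diagonal-chasing proof and is not given. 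Fortunately it is only used in your counting step, which is redundant: an injective map from the finite set $[\ell]$ into itself is automatically a bijection, so you could delete (S1) and the cardinality bookkeeping entirely. What cannot be deleted is the unproved (S2)/dichotomy lemma, and that is where the real work of the lemma lies.
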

\begin{proof}
Using the notation in \Cref{def:findperm}, we first check that $\sigma_i$ is in $[\ell]$ for every $1 \le i \le \ell$.  
It is clear that the values produced in Case (2) of \Cref{def:findperm} lie in $[\ell]$, so we must prove that $0 < i-\lambda_i \le \ell$ for all $i$ such that the diagonal $\mathcal{L}_{i-\lambda_i+1}$ contains no initial cells.

Fix an arbitrary $i \in [\ell]$ such that the diagonal $\mathcal{L}_{i-\lambda_i+1}$ does not contain any initial cells.  Since $i \le \ell$, it is clear that $i-\lambda_i \le \ell$.  Therefore we must show $0<i-\lambda_i$.  Assume not; that is, assume $i-\lambda_i \le 0$.  Then $i \le \lambda_i$, so there are $i$ cells in diagonal $\mathcal{L}_{i-\lambda_i+1}$ of the diagram.  Since special rim hooks move down and to the left, no two entries on the same diagonal can be covered by the same special rim hook.  Therefore there must be exactly $i$ special rim hooks passing through the diagonal $\mathcal{L}_{i-\lambda_i+1}$.  Let $\rh_1, \rh_2, \hdots , \rh_{i}$ be the special rim hooks passing through this diagonal.  Since $(1,\lambda_i-i+1)$ is not an initial cell, then $\rh_1$ must contain the cell $(1,\lambda_i-i+2)$.  But then since $(2,\lambda_i-i+2)$ cannot be an initial cell, $\rh_2$ includes the cell $(2, \lambda_i-i+3)$.  Continuing inductively, we arrive at the following contradiction.  We see that if none of $\rh_1,\rh_2, \hdots , \rh_{i-1}$ has initial cell in diagonal $\mathcal{L}_{i-\lambda_i+1}$, then $\rh_i$ must have its initial cell in this diagonal for otherwise $\rh_i$ would need to include $(i,\lambda_i-i+(i+1))=(i,\lambda_i+1)$, which is not in the diagram.  Hence by contradiction we must have $i-\lambda_i>0$ for all $i$ such that the diagonal $\mathcal{L}_{i-\lambda_i+1}$ contains no initial cells.

Now we must check the injectivity of $\sigma$.  Since $\lambda$ is a partition, $i - \lambda_i = i'-\lambda_{i'}$ if and only if $i=i'$.  Since no two special rim hooks can have terminal cells in the same row, it is clear that if $i$ and $j$ are distinct values such that diagonals $i-\lambda_i+1$ and $j-\lambda_j+1$ do contain initial cells, then $\sigma_i \not= \sigma_j$. 

Suppose there exists $i$ such that the diagonal $\mathcal{L}_{i-\lambda_i+1}$ contains no initial cells.  Note that we just proved $i - \lambda_i > 0$.  We will show that  row $i-\lambda_i$ cannot contain a terminal cell, which completes the proof that $\sigma_j \not= \sigma_i$ for any $j \not= i$ such that $1 \le j \le \ell$.  
The special rim hook tableau definition implies that there are $\lambda_i$  distinct special rim hooks passing through diagonal $\mathcal{L}_{i-\lambda_i+1}$.  Call these $\rh_1, \rh_2, \hdots , \rh_{\lambda_i}$ where $\rh_{\lambda_i}$ is the special rim hook passing through the cell $(i,\lambda_i)$.  Since $(i,\lambda_i)$ is the last cell in row $i$ and cannot be the initial cell for $\rh_{\lambda_i}$, the cell $(i-1,\lambda_i)$ must be contained in $\rh_{\lambda_i}$.  But then since $(i-1,\lambda_{i}-1) \in \mathcal{L}_{i-\lambda_i+1}$ cannot be the initial cell for $\rh_{\lambda_i-1}$, the cell $(i-2,\lambda_i-1)$ must be contained in $\rh_{\lambda_i-1}$.  Continuing inductively, 
we see that the cell $(i-\lambda_i,1)$ must be contained in $\rh_1$, which must terminate at or below row $i-\lambda_i+1$ since $(i-\lambda_i+1,1) \in \rh_1$.  Therefore there cannot be a terminal cell in row $i-\lambda_i$, as desired.

Therefore $\permSRT(R)$ is indeed a permutation of $[\ell]$ since $\sigma=\permSRT(R)$ is an injection whose image is contained in $[\ell]$.
\end{proof}

We are now ready to prove that distinct special rim hook tableaux map to distinct permutations.  Recall that the \textit{sign} of a special rim hook tableau $R$, denoted $\sgn(R)$, is $(-1)^k$ if $k$ is the number of rows crossed by special rim hooks in $R$. 
\begin{thm}\label{thm:SRT-and-Permutations}
Let $\lambda=(\lambda_1,\dots,\lambda_\ell)\vdash n$. The map $R\mapsto \permSRT(R)$ is an injection from the set of special rim hook tableaux of shape $\lambda$ into $S_\ell$ such that $\sgn(R)=\sgn(\permSRT(R)).$ 
\end{thm}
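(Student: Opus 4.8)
The plan is to deduce both assertions---injectivity and the sign identity---from a single procedure that strips the special rim hooks off $R$ one diagonal‑range at a time. The first point is that $\sigma=\permSRT(R)$ already records which rows are of type (1) and which are of type (2) in \Cref{def:findperm}: in type (2) the initial cell of the relevant rim hook lies on a strictly smaller diagonal than its terminal cell $(\sigma_i,1)$, which forces $\sigma_i>i-\lambda_i$, whereas in type (1) we have $\sigma_i=i-\lambda_i$ and (as established in the proof that $\permSRT(R)$ is a permutation) row $i-\lambda_i$ contains no terminal cell; hence $i$ is a row of type (1) exactly when $\sigma_i=i-\lambda_i$. Since distinct special rim hooks have distinct terminal rows, and since (as one verifies) the initial cell of every special rim hook of $R$ lies on a diagonal of the form $i-\lambda_i+1$ for some $i$, the permutation $\sigma$ therefore determines the complete list of special rim hooks of $R$ up to the data ``diagonal of initial cell'' and ``terminal cell'': for each row $i$ of type (2) there is exactly one special rim hook, occupying the diagonals $i-\lambda_i+1,\dots,\sigma_i$ with one cell on each and terminating at $(\sigma_i,1)$, and these are all of them.

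For injectivity it then suffices to show that a family of rim hooks of $C_\lambda$ realizing this prescribed data, and partitioning $C_\lambda$ into special rim hooks, is unique whenever it exists. I would prove this by induction on $|\lambda|$, peeling off the rim hook $\rh_1$ through the northeast corner $(1,\lambda_1)$: that corner is the unique most‑northeastern cell of $C_\lambda$, so it is the initial cell of $\rh_1$, and by the previous paragraph $\rh_1$ occupies the diagonals $1-\lambda_1+1,\dots,\sigma_1$ with one cell on each and terminates at $(\sigma_1,1)$, so only its cell on each intermediate diagonal remains to be pinned down. The key claim---the step I expect to be the real obstacle---is that the partition requirement forces these choices: among the ribbons inside $C_\lambda$ joining $(1,\lambda_1)$ to $(\sigma_1,1)$, exactly one leaves a complement that again partitions into special rim hooks, any other choice on some diagonal stranding a cell of $C_\lambda$ in a connected component of the remaining region that meets no cell of the first column. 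Granting this, one removes $\rh_1$, re‑reads the hook data off the truncated permutation, and applies the inductive hypothesis. A secondary nuisance is that $C_\lambda\setminus\rh_1$ is usually a skew diagram rather than a straight one, so the induction must be set up for a suitable class of skew shapes (with ``special rim hook'' still meaning ``contains a first‑column cell''), or else reorganized around the first‑column expansion of the Jacobi--Trudi determinant as in \cite{EgeRem90}, which removes the hook terminating in a prescribed row and thus stays among straight shapes; alternatively, one could run a minimal‑counterexample argument, choosing a cell on the smallest diagonal where two tableaux with equal image disagree and contradicting the ribbon and first‑column conditions there, which again reduces to the same local forcing.

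The sign identity comes out of the same peeling. Removing a special rim hook occupying $k$ rows multiplies $\sgn(R)$ by $(-1)^{k-1}$, once for each row‑boundary it crosses; and if that hook is detected at row $i$, then by \Cref{delta-and-permutation} the exponent $\lambda_i-i+\sigma_i$ equals its number of cells, while deleting the arc $i\mapsto\sigma_i$ from $\sigma$ (and relabeling) changes $\sgn(\sigma)$ by the sign of the corresponding piece of its cycle decomposition, which one checks is likewise $(-1)^{k-1}$. Alternatively, one may invoke \Cref{Th:THC_and_Perms} to rewrite $\sgn(\permSRT(R))$ as the sign of the tunnel hook covering of shape $\lambda$ with permutation $\sigma$, and then compare tunnel hooks with rim hooks one at a time. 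Either way, $\sgn(R)$ and $\sgn(\permSRT(R))$ both equal $1$ for the empty diagram and change by the same factor at each peeling step, so they agree for every $R$, which finishes the proof once the forced‑peeling claim is in hand.
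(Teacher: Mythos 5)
Your argument has a genuine gap exactly where you flag it. You peel off the hook through the northeast corner $(1,\lambda_1)$, and injectivity then rests on your ``key claim'' that among all ribbons inside $C_\lambda$ from $(1,\lambda_1)$ to $(\sigma_1,1)$ exactly one admits a completion of the complement by special rim hooks; you never prove this, you only say ``granting this.'' But that forcing statement \emph{is} the content of injectivity at the top row: the hook starting at $(1,\lambda_1)$ is not confined to the outer rim of $\lambda$, so knowing its initial diagonal and terminal row does not pin down its intermediate cells by any local or boundary argument, and ruling out two different ribbons each admitting (possibly different) completions requires a genuine connectivity/separation analysis of the complement that your sketch does not supply. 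The alternatives you mention (first-column expansion, minimal counterexample) are, as you yourself note, ``the same local forcing,'' so the gap is not closed by them either. The secondary issue you raise (the complement being skew) compounds this: your induction is not even set up on a class of shapes for which the statement has been formulated.

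The paper's proof avoids this obstacle entirely by peeling from the other end: it removes the hook terminating in cell $(\ell,1)$. Since every cell of that hook lies on the outer rim of $\lambda$, and each diagonal meets the outer rim in exactly one cell, the hook is completely determined by the diagonal of its initial cell, which is read off from $r=\sigma^{-1}(\ell)$; hence two tableaux with the same permutation share this hook with no forcing argument needed, the leftover shape is again a partition, and \Cref{eq:perm-induction} lets the induction on $\ell$ proceed. Your sign bookkeeping (a hook crossing $k-1$ row boundaries contributes $(-1)^{k-1}$ to both $\sgn(R)$ and, via the corresponding cycle, to $\sgn(\permSRT(R))$) is essentially the paper's and is fine, but it too is carried by the same peeling step, so it inherits the gap. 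If you want to salvage your top-down route, you must actually prove the forced-peeling claim (e.g., by showing that any deviation of the top ribbon from the correct choice strands a nonempty connected component of $C_\lambda$ disjoint from the first column, and by extending the induction to the skew shapes that arise); as written, the proof is incomplete. Also, your parenthetical ``as one verifies'' claim that every initial cell lies on a diagonal of the form $i-\lambda_i+1$ does hold, but it deserves the counting argument (type-(2) rows inject into terminal rows, and surjectivity of $\sigma$ forces equality) rather than a bare assertion.
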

\begin{proof}
We prove injectivity by induction on $\ell$. Suppose two special rim hook tableaux $R$ and $T$ of shape $\lambda$ have permutations $\sigma$ and $\tau$, respectively, with $\sigma=\tau$. If $\ell=1$, then clearly $R=T$. Therefore assume $\ell>1$.  By construction, the special rim hook tableaux $R$ and $T$ have rim hooks $\rh$ and $\rh'$, respectively, each terminating in cell $(\ell,1)$.  Since $\sigma^{-1}(\ell)=\tau^{-1}(\ell)$, the initial cells of these special rim hooks must lie in the same diagonal.  However, only one cell from each diagonal can be in the outer rim of the shape and the cells in these rim hooks all lie on the outer rim, so they must have the same initial cell.  Therefore $\rh=\rh'$.

If we remove $\rh$ from $R$ and $T$, we obtain two special rim hook tableaux $R'$ and $T'$, respectively, on a shape $\lambda'$ such that $\ell(\lambda')<\ell(\lambda)$ and $\permSRT(R')=\permSRT(T')\in S_{\ell(\lambda')}$. 
In particular, suppose $r=\sigma^{-1}(\ell)$.  Then we claim that
\begin{equation}\label{eq:perm-induction}\permSRT(R')_j=\begin{cases}
        \sigma_j&\text{if $j<r$,}\\
        \sigma_{j+1}&\text{if $j\geq r$}
    \end{cases}\quad \text{and}\quad \permSRT(T')_j=\begin{cases}
        \tau_j&\text{if $j<r$,}\\
        \tau_{j+1}&\text{if $j\geq r$}.
    \end{cases}\end{equation}
To see this, first note that the only cells impacted by the removal of $\rh$ and $\rh'$ from $R$ and $T$, respectively,
are those in rows $r$ through $\ell$. 
For $j \ge r$, if $\mathfrak{h}$ is a special rim hook with initial cell in the diagonal $d=(j+1)-\lambda_{j+1}+1$, then $\mathfrak{h}$ still has its initial cell in this diagonal but the cell $(j+1,\lambda_{j+1})$ is no longer contained in the diagram. Therefore this special rim hook is now used to determine the value of $\permSRT(R')_j$.  Since its terminal cell is still in the same row, we have $\permSRT(R')_j = \sigma_{j+1}$.  If there was no initial cell in diagonal $d=(j+1)-\lambda_{j+1}+1$, then this diagonal still contains no initial cell in $R'$.  Since the leftmost cell in row $j$ of $\mathfrak{g}$ is immediately above the rightmost cell in row $j+1$ of $\rh$, $\lambda_j'=\lambda_{j+1}-1$.  Therefore $j-\lambda'_j = j- (\lambda_{j+1}-1) = j+1 - \lambda_{j+1}$.  So $\permSRT(R')_j=j-\lambda'_j=(j+1)-\lambda_{j+1} = \sigma_{j+1}$.  Since $T'$ is constructed in the same way as $R'$, the computation of $\permSRT(T')$ is the same.  Therefore $\permSRT(R') = \permSRT(T')$.  By induction, $R'=T'$, so $R=T$.

We likewise prove the statement about signs inductively. For $R$, $R'$, $\rh$, and $r$ as in the above paragraph, if $(r,\lambda_r)$ is the initial cell of $\rh$, then $\sigma_r=\ell$, so
    \[\sgn(R)=(-1)^{\ell-r}\sgn(R')\]
    since $\rh$ crosses $\ell-r$ rows. On the other hand, \begin{equation}\label{eq:permSRT-recurrence}\permSRT(R)=\permSRT(R')(\ell,\ell-1,\dots,r+1,r)\end{equation}     by \Cref{eq:perm-induction}, so 
    \[\sgn(\permSRT(R))=(-1)^{\ell-r}\sgn(\permSRT(R'))\]
as desired.
\end{proof}

This induction proof implies a more direct formula for the permutation associated to a special rim hook tableau.
\begin{prop}\label{prop:permutation-SRHT-cycle-decomp}
   Suppose a special rim hook tableau $R$ has rim hooks $\rh_1, \rh_2, \dots,\rh_k$ with initial cells in rows $x_1, x_2, \dots,x_k$ and terminal cells in rows $j_1, j_2, \dots,j_k$, respectively. Assume that we have ordered the rim hooks so that $j_1<j_2<\cdots<j_k.$ Then,
\[\permSRT(R)=(j_1,j_1-1,\dots,x_1+1,x_1)\cdots (j_k,j_k-1,\dots,x_k+1,x_k).\] 
\end{prop}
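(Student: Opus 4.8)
The plan is to prove \Cref{prop:permutation-SRHT-cycle-decomp} by induction on $k$ (equivalently, on $\ell(\lambda)$), simply unwinding the recurrence \eqref{eq:permSRT-recurrence} that was obtained inside the proof of \Cref{thm:SRT-and-Permutations}. The base case $k\le 1$ is immediate: if $k=0$ then $R$ is empty and both sides are the empty permutation, and if $k=1$ then $R$ is a single row, the lone rim hook has $x_1=j_1=1$, and both sides equal the identity of $S_1$.

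For the inductive step, write $\ell=\ell(\lambda)$ and let $\rh$ be the rim hook of $R$ containing the cell $(\ell,1)$. Since $(\ell,1)$ is the southwestern-most cell of the whole diagram, it is the terminal cell of $\rh$, so $\rh$ has the largest terminal row among all rim hooks of $R$; as distinct rim hooks have distinct terminal rows, we may assume $\rh=\rh_k$ with $j_k=\ell$, and we set $r=x_k$ for its initial row. By \eqref{eq:permSRT-recurrence}, removing $\rh_k$ from $R$ produces a special rim hook tableau $R'$ of some shape $\lambda'$ with $\ell(\lambda')<\ell$, and
\[\permSRT(R)=\permSRT(R')\cdot (j_k, j_k-1,\dots,x_k+1,x_k),\]
where $\permSRT(R')$ is viewed inside $S_\ell$ by fixing the points $\ell(\lambda')+1,\dots,\ell$.

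The one point needing care is that the surviving rim hooks keep their row data: I claim the rim hooks of $R'$ are exactly $\rh_1,\dots,\rh_{k-1}$, each occupying the same rows in $R'$ as in $R$, so that their initial and terminal rows are unchanged. Indeed, since removing a border strip from a Young diagram leaves a Young diagram, $\rh_k$ meets each row it touches in a rightmost segment, and the leftmost column it occupies is weakly decreasing from top to bottom; hence the rows emptied by deleting $\rh_k$ form a contiguous bottom block $\{r_0,r_0+1,\dots,\ell\}$ and $\ell(\lambda')=r_0-1$. Each earlier $\rh_i$ has its terminal cell $(j_i,1)\in\rh_i$, hence not in $\rh_k$, which forces $j_i<r_0$, so $\rh_i$ lives entirely within the (un-reindexed) rows $1,\dots,r_0-1$. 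Applying the induction hypothesis to $R'$ now gives $\permSRT(R')=\prod_{i=1}^{k-1}(j_i,j_i-1,\dots,x_i+1,x_i)$ with the factors in increasing order of $j_i$, and since $j_k=\ell$ exceeds every $j_i$ with $i<k$, substituting into the displayed equation yields the claimed identity. The agreement of signs is already contained in \Cref{thm:SRT-and-Permutations}, so nothing further is needed there. The only genuine obstacle is the row-index bookkeeping just described, and it dissolves once one observes that only a contiguous block of bottom rows can disappear when one removes the rim hook terminating in the last row.
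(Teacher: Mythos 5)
Your argument is correct and is essentially the paper's own proof, which consists of the single line ``apply \Cref{eq:permSRT-recurrence} repeatedly'': you carry out that induction and supply the row bookkeeping (that the hook through $(\ell,1)$ is $\rh_k$ with $j_k=\ell$, that only a contiguous block of bottom rows disappears, and that the surviving hooks keep their initial and terminal rows), which the paper leaves implicit. One small slip: when $k=1$ the tableau need not be a single row (any hook shape $(a,1^b)$ is covered by a single special rim hook), but this base case is redundant, since your $k=0$ case together with the inductive step already handles $k=1$.
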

\begin{proof}
    Apply \Cref{eq:permSRT-recurrence} repeatedly.
\end{proof}


\begin{figure}[h]
\centering

\begin{tikzpicture}[yscale=-1,scale=.6]

\def\L{{8, 7, 7, 4,4,4, 2, 2, 2}}
\pgfmathsetmacro{\len}{dim(\L)}

\foreach \y in {1,...,\len}{
    \pgfmathsetmacro{\j}{\L[\y-1]}
    \foreach \i in {1,...,\j}{
        \draw (\i-.5,\y-.5) rectangle (\i+.5,\y+.5);
        }
}

\tikzset{every node/.style={inner sep=-4pt,color=blue}}
\node (1i) at (8,1) {1};
\node (2i) at (7,2) {2};
\node[label={[label distance=.1cm]45:3}] (3i) at (6,2) {};
\node[label={[label distance=.1cm]45:4}] (4i) at (2,2) {};
\node (5i) at (4,5) {5};
\node[label={[label distance=.1cm]45:8}] (8i) at (1,7) {};
\tikzset{every node/.style={inner sep=-4pt,color=red}}
\node (1t) at (1,1) {1};
\node (2t) at (1,6) {6};
\node (3t) at (1,4) {4};
\node (4t) at (1,3) {3};
\node (5t) at (1,9) {9};
\node (8t) at (1,8) {8};
\begin{scope}[on background layer]
\tikzset{every path/.style={line width = 7pt,color=black,line cap=round,opacity=.15,rounded corners}}
\draw (1i)--(1t);
\draw (2i)--(7,3)--(4,3)--(4,4)--(3,4)--(3,5)--(1,5)--(2t);
\draw (3i)--(3,2)--(3,3)--(2,3)--(2,4)--(3t);
\draw (4i)--(1,2)--(4t);
\draw (5i)--(4,6)--(2,6)--(2,9)--(5t);
\draw (8i)--(8t);
\end{scope}
\tikzset{every path/.style={line width=.5pt,color=blue}}
\draw (7+.5,3+.5)--($(3i)-(.5,.5)$);
\draw (4+.5,4+.5)--($(4i)-(.5,.5)$);
\draw (2+.5,8+.5)--($(8i)-(.5,.5)$);

\end{tikzpicture}
\hspace{1cm}
\begin{tikzpicture}[yscale=-1,scale=.6]

\def\L{{8, 7, 7, 4,4,4, 2, 2, 2}}
\pgfmathsetmacro{\len}{dim(\L)}

\foreach \y in {1,...,\len}{
    \pgfmathsetmacro{\j}{\L[\y-1]}
    \foreach \i in {1,...,\j}{
        \draw (\i-.5,\y-.5) rectangle (\i+.5,\y+.5);
        }
}

\tikzset{every node/.style={inner sep=-4pt,color=blue}}
\node (1i) at (8,1) {1};
\node (2i) at (7,2) {2};
\node (3i) at (7,3) {3};
\node (4i) at (4,4) {4};
\node (5i) at (4,5) {5};
\node (8i) at (2,8) {8};

\tikzset{every node/.style={inner sep=-4pt,color=red}}
\node (1t) at (1,1) {1};
\node (2t) at (1,6) {6};
\node[label={[label distance=.1cm]45:4}] (3t) at (2,5) { };
\node[label={[label distance=.1cm]45:3}] (4t) at (3,5) { };
\node (5t) at (1,9) {9};
\node[label={[label distance=.1cm]45:8}] (8t) at (2,9) { };
\begin{scope}[on background layer]
\tikzset{every path/.style={line width = 7pt,color=black,line cap=round,opacity=.15,rounded corners}}
\draw (1i)--(1t);
\draw (2i)--(1,2)--(2t);
\draw (3i)--(2,3)--(3t);
\draw (4i)--(3,4)--(4t);
\draw (5i)--(4,6)--(2,6)--(2,7)--(1,7)--(5t);
\draw (8i)--(8t);
\end{scope}
\tikzset{every path/.style={line width=.5pt,color=red}}
\draw (1-.5,3-.5)--($(4t)+(.5,.5)$);
\draw (1-.5,4-.5)--($(3t)+(.5,.5)$);
\draw (1-.5,8-.5)--($(8t)+(.5,.5)$);

\end{tikzpicture}
\[\binom{\textcolor{blue}{1\ 2\ 3\ 4\ 5\ }6\ 7\ \textcolor{blue}{8\ } 9}{\textcolor{red}{1\ 6\ 4\ 3\ 9\ } 2\  5\ \textcolor{red}{8\ } 7}\]
    \caption{The left diagram is a special rim hook tableau with additional markings to help compute the permutation (in two-line notation). The right diagram is the tunnel hook covering of this shape with the same permutation. Note that we have not drawn the tunnel hooks with $\Delta_i=0.$}\label{fig:SRT permutation}
\end{figure}
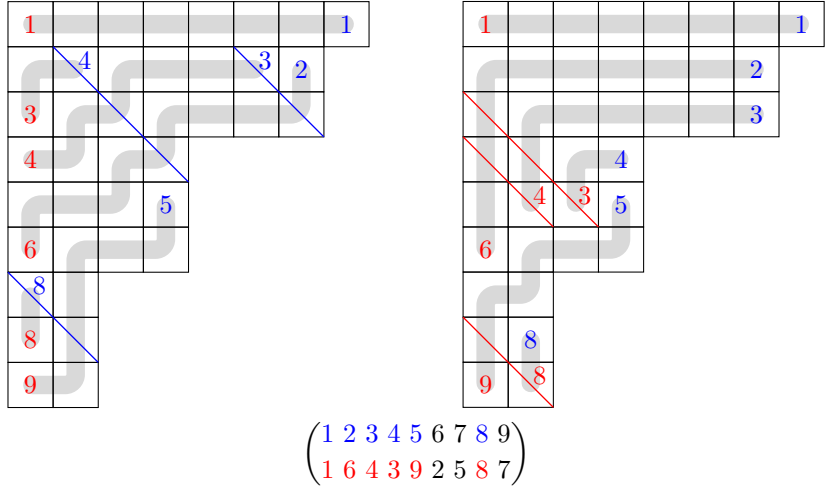

\begin{ex}
For the special rim hook tableau $R$ in \Cref{fig:SRT permutation} (left), the rim hooks $\rh_1,\dots,\rh_6$ have terminal cells in rows $\textcolor{red}{1,3,4,6,8,9}$ and initial cells in rows $1,2,2,2,7,5$ respectively.  Hence, we have 
\[\permSRT(R)=(\textcolor{red}{1})(\textcolor{red}{3},2)(\textcolor{red}{4},3,2)(\textcolor{red}{6},5,4,3,2)(\textcolor{red}{8},7)(\textcolor{red}{9},8,7,6,5).\]
\end{ex}

For tunnel hook coverings there is a similar construction.  Recall that the tunnel hook $\h(i,\tau_i)$ has initial cell in row $i$ and terminal cell $\tau_i$.

\begin{prop}\label{prop:permutation-THC-cycle-decomp}
Suppose a tunnel hook covering $T$ has tunnel hooks $\h(1,\tau_1), \allowbreak \h(2,\tau_2), \dots, \h(\ell,\tau_\ell)$ with terminal cells in rows $j_1,j_2,\cdots,j_\ell$, respectively. Then,
\[\perm(T)=(j_1,j_1-1,\dots,2,1)(j_2,j_2-1,\dots,3,2)\cdots (j_{\ell-1},\ell-1)(\ell).\] 
\end{prop}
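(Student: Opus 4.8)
The plan is to induct on $\ell=\ell(\beta)$, where $\beta$ is the shape of $T$, in close parallel with the proof of \Cref{prop:permutation-SRHT-cycle-decomp}. The base case $\ell=1$ is immediate: the lone hook $\h(1,\tau_1)$ has $j_1=1$, the only permutation of $[1]$ is the identity, and the claimed product $(j_1,\dots,1)=(1)$ is the identity.

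For the inductive step I would single out the hook $\h(1,\tau_1)$ placed first by \Cref{A:THC}. Because the initial diagram $D_{\beta/\nu^{(0)}}$ has no grey cells, the terminal cell $\tau_1$ sits in column $1$ of row $j_1$, so it lies on $\mathcal{L}_{j_1}$ and $\perm(T)(1)=j_1=(j_1,j_1-1,\dots,1)(1)$; moreover this first hook occupies all of row $1$ together with a single cell in each of rows $2,\dots,j_1$. Deleting row $1$ and the cells of $\h(1,\tau_1)$ turns the partial diagram $D^{(1)}_{\beta/\nu^{(1)}}$ (re-indexed so its first nonempty row becomes row $1$) into a genuine GBPR diagram, and the hooks $\h(2,\tau_2),\dots,\h(\ell,\tau_\ell)$ of $T$ become the hooks of a tunnel hook covering $T'$ with $\ell-1$ hooks whose terminal rows are $j'_k=j_{k+1}-1$.

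The crux is the recurrence
\[
\perm(T)=(j_1,j_1-1,\dots,1)\cdot\iota\bigl(\perm(T')\bigr),
\]
where $\iota\colon S_{\ell-1}\hookrightarrow S_\ell$ fixes $1$ and sends $i\mapsto i+1$ for $1\le i\le\ell-1$. One proves this by tracking diagonals: deleting row $1$ drops every surviving cell's row index by one, while deleting the cell of $\h(1,\tau_1)$ in rows $2,\dots,j_1$ drops the column index of the surviving cells in exactly those rows, so the diagonal of a surviving terminal cell is unchanged when that cell lies in a row $\le j_1$ and drops by one otherwise --- which matches exactly what post-composition with the cycle $(j_1,\dots,1)$ does once indices have been shifted up by one. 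Since $\iota$ is a group homomorphism and $\iota\bigl((j_{k+1}-1,\dots,k)\bigr)=(j_{k+1},\dots,k+1)$, plugging the inductive hypothesis $\perm(T')=\prod_{k=1}^{\ell-1}(j'_k,\dots,k)$ into the recurrence gives $\perm(T)=(j_1,\dots,1)\prod_{k=2}^{\ell}(j_k,\dots,k)=\prod_{k=1}^{\ell}(j_k,j_k-1,\dots,k)$, as desired.

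The main obstacle I expect is precisely the justification of this recurrence: pinning down the cell-set of the first hook, verifying that its removal (together with row $1$) leaves a valid tunnel hook covering --- here one relies on the partial-diagram structure already built into \Cref{A:THC} --- and then matching, cell by cell, the diagonal shifts of the surviving terminal cells against the cycle $(j_1,\dots,1)$. Once the recurrence is established, the remaining permutation algebra is routine.
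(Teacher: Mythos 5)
Your base case and the observation that $\perm(T)(1)=j_1$ are fine, but the inductive step has a genuine gap: the recurrence $\perm(T)=(j_1,\dots,1)\,\iota(\perm(T'))$ is justified by a geometric transport that is false in general. Removing $\h(1,\tau_1)$ deletes the column-$1$ cell of every row $2,\dots,j_1$, so the surviving diagram is a skew shape (left-justifying can even create zero-length rows when some $\beta_i=1$), and the later hooks of $T$ do \emph{not} become tunnel hooks of that smaller diagram: a hook of $T$ may contain cells that are boundary cells only because they are adjacent (e.g.\ diagonally) to the grey cells deposited by $\h(1,\tau_1)$, and your diagonal bookkeeping breaks because a terminal cell in a row $>j_1$ can lie on a diagonal $\le j_1$, which the cycle $(j_1,\dots,1)$ does not fix. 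Concretely, take $\beta=(2,1,1)$ and the THC with $\perm(T)=[2,3,1]$: here $\h(1,\tau_1)=\{(1,1),(1,2),(2,1)\}$ ends at $(2,1)$ (so $j_1=2$), $\h(2,\tau_2)=\{(2,2),(3,1),(3,2)\}$ ends at $(3,1)$, and $\h(3,\tau_3)=\{(3,3)\}$ ends at $(3,3)$ on diagonal $1\le j_1$. Deleting row $1$ and the first hook leaves ``shape'' $(0,1)$; the transported third hook sits at $(2,3)$ with ``diagonal'' $0$, the transported second hook $\{(1,1),(2,1),(2,2)\}$ is not a tunnel hook of the new diagram (the genuine THC of shape $(0,1)$ with the same terminal rows has hooks $\{(1,1),(2,1)\}$ and $\{(2,2)\}$), and your matching rule would force $\perm(T)(3)=(2,1)(0+1)=2\neq 1$. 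So while the final cycle identity is of course true, the cell-by-cell argument you propose to prove the recurrence does not hold, and repairing it would require a genuinely different relation between $\perm(T)$ and $\perm(T')$.

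The paper's proof sidesteps this entirely: it uses Lehmer codes, citing \cite[Lemma 35]{AllenMason25EJC} to get $L(\perm(T))_i=j_i-i$ (one less than the height of $\h(i,\tau_i)$), and then checks directly that the product $(j_1,\dots,1)(j_2,\dots,2)\cdots(\ell)$ has the same Lehmer code, which pins down the permutation. Note also that the operation on THCs which does behave well is removal of rows from the \emph{bottom} (this is what underlies \Cref{prop:THC-perm-comp-row-by-row}), not peeling off the first hook as in the SRHT argument of \Cref{prop:permutation-SRHT-cycle-decomp}; the analogy you draw between the two settings is exactly where the proposal fails.
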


See Appendix~\ref{appendix:proofs-of-perm-comps} for the proof.

\begin{ex}
For the tunnel hook covering in \Cref{fig:SRT permutation} (right), the tunnel hooks \[\mathfrak{h}(\textcolor{blue}{1},\tau_1),\ \mathfrak{h}(\textcolor{blue}{2},\tau_2),\ \mathfrak{h}(\textcolor{blue}{3},\tau_3),\ \mathfrak{h}(\textcolor{blue}{4},\tau_4),\ \mathfrak{h}(\textcolor{blue}{5},\tau_5),\ \mathfrak{h}(\textcolor{blue}{6},\tau_6),\ \mathfrak{h}(\textcolor{blue}{7},\tau_7),\ \mathfrak{h}(\textcolor{blue}{8},\tau_8),\ \mathfrak{h}(\textcolor{blue}{9},\tau_9)\] have terminal cells in rows $1,6,5,5,9,6,7,9,9$, respectively. Hence, we have 
\[\perm(T)=(\textcolor{blue}{1})(6,5,4,3,\textcolor{blue}{2})(5,4,\textcolor{blue}{3})(5,\textcolor{blue}{4})(9,8,7,6,\textcolor{blue}{5})(\textcolor{blue}{6})(\textcolor{blue}{7})(9,\textcolor{blue}{8})(\textcolor{blue}{9}).\]
\end{ex}

We sketch an alternate and exceedingly simple way to construct the permutation cycles appearing in Propositions \ref{prop:permutation-SRHT-cycle-decomp} and \ref{prop:permutation-THC-cycle-decomp}. First, for a tunnel hook covering $T$, label the cells covered by $T$ with their row numbers (including the cells outside the diagram). For example: 
    \[\begin{tikzpicture}[yscale=-1,scale=.55]

\def\L{{2,3,2,1}}
\pgfmathsetmacro{\len}{dim(\L)}

\foreach \y in {1,...,\len}{
    \pgfmathsetmacro{\j}{\L[\y-1]}
    \foreach \i in {1,...,\j}{
        \draw (\i-.5,\y-.5) rectangle (\i+.5,\y+.5);
        }
}
\node at (2,4) {4};
\node at (3,3) {3};
\setsepchar{\\/,/ }
\readlist\LS{1,1\\2,2,2\\3,3\\4}
\pgfmathsetmacro{\lens}{\listlen\LS[]}
\foreach \y in {1,...,\lens}{
    \pgfmathsetmacro{\j}{\L[\y-1]}
    \foreach \i in {1,...,\j}{
        \node at (\i,\y) {\LS[\y,\i]};
        }
}

\tikzset{every node/.style={inner sep=-4pt}}
\begin{scope}[on background layer]
\tikzset{every path/.style={line width = 7pt,color=black,line cap=round,opacity=.15,rounded corners}}
\draw (2,1)--(1,1)--(1,2);
\draw (3,2)--(2,2)--(2,3)--(1,3)--(1,4);
\draw (3-.25,3)--(3+.25,3);
\draw (2-.25,4)--(2+.25,4);
\end{scope}

\end{tikzpicture}\]

Create a cycle for each tunnel hook by reading the entries in decreasing order, deleting any repeated values.  Order these cycles by starting row to obtain the permutation.  For the example above, we have
\[\perm(T)=(2,1)(4,3,2)(3)(4)=[2,4,1,3].\]

Second, this also works for a special rim hook tableau by placing the rim hooks in the order of their intersection with the first column. For example:





\[\begin{tikzpicture}[yscale=-1,scale=.55]

\def\L{{4,3,3,3}}
\pgfmathsetmacro{\len}{dim(\L)}

\foreach \y in {1,...,\len}{
    \pgfmathsetmacro{\j}{\L[\y-1]}
    \foreach \i in {1,...,\j}{
        \draw (\i-.5,\y-.5) rectangle (\i+.5,\y+.5);
        }
}
\setsepchar{\\/,/ }
\readlist\LS{1,1,1,1\\2,2,2\\3,3,3\\4,4,4}
\pgfmathsetmacro{\lens}{\listlen\LS[]}
\foreach \y in {1,...,\lens}{
    \pgfmathsetmacro{\j}{\L[\y-1]}
    \foreach \i in {1,...,\j}{
        \node at (\i,\y) {\LS[\y,\i]};
        }
}

\tikzset{every node/.style={inner sep=-4pt}}
\begin{scope}[on background layer]
\tikzset{every path/.style={line width = 7pt,color=black,line cap=round,opacity=.15,rounded corners}}
\draw (4,1)--(2,1)--(2,2)--(1,2);
\draw (1-.25,1)--(1+.25,1);
\draw (1,3)--(2,3);
\draw (3,2)--(3,4)--(1,4);
\end{scope}

\end{tikzpicture}\]
\[\permSRT(R)=(1)(2,1)(3)(4,3,2)=[2,4,1,3]\]

Alternatively, the permutation of a tunnel hook covering can be computed inductively by keeping track of the terminal cells of
the portions of the tunnel hooks appearing in partial diagrams.  

\begin{prop}\label{prop:THC-perm-comp-row-by-row}
Let $T$ be a tunnel hook covering of $\alpha=(\alpha_1,\alpha_2,\dots,\alpha_\ell)$.
For $1 \le k \le \ell$, set $\hat T^k$ to be the tunnel hook covering of shape $\alpha^k=(\alpha_1,\alpha_2,\dots,\alpha_k)$ that is constructed by removing the cells of the diagram of shape $\alpha$
as well as the corresponding cells of the tunnel hooks in rows $k+1,\dots, \ell$.  If $d_1<d_2<\cdots < d_j$ are the
diagonals of the terminal cells in row $k$ of $\hat T^{k}$ then
\[\perm(\hat T^{k})=(d_1,d_2,\dots,d_j) \perm(\hat T^{k-1}).\]
\end{prop}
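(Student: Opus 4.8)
The plan is to translate everything into statements about the terminal cells of the individual tunnel hooks together with the intermediate grey shapes $\nu^{(0)},\nu^{(1)},\dots$ produced by Procedure~\ref{A:THC}. Write $\h(1,\tau_1),\dots,\h(k,\tau_k)$ for the tunnel hooks of $\hat T^{k}$, so that $\perm(\hat T^{k})$ sends $i$ to the unique $d$ with $\tau_i\in\mathcal{L}_d$, and similarly for $\hat T^{k-1}$. Because tunnel hooks only travel downward, $\hat T^{k-1}$ arises from $\hat T^{k}$ by deleting row $k$ and keeping, for each $i\le k-1$, the cells of $\h(i,\tau_i)$ in rows $1,\dots,k-1$; the hook $\h(k,\tau_k)$, which lives entirely in row $k$, is removed. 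Let $I=\{i_1<i_2<\dots<i_j=k\}$ be the set of indices $i$ such that $\h(i,\tau_i)$ meets row $k$ of $\hat T^{k}$. Since a tunnel hook occupies a contiguous block of rows and row $k$ is the bottom row of $\hat T^{k}$, one has $i\in I$ exactly when $\tau_i$ lies in row $k$, and then the diagonals of the cells $\{\tau_i : i\in I\}$ are precisely $d_1,\dots,d_j$; in particular $j=|I|$ because $\perm$ is a bijection.

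The first step is to dispose of the coordinates $i\notin I$. For such an $i$ the hook $\h(i,\tau_i)$ lies entirely in rows $1,\dots,k-1$, so the truncation changes nothing and $\perm(\hat T^{k})_i=\perm(\hat T^{k-1})_i$; moreover this common value is the diagonal index of a terminal cell of $\hat T^{k}$ not lying in row $k$, hence distinct from every $d_m$ (again by injectivity of $\perm$), so the cycle $(d_1,\dots,d_j)$ fixes it. Thus the asserted identity already holds coordinate-wise at every $i\notin I$, and what remains is to match the coordinates indexed by $I$.

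The second, main step is the computation on $I$. When $\h(i,\tau_i)$ with $i\in I$ is drawn it must fill every boundary cell of the then-current diagram in row $k$, so its terminal cell $\tau_i$ lies in row $k$, and the only terminal cell available there is the leftmost boundary cell $(k,\,1+\nu_k^{(i-1)})$; hence $\perm(\hat T^{k})_i=k-\nu_k^{(i-1)}$. The identical reasoning in $\hat T^{k-1}$ gives $\perm(\hat T^{k-1})_i=k-1-\nu_{k-1}^{(i-1)}$ for $i\in I\setminus\{k\}$, while $\perm(\hat T^{k-1})_k=k$ under $S_{k-1}\hookrightarrow S_k$. The quantities entering $\Delta_i$ besides $|\h(i,\tau_i)|$ depend only on the colouring of row $i$, which the truncation does not touch, so $\Delta_i(\hat T^{k})-\Delta_i(\hat T^{k-1})$ equals the number $\eta_k^{(i)}$ of cells of $\h(i,\tau_i)$ in row $k$; invoking \Cref{delta-and-permutation} this yields
\[\eta_k^{(i)}=\perm(\hat T^{k})_i-\perm(\hat T^{k-1})_i=1+\nu_{k-1}^{(i-1)}-\nu_k^{(i-1)}\qquad(i\in I\setminus\{k\}).\]
Only the hooks indexed by $I$ place cells in row $k$, so for consecutive $i_m<i_{m+1}$ in $I$ we get $\nu_k^{(i_{m+1}-1)}=\nu_k^{(i_m-1)}+\eta_k^{(i_m)}=1+\nu_{k-1}^{(i_m-1)}$, hence $\perm(\hat T^{k})_{i_{m+1}}=k-\nu_k^{(i_{m+1}-1)}=k-1-\nu_{k-1}^{(i_m-1)}=\perm(\hat T^{k-1})_{i_m}$; and since $\nu_k^{(i_1-1)}=0$ we also have $\perm(\hat T^{k})_{i_1}=k=\perm(\hat T^{k-1})_{i_j}$. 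Writing $e_m:=\perm(\hat T^{k})_{i_m}$, the $e_m$ strictly decrease, so $e_m=d_{j+1-m}$, while $\perm(\hat T^{k-1})_{i_m}=e_{m+1}=d_{j-m}$ for $m<j$ and $\perm(\hat T^{k-1})_{i_j}=e_1=d_j$. Applying the cycle then finishes the verification: $(d_1,\dots,d_j)(d_{j-m})=d_{j+1-m}=\perm(\hat T^{k})_{i_m}$ for $m<j$, and $(d_1,\dots,d_j)(d_j)=d_1=\perm(\hat T^{k})_{i_j}$.

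I expect the genuine obstacle to be the innocuous-sounding input used throughout the above: that deleting row $k$ leaves the colouring and the boundary- and terminal-cell structure of rows $1,\dots,k-1$ intact, so that the truncated hooks really are tunnel hooks of $\hat T^{k-1}$ and the grey counts $\nu_m^{(i-1)}$ for $m\le k-1$ agree in the two constructions. This rests on the intermediate diagrams of a tunnel hook covering being partition-shaped, which forces $\nu_k^{(i-1)}\le\nu_{k-1}^{(i-1)}$ and so prevents any cell of row $k-1$ from being a boundary cell merely on account of a grey cell in row $k$; this partition-shape property belongs to the basic theory of tunnel hook coverings in~\cite{AllenMason25EJC}. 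With that granted the bookkeeping above is routine. One could instead argue by downward induction on $\ell-k$ using the cycle presentation of $\perm$ in \Cref{prop:permutation-THC-cycle-decomp}, but the coordinate-by-coordinate comparison appears to be the cleanest route.
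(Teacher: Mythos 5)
Your argument is correct, and it follows the same overall skeleton as the paper's appendix proof: verify the identity coordinate by coordinate, treating separately the rows whose tunnel hooks stay above row $k$ (where both permutations agree and the cycle acts trivially) and the rows whose hooks reach row $k$. Where you differ is in how the second case is verified. The paper argues locally with cells: if the truncated hook ends at $(k-1,k-d)$, then the full hook contains $(k,k-d)$, forcing $(k,k-d+1)$ to be the terminal cell of another hook on diagonal $d$, and no terminal cell lies on a diagonal strictly between $d$ and the new diagonal $d'$, so the cycle sends $d\mapsto d'$. You instead run the bookkeeping through the grey counts $\nu_k^{(i-1)}$ and \Cref{delta-and-permutation}: the terminal diagonal in row $k$ at drawing time is $k-\nu_k^{(i-1)}$, the difference $\Delta_i(\hat T^{k})-\Delta_i(\hat T^{k-1})=\eta_k^{(i)}$ converts directly into a permutation shift, and the recursion on $\nu_k$ yields the clean relation $\perm(\hat T^{k})_{i_{m+1}}=\perm(\hat T^{k-1})_{i_m}$ together with strict monotonicity of the row-$k$ diagonals in drawing order, which is exactly the cycle action. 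This buys you an argument that never needs to inspect which specific cells are boundary cells in row $k$, at the cost of leaning a bit more heavily on the compatibility of the truncation with the step-by-step construction (equality of the intermediate grey shapes in rows $1,\dots,k-1$, partition-shapedness of the $\nu^{(r)}$, and the fact that a hook reaching the bottom row terminates at the leftmost non-grey cell there). You flag these correctly; note that the paper relies on the very same unproved structural facts — it asserts just before the proposition that deleting bottom rows of a THC yields a THC, and its claim that the full hook contains $(k,k-d)$ implicitly uses the partition shape of the intermediate grey regions — so your proof is at the same level of rigor, just with the combinatorial geometry replaced by $\Delta$/grey-count algebra.
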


Although we leave the proof of this proposition to Appendix~\ref{appendix:proofs-of-perm-comps}, it relies on the fact that removing rows from the bottom of a tunnel hook covering still produces a valid tunnel hook covering of the resulting shape.  Similarly, removing columns from the right of a special rim hook tableau produces a valid special rim hook tableau of the resulting shape.

\begin{ex}
Consider the tunnel hook covering $T=\hat{T}^4$ below. We draw it along with $\hat{T}^3,\hat{T}^2,$ and $\hat{T}^1$ with the diagonals containing the terminal cells lying the final row.
\[\begin{tikzpicture}[yscale=-1,scale=.55]
\def\L{{2,3,2,1}}
\pgfmathsetmacro{\len}{dim(\L)}


\foreach \y in {1,...,\len}{
    \pgfmathsetmacro{\j}{\L[\y-1]}
    \foreach \i in {1,...,\j}{
        \draw (\i-.5,\y-.5) rectangle (\i+.5,\y+.5);
        }
}
\setsepchar{\\/,/ }
\readlist\LS{1,1\\2,2,2\\3,3\\4}
\pgfmathsetmacro{\lens}{\listlen\LS[]}
\foreach \y in {1,...,\lens}{
    \pgfmathsetmacro{\j}{\L[\y-1]}
    \foreach \i in {1,...,\j}{
        \node at (\i,\y) {};
        }
}

\draw (1-.5,4-.5)--(1+.5,4+.5);
\draw (1-.5,3-.5)--(2+.5,4+.5);

\tikzset{every node/.style={inner sep=-4pt}}
\begin{scope}[on background layer]
\tikzset{every path/.style={line width = 7pt,color=black,line cap=round,opacity=.15,rounded corners}}
\draw (2,1)--(1,1)--(1,2);
\draw (3,2)--(2,2)--(2,3)--(1,3)--(1,4);
\draw (3-.25,3)--(3+.25,3);
\draw (2-.25,4)--(2+.25,4);
\end{scope}

\begin{scope}[shift={(2in,0)}]
\def\L{{2,3,2}}
\pgfmathsetmacro{\len}{dim(\L)}

\foreach \y in {1,...,\len}{
    \pgfmathsetmacro{\j}{\L[\y-1]}
    \foreach \i in {1,...,\j}{
        \draw (\i-.5,\y-.5) rectangle (\i+.5,\y+.5);
        }
}
\setsepchar{\\/,/ }
\readlist\LS{1,1\\2,2,2\\3,3}
\pgfmathsetmacro{\lens}{\listlen\LS[]}
\foreach \y in {1,...,\lens}{
    \pgfmathsetmacro{\j}{\L[\y-1]}
    \foreach \i in {1,...,\j}{
        \node at (\i,\y) {};
        }
}

\draw (1-.5,3-.5)--(1+.5,3+.5);
\draw (1-.5,1-.5)--(3+.5,3+.5);

\tikzset{every node/.style={inner sep=-4pt}}
\begin{scope}[on background layer]
\tikzset{every path/.style={line width = 7pt,color=black,line cap=round,opacity=.15,rounded corners}}
\draw (2,1)--(1,1)--(1,2);
\draw (3,2)--(2,2)--(2,3)--(1,3);
\draw (3-.25,3)--(3+.25,3);

\end{scope}

\end{scope}
\begin{scope}[shift={(4in,0)}]
\def\L{{2,3}}
\pgfmathsetmacro{\len}{dim(\L)}

\foreach \y in {1,...,\len}{
    \pgfmathsetmacro{\j}{\L[\y-1]}
    \foreach \i in {1,...,\j}{
        \draw (\i-.5,\y-.5) rectangle (\i+.5,\y+.5);
        }
}
\setsepchar{\\/,/ }
\readlist\LS{1,1\\2,2,2}
\pgfmathsetmacro{\lens}{\listlen\LS[]}
\foreach \y in {1,...,\lens}{
    \pgfmathsetmacro{\j}{\L[\y-1]}
    \foreach \i in {1,...,\j}{
        \node at (\i,\y) {};
        }
}

\draw (1-.5,2-.5)--(1+.5,2+.5);
\draw (1-.5,1-.5)--(2+.5,2+.5);

\tikzset{every node/.style={inner sep=-4pt}}
\begin{scope}[on background layer]
\tikzset{every path/.style={line width = 7pt,color=black,line cap=round,opacity=.15,rounded corners}}
\draw (2,1)--(1,1)--(1,2);
\draw (3,2)--(2,2);
\end{scope}

\end{scope}
\begin{scope}[shift={(6in,0)}]
\def\L{{2}}
\pgfmathsetmacro{\len}{dim(\L)}

\foreach \y in {1,...,\len}{
    \pgfmathsetmacro{\j}{\L[\y-1]}
    \foreach \i in {1,...,\j}{
        \draw (\i-.5,\y-.5) rectangle (\i+.5,\y+.5);
        }
}
\setsepchar{\\/,/ }
\readlist\LS{1,1}
\pgfmathsetmacro{\lens}{\listlen\LS[]}
\foreach \y in {1,...,\lens}{
    \pgfmathsetmacro{\j}{\L[\y-1]}
    \foreach \i in {1,...,\j}{
        \node at (\i,\y) {};
        }
}

\draw (1-.5,1-.5)--(1+.5,1+.5);

\tikzset{every node/.style={inner sep=-4pt}}
\begin{scope}[on background layer]
\tikzset{every path/.style={line width = 7pt,color=black,line cap=round,opacity=.15,rounded corners}}
\draw (2,1)--(1,1);
\end{scope}

\end{scope}
\end{tikzpicture}\]
\vskip -.25truein
\[
\begin{array}{ccccccccccccccccccccccccccccccccccccccc}
&&&&&\hat T^4&&&&&&&\hat T^3&&&&&&&\hat T^2&&&&&&\hat T^1&&&&&&&&
\end{array}
\]
\noindent

The diagonals of the terminal cells found in row $k$ of $\hat T^k$  are 
\[
\begin{array}{ccccccccccccccccccccccccccccccccccccccc}
&&&&&\{3,4\}&&&&&\{1,3\}&&&&&\{1,2\}&&&&&\{1\}&&&&&
\end{array}
\]
respectively. Multiplying the corresponding increasing cycle permutations of the above collections
gives
\[\perm(T)=
(3,4)(1,3)(1,2)(1)=[2,4,1,3].
\]
\end{ex}

\subsection{The relationship between the permutation and content of a special rim hook tableau}

We now want to describe which permutations one can obtain from special rim hook tableaux.  For a special rim hook $\rh$, let $|\rh|$ denote the number of cells in $\rh$. Define a weak composition $\Gamma(R)=(\Gamma_1(R),\Gamma_2(R),\dots,\Gamma_\ell(R))$ by setting
\[\Gamma_i(R)=\begin{cases}
    |\rh|&\text{if there is a $\rh\in R$ with initial cell on diagonal $i-\lambda_i+1$,}\\
    0&\text{otherwise,}
\end{cases}\]
for each $i\in [\ell]$.  Note that $\Gamma_i(R)$ is well-defined since diagonal $i-\lambda_i+1$ contains at most one initial cell by \Cref{lem:uniqueDiagonals}.  The content of $R$ is $\dec(\fl(\Gamma(R))).$

\begin{lem}\label{lem:SRT-sizes-perm}
    Let $R$ be a special rim hook tableau of shape $\lambda=(\lambda_1,\lambda_2,\dots,\lambda_\ell)\vdash n$.
    
    \begin{enumerate}
        \item For each  $i\in [\ell]$, we have $\Gamma_i(R)=\lambda_i-i+\permSRT(R)_i.$
    \item The image of $R\mapsto \permSRT(R)$ is the set of permutations $\sigma\in \S_\ell$ such that $\lambda_i-i+\sigma_i\geq 0$ for all $i\in [\ell].$
    \end{enumerate} 
    \end{lem}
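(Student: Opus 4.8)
The plan is to dispatch part~(1) by a one-line count on diagonals, and then to spend the real effort on part~(2), whose surjectivity half is the substantive claim.

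For part~(1) I would split along the two cases of \Cref{def:findperm}. If the diagonal $i-\lambda_i+1$ contains no initial cell, then $\Gamma_i(R)=0$ by definition while $\permSRT(R)_i=i-\lambda_i$, so $\lambda_i-i+\permSRT(R)_i=0=\Gamma_i(R)$. If instead some rim hook $\rh$ has its initial cell on diagonal $i-\lambda_i+1$, the key point is that a rim hook, read from its initial (northeasternmost) cell to its terminal (southwesternmost) cell, is a path of cells each of which lies directly below or directly to the left of its predecessor; either move increases the diagonal index $x-y+1$ of a cell by exactly $1$, so $|\rh|$ is one more than the difference of the diagonal indices of its terminal and initial cells. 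Since the initial cell is on diagonal $i-\lambda_i+1$ and the terminal cell $(\permSRT(R)_i,1)$ lies on diagonal $\permSRT(R)_i$, this gives $\Gamma_i(R)=|\rh|=\permSRT(R)_i-(i-\lambda_i+1)+1=\lambda_i-i+\permSRT(R)_i$.

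For part~(2), the inclusion ``$\subseteq$'' is immediate: if $\sigma=\permSRT(R)$ then part~(1) gives $\lambda_i-i+\sigma_i=\Gamma_i(R)$, which is either a positive rim hook size or $0$, hence $\ge 0$. For the reverse inclusion I would construct, from any $\sigma\in S_\ell$ with $\Gamma_i:=\lambda_i-i+\sigma_i\ge 0$ for all $i$ (so $\sum_i\Gamma_i=n$), a special rim hook tableau $R$ with $\permSRT(R)=\sigma$, by induction on $\ell$, peeling off the rim hook that terminates in the bottom row. Let $r=\sigma^{-1}(\ell)$; one checks $1\le\lambda_\ell\le\Gamma_r\le\lambda_1+\ell-1$, so there is a unique rim hook $\rh$ of $\lambda$ terminating in $(\ell,1)$ with $|\rh|=\Gamma_r$. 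Because a rim hook of $\lambda$ is exactly the set of boundary cells of $\lambda$ lying on a consecutive block of diagonals, $\rh$ occupies the boundary diagonals $r-\lambda_r+1,\dots,\ell$; in particular its initial cell is on diagonal $r-\lambda_r+1$, and removing $\rh$ from $\lambda$ leaves a partition $\lambda^-$. Reading the index-shift relation \Cref{eq:perm-induction} from the proof of \Cref{thm:SRT-and-Permutations} forwards, the entries of $\sigma$ at the rows deleted in passing from $\lambda$ to $\lambda^-$ are precisely those \Cref{def:findperm} assigns once $\rh$ is in place, so $\sigma$ descends to a permutation $\sigma^-$ of $[\ell(\lambda^-)]$ that is $\lambda^-$-compatible. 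By the inductive hypothesis there is $R^-$ of shape $\lambda^-$ with $\permSRT(R^-)=\sigma^-$, and adjoining $\rh$ produces a special rim hook tableau $R$ of shape $\lambda$; unwinding \Cref{eq:perm-induction} shows $\permSRT(R)=\sigma$. Combined with the injectivity and sign-compatibility of $\permSRT$ from \Cref{thm:SRT-and-Permutations} and with \Cref{Th:THC_and_Perms}, this also yields the promised bijection between special rim hook tableaux of shape $\lambda$ and tunnel hook coverings of shape $\lambda$ with nonnegative content.

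I expect the main obstacle to be the inductive step in the surjectivity argument: when $\rh$ is removed it can swallow several bottom rows at once — exactly when $\lambda$ ends in a run of parts equal to $1$ — so one must verify that the entries of $\sigma$ on those rows are forced and mutually consistent, that $\sigma^-$ is therefore a genuine permutation of the correct $[\ell(\lambda^-)]$ and remains $\lambda^-$-compatible, and that reassembling $R^-$ with $\rh$ recovers $\sigma$. Making the bookkeeping of \Cref{eq:perm-induction} interact correctly with a length drop greater than one is the crux of the proof.
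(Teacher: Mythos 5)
Your proposal is correct and follows essentially the same route as the paper: part (1) is the same cell count (your diagonal-increment argument is just a rephrasing of the paper's south/west-step count), and part (2) is proved by the same induction that peels off the rim hook through $(\ell,1)$ determined by $r=\sigma^{-1}(\ell)$, passes to the shifted permutation of \Cref{eq:perm-induction} on the smaller shape, checks $\lambda'_j-j+\sigma'_j\ge 0$, and reattaches. The trailing-ones bookkeeping you flag as the crux (forcing $\sigma$ to fix the positions of rows emptied by the removal) is indeed the only delicate point, and the paper's own proof treats it just as implicitly as you do.
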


\begin{proof}
For (1), let $\sigma=\permSRT(R)$ and suppose $\rh$ is a special rim hook of $R$ with initial cell $(x,y)$ in diagonal $\mathcal{L}_{i-\lambda_i+1}$ and terminal cell in row $r=\sigma_i$. Note that the terminal cell of $\rh$ is $(\sigma_i,1)$.  

Since the number of cells in a special rim hook equals one plus the number of south and west steps from the initial cell to the terminal cell, $|\rh|= | \sigma_i-x| + |1-y| + 1$.  Since $\sigma_i\geq x$ and $y\geq 1$, we have
    \[|\rh|= | \sigma_i-x| + |y-1| + 1= (\sigma_i-x)+(y-1)+1=y-x+\sigma_i.\]
Since $(x,y)$ is in diagonal $\mathcal{L}_{i-\lambda_i+1}$, we have $x-y+1=i-\lambda_i+1$ and hence $y-x=\lambda_i-i$.  Therefore $|\rh|=\lambda_i-i+\sigma_i$, as desired.

If there is no special rim hook of $R$ with an initial cell on diagonal $\mathcal{L}_{i-\lambda_i+1}$, then $\sigma_i=i-\lambda_i$. Hence, $\lambda_i-i+\sigma_i=0=\Gamma_i(R)$, as desired.

For (2), suppose $\sigma\in \S_\ell$ satisfies $\lambda_i-i+\sigma_i\geq 0$ for all $i \in [\ell].$ We can construct a special rim hook tableau of shape $\lambda$ with permutation $\sigma$ inductively as follows.  First observe that if $\ell=1$, then the claim is obvious. Now suppose $\ell>1$ and $\sigma_i=\ell$. If $\lambda_i-i+\sigma_i=0$, then $\lambda_i+\ell=i$. However, $i\leq \ell$ implies that $\lambda_i\leq 0$, which is a contradiction.  Thus, $\lambda_i-i+\sigma_i>0$ and therefore we may construct a special rim hook covering $\lambda_i-i+\sigma_i$ cells by starting with initial cell at $(i, \lambda_i)$ and moving down and to the left along the boundary of $\lambda$ until we reach the cell $(\ell,1)$.  

Let $\lambda'$ be the partition obtained by removing the cells of $\rh$. If $\sigma'=\sigma (i,i+1,\dots,\ell-1,\ell)$ (see \Cref{prop:permutation-SRHT-cycle-decomp}),  then $\sigma' \in S_k$  for some $k<\ell$ (because $\sigma_i=\ell$), so we can construct a special rim hook tableau $R'$ with permutation $\sigma'$.  To apply induction, we must also check that $\lambda_j'-j+\sigma_j'\geq 0$ for all $j\in [k]$.  If $j<i$, then $\sigma_j'=\sigma_j$ and $\lambda_j'=\lambda_j$, so $\lambda'_j-j+\sigma_j'=\lambda_j-j+\sigma_j\geq 0$. If $j\in \{i,i+1,\dots,k\}$, then $\sigma'_j=\sigma_{j+1}$ and $\lambda'_j\geq \lambda_{j+1}-1$. Hence, \[\lambda_j'-j+\sigma_j'\geq \lambda_{j+1}-1-j+\sigma_{j+1}=\lambda_{j+1}-(j+1)+\sigma_{j+1},\] which is at least 0 by assumption.  Then, as in the proof of \Cref{thm:SRT-and-Permutations}, attaching $\rh$ to $R'$ forms a special rim hook tableau $R$ of shape $\lambda$ with $\permSRT(R)=\sigma$.
\end{proof}

We combine \Cref{thm:SRT-and-Permutations} and \Cref{lem:SRT-sizes-perm} into a more succinct statement. For $\lambda\vdash n$, let $\SRT_{\lambda}$ be the set of special rim hook tableaux of shape $\lambda$.
\begin{cor}\label{cor:SRT-Perm-wt-preserving-bijection}
    Let $\lambda=(\lambda_1,\lambda_2,\dots,\lambda_\ell)\vdash n$. The map
    \[\permSRT:\SRT_{\lambda}\to \{\sigma\in S_\ell\mid \lambda_i-i+\sigma_i\geq 0\ \text{for all $i\in [\ell]$}\}\]
    is a bijection. Moreover, if $R\in \SRT_{\lambda}$ and $\sigma=\permSRT(R)$, then $\Gamma_i(R)=\lambda_i-i+\sigma_i$ for all $i\in [\ell].$
\end{cor}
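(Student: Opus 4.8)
The plan is to deduce this corollary directly from \Cref{thm:SRT-and-Permutations} and \Cref{lem:SRT-sizes-perm}, which together already contain everything that is needed; no new combinatorial argument is required. First I would invoke \Cref{thm:SRT-and-Permutations} to record that $\permSRT$ is an injection from $\SRT_\lambda$ into $S_\ell$. Next I would invoke \Cref{lem:SRT-sizes-perm}(2), which computes the image of $\permSRT$ to be exactly the set $\{\sigma \in S_\ell \mid \lambda_i - i + \sigma_i \geq 0 \text{ for all } i \in [\ell]\}$. Since an injective map whose image is a prescribed set $X$ is automatically a bijection onto $X$, these two facts combine to give that $\permSRT \colon \SRT_\lambda \to \{\sigma \in S_\ell \mid \lambda_i - i + \sigma_i \geq 0\}$ is a bijection.

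For the ``moreover'' clause, I would simply quote \Cref{lem:SRT-sizes-perm}(1), which is the exact assertion that $\Gamma_i(R) = \lambda_i - i + \permSRT(R)_i$ for all $i \in [\ell]$; this is the $\SRT$ counterpart of \Cref{delta-and-permutation}, so that the weak composition $\Gamma$ for special rim hook tableaux plays the same role that $\Delta$ plays for tunnel hook coverings. Consequently the write-up of this corollary amounts to a couple of sentences of citation together with the elementary observation that an injection whose image has been identified exactly is a bijection onto that image. I do not expect any genuine obstacle here, since the substance lives entirely in \Cref{thm:SRT-and-Permutations} and \Cref{lem:SRT-sizes-perm}; the only point worth double-checking is that the non-strict inequality $\lambda_i - i + \sigma_i \geq 0$ stated in the corollary matches verbatim the one appearing in \Cref{lem:SRT-sizes-perm}(2), which it does.
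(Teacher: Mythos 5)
Your proposal is correct and matches the paper's treatment: the corollary is stated precisely as the combination of \Cref{thm:SRT-and-Permutations} (injectivity into $S_\ell$) and \Cref{lem:SRT-sizes-perm} (image characterization and the formula $\Gamma_i(R)=\lambda_i-i+\sigma_i$), with no further argument needed. Your citation-based write-up is exactly how the paper obtains it.
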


Our development of the permutation of a special rim hook tableau leads us to a direct proof of E\u{g}ecio\u{g}lu and Remmel's result connecting special rim hook tableaux to the Jacobi--Trudi identity, which was originally proven via recurrence relations. For $R\in \SRT_\lambda$, let \[wt(R)=\prod_{i=1}^{\ell(\lambda)}h_{\Gamma_i(R)}.\]
\begin{cor}[{\cite[Theorem 1]{EgeRem90}}] 
    For $\lambda=(\lambda_1,\dots,\lambda_\ell)\vdash n$, 
    \[\det(h_{\lambda_i-i+j})_{1\leq i,j\leq \ell}=\sum_{R\in \SRT_\lambda}\sgn(R) wt(R).\]
\end{cor}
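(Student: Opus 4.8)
The plan is to expand the determinant $\det(h_{\lambda_i-i+j})_{1\le i,j\le\ell}$ directly using the Leibniz formula and match it term-by-term with the sum over special rim hook tableaux, using the permutation-to-tableau correspondence developed in \Cref{cor:SRT-Perm-wt-preserving-bijection}. Writing $D_\ell = \det(h_{\lambda_i-i+j})_{1\le i,j\le \ell}$, the Leibniz expansion gives
\[
D_\ell = \sum_{\sigma\in S_\ell}\sgn(\sigma)\prod_{i=1}^{\ell}h_{\lambda_i-i+\sigma_i},
\]
where we adopt the convention $h_m=0$ for $m<0$ and $h_0=1$. Thus a permutation $\sigma$ contributes a nonzero term if and only if $\lambda_i-i+\sigma_i\ge 0$ for all $i\in[\ell]$, which is exactly the image condition in \Cref{cor:SRT-Perm-wt-preserving-bijection}.

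First I would invoke \Cref{cor:SRT-Perm-wt-preserving-bijection}: the map $\permSRT$ is a bijection from $\SRT_\lambda$ onto $\{\sigma\in S_\ell : \lambda_i-i+\sigma_i\ge 0 \text{ for all } i\}$, and for $R\in\SRT_\lambda$ with $\sigma=\permSRT(R)$ we have $\Gamma_i(R)=\lambda_i-i+\sigma_i$ for every $i$. Hence $\prod_{i=1}^\ell h_{\lambda_i-i+\sigma_i}=\prod_{i=1}^\ell h_{\Gamma_i(R)}=wt(R)$. Second, I would use the sign statement of \Cref{thm:SRT-and-Permutations}, namely $\sgn(R)=\sgn(\permSRT(R))=\sgn(\sigma)$. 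Combining these, each nonzero term $\sgn(\sigma)\prod_i h_{\lambda_i-i+\sigma_i}$ in the Leibniz expansion equals $\sgn(R)\,wt(R)$ for the unique $R=\permSRT^{-1}(\sigma)$, and conversely every $R\in\SRT_\lambda$ arises exactly once. Therefore
\[
D_\ell = \sum_{\substack{\sigma\in S_\ell\\ \lambda_i-i+\sigma_i\ge 0\ \forall i}}\sgn(\sigma)\prod_{i=1}^\ell h_{\lambda_i-i+\sigma_i} = \sum_{R\in\SRT_\lambda}\sgn(R)\,wt(R),
\]
which is the claim.

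The only genuine point requiring care is the bookkeeping of the $h_m=0$ convention: I must confirm that a permutation $\sigma$ with $\lambda_i-i+\sigma_i<0$ for some $i$ contributes zero on the determinant side (immediate, since such a term contains a factor $h_m$ with $m<0$), and that on the tableau side there is simply no special rim hook tableau mapping to such a $\sigma$ (which is precisely the content of the image characterization in \Cref{cor:SRT-Perm-wt-preserving-bijection}). So the ``main obstacle'' is really just aligning the supports of the two sums; once the bijection and sign-compatibility from \Cref{thm:SRT-and-Permutations} and \Cref{cor:SRT-Perm-wt-preserving-bijection} are in hand, the corollary follows by a one-line term-by-term identification of the Leibniz expansion with the signed tableau sum. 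No further combinatorial input is needed.
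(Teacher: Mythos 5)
Your proposal is correct and follows essentially the same route as the paper: expand the determinant via the Leibniz formula, note that a term is nonzero precisely when $\lambda_i-i+\sigma_i\geq 0$ for all $i$, and then match terms using \Cref{cor:SRT-Perm-wt-preserving-bijection} together with the sign compatibility from \Cref{thm:SRT-and-Permutations}. The paper's proof is just a terser statement of the same argument.
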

\begin{proof}
    Combine \Cref{cor:SRT-Perm-wt-preserving-bijection} with the observation that in
    \[\det(h_{\lambda_i-i+j})_{1\leq i,j\leq \ell}=\sum_{\sigma\in \S_{\ell}}\sgn(\sigma)\prod_{i=1}^{\ell}h_{\lambda_i-i+\sigma_i},\]
    the term $\prod_{i=1}^{\ell}h_{\lambda_i-i+\sigma_i}$ is nonzero if and only if $\lambda_i-i+\sigma_i\geq 0$ for all $i\in [\ell].$
\end{proof}

\subsection{Connections and comparisons}

Now we discuss connections between special rim hook tableaux and tunnel hook coverings. For $\lambda\vdash n$, let $\thc^{\geq 0}_{\lambda}$ be the set of tunnel hook coverings of shape $\lambda$ such that $\Delta_i(T)\geq 0$ for all $i\in [\ell(\lambda)].$ Define a map 
    \[\xi_{\lambda}:\SRT_{\lambda}\to \thc^{\geq 0}_{\lambda}\]
    given by setting $\xi_{\lambda}(R)=T$ if $T$ is the (unique) tunnel hook covering with shape $\lambda$ and permutation $\perm(T)=\permSRT(R)$. The map is well-defined by \Cref{Th:THC_and_Perms}. See \Cref{fig:SRT permutation}.

    \begin{thm}\label{thm:wt-preserving-bij-SRT-THC}
        For all partitions $\lambda$, the map $\xi_\lambda$ is a bijection. Moreover, for all $R\in \SRT_\lambda$, $\Gamma(R)=\Delta(\xi_\lambda(R)).$
    \end{thm}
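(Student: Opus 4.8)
The plan is to realize $\xi_\lambda$ as a composite of two bijections already established and then read off the weight statement by comparing the formulas for $\Gamma$ and $\Delta$ in terms of the associated permutation. Write $\ell=\ell(\lambda)$ and set $P_\lambda=\{\sigma\in S_\ell\mid \lambda_i-i+\sigma_i\geq 0\text{ for all }i\in[\ell]\}$.

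First I would pin down the image of $\thc^{\geq 0}_{\lambda}$ under $\perm$. By \Cref{Th:THC_and_Perms}, $T\mapsto\perm(T)$ is a bijection from tunnel hook coverings of shape $\lambda$ onto $S_\ell$, and by \Cref{delta-and-permutation} a tunnel hook covering $T$ of shape $\lambda$ with $\perm(T)=\sigma$ satisfies $\Delta_i(T)=\lambda_i+\sigma_i-i$ for every $i\in[\ell]$. Hence $\Delta_i(T)\geq 0$ for all $i$ exactly when $\sigma\in P_\lambda$, so $\perm$ restricts to a bijection $\thc^{\geq 0}_{\lambda}\to P_\lambda$.

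Next I would invoke \Cref{cor:SRT-Perm-wt-preserving-bijection}, which says precisely that $\permSRT$ is a bijection $\SRT_{\lambda}\to P_\lambda$. Since $\permSRT(R)\in P_\lambda$ for every $R\in\SRT_{\lambda}$, the previous paragraph shows that the unique tunnel hook covering of shape $\lambda$ with permutation $\permSRT(R)$ indeed lies in $\thc^{\geq 0}_{\lambda}$; thus $\xi_\lambda$ is well defined and equals $\bigl(\perm|_{\thc^{\geq 0}_{\lambda}}\bigr)^{-1}\circ\permSRT$, which is a bijection as a composite of bijections.

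For the weight statement I would take $R\in\SRT_{\lambda}$, set $\sigma=\permSRT(R)$ and $T=\xi_\lambda(R)$, so that $\perm(T)=\sigma$. Then \Cref{delta-and-permutation} gives $\Delta_i(T)=\lambda_i+\sigma_i-i$, while \Cref{cor:SRT-Perm-wt-preserving-bijection} gives $\Gamma_i(R)=\lambda_i-i+\sigma_i$, so $\Gamma_i(R)=\Delta_i(T)$ for all $i$ and therefore $\Gamma(R)=\Delta(\xi_\lambda(R))$. There is no substantive obstacle here: the only point that needs care is the identification of $\thc^{\geq 0}_{\lambda}$ with the $\perm$-preimage of $P_\lambda$, which is immediate from \Cref{delta-and-permutation}, and everything else is bookkeeping with the two previously established bijections.
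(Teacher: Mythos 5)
Your proposal is correct and follows essentially the same route as the paper: the paper's proof likewise combines \Cref{delta-and-permutation} (to identify the condition $\Delta_i(T)\geq 0$ with $\lambda_i-i+\sigma_i\geq 0$), \Cref{Th:THC_and_Perms}, and \Cref{cor:SRT-Perm-wt-preserving-bijection}, and reads off $\Gamma_i(R)=\lambda_i-i+\sigma_i=\Delta_i(\xi_\lambda(R))$ exactly as you do. The only difference is presentational: you make the intermediate set $P_\lambda$ and the factorization of $\xi_\lambda$ as a composite of two bijections explicit, which the paper leaves implicit.
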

    \begin{proof}
        Let $T$ be a tunnel hook covering of shape $\lambda$ with permutation $\perm(T)=\sigma$. By \Cref{delta-and-permutation}, $\Delta_i(T)\geq 0$ if and only if $\lambda_i+\sigma_i-i\geq 0$ for each $i\in [\ell(\lambda)].$ Hence, \Cref{cor:SRT-Perm-wt-preserving-bijection} and \Cref{Th:THC_and_Perms} show that $\xi_\lambda$ is a bijection such that if $R\in \SRT_\lambda$, then
        \[\Gamma_i(R)=\lambda_i-i+\sigma_i=\Delta_i(\xi_\lambda(R))\]
        for all $i\in [\ell(\lambda)]$.
    \end{proof}

For a partition $\lambda$, we can view an element of $\thc_\lambda^{\geq 0}$ as a set theoretic partition of the diagram of $\lambda$ by rim hooks by removing all tunnel hooks with $\Delta_i(T)=0$ (as we do in \Cref{fig:SRT permutation}). In this set up, it makes sense to ask which special rim hook tableaux are tunnel hook coverings (and vice versa). 

\begin{lem}\label{lem: intersection-of-srt-and-thc}
    Let $\lambda\vdash n$. Let $R$ be a set theoretic partition of the diagram of $\lambda$ by rim hooks $\rh_1,\rh_2,\dots,\rh_k$. Then $R\in \SRT_\lambda\cap \thc_\lambda^{\geq 0}$ if and only if for each $i\in [k]$, the initial cell of $\rh_i$ is at the end of its row and the terminal cell of $\rh_i$ is in the first column.
\end{lem}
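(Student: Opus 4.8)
The plan is to prove the two implications separately, after recording one structural fact: a rim hook $\rh$ (a connected skew shape with no $2\times 2$) is a ribbon whose cells, read from the initial cell to the terminal cell, move only down or left. Consequently the terminal cell of $\rh$ is the unique cell of $\rh$ lying in its minimal column (and maximal row), so $\rh$ meets the first column if and only if its terminal cell lies in the first column; likewise the initial cell of $\rh$ is the unique cell of $\rh$ in its maximal column. I would state this first and use it throughout.

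For the forward direction, suppose $R\in \SRT_\lambda\cap \thc^{\ge 0}_\lambda$. Membership in $\SRT_\lambda$ says every $\rh_i$ contains a cell of the leftmost column, so by the observation above every $\rh_i$ has its terminal cell in column $1$. For the other condition, write $R$ as the rim hook partition obtained from a tunnel hook covering $T$ of shape $\lambda$ by discarding the hooks with $\Delta_r=0$; for a partition shape such a hook contributes only a single purple cell outside the diagram, hence no cell of $R$. For a hook $\h(r,\c_r)$ of $T$ with $\Delta_r>0$, Procedure \ref{A:THC} forces $\h(r,\c_r)$ to contain every blue cell of row $r$ of the current partial diagram, and since $\lambda$ is a partition these blue cells form the segment from the first uncovered column of row $r$ out to column $\lambda_r$. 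Thus $(r,\lambda_r)\in\h(r,\c_r)$, and as $\h(r,\c_r)$ never enters a row above $r$, this is its NE-most cell, i.e.\ its initial cell lies at the end of its row. Hence every $\rh_i$ has initial cell at the end of its row.

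For the reverse direction, suppose every $\rh_i$ has initial cell at the end of its row and terminal cell in column $1$; the column-$1$ condition immediately gives $R\in\SRT_\lambda$. To get $R\in\thc^{\ge 0}_\lambda$ I would recover a tunnel hook covering. The hooks of $R$ have distinct initial rows $x_1,\dots,x_k$; process rows $r=1,\dots,\ell$ in order, taking $\rh_i$ at step $r$ when $r=x_i$ and the one-cell phantom hook at $(r,\lambda_r+1)$ otherwise. The phantom choice is legitimate because, when $r$ is not an initial row, every hook meeting row $r$ has initial row $\le r$ with equality excluded, so row $r$ is already entirely covered. When $r=x_i$, ribbon geometry shows the cells of $\rh_i$ in row $r$ form a suffix ending at $(r,\lambda_r)$, and its complement in row $r$ is covered by earlier hooks hence is a prefix, i.e.\ exactly the grey cells; so $\rh_i$ contains precisely the blue cells of row $r$. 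One then checks that in each lower row $\rh_i$ picks up exactly the boundary cells of the uncovered region and terminates at a cell of column $1$, so $\rh_i$ is a legal tunnel hook, with $\Delta_r=|\rh_i|>0$ since it lies entirely inside the diagram; the phantom hooks have $\Delta_r=0$. Thus $R$ underlies a tunnel hook covering of $\lambda$ with all $\Delta_r\ge 0$. Alternatively, since $R\in\SRT_\lambda$ one can set $T=\xi_\lambda(R)$ and, using $\permSRT(R)=\perm(T)$ together with Propositions \ref{prop:permutation-SRHT-cycle-decomp} and \ref{prop:permutation-THC-cycle-decomp} and induction on $\ell$ (peeling the hook through $(\ell,1)$, as in the proofs of Theorem \ref{thm:SRT-and-Permutations} and Lemma \ref{lem:SRT-sizes-perm}), identify $R$ with the rim hook partition of $T$.

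The main obstacle is the one genuinely geometric step in the reverse direction: verifying that, below its starting row, the chosen $\rh_i$ occupies exactly the \emph{boundary} cells of the partial GBPR diagram left by the earlier hooks, which amounts to knowing that $\rh_i$ runs along the rim of that uncovered region. Everything else is ribbon bookkeeping; this is the only place the mechanics of Procedure \ref{A:THC} are really needed, so I would isolate it as a small auxiliary claim (and the $\xi_\lambda$-based alternative is attractive precisely because it sidesteps it).
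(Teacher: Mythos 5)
Your proposal is correct, and in substance it follows the same route as the paper: both directions are obtained by unpacking the definitions of special rim hook tableaux and tunnel hook coverings. The difference is one of detail. The paper's entire proof is three sentences: it asserts that end-of-row initial cells alone give $R\in\thc^{\geq 0}_\lambda$, that column-one terminal cells alone give $R\in\SRT_\lambda$, and that the forward inclusion is immediate from the definitions. You actually carry out the forward direction (a positive-content tunnel hook on a partition shape must contain every blue cell of its starting row, hence reaches the end of that row), and you set up an explicit row-by-row reconstruction for the converse; the single step you defer --- that below its starting row each $\rh_i$ occupies exactly the boundary cells of the region left uncovered by the earlier hooks --- is precisely the content the paper treats as immediate, so writing out that auxiliary claim would make your argument strictly more complete than the published one rather than expose a gap in it. Two small points to tidy: the paper proves the converse as two independent implications, and indeed the column-one hypothesis is not needed for the tunnel-hook part (a legal tunnel hook need only end at a terminal cell, not in column one), whereas your reconstruction leans on column-one termination; and in the forward direction your sentence about the blue cells of row $r$ tacitly assumes there are some, so you should dispose of the degenerate possibility of a hook with $\Delta_r>0$ whose starting row is already entirely grey (its topmost in-diagram cells then lie in a lower row, and one still checks the northeast-most cell ends its row). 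Finally, I would drop the alternative route through $\xi_\lambda$: identifying $R$ with the rim-hook partition of $\xi_\lambda(R)$ is essentially the statement the proposition following this lemma derives \emph{from} it, so that detour is heavier than the direct reconstruction and flirts with circularity.
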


\begin{proof}
If the initial cell of $\rh_i$ is at the end of its row for each $i$, then $R\in \thc_\lambda^{\geq 0}$. If the terminal cell of $\rh_i$ is in the first column for each $i$, then $R\in \SRT_\lambda$. The other inclusion is immediate from the definitions of special rim hook tableaux and tunnel hook coverings.
\end{proof}
We have two ways of computing the permutation of $R\in \SRT_\lambda\cap \thc_\lambda^{\geq 0}$. The following propositions shows that they agree.
\begin{prop}
    Let $\lambda\vdash n$. If $R\in \SRT_\lambda\cap \thc_\lambda^{\geq 0}$, then $\perm(R)=\permSRT(R)$ and $\xi_\lambda(R)=R.$
\end{prop}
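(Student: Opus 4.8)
The plan is to reduce both assertions to the single identity $\perm(R)=\permSRT(R)$ and then to establish that identity by an entrywise comparison of permutations. For the reduction: $\xi_\lambda(R)$ is by definition the unique tunnel hook covering of shape $\lambda$ whose permutation is $\permSRT(R)$, while the hypothesis $R\in\thc_\lambda^{\geq 0}$ makes $R$ itself a tunnel hook covering of shape $\lambda$, with its own permutation $\perm(R)$; hence, as soon as $\perm(R)=\permSRT(R)$ is known, the bijectivity in \Cref{Th:THC_and_Perms} forces $\xi_\lambda(R)=R$. Everything thus comes down to proving $\perm(R)_i=\permSRT(R)_i$ for every $i\in[\ell(\lambda)]$.

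For a fixed $i$, I would split on the value of $\Delta_i(R)$. If $\Delta_i(R)>0$, then in the presentation of $R$ as a set-theoretic partition of the diagram of $\lambda$ into rim hooks — obtained by discarding the tunnel hooks with $\Delta_j(R)=0$ — the tunnel hook that begins in row $i$ survives as a genuine rim hook $\rh$ whose initial cell, by \Cref{lem: intersection-of-srt-and-thc}, is the rightmost cell $(i,\lambda_i)$ of row $i$, hence lies on the diagonal $\mathcal{L}_{i-\lambda_i+1}$, and whose terminal cell $\c_i$ lies in the first column, say $\c_i=(r,1)$. Since a first-column cell $(r,1)$ lies on $\mathcal{L}_r$, the definition of $\perm$ gives $\perm(R)_i=r$; and since $\mathcal{L}_{i-\lambda_i+1}$ carries the initial cell of $\rh$ (and of no other rim hook, by \Cref{lem:uniqueDiagonals}), Case~(2) of \Cref{def:findperm} returns the row $r$ of the terminal cell of $\rh$ as well. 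If instead $\Delta_i(R)=0$, then \Cref{delta-and-permutation} gives $\perm(R)_i=i-\lambda_i$ outright; meanwhile, because $\lambda$ is weakly decreasing the map $t\mapsto t-\lambda_t$ is strictly increasing on $[\ell(\lambda)]$, and the diagonals carrying initial cells of $R$ are precisely the $\mathcal{L}_{m-\lambda_m+1}$ with $m$ a starting row of a genuine rim hook (so $\Delta_m(R)>0$, whence $m\neq i$), so $\mathcal{L}_{i-\lambda_i+1}$ carries no initial cell and Case~(1) of \Cref{def:findperm} gives $\permSRT(R)_i=i-\lambda_i$ too. In both cases the two permutations agree in position $i$.

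The step I expect to be the main obstacle is reconciling the two indexing conventions: $\perm$ labels the $\ell$ tunnel hooks of $R$ by their starting rows $1,\dots,\ell$, degenerate hooks with $\Delta_i=0$ included, whereas $\permSRT$ is assembled diagonal by diagonal from the staircase positions $i-\lambda_i+1$. The resolution is purely geometric and rests entirely on \Cref{lem: intersection-of-srt-and-thc}: it pins the initial cell of every rim hook to the end of a row, so that $\mathcal{L}_{i-\lambda_i+1}$ is exactly the diagonal through the initial cell of the $i$th tunnel hook, and it pins terminal cells to the first column, so that a terminal cell's diagonal index coincides with its row; the strict monotonicity of $i\mapsto i-\lambda_i$ then accounts for the rows bearing a degenerate hook. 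One could alternatively try to match the cycle factorizations in \Cref{prop:permutation-SRHT-cycle-decomp} and \Cref{prop:permutation-THC-cycle-decomp}, but those list their cyclic factors in different orders (by terminal row versus by starting row), so that route would require showing the two orderings give the same product — an extra complication that the entrywise comparison avoids.
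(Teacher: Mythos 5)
Your proof is correct and follows essentially the same route as the paper's: an entrywise comparison of $\perm(R)$ and $\permSRT(R)$ using \Cref{lem: intersection-of-srt-and-thc} for the rows starting genuine rim hooks and the value $i-\lambda_i$ for the degenerate rows, with $\xi_\lambda(R)=R$ then forced by the uniqueness in \Cref{Th:THC_and_Perms}. Your explicit appeal to the strict monotonicity of $t\mapsto t-\lambda_t$ to rule out initial cells on the diagonals of degenerate rows is a small point the paper leaves implicit, but the argument is otherwise the same.
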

\begin{proof}
    Let $R$ have rim hooks $\rh_1,\rh_2,\dots,\rh_k$ with initial cells in rows $x_1,x_2,\dots,x_k$ and terminal cells in rows $j_1,j_2,\dots,j_k$, respectively. Let $i\in [k]$. By \Cref{lem: intersection-of-srt-and-thc}, the initial cell of $\rh_i$ is at the end of row $x_i$  and so $\permSRT(R)_{x_i}=j_i$.  
    
    On the other hand, the terminal cell of $\rh_i$ is on diagonal $j_i$ by \Cref{lem: intersection-of-srt-and-thc}, so $\perm(R)_{x_i}=j_i$. Now suppose $s\not\in \{x_1,\dots,x_k\}$. If we view $R$ as a tunnel hook covering with tunnel hooks starting in each row, then $R$ has a tunnel hook starting in row $s$ with $\Delta_s(R)=0$ occupying the cell $(s,\lambda_s+1)$, which is on diagonal $\mathcal{L}_{s-\lambda_s}$. Hence, $\perm(R)_s=s-\lambda_s=\permSRT(R)_s$. Thus, we have proven that $\perm(R)_j=\permSRT(R)_j$ for all $j\in [\ell(\lambda)].$ Since $R$ itself is the unique tunnel hook covering of $\lambda$ with $\perm(R)=\permSRT(R)$, we have that $\xi_\lambda(R)=R$.
\end{proof}

We may now compare our algorithms to those of Loehr and Mendes \cite{LoeMen06} and Sagan and Lee \cite{SagLee03} by converting between special rim hook tableaux and tunnel hook coverings.
  Indeed, we show that \Cref{alg: sym involution FULL} is different from that of Loehr and Mendes \cite[Section 7.3]{LoeMen06}. 
  Consider the following pair (drawn using special rim hook tableaux and tunnel hook coverings, respectively):

\[\begin{tikzpicture}[yscale=-1,scale=.55]

\def\L{{3,2,1}}
\pgfmathsetmacro{\len}{dim(\L)}

\foreach \y in {1,...,\len}{
    \pgfmathsetmacro{\j}{\L[\y-1]}
    \foreach \i in {1,...,\j}{
        \draw (\i-.5,\y-.5) rectangle (\i+.5,\y+.5);
        }
}
\setsepchar{\\/,/ }
\readlist\LS{1,1,1\\2,2\\3}
\pgfmathsetmacro{\lens}{\listlen\LS[]}
\foreach \y in {1,...,\lens}{
    \pgfmathsetmacro{\j}{\L[\y-1]}
    \foreach \i in {1,...,\j}{
        \node at (\i,\y) {\LS[\y,\i]};
        }
}

\tikzset{every node/.style={inner sep=-4pt}}
\begin{scope}[on background layer]
\tikzset{every path/.style={line width = 7pt,color=black,line cap=round,opacity=.15,rounded corners}}
\draw (3,1)--(2,1)--(2,2)--(1,2)--(1,3);
\draw (1+.25,1)--(1-.25,1);
\end{scope}
\node at (1.5,4) {as SRHT};
\node at (-2,2) {Input};

\end{tikzpicture}
\qquad 
\begin{tikzpicture}[yscale=-1,scale=.55]

\def\L{{3,2,1}}
\pgfmathsetmacro{\len}{dim(\L)}

\foreach \y in {1,...,\len}{
    \pgfmathsetmacro{\j}{\L[\y-1]}
    \foreach \i in {1,...,\j}{
        \draw (\i-.5,\y-.5) rectangle (\i+.5,\y+.5);
        }
}
\setsepchar{\\/,/ }
\readlist\LS{1,1,1\\2,2\\3}
\pgfmathsetmacro{\lens}{\listlen\LS[]}
\foreach \y in {1,...,\lens}{
    \pgfmathsetmacro{\j}{\L[\y-1]}
    \foreach \i in {1,...,\j}{
        \node at (\i,\y) {\LS[\y,\i]};
        }
}

\tikzset{every node/.style={inner sep=-4pt}}
\begin{scope}[on background layer]
\tikzset{every path/.style={line width = 7pt,color=black,line cap=round,opacity=.15,rounded corners}}
\draw (3,1)--(1,1)--(1,3);
\draw (2+.25,2)--(2-.25,2);
\end{scope}
\node at (1.5,4) {as THC};
\end{tikzpicture}
\]
The Jacobi--Trudi determinant corresponding to $\lambda=(3,2,1)$ is
  \[
  \det\begin{pmatrix}
h_3&h_4&h_5\\
h_1&h_2&h_3\\
h_{-1}&h_0&h_1
  \end{pmatrix}
\]
with $h_{-1}=0$ and $h_0=1.$
There is a single permutation $\sigma=[3,1,2]$ that produces the term $h_5h_1h_0$ in the determinant (even when allowing for commutativity). Hence,
there is a unique special rim hook tableau as well as a unique tunnel hook covering
of shape $\lambda$ and content $(5,1)$.  Therefore in this example there is only one possible SSYT-SRHT pair $(S,T')$ corresponding to our chosen SSYT-THC pair input $(S,T)$.  Thus, to show that the Loehr and Mendes algorithm is different from \Cref{alg: sym involution FULL}, it is sufficient to show that when we apply the Loehr--Mendes algorithm to $(S,T')$, the output is different from when we apply our algorithm to $(S,T)$.  In particular, we will show that the semistandard Young tableau produced by the Loehr--Mendes algorithm is different from that produced by \Cref{alg: sym involution FULL}.

Indeed, the Loehr and Mendes procedure \cite[p. 941]{LoeMen06} produces a pair consisting of a semistandard Young tableau and a special rim hook tableau which corresponds in our setting to the following pair:
\[
\begin{tikzpicture}[yscale=-1,scale=.55]

\def\L{{4,2}}
\pgfmathsetmacro{\len}{dim(\L)}

\foreach \y in {1,...,\len}{
    \pgfmathsetmacro{\j}{\L[\y-1]}
    \foreach \i in {1,...,\j}{
        \draw (\i-.5,\y-.5) rectangle (\i+.5,\y+.5);
        }
}
\setsepchar{\\/,/ }
\readlist\LS{1,1,1,2\\2,3}
\pgfmathsetmacro{\lens}{\listlen\LS[]}
\foreach \y in {1,...,\lens}{
    \pgfmathsetmacro{\j}{\L[\y-1]}
    \foreach \i in {1,...,\j}{
        \node at (\i,\y) {\LS[\y,\i]};
        }
}

\tikzset{every node/.style={inner sep=-4pt}}
\begin{scope}[on background layer]
\tikzset{every path/.style={line width = 7pt,color=black,line cap=round,opacity=.15,rounded corners}}
\draw (4,1)--(2,1)--(2,2)--(1,2);
\draw (1+.25,1)--(1-.25,1);
\end{scope}
\node at (2,3) {as SRHT};
\node at (-2,2) {LM Output};
\end{tikzpicture}\qquad 
\begin{tikzpicture}[yscale=-1,scale=.55]

\def\L{{4,2}}
\pgfmathsetmacro{\len}{dim(\L)}

\foreach \y in {1,...,\len}{
    \pgfmathsetmacro{\j}{\L[\y-1]}
    \foreach \i in {1,...,\j}{
        \draw (\i-.5,\y-.5) rectangle (\i+.5,\y+.5);
        }
}
\setsepchar{\\/,/ }
\readlist\LS{1,1,1,2\\2,3}
\pgfmathsetmacro{\lens}{\listlen\LS[]}
\foreach \y in {1,...,\lens}{
    \pgfmathsetmacro{\j}{\L[\y-1]}
    \foreach \i in {1,...,\j}{
        \node at (\i,\y) {\LS[\y,\i]};
        }
}

\tikzset{every node/.style={inner sep=-4pt}}
\begin{scope}[on background layer]
\tikzset{every path/.style={line width = 7pt,color=black,line cap=round,opacity=.15,rounded corners}}
\draw (4,1)--(1,1)--(1,2);
\draw (2+.25,2)--(2-.25,2);
\end{scope}
\node at (2,3) {as THC};
\end{tikzpicture}
\]
whereas \Cref{alg: sym involution FULL} produces:

\[
\begin{tikzpicture}[yscale=-1,scale=.55]

\def\L{{4,2}}
\pgfmathsetmacro{\len}{dim(\L)}

\foreach \y in {1,...,\len}{
    \pgfmathsetmacro{\j}{\L[\y-1]}
    \foreach \i in {1,...,\j}{
        \draw (\i-.5,\y-.5) rectangle (\i+.5,\y+.5);
        }
}
\setsepchar{\\/,/ }
\readlist\LS{1,1,1,3\\2,2}
\pgfmathsetmacro{\lens}{\listlen\LS[]}
\foreach \y in {1,...,\lens}{
    \pgfmathsetmacro{\j}{\L[\y-1]}
    \foreach \i in {1,...,\j}{
        \node at (\i,\y) {\LS[\y,\i]};
        }
}

\tikzset{every node/.style={inner sep=-4pt}}
\begin{scope}[on background layer]
\tikzset{every path/.style={line width = 7pt,color=black,line cap=round,opacity=.15,rounded corners}}
\draw (4,1)--(2,1)--(2,2)--(1,2);
\draw (1+.25,1)--(1-.25,1);
\end{scope}
\node at (2,3) {as SRHT};
\node at (-3.5,1.5) {\Cref{alg: sym involution FULL} Output};
\end{tikzpicture}
\qquad 
\begin{tikzpicture}[yscale=-1,scale=.55]

\def\L{{4,2}}
\pgfmathsetmacro{\len}{dim(\L)}

\foreach \y in {1,...,\len}{
    \pgfmathsetmacro{\j}{\L[\y-1]}
    \foreach \i in {1,...,\j}{
        \draw (\i-.5,\y-.5) rectangle (\i+.5,\y+.5);
        }
}
\setsepchar{\\/,/ }
\readlist\LS{1,1,1,3\\2,2}
\pgfmathsetmacro{\lens}{\listlen\LS[]}
\foreach \y in {1,...,\lens}{
    \pgfmathsetmacro{\j}{\L[\y-1]}
    \foreach \i in {1,...,\j}{
        \node at (\i,\y) {\LS[\y,\i]};
        }
}

\tikzset{every node/.style={inner sep=-4pt}}
\begin{scope}[on background layer]
\tikzset{every path/.style={line width = 7pt,color=black,line cap=round,opacity=.15,rounded corners}}
\draw (4,1)--(1,1)--(1,2);
\draw (2+.25,2)--(2-.25,2);
\end{scope}
\node at (2,3) {as THC};
\end{tikzpicture}
\]

The reader should note as well that there is a unique permutation that produces $h_{5}h_1$ in the Jacobi--Trudi determinant for the partition $(4,2)$.

We conjecture that when \Cref{alg: sym involution FULL} is restricted to standard Young tableaux, the algorithm is equivalent to the Sagan and Lee involution~\cite{SagLee03}.  We have tested this conjecture in SageMath \cite{sagemath} for all pairs of size less than or equal to $12$.\footnote{In order to fully implement the algorithm we added the following disambiguation to the TV move.  If the head of the active hook is in the first column, the root must be attached horizontally to the head even though the cell above the head is also a permissible cell.}

Consider the following example.

\[
\begin{tikzpicture}[yscale=-1,scale=.55]

\def\L{{5,3,3,3}}
\pgfmathsetmacro{\len}{dim(\L)}

\foreach \y in {1,...,\len}{
    \pgfmathsetmacro{\j}{\L[\y-1]}
    \foreach \i in {1,...,\j}{
        \draw (\i-.5,\y-.5) rectangle (\i+.5,\y+.5);
        }
}
\setsepchar{\\/,/ }
\readlist\LS{1,4,5,13,14\\2,6,8\\3,9,10\\7,11,12}
\pgfmathsetmacro{\lens}{\listlen\LS[]}
\foreach \y in {1,...,\lens}{
    \pgfmathsetmacro{\j}{\L[\y-1]}
    \foreach \i in {1,...,\j}{
        \node at (\i,\y) {\LS[\y,\i]};
        }
}

\tikzset{every node/.style={inner sep=-4pt}}
\begin{scope}[on background layer]
\tikzset{every path/.style={line width = 7pt,color=black,line cap=round,opacity=.15,rounded corners}}
\draw (5,1)--(1,1);
\draw (3,2)--(3,4)--(1,4);
\draw (2,2)--(2,3)--(1,3);
\draw (1-.25,2)--(1+.25,2);
\end{scope}
\node at (2,5) {as SRHT};
\node at (-1.5,2.5) {Input};
\end{tikzpicture}
\qquad 
\begin{tikzpicture}[yscale=-1,scale=.55]

\def\L{{5,3,3,3}}
\pgfmathsetmacro{\len}{dim(\L)}

\foreach \y in {1,...,\len}{
    \pgfmathsetmacro{\j}{\L[\y-1]}
    \foreach \i in {1,...,\j}{
        \draw (\i-.5,\y-.5) rectangle (\i+.5,\y+.5);
        }
}
\setsepchar{\\/,/ }
\readlist\LS{1,4,5,13,14\\2,6,8\\3,9,10\\7,11,12}
\pgfmathsetmacro{\lens}{\listlen\LS[]}
\foreach \y in {1,...,\lens}{
    \pgfmathsetmacro{\j}{\L[\y-1]}
    \foreach \i in {1,...,\j}{
        \node at (\i,\y) {\LS[\y,\i]};
        }
}

\tikzset{every node/.style={inner sep=-4pt}}
\begin{scope}[on background layer]
\tikzset{every path/.style={line width = 7pt,color=black,line cap=round,opacity=.15,rounded corners}}
\draw (5,1)--(1,1);
\draw (3,2)--(1,2)--(1,4);
\draw (3,3)--(2,3)--(2,4);
\draw (3-.25,4)--(3+.25,4);
\end{scope}
\node at (2,5) {as THC};
\end{tikzpicture}
\]
The Sagan and Lee procedure (rooting at $n=14$ and then applying the sequence of operations on overlapping rooted special rim hook tableaux on \cite[pg 153]{SagLee03}) and our \Cref{alg: sym involution FULL} both produce the following pair:
\[
\begin{tikzpicture}[yscale=-1,scale=.55]

\def\L{{4,4,3,3}}
\pgfmathsetmacro{\len}{dim(\L)}

\foreach \y in {1,...,\len}{
    \pgfmathsetmacro{\j}{\L[\y-1]}
    \foreach \i in {1,...,\j}{
        \draw (\i-.5,\y-.5) rectangle (\i+.5,\y+.5);
        }
}
\setsepchar{\\/,/ }
\readlist\LS{1,4,5,13\\2,6,8,14\\3,9,10\\7,11,12}
\pgfmathsetmacro{\lens}{\listlen\LS[]}
\foreach \y in {1,...,\lens}{
    \pgfmathsetmacro{\j}{\L[\y-1]}
    \foreach \i in {1,...,\j}{
        \node at (\i,\y) {\LS[\y,\i]};
        }
}

\tikzset{every node/.style={inner sep=-4pt}}
\begin{scope}[on background layer]
\tikzset{every path/.style={line width = 7pt,color=black,line cap=round,opacity=.15,rounded corners}}
\draw (4,1)--(2,1)--(2,2)--(1,2);
\draw (4,2)--(3,2)--(3,3)--(1,3);
\draw (3,4)--(1,4);
\draw (1-.25,1)--(1+.25,1);
\end{scope}
\node at (2,5) {as SRHT};
\node at (-1.5,2.5) {Output};
\end{tikzpicture}
\qquad 
\begin{tikzpicture}[yscale=-1,scale=.55]

\def\L{{4,4,3,3}}
\pgfmathsetmacro{\len}{dim(\L)}

\foreach \y in {1,...,\len}{
    \pgfmathsetmacro{\j}{\L[\y-1]}
    \foreach \i in {1,...,\j}{
        \draw (\i-.5,\y-.5) rectangle (\i+.5,\y+.5);
        }
}
\setsepchar{\\/,/ }
\readlist\LS{1,4,5,13\\2,6,8,14\\3,9,10\\7,11,12}
\pgfmathsetmacro{\lens}{\listlen\LS[]}
\foreach \y in {1,...,\lens}{
    \pgfmathsetmacro{\j}{\L[\y-1]}
    \foreach \i in {1,...,\j}{
        \node at (\i,\y) {\LS[\y,\i]};
        }
}
\node at (-1.5,1.5) {};

\tikzset{every node/.style={inner sep=-4pt}}
\begin{scope}[on background layer]
\tikzset{every path/.style={line width = 7pt,color=black,line cap=round,opacity=.15,rounded corners}}
\draw (4,1)--(1,1)--(1,2);
\draw (4,2)--(2,2)--(2,3)--(1,3);
\draw (3,4)--(1,4);
\draw (3-.25,3)--(3+.25,3);
\end{scope}
\node at (2,5) {as THC};
\end{tikzpicture}
\]

\subsection{Conclusion and discussion}

We proved both $\Sym$ identities ($KK^{-1}=I$ and $K^{-1}K=I$) by first proving an analogous identity ($\K\K^{-1}=I$ and $\K^{-1}\K=I$) in $\NSym$ and then reducing to $\Sym$.  The proof of the second identity is an instance of the Garsia--Milne involution principle, relying heavily on the fact that every semistandard Young tableau is also an immaculate tableau. In particular, we could abstract our proof as follows:
\begin{enumerate}
    \item Show that proving the symmetric function identity is equivalent to showing that a signed sum of pairs of combinatorial objects \[\sum_{(S,T)\in \mathcal{S}} \sgn(T)\] takes a particular value.
    \item Find a corresponding noncommutative symmetric function identity that is equivalent to a signed sum of pairs of other combinatorial objects \[\sum_{(S,T)\in \mathcal{N}}\sgn(T)\] taking on a particular value. Importantly, $\mathcal{S}\subseteq \mathcal{N}.$
    \item Produce a sign-reversing involution on $\mathcal{N}$ to prove the identity in NSym.
    \item Produce another involution on $\mathcal{N}\setminus \mathcal{S}$ and combine these two NSym involutions in the spirit of Garsia--Milne to produce a Sym involution.
\end{enumerate}

This paradigm of combining results from $\NSym$ together with the Garsia--Milne involution principle could also be applied to a wide array of symmetric function problems that naturally can be phrased as signed sums as in Step (1). To prove an identity in $\Sym$ (ideally with a known combinatorial expansion involving semistandard Young tableaux such as an expansion into the monomial basis), first identify the corresponding identity in $\NSym$ to complete Step (2).  Then construct a sign-reversing involution to prove the identity in $\NSym$ for Step (3). Finally, construct one more involution to solve the original problem in Step (4).

\section*{Acknowledgments}
We thank Nantel Bergeron, \:{Omer} E\u{g}ecio\u{g}lu, \'{A}lvaro Guti\'{e}rrez, Bruce Sagan, Mike Zabrocki for their helpful conversations. This work benefited from computations in SageMath \cite{sagemath}.

\bibliographystyle{amsplain}

\bibliography{Kostkabib}

\appendix
\section{Proofs of Propositions \ref{prop:permutation-THC-cycle-decomp} and \ref{prop:THC-perm-comp-row-by-row}}\label{appendix:proofs-of-perm-comps}

We include the proofs of several propositions in this appendix in order to streamline the body of the paper.

\begin{proprestatecycle}
    Suppose a tunnel hook covering $T$ has tunnel hooks $\h(1,\tau_1), \allowbreak \h(2,\tau_2), \dots, \h(\ell,\tau_\ell)$ with terminal cells in rows $j_1,j_2,\cdots,j_\ell$, respectively. Then,
\[\perm(T)=(j_1,j_1-1,\dots,2,1)(j_2,j_2-1,\dots,3,2)\cdots (j_{\ell-1},\ell-1)(\ell).\] 
\end{proprestatecycle}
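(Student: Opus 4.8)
The plan is to argue by induction on the number of rows $\ell$, removing the bottom row at each step. Write $c_i:=(j_i,j_i-1,\dots,i+1,i)$, so that the assertion becomes $\perm(T)=c_1c_2\cdots c_\ell$. Since a tunnel hook starting in row $\ell$ cannot descend, $j_\ell=\ell$ and $c_\ell=(\ell)$ is the identity; hence it suffices to prove $\perm(T)=c_1c_2\cdots c_{\ell-1}$, and the base case $\ell=1$ is immediate. For the inductive step I would pass to $T^-:=\hat T^{\ell-1}$, the tunnel hook covering of shape $(\beta_1,\dots,\beta_{\ell-1})$ obtained by deleting row $\ell$ of the diagram together with the row-$\ell$ cells of the tunnel hooks; as noted in the text this is again a valid tunnel hook covering, its hooks are the truncations of $\h(1,\tau_1),\dots,\h(\ell-1,\tau_{\ell-1})$, and the terminal row $j_i^-$ of its $i$-th hook equals $j_i$ if $j_i<\ell$ and equals $\ell-1$ if $j_i=\ell$. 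The inductive hypothesis then gives $\perm(T^-)=c_1^-c_2^-\cdots c_{\ell-1}^-$ with $c_i^-=(j_i^-,\dots,i)$.

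The crux is a one-step recursion relating $\perm(T)$ to $\perm(T^-)$, namely
\[
\perm(T)=\kappa\cdot\perm(T^-),\qquad \kappa=(d_1,d_2,\dots,d_s),
\]
where $d_1<d_2<\cdots<d_s$ are the diagonals of the terminal cells lying in row $\ell$ of $T$, equivalently the values $\perm(T)(i)$ over those indices $i$ with $j_i=\ell$. To prove this I would analyze the GBPR construction inside the last row: the hooks terminating in row $\ell$, listed by increasing starting row, have terminal cells in strictly increasing columns, and after the deletion each such hook terminates one row higher, at the column through which it passes from row $\ell-1$ into row $\ell$. Following where those cells sit shows that going from $T^-$ to $T$ alters the associated permutation exactly by left multiplication by $\kappa$. (This one-row statement is also the base case of \Cref{prop:THC-perm-comp-row-by-row}, so it can be proved once and reused.)

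It then remains to verify the algebraic identity $c_1\cdots c_{\ell-1}=\kappa\cdot(c_1^-\cdots c_{\ell-1}^-)$, which, with the displayed recursion and the inductive hypothesis, closes the induction. One has $c_i=c_i^-$ whenever $j_i<\ell$, while for $j_i=\ell$ a one-line cycle computation gives $c_i=c_i^-\,(i,\ell)$; thus $c_1\cdots c_{\ell-1}$ is exactly $c_1^-\cdots c_{\ell-1}^-$ with a transposition $(i,\ell)$ inserted immediately after each factor $c_i^-$ for which $j_i=\ell$. I would then slide these transpositions to the far left — legitimate because each intervening cycle $c_j^-$ has support disjoint from the transposition being moved past it, a manifestation of the non-crossing structure of tunnel hooks within a fixed column range — and check that together they reassemble into the single cycle $\kappa$.

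The step I expect to be the main obstacle is the geometric input behind the one-step recursion: showing that the terminal cells that reach the bottom row occupy precisely the diagonals $d_1<\cdots<d_s$ and that deleting the row carries each of them onto the diagonal of its predecessor in the list. This is a careful bookkeeping of how the boundary cells in the last row of a GBPR diagram are apportioned among the tunnel hooks passing through it — the same structural fact that makes $\hat T^{\ell-1}$ a valid tunnel hook covering. Once that is settled, the cycle regrouping in the preceding paragraph should be routine.
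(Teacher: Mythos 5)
Your overall strategy -- peel off the bottom row, invoke the one-row recursion $\perm(T)=\kappa\,\perm(\hat T^{\ell-1})$ (this is \Cref{prop:THC-perm-comp-row-by-row} with $k=\ell$, which the appendix proves independently of the present proposition, so leaning on it is legitimate and not circular), and finish with an identity on cycles -- is genuinely different from the paper's proof, which compares Lehmer codes: by \cite[Lemma 35]{AllenMason25EJC} one has $L(\perm(T))_i=j_i-i$, and the paper checks that the displayed cycle product has the same code. However, your key algebraic step has a genuine gap. The transpositions $(i,\ell)$ cannot be slid to the far left by disjointness: when $j_i=\ell$ the factor immediately to the left of $(i,\ell)$ is $c_i^-=(\ell-1,\ell-2,\dots,i)$, whose support contains $i$, and earlier factors $c_j^-$ with $j\le i\le j_j^-$ contain $i$ as well. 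Already for the tunnel hook covering of \Cref{fig:THC permutation} (shape $(8,7,7,4)$, terminal rows $(j_1,j_2,j_3,j_4)=(1,3,4,4)$) your word is $c_1^-c_2^-c_3^-\,(3,4)=(3,2)(3,4)$, and $(3,4)$ does not commute with $(3,2)$: pushing it left conjugates it into $(2,4)$ -- which is exactly $\kappa$ here, showing that conjugation, not commutation, is doing the work.

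Since every $c_j^-$ fixes $\ell$, moving $(i,\ell)$ leftward across a cycle $c$ replaces it by $(c(i),\ell)$, so the transpositions arriving at the left are not the ones you started with; to conclude you must show that the conjugated transpositions, multiplied in the order in which they arrive, assemble into the single increasing cycle $(d_1,\dots,d_s)$. That identification requires knowing how $\perm(\hat T^{\ell-1})$ sends the starting rows of the hooks reaching row $\ell$ onto the diagonals $d_1<\cdots<d_s$ -- i.e.\ relating each such hook's terminal diagonal in row $\ell$ to the diagonal on which its truncation terminates in row $\ell-1$ -- which is precisely the geometric bookkeeping carried out in the appendix proof of \Cref{prop:THC-perm-comp-row-by-row} and does not follow from the statement of that proposition alone. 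As written, this ``check'' is asserted with an incorrect justification, so the proof is incomplete. It is repairable: slide the transpositions to the \emph{right} instead (there disjointness does hold, since $\operatorname{supp}(c_j^-)\subseteq\{j,\dots,\ell-1\}$ omits $i<j$ and $\ell$, and the transpositions keep their relative order), obtaining $c_1\cdots c_{\ell-1}=\perm(\hat T^{\ell-1})\,(i_1,\ell)\cdots(i_{s-1},\ell)$, and then verify that conjugating this product of transpositions by $\perm(\hat T^{\ell-1})$ yields $\kappa$ via the diagonal analysis above; alternatively, the Lehmer-code comparison used in the paper avoids the issue entirely.
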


\begin{proof}

Let $\sigma=\perm(T)$.  Recall that the \textit{Lehmer code} of a permutation $\sigma$ is the sequence $L(\sigma)=(L(\sigma)_1,L(\sigma)_2,\dots,L(\sigma)_{\ell})$, where $L(\sigma)_i=\#\{j>i\mid \sigma_j<\sigma_i\}.$  By \cite[Lemma 35]{AllenMason25EJC}, $L(\sigma)_i+1$ is the height of $\h(i,\tau_i)$ for each $i\in [\ell]$, so $L(\sigma)_i=j_i-i$.  We claim that the permutation \[\pi=(j_1,j_1-1,\dots,2,1)(j_2,j_2-1,\dots,3,2)\cdots (j_{\ell-1},\ell-1)(\ell)\] also has Lehmer code $L(\pi)_i=j_i-i$ for all $i \in [\ell]$.

To see this, for $k\in [\ell]$ set 
\[\pi^{(k)}=(j_k,j_k-1,\dots,k)\cdots (j_{\ell-1},\ell-1)(\ell).\]
Since $j_t\geq t$ for all $t\in [\ell]$, we have that $\pi^{(k)}(i)=i$ for $i<k$. Moreover, $\pi^{(k)}(i)\geq k$ if and only if $i\geq k$. Suppose $k\geq 2$. Since $\pi^{(k)}(i)=i$ for $i\leq k-1$ and $\pi^{(k-1)}=(j_{k-1},j_{k-1}-1,\dots,k-1) \pi^{(k)}$, we have 
\begin{equation}\label{eq:cycle-computation}\pi^{(k-1)}(i)=\begin{cases}
    \pi^{(k)}(i)&\text{if $\pi^{(k)}(i)>j_{k-1}$,}\\
    \pi^{(k)}(i)-1&\text{if $k\leq \pi^{(k)}(i)\leq j_{k-1}$,}\\
    j_{k-1}&\text{if $i=k-1$,}\\
    i&\text{if $i<k-1$}\\
\end{cases}\end{equation}
for each $i\in [\ell]$. Thus, if $j>i\geq k$, we have that $\pi^{(k-1)}(i)>\pi^{(k-1)}(j)$ if and only if $\pi^{(k)}(i)>\pi^{(k)}(j)$. Hence, $L(\pi^{(k)})_i=L(\pi^{(k-1)})_i$ for $i\geq k$. From this it follows that for all $k\in [\ell]$, \[L(\pi^{(k)})_k=L(\pi^{(k-1)})_k=\cdots=L(\pi^{(1)})_k=L(\pi)_k.\] 
By \Cref{eq:cycle-computation}, $\pi^{(k-1)}(k-1)=j_{k-1}$ and the numbers $k-1,\dots,j_{k-1}-1$ are in positions $k$ or greater of $\pi^{(k-1)}$ and the numbers $1,\dots,k-2$ are in positions $1,\dots,k-2$. Hence, $L(\pi)_{k-1}=L(\pi^{(k-1)})_{k-1}=j_{k-1}-(k-1)$.  Therefore, $L(\pi)_k=j_k-k$ for all $k\in [\ell].$

Since the Lehmer code of the permutation $\pi$ matches the Lehmer code of \(\perm(T)\), they are the same permutation and thus \[\perm(T)=(j_1,j_1-1,\dots,2,1)(j_2,j_2-1,\dots,3,2)\cdots (j_{\ell-1},\ell-1)(\ell).\qedhere\]
\end{proof}

\begin{proprestaterows}
Let $T$ be a tunnel hook covering of $\alpha=(\alpha_1,\alpha_2,\dots,\alpha_\ell)$.
For $1 \le k \le \ell$, set $\hat T^k$ to be the tunnel hook covering of shape $\alpha^k=(\alpha_1,\alpha_2,\dots,\alpha_k)$ that is constructed by removing the cells of the diagram of shape $\alpha$
as well as the corresponding cells of the tunnel hooks in rows $k+1,\dots, \ell$.  If $d_1<d_2<\cdots < d_j$ are the
diagonals of the terminal cells in row $k$ of $\hat T^{k}$ then
\[\perm(\hat T^{k})=(d_1,d_2,\dots,d_j) \perm(\hat T^{k-1}).\]
\end{proprestaterows}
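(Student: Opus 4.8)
The plan is to reduce to deleting a single bottom row and then to exploit the fact that, in that row, the cells of each hook form a contiguous gap-free block. For the reduction, note that restricting a GBPR diagram to its top $k$ rows is again the GBPR diagram of the truncated shape, and the chosen hooks restrict accordingly; hence $\hat T^k$ is a tunnel hook covering of shape $\alpha^k=(\alpha_1,\dots,\alpha_k)$, and $\hat T^{k-1}$ is obtained from $\hat T^k$ by deleting its last row. So it suffices to prove the statement when $k=\ell$. Write $\bar T=\hat T^{\ell-1}$; then $\bar T$ is $T$ with the hook $\h(\ell,\c_\ell)$ discarded and every other hook truncated at row $\ell-1$.

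The key structural input concerns row $\ell$. In \Cref{A:THC} the grey cells of each row always remain a prefix of that row, so each hook, in every row it passes through, absorbs exactly the contiguous block of boundary cells immediately to the right of the current grey prefix. Let $I=\{i_1<\dots<i_{j-1}\}\subseteq[\ell-1]$ be the set of hook indices $i$ for which $\h(i,\c_i)$ has a cell in row $\ell$; since the terminal cell of a hook is its bottom-most cell, these are precisely the indices (other than $\ell$) of hooks whose terminal cell lies in row $\ell$, so $|I|=j-1$. Because no hook processed strictly between consecutive elements of $I\cup\{\ell\}$ touches row $\ell$, the prefix property forces the hooks $\h(i_1,\c_{i_1}),\dots,\h(i_{j-1},\c_{i_{j-1}}),\h(\ell,\c_\ell)$ to fill row $\ell$ with gap-free consecutive blocks, in this left-to-right order, of positive sizes $b_{i_1},\dots,b_{i_{j-1}},b_\ell$. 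Reading the diagonal off the left end of each block then yields $d_j=\ell$, $\perm(T)(\ell)=d_1$, $\perm(T)(i_t)=d_{j-t+1}$, and $b_{i_t}=d_{j-t+1}-d_{j-t}$ for $1\le t\le j-1$.

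Next I would compare permutations. For $x\notin I\cup\{\ell\}$ the hook $\h(x,\c_x)$ is unchanged in $\bar T$, so $\perm(T)(x)=\perm(\bar T)(x)$; since $\perm(T)$ is a bijection carrying $I\cup\{\ell\}$ onto $\{d_1,\dots,d_j\}$, this common value avoids $\{d_1,\dots,d_j\}$. For $x=i_t\in I$, deleting the $b_{i_t}$ cells of $\h(i_t,\c_{i_t})$ in row $\ell$ lowers $|\h(i_t,\c_{i_t})|$ by $b_{i_t}$ while leaving the red- and purple-cell counts $\rho_{i_t},\g1_{i_t}$ of row $i_t$ untouched, so $\Delta_{i_t}(T)-\Delta_{i_t}(\bar T)=b_{i_t}$; by \Cref{delta-and-permutation} this gives $\perm(\bar T)(i_t)=\perm(T)(i_t)-b_{i_t}=d_{j-t+1}-b_{i_t}=d_{j-t}$. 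Since $d_j=\ell$, the $j$-cycle $(d_1,\dots,d_j)$ sends $\ell\mapsto d_1$, sends $d_m\mapsto d_{m+1}$ for $1\le m<j$, and fixes every integer outside $\{d_1,\dots,d_j\}$; matching this against the three cases above (and using $\perm(\bar T)(\ell)=\ell$) shows $\perm(T)(x)=\bigl[(d_1,\dots,d_j)\,\perm(\bar T)\bigr](x)$ for all $x\in[\ell]$, which is the assertion.

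I expect the main obstacle to be the structural claim that in the bottom row of a tunnel hook covering the successive hooks occupy gap-free consecutive blocks, in order of starting row, with $\h(\ell,\c_\ell)$ last. Pinning this down requires a careful reading of the definition of GBPR diagrams and of how the coloring of row $\ell$ evolves through \Cref{A:THC}, including the cases in which that row contains red or purple cells; the remaining ingredients — the reduction, the identity $\Delta_{i_t}(T)-\Delta_{i_t}(\bar T)=b_{i_t}$, and the closing cycle computation — are then routine. (An essentially equivalent route applies \Cref{prop:permutation-THC-cycle-decomp} to $T$ and $\bar T$ and identifies $\perm(T)\,\perm(\bar T)^{-1}$, a product of transpositions each fixing all but $\ell$, with the cycle $(d_1,\dots,d_j)$, but this identification relies on the same block structure.)
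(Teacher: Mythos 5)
Your proposal is correct and follows essentially the same route as the paper's appendix proof: both arguments compare $\perm(\hat T^{k})$ with $\perm(\hat T^{k-1})$ hook by hook, using the fact that the hooks reaching row $k$ tile that row by contiguous blocks, left to right in processing order with the hook starting in row $k$ last; you recover the shift of each terminal diagonal from the block sizes via \Cref{delta-and-permutation}, whereas the paper reads it off the terminal-cell coordinates and the observation that the old and new diagonals $d<d'$ of a continuing hook are consecutive among $d_1,\dots,d_j$. The structural claim you flag as the main obstacle is precisely the geometric input the paper also invokes with little elaboration (e.g.\ that $(k,1)$ is always a terminal cell in row $k$ and that $(k,k-d+1)$ is the terminal cell of the next hook), and it does follow from the grey-prefix/boundary-cell definitions as you indicate, so this is a matter of detail rather than a genuine gap.
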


\begin{proof}
    Let $k\geq 2$.  Set $\pi^{(k-1)}=\perm(\hat{T}^{k-1})$ and $\pi^{(k)}=\perm(\hat{T}^{k})$. Suppose $\h$ is a tunnel hook of $\hat{T}^{k}$ starting in row $i \leq k-1$ and ending in row $j\leq k-1$. Then, $\h$ is a tunnel hook of $\T^{k-1}$ and in particular starts and ends in the same cells. Since no two tunnel hooks end on the same diagonal, $\pi^{(k-1)}(i)\not\in \{d_1,\dots,d_j\}$ and hence \[\pi^{(k)}(i)=(d_1,\dots,d_j)\pi^{(k-1)}(i).\] 

Suppose $\h$ is a tunnel hook of $\hat{T}^{k}$ starting in row $i \leq k-1$ and ending in row $k$. Set $d=\pi^{(k-1)}(i)$. Then the terminal cell of $\h$ in $\hat{T}^{k-1}$ is $(k-1,k-d)$. Since $\h$ contains the cell $(k,k-d)$ in $\hat{T}^k$, we know that $(k,k-d+1)$ is the terminal cell of a tunnel hook in $\hat{T}^k$, and this is on diagonal $d$. Observe that $\h$ ends  in  $\hat{T}^k$ on some diagonal $\pi^{(k)}(i)=d'>d$ and there is no other tunnel hook ending on a diagonal strictly between $d$ and $d'$ in row $k$. Hence, the cycle of diagonals we construct is of the form $(d_1,\dots,d,d',\dots,d_j)$ and we can see that \[(d_1,\dots,d,d',\dots,d_j)\pi^{(k-1)}(i)=d'=\pi^{(k)}(i).\]\\  
Now suppose $\h$ is a tunnel hook of $\hat{T}^{k}$ starting in row $k$ and hence ending in row $k$. Then by construction, $\pi^{(k)}(k)=d_1$. Because $(k,1)$ is always the terminal cell of a tunnel hook of $\hat{T}^{k}$, we have that $d_j=k$. Since $\hat{T}^{k-1}$ has $k-1$ rows, $\pi^{(k-1)}(k)=k$. Thus, we have
\[(d_1,\dots,d_{j-1},k)\pi^{(k-1)}(k)=d_1=\pi^{(k)}(k).\qedhere\]
\end{proof}

\end{document}